\newtheorem{theorem}{Theorem}[section]
\newtheorem{proposition}[theorem]{Proposition}
\newtheorem{lemma}[theorem]{Lemma}
\newtheorem{corollary}[theorem]{Corollary}
\newtheorem{sublemma}[theorem]{Sublemma}
\newtheorem{meta}[theorem]{Metaconjecture}
\theoremstyle{definition}
\newtheorem{definition}[theorem]{Definition}
\newtheorem{example}[theorem]{Example}
\theoremstyle{remark}
\newtheorem{remark}[theorem]{Remark}
\newtheorem{question}[theorem]{Question}
\numberwithin{equation}{section}
\newcommand{\cX}{\mathcal{X}}
\newcommand{\cU}{\mathcal{U}}
\newcommand{\bR}{\mathbb{R}}
\newcommand{\mfS}{\mathfrak{S}}
\newcommand{\Tw}{\mathrm{Tw}}
\newcommand{\Stab}{\mathrm{Stab}}
\newcommand{\e}{\varepsilon}
\renewcommand{\Im}{\mathrm{Im}}
\newcommand{\MCG}{\mathrm{MCG}}
\newcommand{\Cay}{\mathrm{Cay}}
\newcommand{\twist}{\mathrm{twist}}
\newcommand{\modulus}{\mathrm{mod}}
\newcommand{\cI}{\mathcal{I}}
\newcommand{\cE}{\mathcal{E}}
\newcommand{\cH}{\mathcal{H}}
\newcommand{\bC}{\mathbb{C}}
\newcommand{\cM}{\mathcal{M}}
\newcommand{\bZ}{\mathbb{Z}}
\newcommand{\cC}{\mathcal{C}}
\newcommand{\cD}{\mathcal{D}}
\newcommand{\cT}{\mathcal{T}}
\newcommand{\cS}{\mathcal{S}}
\newcommand{\bH}{\mathbb{H}}
\newcommand{\bP}{\mathbb{P}}
\newcommand{\ol}[1]{\makebox[0pt]{$\phantom{#1}\overline{\phantom{#1}}$}#1}
\newcommand{\approxadd}{\overset{+}{\asymp}}
\newcommand{\approxmult}{\overset{*}{\asymp}}
\renewcommand{\bold}[1]{\medskip \noindent {\bf #1 }\nopagebreak}
\begin{document}

\title[The geometry of totally geodesic subvarieties]{The geometry of totally geodesic subvarieties of moduli spaces of Riemann surfaces}

\author{Francisco Arana--Herrera}

\author{Alex Wright}

\begin{abstract}
We prove a semisimplicity result for the  boundary, in the corresponding Deligne--Mumford compactification, of a totally geodesic subvariety of a moduli space of Riemann surfaces. At the level of Teichm\"uller space, this semisimplicity theorem gives that each component of the boundary is a product of simple factors, each of which behaves metrically like a diagonal embedding. Building on this result, we also show that the associated totally geodesic submanifolds of Teichmüller space and orbifold fundamental groups are hierarchically hyperbolic. 

The proof intertwines in a novel way results and perspectives originating in dynamics, algebraic geometry, geometric group theory, and both classical and modern Teichm\"uller theory. It establishes both new rigidity and new flexibility for totally geodesic submanifolds and their associated varieties and orbifold fundamental groups  and provides a rich set of new  tools for the study of these objects.  
\end{abstract}

\maketitle

\thispagestyle{empty}

\tableofcontents

\newpage 

\section{Introduction}

\subsection{Initial context.} Let $\cT_{g,n}$ be the Teichm\"uller space of genus $g$ Riemann surfaces with $n$ punctures or marked points,  and let $$\pi:\cT_{g,n} \to \cM_{g,n}$$ be the map to the associated moduli space of Riemann surfaces $\cM_{g,n}$. We also use the same notation for the associated maps on the bundles of quadratic differentials. 

Teichm\"uller space admits several metrics of interest, but here we consider exclusively the Teichm\"uller metric. This metric reflects the modular nature of Teichm\"uller space, giving a concrete measure of how non-conformal a map between two Riemann surfaces must be, and also the intrinsic complex geometry of Teichm\"uller space as a complex manifold, being equal to the Koyabashi metric \cite{Royden}. It is in a sense as inhomogeneous as possible \cite{Royden} and its fine geometry is often mysterious. 

Teichm\"uller discs, also known as complex geodesics, are holomorphic isometric embeddings of the hyperbolic plane into Teichm\"uller space. There is a unique Teichm\"uller disc through any pair of distinct points, and one might say that the mystery of Teichm\"uller and moduli spaces can sometimes be pierced  with Teichm\"uller discs. 

Here we study totally geodesic submanifolds of Teichm\"uller space. We say a complex submanifold $N$ of Teichm\"uller space is totally geodesic if $N$ contains the Teichm\"uller disc\footnote{ Equivalently, the (real) Teichm\"uller geodesic.
} through every pair of distinct points in $N$.

The one complex dimensional totally geodesic submanifolds are exactly the Teichm\"uller discs.  We call a complex manifold or variety higher dimensional if it has dimension greater than one. 

Starting with smaller Teichm\"uller spaces and taking covering constructions in the sense of \cite[Section 6]{MMW}, one can obtain higher dimensional totally geodesic submanifolds of larger Teichm\"uller spaces, which we call trivial. Recently, non-trivial, higher dimensional examples were discovered for the first time \cite{MMW, EMMW}. 

A totally geodesic submanifold $N\subseteq \cT_{g,n}$ is called algebraic if $\pi(N) \subseteq \mathcal{M}_{g,n}$ is a closed algebraic variety; in this case $\pi(N)$ is called a totally geodesic subvariety. The non-trivial examples mentioned above are all algebraic, and were discovered via their associated totally geodesic subvarieties. Later, the second author established the following \cite{Wri20}.

\begin{theorem}[Wright]
Every higher dimensional totally geodesic submanifold is algebraic. There are only finitely many higher dimensional totally geodesic subvarieties in each moduli space.
\end{theorem}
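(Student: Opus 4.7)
The plan is to associate to any higher dimensional totally geodesic submanifold $N \subset \cT_{g,n}$ an $\SL(2,\RR)$-invariant subset of $\qm$ and then apply the orbit-closure and algebraicity theorems of Eskin--Mirzakhani--Mohammadi and Filip. Set $\mathcal{Q}N \subset \qt$ to be the set of pairs $(X,q)$ such that the Teichm\"uller disc generated by $(X,q)$ is contained in $N$. The $\SL(2,\RR)$-action preserves each such Teichm\"uller disc, so $\mathcal{Q}N$ is $\SL(2,\RR)$-invariant, and its image $M := \pi(\mathcal{Q}N) \subset \qm$ is a closed, $\SL(2,\RR)$-invariant set. Intersecting $M$ with each stratum and invoking the Eskin--Mirzakhani--Mohammadi orbit closure theorem (in its extension to quadratic differentials) realises the closure of every $\SL(2,\RR)$-orbit in $M$ as an affine invariant submanifold.

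Next I would show that $M$ is a finite union $\mathcal{M}_1 \cup \cdots \cup \mathcal{M}_k$ of such affine invariant submanifolds, and that the fiber dimension of $\mathcal{Q}N \to N$---equal to $\dim_{\bC} N \geq 2$---forces each $\mathcal{M}_i$ to have rank at least $2$. Filip's algebraicity theorem gives that each $\mathcal{M}_i$ is quasi-projective; pushing forward under the natural projection $\qm \to \cM_{g,n}$, the image $\pi(N)$ appears as the image of a finite union of quasi-projective varieties, hence is algebraic. For finiteness, the Eskin--Filip--Wright theorem bounding the number of higher rank affine invariant submanifolds in each stratum, combined with the fact that distinct higher dimensional $\pi(N)$ give rise to distinct collections $\{\mathcal{M}_i\}$, yields the desired bound.

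The main obstacle is passing from the local picture inside each stratum to a global identification of $M$ as a finite union of affine invariant submanifolds: strata are not closed, so one must control how an affine invariant submanifold in a given stratum degenerates, as cylinders collapse, into affine invariant submanifolds living in lower strata. I expect this step to require the Mirzakhani--Wright ``what you see is what you get'' partial compactification together with the cylinder deformation theorem. A secondary difficulty is verifying that the period-coordinate tangent directions of each $\mathcal{M}_i$ really account for all of the complex directions coming from $N$, so that the rank bound $\geq 2$ is not accidentally lost when one projects from $\qt$ to $\cT_{g,n}$.
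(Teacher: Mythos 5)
First, a point of orientation: this statement is background quoted from \cite{Wri20}; the present paper does not reprove it, so your proposal has to be measured against the argument of \cite{Wri20}. At the level of architecture you have reconstructed that argument: pass to $QN$, apply Eskin--Mirzakhani--Mohammadi and Filip (via double covers to reduce quadratic to abelian differentials), and get finiteness from the Eskin--Filip--Wright theorem on invariant subvarieties of rank at least two. The genuine gap is the step you dispose of in one clause: that the fiber dimension $\dim_{\bC} N \geq 2$ ``forces each $\mathcal{M}_i$ to have rank at least $2$.'' It does not. The dimension count gives $\dim \mathcal{M} = 2\dim N \geq 4$, but $\dim \mathcal{M} = 2\,\mathrm{rank}(\mathcal{M}) + \mathrm{rel}(\mathcal{M})$, so a four-dimensional $\mathcal{M}$ could a priori be rank one with two rel directions, and then Eskin--Filip--Wright finiteness says nothing (nor could any rank-one finiteness statement: Teichm\"uller curves are already infinite in number in each moduli space). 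Excluding rel is exactly \cite[Theorem 1.3]{Wri20} --- quoted in this paper's introduction as the statement that if $\mathcal{M}$ is not transverse to the isoperiodic foliation then $\dim \mathcal{M} = 2\dim\rho(\mathcal{M})$ never holds --- and its proof is the technical heart of that paper. The ``secondary difficulty'' you flag at the end is in fact this central difficulty, and your sketch offers no mechanism for resolving it.

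Two smaller gaps. The assertion that $M=\pi(QN)$ is closed and is a finite union of affine invariant submanifolds needs an argument: closedness uses properness of the mapping class group action, and finiteness of the union does not follow from the orbit closure theorem alone --- in the actual argument one either has the rank bound first (so Eskin--Filip--Wright applies) or uses that $QN$ is irreducible, indeed a rank $\dim N$ holomorphic vector bundle over $N$ (\cite[Proposition 3.1]{IsomEmb}), so that a single invariant subvariety $\overline{\pi(Q_{\max}N)}$ of dimension $2\dim N$ carries the whole structure; your worry about degenerations into lower strata is then largely beside the point, since both algebraicity and finiteness are run on the top-dimensional piece. Finally, for algebraicity, the image of a quasi-projective variety under $\rho$ is only constructible by Chevalley; you should add that $\pi(N)$ is closed in the analytic topology (properness again), so that constructible together with closed yields a closed subvariety.
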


For a typical Teichm\"uller disc $N \subseteq \mathcal{T}_{g,n}$, its projection $\pi(N)$ is dense in $\cM_{g,n}$, and each moduli space $\cM_{g,n}$ of complex dimension greater than one has infinitely many totally geodesic subvarieties of dimension one, typically known as Teichm\"uller curves. So the previous theorem establishes a very strong contrast between the cases of dimension one and higher dimension. 

Already in \cite{MMW} it was suggested that the example constructed there might be seen as ``a new type of Teichm\"uller space, on an equal footing with $\cT_{g,n}$''. Indeed, we believe that that higher dimensional totally geodesic submanifolds are as complicated as, and behave very much like, whole Teichm\"uller spaces. We elaborate on this as follows: 

\begin{meta}\label{M}
Many results known for Teichm\"uller spaces also hold for higher dimensional totally geodesic submanifolds;  many results known for moduli spaces also hold for higher dimensional totally geodesic subvarieties; and many results known for mapping class groups also hold for the orbifold fundamental groups of higher dimensional totally geodesic subvarieties. 
\end{meta}

Theorem \ref{T:main2} below can be seen as an instance of this metaconjecture. 

\subsection{New results.} In this paper we make use of augmented Teichm\"uller space $\ol{\cT}_{g,n}$, which is obtained from Teichm\"uller space by adding nodal surfaces where collections of simple closed curves have been pinched. A more descriptive but less common name for  augmented Teichm\"uller space is the Deligne--Mumford bordification of Teichm\"uller space, and indeed the quotient of $\ol{\cT}_{g,n}$ by the corresponding mapping class group is the Deligne--Mumford compactification $\ol{\cM}_{g,n}$ of moduli space. See \cite[Section 5.2]{BainbridgeThesis} for an expository introduction. 

The Deligne--Mumford bordification is stratified and  each stratum is a product of Teichm\"uller spaces.  A complex submanifold $L$ of a product of Teichm\"uller spaces is called simple if its projection to each factor is an isometric embedding whose image is a totally geodesic submanifold; see the next paragraph for details. A complex submanifold $L$ of a product of Teichm\"uller spaces is called semisimple if it is a product of simple factors. 

In the definition of \textit{simple} above we use the sup metric on the product of Teichm\"uller spaces. The isometric embedding condition can be rephrased without referencing a metric on the product by saying that given two points $(X_1, \ldots, X_k)$ and $(Y_1, \ldots, Y_k)$ in $L$ we have $d(X_i, Y_i) = d(Y_j, X_j)$ for all $i,j \in \{1,\dots,k\}$. In this sense, the isometric embedding condition indicates that $L$ looks metrically like a diagonally embedded copy of a subset of $\cT_{g',n'}$ in $(\cT_{g',n'})^k$. 

Our first main result is the following, where $\ol{N} \subseteq \ol{\cT}_{g,n}$ denotes the closure of $N \subseteq \mathcal{T}_{g,n}$ in the Deligne--Mumford bordification. 

\begin{theorem}\label{T:main1}
Let $N \subseteq \mathcal{T}_{g,n}$ be an algebraic totally geodesic submanifold of Teichm\"uller space and let $L$ be the intersection of $\ol{N}$ with a stratum of $\ol{\cT}_{g,n}$. Then $L$ is semisimple and algebraic. 
\end{theorem}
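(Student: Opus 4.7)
I would split the theorem into an algebraicity claim and a semisimplicity claim, addressing the first by algebraic geometry and the second via the dynamical correspondence between $N$ and affine invariant submanifolds of strata of quadratic differentials.

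\textbf{Algebraicity of $L$.} Since $N$ is algebraic, $\pi(N)$ is a closed subvariety of $\cM_{g,n}$, and its closure in the projective variety $\ol{\cM}_{g,n}$ is an algebraic subvariety whose intersection with each boundary stratum is again algebraic. Each boundary stratum is a quotient of a product $\prod_i \cM_{g_i,n_i}$ by the finite automorphism group of the stable dual graph, so lifting to $\prod_i \cT_{g_i,n_i}$ yields that the projection of $L$ to each factor is algebraic.

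\textbf{Semisimplicity of $L$.} I would consider the bundle $\qt N \subseteq \qt_{g,n}$ of unit-area pairs $(X,q)$ whose Teichm\"uller disc lies in $N$. This is $\SL(2,\RR)$-invariant and, after passing to a suitable finite cover, descends to an affine invariant submanifold of a stratum of quadratic differentials in the sense of Eskin--Mirzakhani--Mohammadi. To analyze $L$, I would take the multicurve $\gamma$ pinched in the chosen stratum and study Teichm\"uller flow degenerations in $\qt N$ that make $\gamma$ short. Following the Mirzakhani--Wright framework for boundaries of affine invariant submanifolds (and its analogues for quadratic differentials), these degenerations produce an affine invariant object in a product of strata on the complementary subsurfaces $S_1,\dots,S_m$, with internal structure governed by an equivalence relation on cylinders with core curves in $\gamma$: two cylinders are equivalent when the affine structure of $\qt N$ forces their moduli to deform proportionally. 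Grouping the $S_i$ by which equivalence classes of cylinders they abut should produce the decomposition $L = L_1 \times \cdots \times L_m$ into simple factors. Within each class, the proportional deformation of cylinder moduli, translated through the Teichm\"uller disc bundle back to $L$, activates the Teichm\"uller structures on the relevant components at equal rates, which is the infinitesimal content of the isometric embedding condition; the Teichm\"uller discs in $N$ that limit into the corresponding boundary pieces then populate the projection to each coordinate with enough discs to make it totally geodesic.

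\textbf{Main obstacle.} The hardest step will be promoting the affine (period-coordinate level) coupling of cylinder moduli produced by the affine invariant submanifold theory into the metric statement that each simple factor of $L$ projects \emph{isometrically} onto each of its coordinates with totally geodesic image, rather than merely bi-Lipschitzly. Closing this gap should rely on the homogeneity of the $\SL(2,\RR)$-action together with the rigidity imposed by the requirement that Teichm\"uller discs inside $N$ limit, in a suitable sense, to Teichm\"uller discs inside $L$; the latter should in particular rule out distortions of the Teichm\"uller parameter between coupled factors, ultimately producing the exact diagonal structure required by the definition of simplicity.
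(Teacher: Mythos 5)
Your overall strategy is broadly aligned with the paper's: the algebraicity is handled by passing to the Deligne--Mumford compactification (the paper does this carefully via Dehn space, since one must identify the boundary of the lift of $N$ in the bordification with a component of the lift of the boundary of $\pi(N)$, but your sketch is in the right spirit), and the semisimplicity is indeed extracted from the boundary theory of invariant subvarieties in products of strata. However, there are two genuine gaps, and the first is exactly the step you flag as the ``main obstacle'': nothing in your proposal actually closes it. Homogeneity of the $\mathrm{GL}^+(2,\mathbb{R})$-action and the heuristic that ``discs in $N$ limit to discs in $L$'' do not by themselves rule out bi-Lipschitz distortion between coupled factors. What the paper does instead is purely structural: using the Chen--Wright prime decomposition theorem (absolute periods in one component of a prime factor locally determine those in every other component), it shows that any differential in the limiting fibers over $L$ is, within each prime class, either zero on \emph{all} components or nonzero on all, and is rigid under componentwise rescaling. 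One then defines a non-standard exponential map that flows for time $|q_i|$ in each class and proves it is continuous, injective (Teichm\"uller's uniqueness theorem plus the scaling rigidity), proper, and finally surjective onto $L$ by invariance of domain and irreducibility of $L$. This single argument simultaneously produces the product decomposition, the dimension count, and the diagonal (isometric-embedding) structure of each simple factor, because a T-disc that is nonconstant in every coordinate of a class moves all those coordinates by equal Teichm\"uller distance. A separate smoothness argument is still needed to upgrade the resulting analytic sets to complex submanifolds, which your proposal does not address.

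The second gap is upstream of this: to run any such argument one needs, over \emph{every} point of $L$ (not just generic degenerations), a $\dim L$-dimensional family of finite-area quadratic differentials generating T-discs contained in $L$. Citing the Mirzakhani--Wright/Chen--Wright boundary framework does not give this for free, because that theory describes period-coordinate limits in a partial compactification of strata, whereas $L$ is the honest intersection of $\ol{N}$ with a Deligne--Mumford stratum. The paper bridges this by first proving a global cylinder rigidity statement: the locus in $Q_{\max}N$ where a fixed curve is a cylinder is path connected, whence $N$-equivalent cylinders have globally constant ratios of circumferences and moduli. This forces matching (proportional) $2$-residues at nodes of equivalent pinched curves, which cuts out the finite-area subspace of the right dimension in the closure of $QN$ in the quadratic Hodge bundle; one also needs $\mathrm{GL}^+(2,\mathbb{R})$-invariance of that closure on the finite-area locus, which is not formal because the action is discontinuous where the differential vanishes on a component. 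Your cylinder-based equivalence relation implicitly presupposes these rigidity statements (well-definedness of ``moduli deform proportionally'' at every boundary point, and that grouping subsurfaces by the classes of cylinders they abut is coherent), so as written the proposal assumes the hard inputs rather than proving them.
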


The main point of Theorem \ref{T:main1} is that $L$ is semisimple; the algebraicity is not hard and is included for completeness. Implicit in Theorem \ref{T:main1} is that $L$ is a connected complex manifold, or in other words it is irreducible and smooth.

\begin{example}\label{E:FirstExample}
There is a trivial totally geodesic submanifold $N\subseteq \cT_{3}$ that can be defined as the fixed point set of an involution in the mapping class group, and which  consists of unbranched double covers of genus two surfaces.  Consider such a cover $X\to Y$ and let $\alpha$ be a separating simple closed curve on $Y$ whose preimage on $X$ has two components; see Figure \ref{F:FirstExample}. Pinching this preimage multi-curve gives rise to a component $$L \subseteq \cT_{1,1} \times \cT_{1,1} \times \cT_{1,2} $$ of  $\ol{N}$. If $L_1\subseteq \cT_{1,1} \times \cT_{1,1}$ is the diagonal and $L_2\subseteq \cT_{1,2}$ is the set of $(Z, \{p,q\})$ where the two marked points $\{p,q\}$ are exchanged by an appropriate involution of $Z$, we have $L=L_1\times L_2$. Since $L_1$ and  $L_2$ are each simple,  $L$ is semisimple. 
\end{example}

\begin{figure}[h!]
\includegraphics[width=0.6\linewidth]{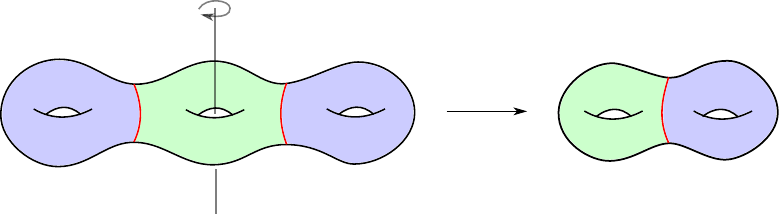}
\caption{The cover $X\to Y$ in Example \ref{E:FirstExample}.}
\label{F:FirstExample}
\end{figure}  

Our second main result is the following, whose proof uses Theorem \ref{T:main1}. 

\begin{theorem}\label{T:main2}
Let $N \subseteq \mathcal{T}_{g,n}$ be an algebraic totally geodesic submanifold of Teichm\"uller space. Then $N$ is a hierarchically hyperbolic space and the associated orbifold fundamental group  is a hierarchically hyperbolic group.
\end{theorem}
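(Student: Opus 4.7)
The plan is to build a hierarchically hyperbolic structure on $N$ by induction on the pair $(3g-3+n,\dim_\CC N)$, using Theorem \ref{T:main1} to organize the index set and inheriting projections from the subsurface projection machinery of the ambient mapping class group. The base case is $\dim_\CC N = 1$: then $N$ is a Teichm\"uller disc, isometric to $\bH^2$, and hence trivially HHS.

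For the inductive step, I would take the index set $\mathfrak{S}_N$ to consist of a formal maximal element $\widehat N$ together with the simple factors of all boundary strata of $\ol N$. For each multicurve $\gamma$ with $\ol N \cap \IT_\gamma \neq \emptyset$, Theorem \ref{T:main1} provides a semisimple decomposition $\ol N \cap \IT_\gamma = L_{\gamma,1}\times\cdots\times L_{\gamma,k_\gamma}$, each $L_{\gamma,i}$ being an algebraic totally geodesic submanifold of a smaller Teichm\"uller space, and so HHS by the inductive hypothesis. The HHS relations on $\mathfrak{S}_N$ are dictated by the boundary-strata picture: orthogonality $\perp$ between distinct simple factors of the same stratum (this is where semisimplicity is crucial), nesting $\sqsubseteq$ between strata related by unpinching some curves, and transversality otherwise. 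The hyperbolic space attached to each $L_{\gamma,i}$ is the top-level hyperbolic space of its inductive HHS structure; the hyperbolic space attached to $\widehat N$ would be constructed by electrifying the curve complex of $S_{g,n}$ along the multicurves $\gamma$ bounding strata of $\ol N$. Projections from $N$ are inherited from Masur--Minsky subsurface projections, using the simple (diagonal embedding) clause of Theorem \ref{T:main1} to ensure that the image of $N$ in any factor $L_{\gamma,i}$ is well-defined and coarsely Lipschitz.

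The main obstacle is the uniqueness axiom: whenever $x,y \in N$ are far apart in the Teichm\"uller metric, some $\pi_U(x),\pi_U(y)$ with $U \in \mathfrak{S}_N$ should be far apart. For the ambient $\cT_{g,n}$ this is Rafi's combinatorial distance formula, and one has to argue that whenever an ambient subsurface projection grows along a sequence in $N$, the corresponding subsurface is already recorded by some simple factor in $\mathfrak{S}_N$; this should fall out of algebraicity of $N$ together with a limiting argument that sends such a sequence into the boundary, feeding back into Theorem \ref{T:main1}. By contrast, consistency and partial realization are largely supplied by the product structure of the strata, and bounded geodesic image and large links descend cleanly from the inductive HHS structures on the $L_{\gamma,i}$.

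For the orbifold fundamental group $\Gamma = \pi_1^{\mathrm{orb}}(\pi(N))$, the construction is $\Gamma$-equivariant throughout. By induction, together with the finiteness part of the theorem of the second author quoted in the introduction (applied in each smaller moduli space to the varieties $\pi(L_{\gamma,i})$), $\Gamma$ acts on $\mathfrak{S}_N$ with finitely many orbits; and $\Gamma$ acts on $N$ properly and cocompactly modulo the orbifold structure. The standard machinery of Behrstock--Hagen--Sisto then promotes the HHS structure on $N$ to a hierarchically hyperbolic group structure on $\Gamma$.
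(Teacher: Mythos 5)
There is a genuine gap, and it occurs at the two places where the real work of the theorem lives.

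First, your index set loses the annular data, and with it the uniqueness axiom. Your domains are the formal top element together with the simple factors of boundary strata, each carrying only the top-level hyperbolic space of its inductive structure. No domain in this list records twisting about a pinchable curve: in a stratum where an $N$-equivalence class of curves is pinched, those curves are nodes, and the Teichm\"uller-space factors of the stratum (and hence the top-level spaces of its simple factors) do not see the twist parameter; the curve graph $\cC(\Sigma)$, electrified or not, does not see it either. This is not a technicality: by Lemma \ref{L:InvariantUnderTwist} there is a multi-twist $D_\alpha$ supported on an equivalence class of cylinder curves with $D_\alpha N = N$, and the points $X$ and $D_\alpha^k X$ lie in $N$, are arbitrarily far apart as $k\to\infty$, yet have coarsely equal projections to every domain you propose. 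So the uniqueness axiom fails for your structure. The paper's structure includes, as separate domains, the $N$-equivalence classes of cylinder curves, with horoball-type annular spaces, and Theorem \ref{T:AnnularCurveSS} is exactly what makes those coordinates coherent.

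Second, the paragraph you label "the main obstacle" is in fact the core of the paper, and the argument you sketch for it does not suffice. Theorem \ref{T:main1} is a statement about the fine (isometric) geometry of the boundary; what the HHS axioms require are uniform coarse statements about curve-graph projections of interior points of $N$: (i) the dichotomy that for every subsurface $U$ the projection $\pi_U(N)$ is either unbounded or of uniformly bounded diameter (Lemma \ref{L:dichotomy}), which is what lets you discard unrecorded subsurfaces in large links and uniqueness; and (ii) the coarse semisimplicity statements (Theorem \ref{T:CurveSS}, Corollary \ref{C:CurveSS}, Theorem \ref{T:AnnularCurveSS}) saying that the projections of $N$ to the curve graphs of equivalent subsurfaces move in lockstep with uniform constants, so that recording one coordinate per equivalence class (or per simple factor) loses no distance and creates no inconsistency. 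A limiting argument "feeding back into Theorem \ref{T:main1}" does not produce these uniform quasi-isometry constants; in the paper they require Rafi's active-interval and modified annular-projection machinery and the new electrification theorem (Theorem \ref{T:electrification}), which the authors single out as the main difficulty. Relatedly, your identification of a simple factor with "an algebraic totally geodesic submanifold of a smaller Teichm\"uller space" needs care (it is a diagonal-like subset of a product; one must pass to its isometric projection to a single factor, and then justify that the choice of factor is immaterial -- which is again statement (ii)). By contrast, the paper's proof uses no induction at all: its domains are equivalence classes of active subsurfaces of the original surface (annular and non-annular), with hyperbolic spaces the images of $N$ in the corresponding products of curve graphs, and the axioms are verified directly once the coarse semisimplicity is in hand.
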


By ``the associated orbifold fundamental group" we mean the stabilizer of $N$ in the mapping class group, or equivalently the orbifold fundamental group of the normalization of $\pi(N)\subseteq \cM_{g,n}.$ 

Roughly speaking, Theorem \ref{T:main2} indicates that we can use curve graphs of subsurfaces as a coarse coordinate system, allowing us to study these objects combinatorially. To do this we only use subsurfaces that appear as nodal subsurfaces in $\ol{N}$. We define two subsurfaces to be equivalent if they occur in the same simple factor, and, roughly speaking, we use one subsurface from each equivalence class. In $N$ the shape  of an active subsurface roughly determines that of all equivalent subsurfaces, and, in contrast, the shapes of appropriate nonequivalent subsurfaces are independent. This gives both new rigidity phenomenon (equivalent subsurfaces) and new flexibility (independent subsurfaces). 

Theorem \ref{T:main2} also encodes a large amount of information and structure on which strata of the Deligne-Mumford bordification $\ol{N}$ intersects. 

\bold{Independent work.} Benirschke, Dozier, and Rached \cite{BDR} have independently and simultaneously obtained a version of Theorem \ref{T:main1} (but without smoothness or irreducibility). Their analysis involves significant use of Beltrami differentials, which do not appear in our work, and conversely most of our main tools do not appear in their work. Their is however some overlap, in particular at a key step of our analysis in Section \ref{SS:Primality}. 

\bold{Overview of motivation.} Theorem \ref{T:main1} implies that totally geodesic subvarieties have remarkably understandable closures in the corresponding Deligne--Mumford compactification. This could prove to be very useful for the dynamical and algebro-geometric study of these subvarieties.  

Theorem \ref{T:main2} implies that a number of previous results for Teichmüller spaces, some mentioned below, now apply to clarify the coarse geometry of totally geodesic submanifolds. 

Theorem \ref{T:main2} also  establishes the objects in question as potentially clarifying examples in the theory of subgroups of mapping class groups and in the theory of hierarchical hyperbolicity. For instance, the fundamental groups of higher dimensional totally geodesic subvarieties may serve as prototypical examples  in the search for a useful general definition of geometrically finite subgroups of mapping class groups, beyond the special case of parabolically geometrically finite subgroups, and some of the ideas behind Theorem \ref{T:main2} may provide a prototype for understanding certain  nice subsets of hierarchically hyperbolic spaces which are not hierarchically quasi-convex. See \cite[Definition 1.10]{ExtensionsII} and \cite{Convexity} for context. 

There is also a significant potential for our results to provide  new tools in the study of the classification problem of higher dimensional totally geodesic subvarieties. We will comment on this in more detail below, after discussing the relationship to invariant subvarieties of differentials and saying more about the known examples. 

\bold{Invariant subvarieties of differentials.} There is a $GL(2,\mathbb{R})$ action on the bundle of quadratic differentials (non-zero, holomorphic away from marked points, and with at most simple poles at marked points) over $\cM_{g,n}$, and deep results imply that its orbit closures are complex algebraic varieties \cite{EMM15, EMbig, Filip}. Let $\rho$
be the natural forgetful map defined by $\rho(X,q) := X$.
As we recall in Remark \ref{R:2dim} below, if $\cM$ is a closed invariant subvariety of quadratic differentials, then $\rho(\cM)$ is a totally geodesic subvariety if $$\dim(\cM) = 2 \dim \rho(\cM),$$ and all totally geodesic subvarieties arise in this way. 

In general there seems to be no reason to expect $\dim(\cM) = 2 \dim \rho(\cM)$ when $\dim(\cM)>2$. All that is known at present is that if $\cM$ intersects the principle stratum of quadratic differentials this condition is automatically satisfied \cite[Proposition 5.1]{HT}, and if $\cM$ is not transverse to the isoperiodic foliation this condition is never satisfied \cite[Theorem 1.3]{Wri20}.

\bold{Recent examples.} Recently, a collection of six new invariant subvarieties of quadratic differentials were discovered \cite{MMW, EMMW}, all of dimension four. Of these, three satisfy the condition $\dim(\cM) = 2 \dim \rho(\cM)$ and hence give rise to totally geodesic subvarieties in moduli space. These are complex surfaces in $\cM_{1,3}$, $\cM_{1,4}$, and $\cM_{2,1}$.

\bold{Hopes for classification.} It is a mystery how many higher dimensional totally geodesic subvarieties of moduli space remain to be discovered. Given how few are known at present, one might hope for a complete classification.

One approach to  this problem is as follows: First, classify all invariant subvarieties of quadratic differentials, then, determine which give rise to totally geodesic subvarieties of moduli space. The first step is a major goal in its own right and has seen major progress in recent years; see \cite{McMullenGenus2, Genus3, HypApisa, Hull, HighRank, Ygouf, Saturated} and the surveys \cite{McMullenSurvey, Broad} for a very incomplete selection of results. That being said, the problem of finding  a complete classification of all invariant subvarieties of quadratic differentials remains extremely challenging and seems to us unlikely to be completed anytime soon. 

This raises the question of if it is possible to approach the classification of totally geodesic subvarieties of moduli space in a new way, making more use of the totally geodesic assumption and not merely the general features of the associated invariant subvariety of quadratic differentials. We believe that the answer to this question is yes, and the results we present in this paper provide new tools for this purpose.

In particular,  Theorem \ref{T:main1} allows for inductive arguments, but this may well be only the beginning. One might dream that Theorem \ref{T:main1} could lead to new insights related to the fact that the Teichm\"uller metric stores an enormous amount of useful information. Recall for example that complex linear isometric embeddings $Q(Y) \to Q(X)$ of spaces of quadratic differentials must arise from maps $X\to Y$ of the Riemann surfaces \cite[Theorem 1.3]{GG} and that isometric embeddings of Teichm\"uller spaces must arise from covering constructions \cite{IsomEmb}.

Putting aside such uncertain dreams, we wish to give a deeper explanation of how our results seem likely to lead to major improvements of existing techniques for the classification problem. Let us start with the Cylinder Deformation Theorem of the second author \cite{Wcyl}, which has been generalized in \cite{Equations}, and is now a primary tool in the classification problem. Given an invariant subvariety $\cM$, and a differential $(X,q)\in \cM$, one defines an equivalence relation on cylinders of $q$, with two cylinders being equivalent if they are parallel and remain parallel on nearby differentials in $\cM$. The Cylinder Deformation Theorem states that equivalence classes of cylinders can be deformed while remaining in  $\cM$. The rigidity provided by the equivalence relation is in tension to the flexibility provided by the deformations, giving the first indication of the potential usefulness of this theorem. 

One of our intermediate results allows us to consider simple closed curves on surfaces rather than cylinders on quadratic differentials, raising the possibility of easier and more topological analogues of arguments involving the Cylinder Deformation Theorem. Let $N \subseteq \mathcal{T}_{g,n}$ be an algebraic totally geodesic submanifold. We define $QN$ to be the set of quadratic differentials generating Teichm\"uller discs completely contained in $N$. Note that $\pi(QN)$ is an algebraic variety \cite{EMM15, EMbig, Filip}. It is not hard to show $QN$ is irreducible, and \cite[Proposition 3.1]{IsomEmb} shows the harder fact that it is a holomorphic vector bundle over $N$. We define $Q_{\max}N$ to be the largest stratum of $QN$; see Section \ref{s:cyl} for details. We say that a simple closed curve is a cylinder at a quadratic differential if it is homotopic to the core curve of a cylinder on that differential.
\begin{theorem}\label{T:SummaryForClassification}
Suppose $N\subseteq \cT_{g,n}$ is an algebraic totally geodesic submanifold. Let $\cS$ be the set of simple closed curves on the marking surface that are cylinders at some point of $Q_{\max}N$ and define two such curves to be equivalent if there is a point of $Q_{\max}N$ where they are equivalent cylinders. 
\begin{enumerate}
\item\label{SFC:CylinderEverywhere} For every $X\in N$ and every $\alpha\in \cS$  there is a quadratic differential $(X,q)\in QN$ where $\alpha$ is a cylinder. 
\item\label{SFC:EquivAlsoCylinder} If $\alpha\in \cS$ is a cylinder at $(X,q)\in QN$, then the equivalent curves in $\cS$ are also cylinders at $(X,q)$ and these cylinders are equivalent. The ratios of  circumferences and the ratios of moduli of these cylinders depends only on their core curves and not on $(X,q)$. 
\item\label{SFC:SimultaneousRealization}  For any two simple closed curves $\alpha, \beta\in \cS$, there is a quadratic differential in $QN$ where they are both cylinders. 
\item\label{SFC:Pinchable}  Unions of disjoint equivalence classes in $\cS$ are exactly the multi-curves  that can be pinched in $\ol{N}$. 
\item\label{SFC:BoundaryOfFilled}  For any two equivalence classes of simple closed curves in $\cS$,  the boundary of the subsurface they fill is a union of equivalence classes of $\cS$.
\end{enumerate} 
\end{theorem}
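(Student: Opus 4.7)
The plan is to leverage Theorem \ref{T:main1} in combination with the Cylinder Deformation Theorem of \cite{Wcyl} (and its generalization in \cite{Equations}) and the fact from \cite[Proposition 3.1]{IsomEmb} that $QN$ is a holomorphic vector bundle over $N$. The core observation is that the semisimple boundary decomposition provided by Theorem \ref{T:main1} should correspond precisely to the partition of pinchable curves into cylinder equivalence classes; establishing this correspondence is the central step from which the rest follows.

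The heart of the argument is part \ref{SFC:Pinchable}. For the forward direction, given a disjoint union of equivalence classes in $\cS$, I would pick a differential $(X,q) \in Q_{\max}N$ at which representatives of each class appear as cylinders and apply the Cylinder Deformation Theorem to simultaneously stretch each equivalence class, letting the moduli go to infinity and producing a path in $QN$ whose projection to $N$ degenerates in $\ol{N}$ by pinching precisely the given multi-curve. For the reverse direction, given a pinchable multi-curve $\mu$, Theorem \ref{T:main1} asserts that the corresponding boundary stratum $L \subseteq \ol{N}$ decomposes as a product of simple factors $L_1 \times \cdots \times L_k$. I would argue that nodes distributed to a common simple factor correspond to cylinders that are equivalent in the sense defining $\cS$: the isometric embedding condition in Theorem \ref{T:main1} constrains the behavior of the pinched cylinders on nearby differentials of $QN$, forcing cylinders within a single factor to deform in lockstep, hence to be parallel and remain parallel, which is the definition of equivalence.

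Once \ref{SFC:Pinchable} is established, parts \ref{SFC:CylinderEverywhere} and \ref{SFC:EquivAlsoCylinder} follow quickly. For \ref{SFC:CylinderEverywhere}, the boundary stratum where the equivalence class of $\alpha$ is pinched surjects onto $N$ (its projection to $N$ is an algebraic subset that contains a non-empty open set, hence equals $N$); opening the nodes from a point over $X \in N$ yields a differential on $X$ with $\alpha$ as a long thin cylinder. Alternatively, one can combine openness (from the vector bundle structure of $QN$) and closedness (from Theorem \ref{T:main1}) of the relevant set, together with connectedness of $N$. Part \ref{SFC:EquivAlsoCylinder} follows because the simple factor containing the equivalence class of $\alpha$ provides a common parametrization of the equivalent cylinders, and the isometric embedding condition forces ratios of circumferences and moduli to be controlled by the simple structure independently of $(X,q)$. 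For \ref{SFC:SimultaneousRealization}, if the equivalence classes of $\alpha$ and $\beta$ have representatives that together form a disjoint multi-curve, \ref{SFC:Pinchable} gives a pinchable configuration and partially opening both nodes yields the required differential; if $\alpha$ and $\beta$ intersect, I would use the $GL(2,\mathbb{R})$ action, which preserves the property of being a cylinder, together with a density argument in $Q_{\max}N$ to deform a differential realizing $\alpha$ into one that simultaneously realizes $\beta$.

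For \ref{SFC:BoundaryOfFilled}, I would stretch both equivalence classes of $\alpha$ and $\beta$ simultaneously via the Cylinder Deformation Theorem; as the moduli diverge, the complement of the subsurface they fill collapses and its boundary curves become pinched, producing a pinchable multi-curve containing the boundary of the filled subsurface, so \ref{SFC:Pinchable} places those boundary components in $\cS$. The main obstacle will be the reverse direction of \ref{SFC:Pinchable}: extracting from the purely metric semisimplicity statement of Theorem \ref{T:main1} the specific dynamical information that cylinders within a single simple factor are equivalent on nearby differentials of $QN$. This requires tying the structure of the boundary stratum to the behavior of the cylinder equivalence relation on the open part of $QN$, which I expect to require genuine geometric analysis of how cylinders degenerate as one approaches boundary strata, rather than a formal deduction.
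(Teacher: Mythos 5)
Your proposal does not follow the paper's route, and the step you yourself flag as the ``main obstacle'' is a genuine gap rather than a technicality. The paper's logic runs in the opposite direction from yours: the cylinder statements are established first (Section 2) and are then used as inputs to the boundary theorem, so the actual proof of Theorem \ref{T:SummaryForClassification} is an assembly of earlier results --- part \eqref{SFC:CylinderEverywhere} from Corollary \ref{C:StrebelInQN} (a multi-twist stabilizing $N$ via Lemma \ref{L:InvariantUnderTwist} plus Strebel's existence theorem and the horocycle characterization), part \eqref{SFC:EquivAlsoCylinder} from Corollary \ref{C:Connected}, whose engine is the path-connectivity Theorem \ref{T:Connected} together with the local constancy of moduli ratios from \cite{MW17}, part \eqref{SFC:SimultaneousRealization} from Lemma \ref{L:Simultaneous}, and part \eqref{SFC:Pinchable} from Corollary \ref{C:Pinchable}. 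Your plan to extract parts \eqref{SFC:EquivAlsoCylinder} and the reverse direction of \eqref{SFC:Pinchable} from Theorem \ref{T:main1} cannot work as a formal deduction: semisimplicity is a metric statement about boundary strata, the simple factors live on the complementary subsurfaces rather than on the nodes (so ``nodes distributed to a common simple factor'' is not even well-posed), and the claim that equivalent curves are cylinders simultaneously at \emph{every} point of $QN$ with constant circumference and modulus ratios is exactly the content of Theorem \ref{T:Connected}/Corollary \ref{C:Connected}, proved by a substantial argument (nondivergence of horocycle flow, Strebel differentials along a geodesic, perturbation into $Q_{\max}N$, and Rafi's twisting criterion Lemma \ref{L:RafiCyl}) that the boundary theorem does not encode. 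In the paper the curve equivalence relation is an input to the boundary analysis, not an output of it.

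There are also concrete failures in the remaining parts. For \eqref{SFC:CylinderEverywhere}, neither of your alternatives is justified: a boundary stratum has no projection onto $N$, and in the open-closed argument closedness fails --- as $X_k\to X$ the differentials realizing $\alpha$ as a cylinder can degenerate (the cylinder collapsing to a union of saddle connections), and Theorem \ref{T:main1} gives no control over this; the paper's mechanism is the multi-twist of Lemma \ref{L:InvariantUnderTwist} together with Lemma \ref{L:StrebelGeneratingHorocycle}. For \eqref{SFC:SimultaneousRealization} in the intersecting case, ``$GL(2,\mathbb{R})$ plus density'' produces nothing: being a cylinder curve is not improved by the action, and generic deformations do not create a cylinder in a prescribed class; the paper instead twists a large amount around each class and runs a triangle-inequality case analysis with Lemma \ref{L:RafiCyl}. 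For \eqref{SFC:BoundaryOfFilled}, you cannot apply the Cylinder Deformation Theorem simultaneously to two intersecting equivalence classes, since their cylinders on a single differential are non-parallel and deforming one class destroys the other, and stretching cylinders inside the filled subsurface is not what pinches its boundary; the paper's argument uses the two multi-twists to produce (Thurston--Veech style) points of $N$ whose projections to the curve graphs of the components of the filled subsurface are far apart, then invokes Rafi's active-interval theorem (Theorem \ref{T:Active}) to force the boundary curves to become short along the connecting Teichm\"uller geodesic, and concludes with Corollary \ref{C:Pinchable}.
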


In addition to this result, our proof of Theorem \ref{T:main2} extends the definition of equivalent simple closed curves to a definition of equivalent subsurfaces. The structure consisting of all these curves and subsurfaces and the equivalence relation on them is vaguely reminiscent of the structure of a root system and may be useful for the classification problem.  

\bold{Additional context on totally geodesic submanifolds.}  See \cite{GoujardSurvey} for a recent survey on totally geodesic submanifolds written to accompany a Bourbaki seminar on the subject. 

The three special examples discussed above are remarkable from many points of view, sharing connections with billiards in quadrilaterals and classical algebraic geometry, and have already been the topic of further study \cite{GothicVol, GothicIntersection}.

Totally geodesic submanifolds of Teichmüller space which are  isometric to covering constructions are known to arise from covering constructions \cite{IsomEmb}. 

There are no higher dimensional  analogues of Teichm\"uller discs which are symmetric spaces \cite{Ant2}. The holomorphicity in the definition of Teichm\"uller discs is essentially redundant \cite{Ant1}. See also \cite{Ant3}.

\bold{The core of Theorem \ref{T:main2}.} A series of papers beginning with seminal work of Masur and Minsky developed the idea that curve graphs can be used as coarse coordinate systems for mapping class groups and Teichm\"uller spaces; see \cite{MMI, MMII, OriginalRealization, RafiComb} for some of the key entries in this story. In particular, there is a map from Teichm\"uller space to the product of the curve graphs of all subsurfaces whose image is characterized by remarkably simple consistency inequalities, each involving only two subsurfaces at a time; the fact that the consistency inequalities characterize the image is called the Consistency Theorem  \cite{OriginalRealization} or the Realization Theorem \cite{HHS2}. The so-called Distance Formula allows one to coarsely compute distances in Teichm\"uller space using the image of this map. 

Roughly speaking, Theorem \ref{T:main2} indicates that we can pick out a collection of subsurfaces that are distinguished enough so that the image of $N$ in the corresponding product of curve graphs is characterized by similar consistency inequalities, and large enough so that the Distance Formula still holds. It would be easy to maintain the Distance Formula if we used all subsurfaces, but this would cause the Realization Theorem to fail. Conversely, it would be easy to understand the image if we used only a very small collection of subsurfaces, but using too small of a collection would cause the Distance Formula formula to fail. To prove Theorem \ref{T:main2} we must use exactly the right collection, allowing us to coarsely specify points in $N$ using their images in curve graphs uniquely and without unnecessary redundant information.   

\bold{Additional context on hierarchical hyperbolicity.} The relevant structure required to have a version of the Realization Theorem and the Distance Formula was axiomatized in influential work of Behrstock, Hagen, and Sisto \cite{HHS1, HHS2}. We recommend the survey \cite{WhatIs} for an introduction. The resulting structure, called a hierarchically hyperbolic structure, is not generic, but is enormously powerful: Even when given a space with nice non-positively curved behavior, one typically does not expect it to be hierarchically hyperbolic, but if it is hierarchically hyperbolic then one gains a remarkably good understanding of the space. Despite the stringent requirements, many spaces and groups have been shown to be hierarchically hyperbolic  \cite{HHS1, HHS2, RefinedCobination, Chesser, ExtraLarge, Equivariant3Groups, GraphProducts2, Vokes, ExtensionsII, MultiCurveExtensions, Hughes, admissible, CylinderGraph, Kopreski}. 

Other perspectives on hierarchically hyperbolicity exist beyond the coordinate system point of view we highlighted above. For instance, while hyperbolic spaces satisfy that hulls of finite subsets are coarsely trees, hierarchically hyperbolic spaces satisfy that hulls of finite subsets are coarsely $\mathrm{CAT}(0)$ cube complexes \cite{Quasiflats, StableCubulations, AsymptoticD}. Furthermore, hierarchically hyperbolic spaces are hyperbolic up to products of simpler hierarchically hyperbolic spaces \cite{AsymptoticD}. 

Consequences of hierarchical hyperbolicity, sometimes with extra restrictions satisfied in our examples, include: bounds on asymptotic dimension \cite{AsymptoticD},  classification of maximal dimensional quasi-flats \cite{Quasiflats}, coarse injectivity \cite{Helly},  semi-hyperbolicity \cite{Helly, StableCubulations}, and being quasi-cubical \cite{Petyt}. 

The specific way in which Theorem \ref{T:main2} holds has consequences like the fundamental group being undistorted in the mapping class group, which in the case of Teichm\"uller curves was proven in \cite{AffineUndistorted} and in the case of examples arising from covering constructions corresponds to \cite[Theorem 9.1]{RScovers}.

\bold{Some comments on the proofs.} 
Our analysis relies on and strengthens a profitable marriage between coarse geometry and results of a more analytic, algebro-geometric, and dynamical flavor. In particular, we make use of theorems of Rafi expressing how the coarse geometry of the Teichm\"uller metric affects the fine geometry of Teichm\"uller geodesics. Even our proof of Theorem \ref{T:main1}, a statement in fine, non-coarse geometry, is intimately linked with coarse geometry.

Our inspiration for why Theorem \ref{T:main1} should be true was, in the first place, the  structure theorem \cite[Theorem 1.3]{ChenWright} on the boundary of invariant subvarieties of Abelian differentials, and, secondly, the rather natural idea that Theorem \ref{T:main2} should not only be true at a coarse level but also be witnessed by fine geometry. 

Having proved Theorem \ref{T:main1}, the main difficulty in proving Theorem \ref{T:main2} is obtaining a coarse statement analogous to Theorem \ref{T:main1} but for curve graphs. This is surprisingly non-trivial to obtain from first principles but is well within the reach of existing technology. It requires the proof of a result of independent interest that does not seem to have been previously recorded in the literature, namely that the electrification of the real line parameterizing a Teichm\"uller geodesic along the intervals where curves are short quasi-isometrically embeds into the curve graph of the surface. 

\bold{Conventions.} We emphasize that we work only with complex submanifolds.
All dimensions we give are over $\bC$. 

When we state a result related to the Teichm\"uller space $\cT_{g,n}$, we allow all constants to implicitly depend on $g$ and $n$ without further comment; so for the purposes of such constants one should consider the Teichm\"uller space $\cT_{g,n}$ as fixed. 

\bold{Acknowledgments.} We thank 
Janusz Adamus,
Matt Bainbridge, 
Dawei Chen, 
Howard Masur, 
Jesús Ruiz,  
Jacob Russell, 
Edson Sampaio, 
Saul Schleimer, 
Carlos Servan,  
Robert Tang,
Sam Taylor,
and 
Scott Wolpert 
for helpful conversations and comments. We are especially grateful to Kasra Rafi for generously teaching us the results in Appendix \ref{A:Rafi} and for clarifying a number of his results to us. 

As mentioned above, Fred Benirschke, Ben Dozier, and John Rached have independently obtained a version of the first of our two main results \cite{BDR}; we thank them for discussing their work with us and for offering helpful comments on our work.

The second author was partially supported by  NSF Grants DMS 1856155 and 2142712 and a Sloan Research Fellowship.

\section{Cylinders via horocycle and geodesic flows}
\label{s:cyl}

\subsection{Main results.} This section lays the foundation for the rest of the paper and contains multiple results and constructions that will be used later on. 

Let $N \subseteq \mathcal{T}_{g,n}$ be an algebraic totally geodesic submanifold of Teichmüller space. Denote by $QN$ the set of quadratic differentials generating Teichm\"uller discs completely contained in $N$ (together with the zero differentials on surfaces in $N$) and let $Q_{\max} N$ denote the largest stratum of $QN$; this is well defined since $\pi(QN)$ is an irreducible variety. The set $Q_{\max} N$ is a lift of an invariant subvariety $\pi(Q_{\max} N)$ of quadratic differentials in moduli space. 

We say that a simple closed curve (on the marking surface) is the core curve of a cylinder on a quadratic differential if it is freely homotopic to the core curve of that cylinder.  We say that a simple closed curve is a cylinder curve for $QN$ if it is the core curve of a cylinder at some point of $QN$. Note that, since $QN$ is the closure of $Q_{\max}{N}$, every cylinder curve for $QN$ is the core curve of a cylinder at some point of $Q_{\max} N$. One of the main technical results of this section is the following. 

\begin{theorem}\label{T:Connected} 
Let $N \subseteq \mathcal{T}_{g,n}$ be an algebraic totally geodesic submanifold of Teichmüller space. For any cylinder curve $\alpha$ of $QN$, the subset of $Q_{\max}N$ where $\alpha$ is the core curve of a cylinder is path connected. 
\end{theorem}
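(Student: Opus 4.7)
I would approach this by reducing via the $SL(2,\bR)$-action and then analyzing the short-cylinder locus. Let $S_\alpha$ denote the locus in question. First, $S_\alpha$ is $SL(2,\bR)$-invariant: the $SL(2,\bR)$-action is affine on flat structures and sends cylinders to cylinders with the same core curves. In particular $S_\alpha$ is $SO(2)$-invariant, so any point of $S_\alpha$ can be rotated to a point of the horizontal slice $C_\alpha := \{q \in S_\alpha : \alpha \text{ is a horizontal cylinder of } q\}$, and by path connectedness of $SO(2)$ the theorem reduces to showing $C_\alpha$ is path connected.

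Next, $C_\alpha$ is preserved by the horocycle flow $u_s$ and by the Teichm\"uller geodesic flow $g_t$ (the action of the upper-triangular subgroup of $SL(2,\bR)$). Applying $g_{-t}$ to a point of $C_\alpha$ for $t>0$ contracts the horizontal circumference of the $\alpha$-cylinder exponentially without disrupting the cylinder structure, so any point of $C_\alpha$ is joined inside $C_\alpha$ to one where $\alpha$ is arbitrarily short horizontally. The problem thus reduces to path connectedness of the short-cylinder sublocus $C_\alpha^\epsilon \subset C_\alpha$, where $\alpha$ has horizontal circumference at most $\epsilon$, for some fixed small $\epsilon > 0$.

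Finally, I would identify $C_\alpha^\epsilon$ with a neighborhood in $Q_{\max}N$ of the divisor $Z_\alpha \subset \overline{Q_{\max}N}$ along which $\alpha$ is pinched: locally a point of $C_\alpha^\epsilon$ is specified by a point of $Z_\alpha$ together with a small positive real opening parameter (the height of the $\alpha$-cylinder). So path connectedness of $C_\alpha^\epsilon$ reduces to path connectedness of $Z_\alpha$.

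The main obstacle will be this last step: establishing that $Z_\alpha$ is path connected. Since $\pi(Q_{\max}N)$ is irreducible by Filip's theorem and pinching a single curve is locally cut out by the vanishing of a single complex period, $Z_\alpha$ has pure complex codimension one, but an analytic codimension-one subvariety of an irreducible variety need not be irreducible. I would combine (i) horocycle flow invariance of each component of $Z_\alpha$, (ii) the cylinder deformation theorem, which allows one to move between short-cylinder configurations and their smoothings in a controlled way, and (iii) the $SL(2,\bR)$-invariance of $S_\alpha$, to show that all candidate components of $Z_\alpha$ are joined by paths inside $C_\alpha^\epsilon$. This is precisely where the horocycle and geodesic flow techniques advertised in the section title are expected to play the decisive role; an alternative route would exploit ergodicity or minimality of the $u_s$-action on a natural quotient of $C_\alpha^\epsilon$ to produce dense orbits witnessing path connectedness more directly.
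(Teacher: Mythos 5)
Your reductions in the first two steps are fine (rotation invariance, and using $g_{-t}$ to push into the large-modulus locus), but the proposal stalls exactly where the theorem lives. The final step --- path connectedness of the pinched locus $Z_\alpha$ --- is not an easier statement than the one you started with: it is a connectivity/irreducibility assertion about a boundary substack of $\overline{Q_{\max}N}$, and nothing in your sketch proves it. ``Horocycle invariance of each component'' plus ``the cylinder deformation theorem'' is a list of tools, not an argument, and the suggested alternative via ergodicity or minimality of the $u_s$-action cannot work as stated: dense orbits (which in any case are only guaranteed for almost every point, on a quotient that you have not shown carries the needed structure) do not produce continuous paths inside the cylinder locus joining two prescribed points; density is simply not path connectedness. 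There is also a circularity risk: in the logical order of the paper, statements about the structure and irreducibility of boundary loci of $N$ and $Q_{\max}N$ are derived downstream of this theorem (via the resulting notion of $N$-equivalence and constancy of moduli ratios), so one cannot appeal to them here.

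A secondary gap is the identification of $C_\alpha^\epsilon$ with a neighborhood of $Z_\alpha$ fibered by an ``opening parameter.'' This presupposes a plumbing-type local product structure on the closure of $Q_{\max}N$ near the boundary, which is not available a priori --- establishing such boundary structure is a substantial enterprise (it is the subject of independent work of Benirschke--Dozier--Rached and of the boundary analysis in this paper). Moreover the picture is off in detail: in $Q_{\max}N$ one cannot pinch $\alpha$ alone --- its whole parallelism class degenerates with it, with moduli in fixed ratio --- so the ``one vanishing period, codimension one'' count in the ambient stratum does not describe $\overline{Q_{\max}N}$, and the deformation parameter is complex (height and twist), not a single positive real. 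The paper's proof avoids the boundary entirely: it connects the two given differentials through the interior by taking the Teichm\"uller geodesic $\mathcal{G}$ between the underlying surfaces, building a second path $\mathcal{F}'$ (via Strebel differentials from Corollary \ref{C:StrebelInQN} and cylinder deformations) whose annular projection at $\alpha$ is uniformly far from that of $\mathcal{G}$ while $\ell_\alpha$ stays bounded below, and then certifying via Rafi's twisting criterion (Lemma \ref{L:RafiCyl}) that every Teichm\"uller geodesic from $\mathcal{G}(t)$ to $\mathcal{F}'(t)$ is generated by a differential with an $\alpha$-cylinder of definite modulus; a small perturbation then places the resulting path in $Q_{\max}N$ without destroying the cylinder. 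If you want to salvage your approach, you would need to first prove the boundary structure and connectivity results you are invoking, which is a different (and harder) project.
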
 

Following \cite{Wcyl}, given $q \in Q_{\max} N$, we say two cylinders on $q$ are $Q_{\max} N$-parallel if they are parallel on every quadratic differential in a sufficiently small neighborhood of $q$ in $Q_{\max}N$. We say that two cylinder curves are $N$-equivalent if there is a point of $Q_{\max} N$ where they are the core curves of $Q_{\max} N$-parallel cylinders; if $N$ is clear from context we sometimes omit it from the terminology. The significance of Theorem \ref{T:Connected} is that it allows us to prove the following result, which will be the foundation for much of our later discussion. 

\begin{corollary}\label{C:Connected}
Let $N \subseteq \mathcal{T}_{g,n}$ be an algebraic totally geodesic submanifold of Teichmüller space and let $\alpha, \beta$ be $N$-equivalent cylinder curves. Then, at every point $q$ of $QN$ where $\alpha$ is a cylinder,  $\beta$ is also a cylinder. If $q\in Q_{\max} N$, then the associated cylinders are $Q_{\max} N$-parallel. Furthermore, the ratios of moduli, heights, and circumferences of the associated cylinders are constant depending only on $\alpha, \beta$, and $N$, and, in the case of moduli, such constant is rational.
\end{corollary}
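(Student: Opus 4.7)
The plan is to establish the corollary first for $q\in Q_{\max}N$ by combining Theorem \ref{T:Connected} with the deformation theory of $Q_{\max}N$-parallel cylinders, and then to pass to general $q\in QN$ by a density argument. First, by definition of $N$-equivalence fix a point $q_0\in Q_{\max}N$ at which $\alpha,\beta$ are core curves of $Q_{\max}N$-parallel cylinders, and let $c_{\text{circ}},c_{\text{ht}},c_{\text{mod}}$ denote the ratios of circumferences, heights, and moduli of the $\beta$-cylinder over the $\alpha$-cylinder at $q_0$. Let $U\subseteq Q_{\max}N$ be the subset at which $\alpha$ is a cylinder. Theorem \ref{T:Connected} says $U$ is path connected and $q_0\in U$, so it suffices to show that the property

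\emph{``$\beta$ is a cylinder $Q_{\max}N$-parallel to $\alpha$ with the three ratios equal to $c_{\text{circ}},c_{\text{ht}},c_{\text{mod}}$''}

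is both open and closed in $U$.

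For openness, recall that in period coordinates $\pi(Q_{\max}N)$ pulls back to an affine subspace, and the holonomy of each cylinder is a linear coordinate. If $\alpha,\beta$ are $Q_{\max}N$-parallel cylinders at some point of $U$, the ratio of their holonomies is a real constant that is preserved under every deformation keeping both curves cylinders, so the property persists on an open neighborhood in $U$. The constancy of $c_{\text{mod}}$ (and its rationality) is a standard consequence of the Cylinder Deformation Theorem of \cite{Wcyl} together with its refinement in \cite{Equations}: on a given connected component of $U$ the affine structure forces the modulus ratio of any two $Q_{\max}N$-parallel cylinders to be a rational constant. For closedness, along a path in $U$ where the ratios $c_{\text{circ}}$ and $c_{\text{ht}}$ hold, the circumference and height of the $\beta$-cylinder are bounded away from zero as long as those of $\alpha$ are, so $\beta$ cannot degenerate and must persist as a cylinder at the limit, again $Q_{\max}N$-parallel to $\alpha$. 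Path connectedness of $U$ then propagates the property from $q_0$ to every point of $U$.

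Finally let $q\in QN$ be arbitrary with $\alpha$ a cylinder at $q$. Since $Q_{\max}N$ is open and dense in $QN$ and cylinders persist under sufficiently small deformations within the ambient stratum of quadratic differentials, one can choose a sequence $q_n\to q$ with $q_n\in Q_{\max}N$ and $\alpha$ a cylinder at each $q_n$. By the previous step, $\beta$ is a cylinder at each $q_n$ with the three ratios fixed; since these ratios control circumference and height from below, $\beta$ remains a cylinder at $q$ and the ratios pass to the limit.

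The main obstacle is the closed part of Step 2, where one must rule out the possibility that $\beta$ degenerates along a path in $U$ while $\alpha$ persists; this is handled by using the constancy of circumference and height ratios guaranteed by $Q_{\max}N$-parallelism, but crucially relies on having the \emph{global} path connectedness provided by Theorem \ref{T:Connected} to propagate the ratios throughout $U$ rather than only on a single local chart.
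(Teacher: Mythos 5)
Your overall architecture is the same as the paper's: fix a basepoint where $\alpha,\beta$ are $Q_{\max}N$-parallel, use the path connectedness from Theorem \ref{T:Connected} to propagate a local constancy statement over the locus $U$ where $\alpha$ is a cylinder, and then pass from $Q_{\max}N$ to $QN$ by density and continuity. However, there is a genuine gap at the heart of the propagation step: you assert that the constancy (and rationality) of the modulus ratio of $Q_{\max}N$-parallel cylinders is ``a standard consequence of the Cylinder Deformation Theorem of \cite{Wcyl} together with its refinement in \cite{Equations}'' and that ``the affine structure forces'' it. This is false for general invariant subvarieties: the affine structure only forces local constancy of the ratio of \emph{holonomies} (circumferences) of parallel cylinders. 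When the invariant subvariety has rel, rel deformations change the heights, and hence the moduli, of parallel cylinders independently while keeping their circumference ratio fixed — the eigenform loci in $\cH(1,1)$ are the standard example — so neither the height ratio nor the modulus ratio is locally constant, let alone rational. The paper's Lemma \ref{L:ConstRatio} obtains exactly this missing input by first invoking \cite[Theorem 1.3]{Wri20}, which says that $Q_{\max}N$ is the lift of an invariant subvariety with \emph{no rel}, and then applying \cite[Corollary 1.6]{MW17}, which gives constancy and rationality of modulus ratios in the absence of rel. Without citing the no-rel property of totally geodesic loci, your openness step (for $c_{\rm ht}$ and $c_{\rm mod}$) and your rationality claim are unjustified, and this is precisely the nontrivial content beyond \cite{Wcyl}.

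Two smaller points. First, in your closedness step you should say a word about why the limit cylinders are again $Q_{\max}N$-parallel (not merely parallel with the right ratios at the limit point); one way is to note that the linear relation between the holonomies of $\alpha$ and $\beta$ holds on open subsets of $Q_{\max}N$ near the approximating points and extends to a neighborhood of the limit by analytic continuation in period coordinates. Second, your final density argument for general $q\in QN$ matches the paper's, and the circumference-ratio constancy you use there is indeed immediate from the definition of parallelism as in \cite{Wcyl}; it is only the height and modulus ratios that need the no-rel input described above.
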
 

Notice that Corollary \ref{C:Connected} guarantees the terminology $N$-equivalent is appropriate in the sense that it defines an equivalence relation among cylinder curves.

Another crucial result of this section is Lemma \ref{L:ratio}, which guarantees that the ratio of hyperbolic lengths of $N$-equivalent cylinder curves is bounded when the curves have bounded length. 

\subsection{Deducing the corollary.} To deduce Corollary \ref{C:Connected} from Theorem \ref{T:Connected} we will make use of the following result.

\begin{lemma}\label{L:ConstRatio}
Let $N \subseteq \mathcal{T}_{g,n}$ be an algebraic totally geodesic submanifold of Teichmüller space. The ratio of moduli, heights, and circumferences of different $Q_{\max} N$-parallel cylinders is locally constant, i.e., given a point $q$ of $Q_{\max} N$ and a pair of $Q_{\max} N$-parallel cylinders on it, on any sufficiently small deformation in $Q_{\max} N$, these ratios do not change. Furthermore, the ratios of moduli are rational. 
\end{lemma}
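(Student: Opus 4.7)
The plan is to work in local period coordinates on the ambient stratum of quadratic differentials and exploit the affine-invariant structure of $Q_{\max}N$ to reduce the assertions to linear algebra. By the theorems of Eskin--Mirzakhani--Mohammadi and Filip, $\pi(Q_{\max}N)$ is an affine invariant subvariety, so near any $q \in Q_{\max}N$ it is cut out by $\bR$-linear equations in the period coordinates of the ambient stratum. In particular, for each cylinder $C_i$ with core curve $\alpha_i$ on $q$ and a chosen transverse relative arc $\gamma_i$, the complex holonomy $v_i = \int_{\alpha_i}\sqrt{q'}$ and the relative period $w_i = \int_{\gamma_i}\sqrt{q'}$ are restrictions of $\bC$-linear functions on the ambient stratum, and hence are holomorphic functions on $Q_{\max}N$.

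For two $Q_{\max}N$-parallel cylinders $C_1, C_2$, the hypothesis that $v_2(q')/v_1(q') \in \bR$ for all $q'$ in a neighborhood $U$ of $q$ in $Q_{\max}N$, combined with the holomorphy of $v_1$ and $v_2$ and the non-vanishing of $v_1$ on $U$, forces the holomorphic function $v_2/v_1$ on $U$ to take only real values. The open mapping theorem then yields $v_2 \equiv \lambda v_1$ on $U$ for a unique $\lambda \in \bR$. Consequently $|v_2| = |\lambda|\,|v_1|$ throughout $U$, so the circumference ratio is the locally constant $|\lambda|$ and the two cylinders share a common direction on all of $U$.

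For the heights and moduli I would invoke Wright's Cylinder Deformation Theorem \cite{Wcyl} (and its refinement \cite{Equations}). These theorems produce a one-parameter family of deformations inside $Q_{\max}N$ that simultaneously stretches every cylinder in the $Q_{\max}N$-parallel equivalence class of $C_1$ by a common multiplicative factor while preserving its circumference and twist. Combining this with the identity $v_2 = \lambda v_1$ on $U$ and the $\bC$-linearity of $w_1, w_2$ shows that there are a real constant $\lambda'$ and a complex constant $c$ for which $w_2 = \lambda' w_1 + c v_1$ holds on $U$. A direct computation of the moduli $m_i = \Im(w_i/v_i)$ then yields $m_2/m_1 = \lambda'/\lambda$, a locally constant real number, and the ratios of heights are obtained similarly.

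The main obstacle is the rationality of $m_2/m_1$. The plan is to observe that the twist-and-stretch subspace of the tangent space to $Q_{\max}N$, and more generally the defining equations of $Q_{\max}N$ governing the parallel equivalence class of $C_1$, can be expressed by $\mathbb{Q}$-rational linear equations with respect to an integral basis of relative homology adapted to the cylinders. Consequently the scalars $\lambda$ and $\lambda'$ are both rational, and hence so is $m_2/m_1 = \lambda'/\lambda$.
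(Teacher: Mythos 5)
Your argument for the circumference ratio is fine and is essentially the standard one (a real-valued holomorphic function such as $v_2/v_1$ is locally constant); this matches the paper, which treats that part as immediate from the definitions in \cite{Wcyl}. The gap is in the moduli/heights step. The relation $w_2 = \lambda' w_1 + c\, v_1$ on a neighborhood does \emph{not} follow from the Cylinder Deformation Theorem: that theorem only asserts that the standard shear/stretch supported on an equivalence class is tangent to (and can be integrated inside) the invariant subvariety, and it holds for \emph{every} invariant subvariety, whereas local constancy of moduli ratios of parallel cylinders is simply false in general. For instance, in a locus with nontrivial rel (such as the eigenform locus in $\mathcal{H}(1,1)$), rel deformations change the heights of parallel cylinders by different amounts, so the ratio of moduli of $\mathcal{M}$-parallel cylinders varies even though the Cylinder Deformation Theorem applies. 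Any correct proof must therefore use what is special about $Q_{\max}N$: the paper invokes \cite[Theorem 1.3]{Wri20}, which says that the invariant subvariety underlying $Q_{\max}N$ has no rel, and then applies \cite[Corollary 1.6]{MW17}, which is exactly the statement that for rel-zero invariant subvarieties the moduli of parallel cylinders have constant, rational ratios. Your proposal never uses the totally geodesic (no-rel) hypothesis, so it cannot close this step.

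The rationality step has the same problem. Defining equations of invariant subvarieties are linear in period coordinates but need not be $\mathbb{Q}$-rational in an integral homology basis: fields of definition are in general nontrivial totally real number fields (already for Teichm\"uller curves of eigenforms), and indeed the circumference ratio $\lambda$ need not be rational --- which is consistent with the lemma, which claims rationality only for the moduli ratios. If your argument were correct as written it would also prove rationality of $\lambda$, a false statement in general. Rationality of the moduli ratios again comes from the no-rel property via \cite[Corollary 1.6]{MW17} (see also \cite[Lemma 6.10]{AWdiamonds}), roughly because in the absence of rel the cylinder twist direction must integrate to a genuine multi-twist, forcing commensurability of the moduli.
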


\begin{proof}
That the ratio of circumferences stays constant is an immediate consequence of the definitions \cite{Wcyl}. For the ratio of the moduli, one has to first recall \cite[Theorem 1.3]{Wri20}, which says that $Q_{\max} N$ is the lift of a particular kind of invariant subvariety, more precisely, it has no rel, and then invoke \cite[Corollary 1.6]{MW17}, which gives the desired result. (See also \cite[Lemma 6.10]{AWdiamonds} for a different proof of the result powering \cite[Corollary 1.6]{MW17}.)
\end{proof}

We are now ready to deduce Corollary \ref{C:Connected} from Theorem \ref{T:Connected}.

\begin{proof}[Proof of Corollary \ref{C:Connected}]
Let $\alpha$ and $\beta$ be $N$-equivalent cylinder curves and let $q$ be a point of $Q_{\max}N$ where they are core curves of $Q_{\max} N$-parallel cylinders. Let $q'\in Q_{\max} N$ be any point where $\alpha$ is a cylinder. Theorem \ref{T:Connected} gives a path in  $Q_{\max}N$ from $q$ to $q'$ such that $\alpha$ is a cylinder at every point on this path. By Lemma \ref{L:ConstRatio}, we get that $\beta$ must also be a cylinder at every point on this path. This argument also gives that the ratios of moduli, heights, and circumferences only depend on $\alpha, \beta$, and $N$ for differentials in $Q_{\max}$. 

Because $QN$ is the closure of $Q_{\max} N$ and because all the quantities involved are continuous, the ratios are the same on $QN$ as they are on $Q_{\max} N$. 
\end{proof}

\subsection{Cylinders via horocycle flow.}\label{SS:CylsFromHorocycleFlow} To prove Theorem \ref{T:Connected} we will need different methods to produce quadratic differentials with cylinders. We first discuss a method that relies on the Teichmüller horocycle flow
$$u_t=
\begin{pmatrix}
1 & t \\
0 & 1 
\end{pmatrix}.$$ 
Given $X \in \mathcal{T}_{g,n}$, denote by $Q(X)$ the set of quadratic differentials on $X$.

\begin{lemma}\label{L:GenerateHorocycle}
For any two distinct points $X,Y \in \mathcal{T}_{g,n}$, there exists a unique, up to scale, $q\in Q(X)$ and a unique time $t > 0$, both depending continuously on $X$ and $Y$, such that $u_t(X,q)=(Y,q')$ for some quadratic differential $q'$, and this $q$ generates the Teichm\"uller disc through $X$ and $Y$. If both $X$ and $Y$ are in a totally geodesic submanifold $N$, then $q\in QN$. 
\end{lemma}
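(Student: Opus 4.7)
The plan is to use the standard fact that through any two distinct points of Teichm\"uller space there passes a unique Teichm\"uller disc, and to reduce the statement to an elementary computation in the hyperbolic plane.

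Any $q \in Q(X)$ as in the conclusion of the lemma must generate a Teichm\"uller disc containing $Y$, hence the unique such disc $D$ through $X$ and $Y$. This also immediately gives the last assertion: if $X, Y \in N$ then $D \subseteq N$ by total geodesy, and so $q \in QN$. It remains to establish existence, uniqueness up to scale, and continuous dependence of $(q, t)$. For this I would fix any unit-area $q_0 \in Q(X)$ generating $D$ and use the orbit map $g \mapsto \pi(g \cdot q_0)$ to identify $D$ with $SO(2) \backslash SL(2,\bR) \cong \bH^2$, placing $X$ at the basepoint $i$ and $Y$ at some point $z \ne i$. A unit-area $q \in Q(X)$ generating $D$ has the form $q = k \cdot q_0$ for some $k \in SO(2)$; under the identification, the horocycle trajectory $t \mapsto \pi(u_t(X, q))$ is the horocycle through $i$ obtained from the standard horizontal horocycle $\{i + s : s \in \bR\}$ by rotating through $k$. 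The condition on $(q, t)$ then translates into the statement that the $SO(2)$-orbit of $z$ around $i$, a hyperbolic circle, meets the standard horocycle at the point $i + t$.

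This condition admits solutions at precisely the two intersection points of the circle and the horocycle, which are symmetric across the geodesic from $i$ to $z$. The two corresponding values of $k$ produce quadratic differentials on $X$ that differ by a complex scalar of unit modulus and thus coincide up to scale; once a representative of $q$ is chosen, $t$ is uniquely determined, and continuity of $(q, t)$ in $(X, Y)$ follows from smoothness. The only nontrivial step is this collapsing of the two geometric solutions, and it hinges on the formula $r_\theta \cdot q_0 = e^{2i\theta} q_0$ for the action of $SO(2)$ on quadratic differentials---the factor of two (from the order of the differential) exactly matches the twofold ambiguity of horocycles through a pair of points of $\bH^2$.
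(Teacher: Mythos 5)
Your reduction to the hyperbolic geometry of the Teichm\"uller disc is essentially the same route as the paper's (very terse) proof, which appeals to the uniqueness of the Teichm\"uller horocycle arc joining $X$ to $Y$; but the step you yourself single out as the only nontrivial one is exactly where the argument breaks. The identity $r_\theta\cdot q_0=e^{2i\theta}q_0$ collapses only the two lifts $\pm k\in SO(2)$ of a single rotation of $\bH$ about $i$ (the kernel of $SO(2)\to PSO(2)$); it does not identify the two \emph{distinct} rotations carrying $z$ to the two intersection points of the hyperbolic circle about $i$ through $z$ with the horocycle $\{\Im w=1\}$. (Those intersection points are symmetric across the imaginary axis, not across the geodesic from $i$ to $z$.) The two intersection points correspond to the two distinct horocycles of $\bH$ through $i$ and $z$, and they produce two quadratic differentials in $Q(X)$ differing by a unit scalar $e^{i\psi}$ with $\psi\not\equiv 0\pmod{2\pi}$, which in general is not $\pm 1$; so they are genuinely non-proportional, and the corresponding times are $s_0$ and $-s_0$. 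As written, your collapsing argument therefore does not establish the uniqueness claim: with $t$ allowed to range over all of $\mathbb{R}$ there are two solutions, one for each horocycle through the pair of points.

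The way to recover the statement is to orient the arc. The reflection of the disc in the Teichm\"uller geodesic through $X$ and $Y$ swaps the two horocycles and reverses the flow parametrization, so exactly one of the two solutions reaches $Y$ at positive time; requiring the horocycle to be traversed from $X$ to $Y$ in the flow direction (equivalently $t>0$) singles out a single pair $(q,t)$ with $q$ unique up to positive scale, and this is the reading behind the paper's phrase ``the unique Teichm\"uller horocycle arc joining $X$ to $Y$'' (and the version implicitly used in Lemma \ref{L:StrebelGeneratingHorocycle}, where the twisting time is positive). The rest of your proposal--existence, continuity, and the observation that any such $q$ generates the unique Teichm\"uller disc through $X$ and $Y$ and hence lies in $QN$ when $X,Y\in N$--is fine and is unaffected by this ambiguity, since both candidate differentials generate the same disc.
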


\begin{proof}
There exists a unique positive Teichmüller horocycle arc joining $X$ to $Y$; see Figure \ref{fig:horocycle}. Furthermore, this arc belongs to the unique Teichm\"uller disc through $X$ and $Y$. Any quadratic differential $q$ as in the statement generates this Teichm\"uller disc. 
\end{proof}

\begin{figure}
    \centering
    \includegraphics[width=0.35\linewidth]{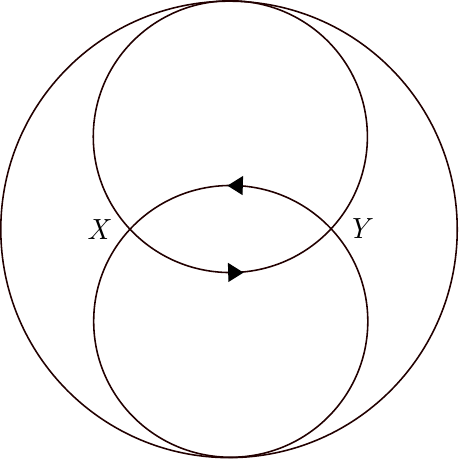}
    \caption{Horocycle arcs joining pairs of points in Teichmüller space. Arrows represent positive time directions.}
    \label{fig:horocycle}
\end{figure}

In the setting of Lemma \ref{L:GenerateHorocycle} we say that the quadratic differential $q$ generates the horocycle from $X$ to $Y$. We next recall the following. 

\begin{lemma}\label{L:StrebelGeneratingHorocycle}
Let $X \in \mathcal{T}_{g,n}$ and let $\alpha=\sum k_i \alpha_i$ be a multi-curve with all $k_i$ positive integers. Let $D_\alpha :=\prod D_{\alpha_i}^{k_i}$ be the Dehn twist in $\alpha$. Then the differential that generates the horocycle from $X$ to $D_\alpha X$ is horizontally periodic and the core curves of the horizontal cylinders are the $\alpha_i$. If $M_i$ is the modulus of the cylinder with core curve $\alpha_i$, then $M_i/M_j=k_i/k_j$. 
\end{lemma}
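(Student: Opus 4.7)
The plan is to use the Jenkins--Strebel theorem to exhibit a candidate quadratic differential on $X$ with the prescribed horizontal cylinder structure, compute the effect of the horocycle flow on it, and then invoke the uniqueness from Lemma \ref{L:GenerateHorocycle} to conclude.

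First I would apply the Jenkins--Strebel theorem: for the disjoint system of essential simple closed curves $\alpha_i$ on $X$ with the positive weights $k_i$, there exists a holomorphic quadratic differential $q$ on $X$, unique up to a positive real scale, that is horizontally periodic with horizontal cylinders whose core curves are freely homotopic to the $\alpha_i$ and whose moduli $M_i$ satisfy $M_i/k_i=M_j/k_j$ for all $i,j$. In particular $M_i/M_j=k_i/k_j$, which is exactly the ratio statement we want for the conclusion.

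Next I would analyze the action of the horocycle flow on this candidate. Identify each horizontal cylinder isometrically in flat coordinates with $(\bR/c_i\bZ)\times[0,h_i]$, where $c_i$ is the circumference and $h_i=c_iM_i$ is the height. In flat coordinates the horocycle $u_t$ acts as the shear $(x,y)\mapsto(x+ty,y)$, so after time $t$ the monodromy around the core curve of the $i$-th cylinder is advanced by $t h_i$ in the horizontal direction, which equals $th_i/c_i=tM_i$ full Dehn twists in $\alpha_i$. Rescaling $q$ so that the common value of $M_i/k_i$ is $1$, we get $M_i=k_i$, and then at time $t=1$ the horocycle flow takes $(X,q)$ to a differential whose underlying marked Riemann surface is obtained from $X$ by performing $k_i$ Dehn twists around each $\alpha_i$, namely $D_\alpha X$.

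Finally, Lemma \ref{L:GenerateHorocycle} guarantees that there is a unique (up to scale) quadratic differential on $X$ generating the horocycle arc from $X$ to $D_\alpha X$, so the candidate $q$ we just exhibited must be that generator. This yields horizontal periodicity, the identification of the horizontal core curves with the $\alpha_i$, and the ratio $M_i/M_j=k_i/k_j$. The only delicate step is the bookkeeping relating the horocycle shear amount to the number of Dehn twists per cylinder; once that is done correctly, the rest is an appeal to existence/uniqueness from Jenkins--Strebel and Lemma \ref{L:GenerateHorocycle}.
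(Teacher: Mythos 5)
Your proposal follows essentially the same route as the paper: apply Strebel's theorem to get a horizontally periodic differential on $X$ with core curves $\alpha_i$ and moduli in the ratios $k_i$, compute that time-$t$ horocycle flow performs $tM_i$ twists in the $i$-th cylinder, and invoke the uniqueness in Lemma \ref{L:GenerateHorocycle}. One step is stated incorrectly, though the fix is immediate: the moduli $M_i$ are conformal invariants, so rescaling $q$ by a positive real number does \emph{not} change them, and you cannot normalize so that $M_i=k_i$ (Strebel only gives $M_i=ck_i$ for some constant $c=c(X)>0$ you do not control). Instead of flowing for time $t=1$, simply flow for time $t=1/c$, at which point each cylinder has undergone exactly $k_i$ full twists and $u_t q$ lies over $D_\alpha X$; this is precisely what the paper does, and the rest of your argument then goes through unchanged.
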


\begin{proof}
For any choice of positive numbers $m_i$ there exists a, unique up to scale, differential $q$ on $X$ which is horizontally periodic with core curves $\alpha_i$ of modulus $cm_i$ for some $c=c(X)>0$ \cite[Theorem 21.7]{StrebelBook}. Here we use $m_i=k_i$. It takes time $1/M$ for horocycle flow to accomplish a Dehn twist on a horizontal cylinder of modulus $M$, so let $t:= k_i / (cm_i) = 1/c$.  Then we get that $u_t q$ is a differential on $D_\alpha X$. By Lemma \ref{L:GenerateHorocycle}, $q$ generates the desired horocycle. 
\end{proof}

Next, we produce Dehn twists that stabilize a totally geodesic submanifold.  

\begin{lemma}\label{L:InvariantUnderTwist}
Let $N \subseteq \mathcal{T}_{g,n}$ a totally geodesic submanifold and let $\{\alpha_1, \ldots, \alpha_r\}$ be the core curves of an equivalence class of $Q_{\max}N$-parallel cylinders. Let $k_1, \ldots, k_r$ be positive integers such that the $\alpha_i$ are cylinder core curves on a differential in $QN$ with the corresponding moduli $M_i$ satisfying $M_i/M_k=k_i/k_j$. If $\alpha :=\sum k_i \alpha_i$, then $D_\alpha N =N$.
\end{lemma}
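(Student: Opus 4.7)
My plan is to realize the multi-twist $D_\alpha$ as the endpoint of a standard cylinder-class shear applied inside $Q_{\max}N$, and then pass from a single point of $N$ to all of $N$ via complex analytic continuation.

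First I would fix a point $(X_0,q_0)\in Q_{\max}N$ realizing the hypothesis: the $\alpha_i$ are core curves of the given equivalence class of $Q_{\max}N$-parallel cylinders, with moduli $M_i$ satisfying $M_i/M_j=k_i/k_j$. The standard shear of this equivalence class is a real one-parameter family $s\mapsto(X_s,q_s)$ in which each $\alpha_i$-cylinder is sheared by $s\,h_i$ in the direction of its core curve, while cylinders outside the equivalence class are left untouched; since shearing the $\alpha_i$-cylinder by $s\,h_i$ accomplishes $s h_i/c_i=s M_i$ fractional Dehn twists in $\alpha_i$, and since the hypothesis $M_i/M_j=k_i/k_j$ forces the quantities $k_i/M_i$ to share a common value $s_0$, at time $s_0$ each $\alpha_i$ has been Dehn-twisted exactly $k_i$ times. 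By the Cylinder Deformation Theorem of \cite{Wcyl} the entire path lies in $Q_{\max}N$, so the endpoint $(X_{s_0},q_{s_0})=D_\alpha\cdot(X_0,q_0)$ lies in $Q_{\max}N$ as well; projecting to Teichm\"uller space gives $D_\alpha X_0\in N$.

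To upgrade from this single point to all of $N$, I would observe that the set $U\subseteq Q_{\max}N$ of differentials at which the $\alpha_i$ appear as cylinders in the given equivalence class is open, and by Corollary \ref{C:Connected} (or locally by Lemma \ref{L:ConstRatio}) the ratios $M_i/M_j=k_i/k_j$ persist throughout $U$. Running the same shear argument at every $(X,q)\in U$ yields $D_\alpha\cdot U\subseteq Q_{\max}N$. Since the natural projection $Q_{\max}N\to N$ is a holomorphic submersion (as $QN\to N$ is a holomorphic vector bundle by \cite[Proposition 3.1]{IsomEmb}), the image of $U$ in $N$ is a nonempty open subset of $N$ mapped into $N$ by $D_\alpha$. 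The set $\{X\in N:D_\alpha X\in N\}=N\cap D_\alpha^{-1}(N)$ is a closed complex analytic subset of $N$ containing this open subset, so the irreducibility of the connected complex submanifold $N$ forces it to equal $N$. Hence $D_\alpha N\subseteq N$, and the symmetric argument applied to $D_\alpha^{-1}$ gives equality.

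The main subtlety is the bookkeeping in the first step: verifying that the cylinder shear at parameter $s_0$ really produces the multi-twist $D_\alpha$, which amounts to tracking how the shear amount $s h_i$ in each cylinder converts to $s M_i$ fractional Dehn twists and matching the sign conventions between the cylinder deformation and the mapping class group action. Once that is confirmed, the remaining steps are relatively routine applications of the Cylinder Deformation Theorem and complex analytic continuation.
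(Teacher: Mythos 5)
Your argument is correct and is essentially the paper's proof: the paper likewise applies the Cylinder Deformation Theorem on an open subset where the $\alpha_i$ are cylinders (the integral shear at time $s_0$ realizing $D_\alpha^{\pm1}$) and then extends to all of $N$ (there phrased as $QN$) by analytic continuation, concluding by symmetry for $D_\alpha^{-1}$. One caution: do rely on the local statement, Lemma \ref{L:ConstRatio}, rather than Corollary \ref{C:Connected} for the persistence of the moduli ratios, since Corollary \ref{C:Connected} is deduced from Theorem \ref{T:Connected}, whose proof uses Corollary \ref{C:StrebelInQN} and hence this lemma, so citing it here would be circular.
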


Morally, this lemma is true because $D_\alpha$ is in the fundamental group of $\pi(N)$ and that fundamental group stabilizes $N$, but we find it easier to give a careful proof from a slightly different perspective.

\begin{proof}
There is an open subset of $QN$ of quadratic differentials where the $\alpha_i$ are core curves of cylinders. The Cylinder Deformation Theorem then gives that $D_\alpha^{\pm1}$ applied to this subset is contained in $QN$. By analytic continuation, we deduce $D_\alpha ^{\pm1} QN \subseteq QN$. Thus, $D_\alpha QN=QN$. 
\end{proof}

We summarize this discussion as follows.

\begin{corollary}\label{C:StrebelInQN}
Let $N \subseteq \mathcal{T}_{g,n}$ be a totally geodesic submanifold. Suppose that at some point of $Q_{\max}N$ there is an equivalence class of $Q_{\max}N$-parallel cylinders with core curves $\alpha_1, \ldots, \alpha_r$ and moduli $c k_1, \ldots, ck_r$ for some $c>0$ and positive integers $k_i$. Then, for any $X \in N$, there exists a unique up to scale horizontally periodic quadratic differential $q$ on $X$ with core curves $\alpha_i$ with moduli $c' k_1, \ldots, c'k_r$ for some $c'>0$. This $q$ depends continuously on $X$ and lies in $QN$.
\end{corollary}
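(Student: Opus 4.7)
The plan is to deduce this corollary by combining the three preceding lemmas, with Lemma \ref{L:InvariantUnderTwist} doing the heavy lifting to move from ``some point of $Q_{\max}N$'' to ``every $X\in N$''. The existence and uniqueness (up to scale) of the horizontally periodic differential $q$ on any $X\in\cT_{g,n}$ with core curves $\alpha_i$ and moduli proportional to the prescribed positive integers $k_i$ is a direct invocation of Strebel's theorem \cite[Theorem 21.7]{StrebelBook}, exactly as already used inside the proof of Lemma \ref{L:StrebelGeneratingHorocycle}. So the only content to prove is that this $q$ actually lies in $QN$ and depends continuously on $X\in N$.

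The key idea for membership in $QN$ is to realize $q$ as the generator of a horocycle between two points of $N$. First, apply Lemma \ref{L:InvariantUnderTwist} to the equivalence class of $Q_{\max}N$-parallel cylinders with core curves $\alpha_1,\dots,\alpha_r$ and moduli $ck_1,\dots,ck_r$: setting $\alpha:=\sum k_i\alpha_i$, we obtain $D_\alpha N=N$. Consequently, for any $X\in N$, the point $D_\alpha X$ also lies in $N$. Now, given an arbitrary $X\in N$, let $q$ be the Strebel differential provided by the first paragraph. By Lemma \ref{L:StrebelGeneratingHorocycle}, this $q$ (up to scale) is precisely the generator of the horocycle arc from $X$ to $D_\alpha X$, since its cylinder core curves are the $\alpha_i$ with moduli in the correct ratios $k_i:k_j$. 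Because both endpoints $X$ and $D_\alpha X$ lie in $N$, the second assertion of Lemma \ref{L:GenerateHorocycle} yields $q\in QN$, as desired.

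Continuity of $q$ in $X$ follows from the continuous dependence clause of Lemma \ref{L:GenerateHorocycle}, applied to the continuous family of pairs $(X, D_\alpha X)$ as $X$ varies in $N$ (the map $X\mapsto D_\alpha X$ is a biholomorphism of $N$). I do not anticipate any genuine obstacle here: the whole proof is essentially a bookkeeping exercise putting together the three lemmas just developed. The only minor subtlety is to observe that scaling and the choice of constant $c'$ are handled automatically, since Strebel's theorem gives a distinguished differential determined up to the single positive scalar $c'$, and the horocycle argument pins down this differential up to the same ambiguity.
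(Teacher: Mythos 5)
Your proposal is correct and follows essentially the same route as the paper's own proof: the multi-twist from Lemma \ref{L:InvariantUnderTwist} gives $D_\alpha X\in N$, Lemma \ref{L:StrebelGeneratingHorocycle} identifies the Strebel differential as the generator of the horocycle from $X$ to $D_\alpha X$, and Lemma \ref{L:GenerateHorocycle} supplies membership in $QN$ and continuous dependence on $X$. The paper states this in one terse sentence; you have merely spelled out the same bookkeeping.
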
 

\begin{proof}
Lemma \ref{L:InvariantUnderTwist} gives a multi-twist $D_\alpha$ stabilizing $N$. Using this and Lemma \ref{L:StrebelGeneratingHorocycle} one obtains the desired differential; it lies in $QN$ by Lemma \ref{L:GenerateHorocycle}.
\end{proof}

\subsection{Annular curve graphs}\label{SS:AnnularSetup} There are two versions of annular curve graphs. One is used in the hierarchically hyperbolic structure of mapping class groups and is quasi-isometric to $\bR$. The other one is used in the hierarchically hyperbolic structure of Teichm\"uller spaces and is isometric to a horoball in the hyperbolic plane. 

For the annular region corresponding to a simple closed curve $\alpha$, the mapping class group version keeps track of the twist along $\alpha$. We think of the Teichm\"uller space version as mapping to $$\{x+iy : y\geq 1\} \subseteq \bH,$$ with the $x$ coordinate again encoding the twist along $\alpha$ and the $y$ coordinate being equal to $\max(1, 1/\ell_\alpha(X))$, where $\ell_\alpha(X)$ is the hyperbolic length of $\alpha$ at $X\in \cT_{g,n}$. In particular, we note that the mapping class group version can be thought of as the $x$ coordinate of the Teichm\"uller space version. 

If desired, we can view $\bR$ as the boundary of the horoball with its intrinsic path metric, and consider the projection $x+iy \mapsto x+i$ from the horoball to its boundary. We can think of the mapping class group version of the map to the annular curve graph as the composition of the Teichm\"uller space version together with this projection. The fact that the projection from a horoball to its boundary is highly non-Lipschitz offers some hint that using the mapping class group version of the annular curve graph when discussing Teichm\"uller spaces requires extra care.  

The mapping class group version has been standard since the work of Masur-Minsky \cite[Section 2.4]{MMII} and the Teichm\"uller space version was foreshadowed in Minsky's product region theorem \cite{MinskyProduct}, is implicit in formulas of Rafi \cite{RafiComb}, and is explicit in subsequent work \cite{EMRrank, DurhamAugmented}. We suggest \cite[Section 3.1]{EMRrank} as a reference on this topic. 

By default we will use the Teichm\"uller space version and we will comment when using the mapping class group version. 

\subsection{Cylinders via geodesic flow.} 
Next, we discuss a coarser method for producing quadratic differentials with cylinders, via the Teichmüller geodesic flow. Let $d_\alpha^\MCG(X,Y)$ denote the distance between the subsurface projections of $X,Y \in \mathcal{T}_{g,n}$ in the mapping class group version of the annular curve graph of $\alpha$ and let $\ell_\alpha(X)$ denote the hyperbolic length of the geodesic representative of $\alpha$ on $X \in \mathcal{T}_{g,n}$. Roughly speaking, the following result guarantees the existence of cylinders whenever there is large enough twist. 

\begin{lemma}\label{L:RafiCyl}
There exists a constant $D > 0$ depending only on $g$ and $n$ such that for any $X,Y \in \mathcal{T}_{g,n}$ and every simple closed curve $\alpha$, if 
\[
d_\alpha^\MCG(X,Y) > D(1 + \ell_X(\alpha)^{-1} + \ell_Y(\alpha)^{-1}),
\]
then the quadratic differential generating the Teichm\"uller geodesic segment from $X$ to $Y$ has a cylinder with core curve $\alpha$. 
\end{lemma}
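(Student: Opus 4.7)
The plan is to derive this lemma from Rafi's combinatorial description of how annular subsurface projection accumulates along a Teichm\"uller geodesic segment. I would invoke the following upper bound, which should either appear in the appendix attributed to Rafi or follow from the combinatorial model in Rafi's earlier work: for the Teichm\"uller geodesic segment from $X$ to $Y$ generated by a quadratic differential $q$, and for any simple closed curve $\alpha$,
$$d_\alpha^\MCG(X,Y) \leq C\bigl(1 + \ell_X(\alpha)^{-1} + \ell_Y(\alpha)^{-1} + \mathrm{Mod}_q(\alpha)\bigr),$$
where $\mathrm{Mod}_q(\alpha)$ denotes the modulus of the (maximal) flat cylinder with core curve $\alpha$ on $(X,q)$, set to zero if no such cylinder exists, and $C$ depends only on $g$ and $n$. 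Granting this bound, the lemma is essentially a rearrangement: setting $D := C+1$, the hypothesis forces
$$\mathrm{Mod}_q(\alpha) \geq \tfrac{1}{C}\bigl(1 + \ell_X(\alpha)^{-1} + \ell_Y(\alpha)^{-1}\bigr) > 0,$$
so $\alpha$ is the core curve of a genuine flat cylinder on $q$, as required.

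To establish the upper bound, I would decompose the Teichm\"uller geodesic using Rafi's thick-thin decomposition for $\alpha$ and separate the twisting contribution into three pieces: a contribution near $X$, a contribution near $Y$, and an intermediate contribution. Inside the thin part for $\alpha$, Minsky's product region theorem identifies the Teichm\"uller metric with a sup metric whose twist factor is quasi-isometric to the mapping class group version of the annular curve graph, so the endpoint contributions can be bounded by the ``depth'' of $X$ and $Y$ in the corresponding horoball, which is comparable to $\ell_X(\alpha)^{-1}$ and $\ell_Y(\alpha)^{-1}$ respectively. The intermediate contribution is controlled by the flat geometry: the geodesic flow $g_t$ acting on $q$ performs roughly one unit of twisting per unit of modulus when traversing a flat cylinder with core $\alpha$, so this contribution is bounded by a constant multiple of $\mathrm{Mod}_q(\alpha)$; in particular, if no flat cylinder with core $\alpha$ exists, this intermediate contribution is $O(1)$.

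The main obstacle is keeping the constants clean, and in particular making sure the endpoint correction enters linearly in $\ell^{-1}$ rather than in some weaker form like $\log(1/\ell)$. This requires careful use of the product region theorem together with the comparison between the Teichm\"uller space version and the MCG version of the annular curve graph (see Subsection \ref{SS:AnnularSetup}), since the two versions agree up to a quasi-isometry on compact pieces but differ in their behavior as $\ell_\alpha \to 0$. A secondary subtlety is to verify that ``intermediate twisting without a flat cylinder'' is indeed uniformly bounded, which uses the fact that in the absence of a flat cylinder the vertical and horizontal foliations of $q$ project to bounded-diameter sets in the annular curve graph of $\alpha$.
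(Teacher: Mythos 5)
There is a genuine gap: your key upper bound is false as stated, because you evaluate the modulus at the initial differential $(X,q)$. The maximal flat cylinder with core $\alpha$ on $(X,q)$ is an embedded annulus on $X$ homotopic to $\alpha$, so its modulus is at most $1/\Ext_X(\alpha)\leq \pi/\ell_X(\alpha)$ by Maskit's comparison; hence your proposed inequality collapses to $d_\alpha^\MCG(X,Y)\leq C'(1+\ell_X(\alpha)^{-1}+\ell_Y(\alpha)^{-1})$, which fails for $X$ thick and $Y=D_\alpha^n X$ with $n$ large: there $d_\alpha^\MCG(X,Y)\asymp n$ while both endpoint lengths of $\alpha$ are bounded below. (In that example the conclusion of the lemma does hold, but the $\alpha$-cylinder has large modulus only near the balanced time in the middle of the segment, not at $X$; its modulus at $X$ is $O(1)$.) For the same reason, the claim in your second paragraph that the intermediate twisting is bounded by a multiple of $\modulus_q(\alpha)$ is incorrect: the accumulated twisting is controlled by Rafi's total twisting parameter $T_\alpha$, equivalently by the maximum of $\modulus_{g_t q}(\alpha)$ over the segment, as the formulas $\twist_{g_tq}(\alpha) \approxadd T_\alpha e^{2(t-t_\alpha)}/(4\cosh^2(t-t_\alpha)-2)$ and $\modulus_{g_tq}(\alpha)\approxmult T_\alpha/\cosh^2(t-t_\alpha)$ recalled in Appendix \ref{A:Rafi} make precise. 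With that replacement your strategy becomes the paper's argument: by \cite[Theorem 4.3]{RafiComb}, $d_\alpha^\MCG(X,Y)$ agrees with the change in flat twisting up to an error $O(\ell_X(\alpha)^{-1}+\ell_Y(\alpha)^{-1})$ (so the $O(\ell^{-1})$ endpoint control you propose to re-derive from Minsky's product regions can simply be cited), and a large change in flat twisting forces a large-modulus cylinder with core $\alpha$ at some time on the segment.

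Once the modulus is measured at the correct time, the rearrangement only produces a cylinder on $g_tq$ for some $t$ in the segment, not on $q$ itself, so you also need the (easy but necessary) observation that the Teichm\"uller flow acts by affine maps on the flat surface, so $\alpha$ is the core curve of a flat cylinder on $g_tq$ if and only if it is one on $q$. Finally, the paper first perturbs $X$ or $Y$ slightly so that $\alpha$ has nonzero intersection number with both the vertical and horizontal foliations (harmless, since the argument gives a definite lower bound on the modulus); some such care with the vertical/horizontal cases is needed in your approach as well, since Rafi's flat twisting is defined exceptionally there.
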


\begin{proof}
This can be thought of as a black box coming from work of Rafi, but we briefly outline the structure of the argument. 

Denote by $\nu^+$ and $\nu^-$ the vertical and horizontal foliations of the quadratic differential $q$ generating the Teichm\"uller geodesic segment from $X$ to $Y$. Since the ensuing argument gives a lower bound on the modulus of the cylinder, we may take an arbitrarily small perturbation of $X$ or $Y$, allowing us to assume that $\alpha$ has non-zero intersection number with the $\nu^\pm$. 

As we recall briefly in Appendix \ref{A:Rafi}, Rafi has an alternative definition of twisting along a Teichm\"uller geodesic which makes use of the flat metric of the generating quadratic differential \cite[Section 4]{RafiComb}. By \cite[Theorem 4.3]{RafiComb}, it is possible to compare differences in this alternative twisting to $d_\alpha^\MCG(X,Y) $ with an error of the form  $O(\ell_X(\alpha)^{-1} + \ell_Y(\alpha)^{-1})$. Hence, in our situation, there is large change in Rafi's alternative twist between $X$ and $Y$. Formulas in \cite[Section 4]{RafiComb}, which are recalled in Appendix \ref{A:Rafi}, imply the existence of a large modulus cylinder (at some point between $X$ and $Y$).
\end{proof}

\subsection{Hyperbolic lengths.} Later we will need to estimate hyperbolic lengths of short curves. We recall some of the necessary background for this now. 

\begin{lemma} \label{L:expanding}
Let $q$ be a horizontally periodic quadratic differential such that the ratios of circumferences of horizontal cylinders is at most $M$. Then, there exists a uniform bound, depending only on $M$ (and, as always,  $g$ and $n$), for the modulus of the expanding annuli corresponding to the core curves of the horizontal cylinders of $q$. 
\end{lemma}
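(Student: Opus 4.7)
The plan is to invoke the standard flat-geometric estimate for expanding annuli in half-translation surfaces: the modulus of an expanding annulus $E$ around a simple closed geodesic satisfies
\[
\mathrm{mod}(E) \;\leq\; C \log\!\left(\frac{L_{\mathrm{out}}}{L_{\mathrm{in}}}\right) + C,
\]
where $L_{\mathrm{in}}$ and $L_{\mathrm{out}}$ are the flat lengths of the inner and outer boundaries of $E$, and $C$ depends only on the orders of zeros (hence only on $g$ and $n$). This appears in Minsky's work on the thick-thin decomposition and is used throughout Rafi's combinatorial model paper.

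First I would identify the boundaries of an expanding annulus $E_i$ associated to a horizontal core curve $\alpha_i$: the inner boundary is a horizontal curve of flat length $\ell_i$ equal to the circumference of the horizontal cylinder $C_i$ with core $\alpha_i$, while the outer boundary is a curve homotopic to $\alpha_i$ that passes through zeros of $q$. The crucial observation is that, since $q$ is horizontally periodic, all zeros of $q$ lie on horizontal saddle connections which form the boundaries of the horizontal cylinders. Consequently the outer boundary of $E_i$ can be realized as a concatenation of horizontal saddle connections lying on cylinder boundaries, and $L_{\mathrm{out}}$ is bounded above by the total flat length of horizontal saddle connections of $q$.

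Next I would bound this total length. Each horizontal saddle connection lies on the boundary of exactly two horizontal cylinders (with multiplicity of sides), so summing circumferences over all horizontal cylinders recovers (twice) the total length of horizontal saddle connections; hence it equals $\sum_j \ell_j$. Since the number of horizontal cylinders on a surface in $\cT_{g,n}$ is at most $3g-3+n$, and each circumference satisfies $\ell_j \leq M \ell_i$ by hypothesis, we obtain $L_{\mathrm{out}} \leq (3g-3+n) M \ell_i$. Substituting into the displayed inequality yields
\[
\mathrm{mod}(E_i) \;\leq\; C \log\bigl((3g-3+n) M\bigr) + C,
\]
which is the desired bound depending only on $M$, $g$, and $n$.

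The main obstacle is the identification of the outer boundary of the expanding annulus as a union of horizontal saddle connections. Making this precise requires a careful invocation of the structure theory for expanding annuli in half-translation surfaces, verifying that in the horizontally periodic setting an expanding annulus cannot meaningfully extend past the horizontal saddle skeleton without encountering a zero or failing to be embedded. Once this identification is granted, the remainder of the argument is a routine substitution.
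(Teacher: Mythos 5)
Your scaffolding matches the paper's: both arguments use $\mathrm{mod}(E)\asymp \log(L_{\mathrm{out}}/L_{\mathrm{in}})$, take $L_{\mathrm{in}}$ to be the circumference $\ell_i$, and then try to bound $L_{\mathrm{out}}$ by a bounded multiple of $\ell_i$ using horizontal periodicity together with the ratio hypothesis. The gap is exactly the step you flag as the main obstacle, and it cannot be repaired as stated: the outer boundary of an expanding annulus is \emph{not} a concatenation of horizontal saddle connections. In the construction of \cite[Section 3]{RafiHyp} the expanding annulus is grown outward from the flat geodesic representative as a maximal regular (singularity-free) annulus homotopic to $\alpha_i$, so its outer boundary is an equidistant curve: away from singularities it consists of horizontal segments lying in the \emph{interiors} of the neighboring horizontal cylinders at distance $r$ from the inner circle, joined by circular arcs of radius $r$ around the cone points sitting on the inner boundary. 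It need not meet the critical graph at all, and the outer boundary passes \emph{around} zeros rather than through them. Consequently the inequality $L_{\mathrm{out}}\leq$ (total length of horizontal saddle connections) has no justification; there is no structure theorem identifying outer boundaries of expanding annuli with unions of saddle connections, and the hypothesis on circumference ratios enters nowhere in your proposed identification, even though it must (this is where the paper's warning example of the slit torus shows the expansion can otherwise be stopped only very far from the inner boundary).

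The repair, which is what the paper does, is to bound $L_{\mathrm{out}}$ via horizontal segments \emph{inside} the annulus rather than via its boundary: an expanding annulus of large modulus contains an embedded horizontal segment crossing it whose length is proportional to the size of the outer boundary. Since $q$ is horizontally periodic, every horizontal leaf is either a closed leaf of length equal to the circumference of the cylinder containing it or lies in a boundary circle of a cylinder, so any \emph{embedded} horizontal segment has length at most the largest circumference, which by hypothesis is at most $M\ell_i = M\,L_{\mathrm{in}}$. Feeding this into the logarithmic estimate gives $\mathrm{mod}(E)\leq C\log M + C$. (Alternatively, one can argue that the width of $E$ cannot much exceed the largest circumference: otherwise $E$ would contain an entire closed horizontal leaf in its interior, which, being essential and contained in the annulus $E$, would be a flat geodesic homotopic to $\alpha_i$ outside the maximal flat cylinder, a contradiction.) Your remaining steps, namely the logarithmic modulus estimate, the identification $L_{\mathrm{in}}=\ell_i$, and the count of at most $3g-3+n$ horizontal cylinders, are fine but do not salvage the key step.
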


Expanding annuli appear in work of Minksy and Rafi \cite{MinskyProduct, ThickThin} and are now a basic tool. Their definition admits slight variations which are not important here, but, for concreteness, let us say we follow \cite[Section 3]{RafiHyp}, so the inner boundary is the flat geodesic representative.

\begin{proof}
Notice that, for any such expanding annulus of large modulus, there exists an embedded horizontal segment crossing it that is of size proportional to the size of the outer boundary. The circumference of one of the cylinders under consideration is as big as this horizontal segment. On the other hand, the length of the inner boundary is by definition a circumference in the case under consideration. Thus, the desired bound follows from the fact that, as recalled in \cite[Section 3]{RafiHyp}, the moduli of expanding annuli can be calculated as the logarithm of the ratio of the lengths of the inner and outer boundaries.
\end{proof}

The ratio assumption in Lemma \ref{L:expanding} is crucial, as seen by considering a  one by one square torus with a small circumference cylinder glued in along a horizontal slit to give a surface in $\cH(2)$. 

We deduce the following consequence of Lemma \ref{L:expanding}; here we denote the modulus of a cylinder $F$ by $\mathrm{Mod}(F)$.

\begin{corollary} \label{C:length}
    There exists $\delta>0$ (depending only on $g$ and $n$) such that for all $M>0$ there exists an $E>0$ such that the following holds. 
    Let $(X,q)$ be a horizontally periodic quadratic differential such that the ratio of circumferences of horizontal cylinders on $q$ is at most $M$. 
    Suppose $\alpha$ is the core curve of a horizontal cylinder $F$ on $q$ with $\ell_\alpha(X) \leq \delta$. Then, 
    \[
    1/E \leq \ell_\alpha(X) \cdot \mathrm{Mod}(F) \leq E.
    \]
\end{corollary}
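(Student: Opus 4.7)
The plan is to bracket $\mathrm{Mod}(F)$ by quantities comparable to $1/\ell_\alpha(X)$ by passing through extremal length. First I would invoke Maskit's classical comparison of hyperbolic length and extremal length: choosing $\delta > 0$ small enough (and universal), one has $\ell_\alpha(X) \asymp \mathrm{Ext}_\alpha(X)$ with universal multiplicative constants for every simple closed curve $\alpha$ with $\ell_\alpha(X) \leq \delta$.

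The upper bound in the conclusion is then immediate. By definition of extremal length, any embedded annulus with core homotopic to $\alpha$ has modulus at most $1/\mathrm{Ext}_\alpha(X)$; applied to the flat cylinder $F$ and combined with Maskit, this gives $\mathrm{Mod}(F) \leq C/\ell_\alpha(X)$ for a universal $C$, so $\ell_\alpha(X) \cdot \mathrm{Mod}(F) \leq C$, which provides the upper bound on $E$.

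For the lower bound, I would invoke Rafi's decomposition of the conformally maximal annulus around a short curve into a flat cylinder plus the two expanding annuli on either side, in the form used in \cite{RafiComb, ThickThin} and recalled in Appendix \ref{A:Rafi}: whenever $\alpha$ is the core of a flat cylinder $F$ of $q$ and $\ell_\alpha(X)$ is sufficiently small,
\[
\frac{1}{\mathrm{Ext}_\alpha(X)} \asymp \mathrm{Mod}(F) + \mathrm{Mod}(E^+) + \mathrm{Mod}(E^-),
\]
where $E^\pm$ are the two expanding annuli in the flat metric adjacent to $F$. Lemma \ref{L:expanding} bounds $\mathrm{Mod}(E^\pm)$ by some $C(M)$ depending only on $M$. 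Combining this with Maskit, one obtains $\mathrm{Mod}(F) \geq c'/\ell_\alpha(X) - 2 C(M)$, and hence $\ell_\alpha(X) \cdot \mathrm{Mod}(F) \geq c' - 2 C(M)\,\ell_\alpha(X)$. Adjusting the constants (absorbing the $M$-dependence into $E(M)$) produces the desired positive lower bound $1/E$.

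The main technical point is invoking Rafi's short-curve formula in the precise form above, specialized to the case where $\alpha$ is already the core of a horizontal cylinder of $q$; beyond this, the proof is essentially bookkeeping of universal versus $M$-dependent constants.
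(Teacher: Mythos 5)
Your proposal is essentially the paper's own proof: both rest on the Minsky--Rafi comparison $1/\mathrm{Ext}_\alpha(X) \asymp \mathrm{Mod}(F) + \mathrm{Mod}(E^+) + \mathrm{Mod}(E^-)$, the bound on expanding annuli from Lemma \ref{L:expanding}, and Maskit's comparison of hyperbolic and extremal length (your separate upper bound via the definition of extremal length is a harmless variant, since the two-sided comparison already gives it). One caveat on your last step: from $\ell_\alpha(X)\cdot \mathrm{Mod}(F) \geq c' - 2C(M)\,\ell_\alpha(X)$ you cannot make the right-hand side positive by enlarging $E$ --- if $\delta$ is fixed independently of $M$ and $C(M)\delta$ is large the bound is vacuous --- so the $M$-dependence must be absorbed into the smallness of $\delta$ (i.e.\ one needs $\ell_\alpha(X) \leq \delta < c'/(2C(M))$), which is exactly what the paper's phrase ``if $\delta$ is small enough'' is doing, at the cost of reading the quantifiers as ``for all $M$ there exist $\delta, E$.''
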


%
%

\begin{proof}
The ratio of the inverse extremal length of $\alpha$ and the sum of $\mathrm{Mod}(F)$ and the moduli of the corresponding expanding annuli is uniformly bounded; see  \cite[Theorem 3.1, Statement (2)]{RafiHyp} for a convenient reference and \cite[Theorem 4.6]{MinskyHarmonic} for the original. Lemma \ref{L:expanding} gives that the moduli of the expanding annuli are bounded, and, in particular, if $\delta > 0$ is small enough, this implies that $F$  has large modulus. 

%
%

With the assumption that $\ell_\alpha(X) \leq \delta$, hyperbolic and extremal lengths have bounded ratios \cite[Proposition 1, Corollary 3]{maskit}, so the result follows. 
\end{proof}

\subsection{Proof of path connectivity.} We are now ready to prove Theorem \ref{T:Connected}.

\begin{proof}[Proof of Theorem \ref{T:Connected}]
    Suppose $\alpha$ is the core curve of a cylinder $C$ on $(X,q) \in Q_{\max}N$. Denote by $[C]$ the $Q_{\max}N$-parallelism class of $C$. Suppose also that $\alpha$ is the core curve of a cylinder $D$ on $(X',q') \in Q_{\max}N$. Our goal is to produce a path connecting $q$ to $q'$ in $Q_{\max} N$ along which $\alpha$ is always a cylinder.
    
    Consider the reparametrized Teichmüller geodesic $\mathcal{G} \colon [0,1] \to N$ from $X$ to $X'$. After rotating $q$ and $q'$, we can assume without loss of generality that the cylinders $C$ and $D$ are horizontal.  Applying the Teichmüller horocycle flow to $q$ and $q'$ for long enough times yields Riemann surfaces $Y \in N$ and $Y' \in N$ whose projections to the annular curve graph of $\alpha$ are coarsely equal and very far from the projection of any point on $\mathcal{G}$. Furthermore, because the Teichmüller horocycle flow is strongly nondivergent \cite{nondiv}, it is possible to arrange for $X,Y,X',Y' \in N$ to be $\epsilon$-thick for some $\epsilon := \epsilon(X,Y) > 0$ independent of the constant implicit in the notion of ``very far" above. 
    %
    %
    %

    \begin{figure}[h!]
    \includegraphics[width=0.45\linewidth]{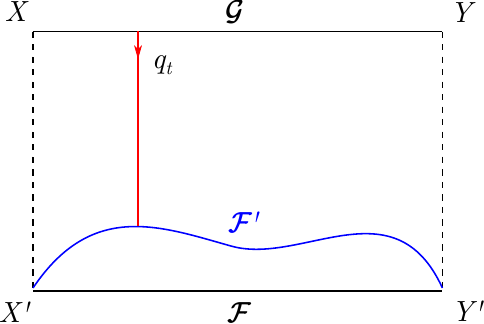}
    \caption{Dotted lines are Teichmüller horocycles and solid straight lines are Teichmüller geodesics.}
    \label{F:HoroGeo}
    \end{figure} 
    
    Let $\mathcal{F} \colon [0,1] \to N$ denote the  reparametrized Teichmüller geodesic from $X'$ to $Y'$. Now, by Corollary \ref{C:StrebelInQN}, for every $Y'' \in \mathcal{F}$ we can find a continuously varying horizontally periodic quadratic differential on $Y''$ whose horizontal cylinder core curves are exactly those corresponding to cylinders in $[C]$. Using the Cylinder Deformation Theorem over these differentials together with Corollary \ref{C:length}, it is possible to produce a continuous path $\mathcal{F}' \colon [0,1] \to N$ with the same endpoints as $\mathcal{F}$ and such that for every $Y'' \in \mathcal{F}'$ it holds that $\ell_{\alpha}(Y'') > \epsilon'$ and that the projection of $Y''$ to the annular curve graph of $\alpha$ is very close to that of $X'$ and $Y'$. Here $\epsilon'$ is a positive constant depending on $\epsilon$ and the constant in Corollary \ref{C:length}. See Figure \ref{F:HoroGeo} for a picture of this construction. 

    Now let $q_t \colon [0,1] \to QN$ be the continuous one parameter family of quadratic differentials such that $q_t \in QN$ is the initial quadratic differential of the Teichmüller geodesic segment joining $\mathcal{G}(t) \in N$ to $\mathcal{F}'(t) \in N$. Since the Teichmüller horocycle flow preserves Teichm\"uller disc, $q_0$ can be obtained as a rotation of $q$ and $q_1$ can be obtained as a rotation of $q'$. Originally $C$ was defined as a cylinder on $q$, but, by abuse of notation, we use the same symbol to denote the corresponding cylinder on $q_0$, since $q$ and $q_0$ only differ by a rotation. 

    Recall that $QN$ is a vector bundle over $N$. The non-maximal part $QN \setminus Q_{\max} N$ is the union of the intersections of $QN$ with certain lower dimensional strata. It is thus possible (for example smoothing and using Sard's Theorem) to produce arbitrarily small deformations $\hat{q}_t \colon [0,1] \to Q_{\max} N$ of the path $q_t$ with image contained in $Q_{\max} N$. Since the endpoints of $q_t$ are already in $Q_{\max} N$, we can additionally arrange for $\hat{q}_t$ to agree with $q_t$ at the endpoints. 

    Now, by Lemma \ref{L:RafiCyl}, assuming the path $\hat{q}_t$ is sufficiently close to the path $q_t$, we get that $\alpha$ must be a cylinder on $\hat{q}_t$ for all $t \in [0,1]$. We have thus produced the desired path connecting $q$ and $q'$ in $Q_{\max} N$.
\end{proof}

\subsection{Short curves and cylinders.} The following lemma and subsequent corollary emphasize the important connection between short curves and cylinders.

\begin{lemma}\label{L:GetCyls}
Let $N \subseteq \mathcal{T}_{g,n}$ be an algebraic totally geodesic submanifold. Then, there exists some $\epsilon>0$ depending only on $N$ such that if $\{\alpha_1, \ldots, \alpha_r\}$ is a collection of disjoint simple closed curves that can be made simultaneously of hyperbolic length less than $\epsilon$ on some surface in $N$, then there exists a quadratic differential in $QN$ where they are all core curves of cylinders. 
\end{lemma}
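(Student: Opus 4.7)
The plan is to prove the lemma in two stages: first, establish that each $\alpha_i$ is individually a cylinder curve for $QN$; second, realize all of them simultaneously as cylinders on a single differential in $QN$.

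Stage 1 (each $\alpha_i$ is a cylinder curve for $QN$). Since $\pi(N)$ is algebraic, it has finite volume in $\cM_{g,n}$, so its thick part is compact and nonempty; in particular there exists $Y \in N$ with $\ell_\gamma(Y) \geq \epsilon_0$ for every simple closed curve $\gamma$, where $\epsilon_0 > 0$ depends only on $N$. Given $X \in N$ with $\ell_{\alpha_i}(X) < \epsilon$, I would consider the Teichm\"uller geodesic in $N$ from $X$ to $Y$, generated by some $q \in QN$. For $\epsilon$ sufficiently small (depending on $\epsilon_0$), the deep thin-part excursion in $\alpha_i$ should force a large subsurface projection distance $d_{\alpha_i}^{\MCG}(X, Y)$ via Rafi's twist analysis (Appendix \ref{A:Rafi}), which exceeds the threshold in Lemma \ref{L:RafiCyl} and hence identifies $\alpha_i$ as a cylinder core of $q$.

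Stage 2 (simultaneous realization). Once each $\alpha_i$ is known to be a cylinder curve, partition $\{\alpha_1, \ldots, \alpha_r\}$ into $N$-equivalence classes $C_1, \ldots, C_k$ using Corollary \ref{C:Connected}. Within each class, Lemma \ref{L:InvariantUnderTwist} provides a multi-twist $D_{C_j}$ in the stabilizer of $N$, with integer weights proportional to the rational moduli ratios from Corollary \ref{C:Connected}. Composing these yields a multi-twist $D = \prod_j D_{C_j}$ that stabilizes $N$ and involves all of the $\alpha_i$. By Lemma \ref{L:GenerateHorocycle} and Lemma \ref{L:StrebelGeneratingHorocycle}, the horocycle from $X$ to $DX$ is generated by a horizontally periodic differential $q \in QN$ whose horizontal cylinder core curves include all of the $\alpha_i$, as desired.

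Main obstacle. The chief difficulty lies in Stage 1: a generic Teichm\"uller geodesic from $X$ to a thick $Y$ need not have large twist distance simultaneously around every $\alpha_i$, since the twist depends on the direction chosen in $N$, and $X$ being deep in the thin part does not by itself give a lower bound on $d_{\alpha_i}^{\MCG}(X,Y)$. The resolution should proceed either by a careful relocation of $X$ within the thin part to boost the twist against $Y$ (exploiting the horocyclic structure of the thin part, analogously to the construction in the proof of Theorem \ref{T:Connected}) or via a limit argument over a sequence $X_n \in N$ with $\ell_{\alpha_i}(X_n) \to 0$, using algebraicity and the finite list of multi-curves pinchable in $\ol{N}$ to extract a uniform $\epsilon$. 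A secondary obstacle in Stage 2 is verifying that the curves across different equivalence classes $C_j$ are pairwise disjoint, which is needed for the composition $D$ to define a single multi-twist; this should follow from the disjointness and simultaneous shortness of the $\alpha_i$ themselves, combined with collar estimates and the uniform moduli control from Corollary \ref{C:length}.
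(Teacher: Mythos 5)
There is a genuine gap, and it sits exactly where you flag it: Stage 1 is not a technical nuisance but the entire content of the lemma, and neither of your suggested repairs supplies the missing mechanism. Shortness of $\alpha_i$ at a single point $X\in N$ gives, by itself, no cylinder, no lower bound on $d_{\alpha_i}^{\MCG}(X,Y)$ for the geodesic to a thick basepoint, and no way to ``relocate $X$ within the thin part to boost the twist'': to twist around $\alpha_i$ while staying in $N$ you would already need a multi-twist supported on the $\alpha_i$ that stabilizes $N$, which is precisely the object that has to be produced. Your second suggestion is circular in the paper's logical order: the identification of which multi-curves are pinchable in $\ol{N}$ with (unions of equivalence classes of) cylinder curves is Corollary \ref{C:Pinchable} and part \eqref{SFC:Pinchable} of Theorem \ref{T:SummaryForClassification}, both of which are deduced from Lemma \ref{L:GetCyls}, not available before it. The paper's proof instead runs as follows: algebraicity makes the closure of $\pi(N)$ in $\ol{\cM}_{g,n}$ compact, and a compactness argument yields a uniform $\epsilon$ such that simultaneous $\epsilon$-shortness of the disjoint curves $\alpha_1,\dots,\alpha_r$ forces $\ol{N}$ to meet a stratum of the bordification where all the $\alpha_i$ are pinched to nodes; then an algebro-geometric argument in plumbing coordinates through such a boundary point (see \cite[Proposition 3.1]{Equations} and \cite[Proposition 7.6]{FredBoundary}) shows that $N$ is stabilized by a multi-twist whose support includes the $\alpha_i$. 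Only at that point do the tools you invoke--Corollary \ref{C:StrebelInQN}, or alternatively applying a large power of the multi-twist and using Lemma \ref{L:RafiCyl}--produce the desired differential in $QN$. This boundary-plus-plumbing step is the key idea absent from your proposal.

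Granting Stage 1 (in the strong form that there is a multi-twist stabilizing $N$ supported on the $\alpha_i$, not merely that each $\alpha_i$ is individually a cylinder curve), your Stage 2 is essentially consistent with the paper's machinery: the classes are handled by Corollary \ref{C:Connected} and Lemma \ref{L:InvariantUnderTwist}, the cross-class disjointness worry is resolved because Lemma \ref{L:ratio} forces all curves equivalent to the $\alpha_i$ to be short at $X$ as well, hence pairwise disjoint by the collar lemma, and Lemmas \ref{L:GenerateHorocycle} and \ref{L:StrebelGeneratingHorocycle} then give a horizontally periodic differential in $QN$ whose horizontal core curves include all the $\alpha_i$. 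But note that even the weaker conclusion of your Stage 1--that each $\alpha_i$ is a cylinder curve--cannot be reached by the geodesic-to-thick-basepoint argument, so the proposal as written does not close.
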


\begin{proof}
A compactness argument using the Deligne--Mumford compactification provides an $\epsilon>0$ such that if $\{\alpha_1, \ldots, \alpha_r\}$ is a collection of disjoint simple closed curves that can be made simultaneously of hyperbolic length less than $\epsilon$ on some surface in $N$, then the boundary of $N$ intersects a stratum of the Deligne--Mumford bordification where all these $\alpha_i$ are pinched to nodes. 

Working in moduli space rather than Teichm\"uller space,  considering a holomorphic disc through such a boundary point, and  using plumbing coordinates, one can check that  $N$ is stabilized by a multi-twist whose support includes the $\alpha_i$; see \cite[Proposition 3.1]{Equations} and \cite[Proposition 7.6]{FredBoundary} for details. Corollary \ref{C:StrebelInQN} then gives the result. Alternatively, after applying a large power of this multi-twist to any point in $N$ and considering the corresponding quadratic differential, Lemma \ref{L:RafiCyl} gives the desired result. 
\end{proof}

\begin{corollary}\label{C:Pinchable}
Let $N \subseteq \mathcal{T}_{g,n}$ be an algebraic totally geodesic submanifold. Then, the core curves of cylinders of quadratic differentials in $QN$ are exactly the curves that get arbitrarily short on $N$. 
\end{corollary}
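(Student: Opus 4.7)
The plan is to deduce both inclusions directly from the material already assembled in this section.

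For the ``short implies cylinder'' direction, the statement is essentially a specialization of Lemma \ref{L:GetCyls} to a single curve. If $\alpha$ can be made arbitrarily short on surfaces in $N$, then in particular there exists $X \in N$ with $\ell_{\alpha}(X) < \epsilon$ for the $\epsilon$ provided by Lemma \ref{L:GetCyls}, and applying that lemma to the singleton collection $\{\alpha\}$ produces a quadratic differential in $QN$ having $\alpha$ as the core curve of a cylinder.

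For the reverse direction I would exploit the $SL(2,\mathbb{R})$-invariance of $QN$, which is immediate from the definition since a Teichm\"uller disc is precisely an $SL(2,\mathbb{R})$-orbit. Given $(X,q)\in QN$ with $\alpha$ the core curve of a cylinder $C$, I would first rotate $q$ so that $C$ becomes horizontal and then apply the Teichm\"uller geodesic flow $g_t = \mathrm{diag}(e^{-t/2}, e^{t/2})$; by invariance each pair $(X_t, q_t) := g_t \cdot (X,q)$ lies in $QN$, so in particular each $X_t \in N$. Along the flow the cylinder $C$ persists, with its circumference scaled by $e^{-t/2}$ and its height by $e^{t/2}$, so that its modulus grows like $e^t$. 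Using $C$ itself as a test annulus shows that the extremal length of $\alpha$ on $X_t$ tends to zero, and by the Maskit comparison \cite{maskit} the hyperbolic length tends to zero as well. Thus $\alpha$ becomes arbitrarily short on $N$.

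The corollary is essentially immediate from what has come before, so there is no substantial obstacle. The only small points to verify are the $SL(2,\mathbb{R})$-invariance of $QN$ and the standard comparisons between modulus, extremal length, and hyperbolic length used to convert a large-modulus cylinder into a short curve.
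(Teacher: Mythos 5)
Your proof is correct and follows essentially the same route as the paper: the paper also handles the ``cylinder implies short'' direction by noting the core curve can be made short via the Teichm\"uller geodesic flow (or a cylinder deformation), and the converse by invoking Lemma \ref{L:GetCyls}. Your write-up merely spells out the standard modulus/extremal-length/hyperbolic-length comparisons that the paper leaves implicit.
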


\begin{proof}
The hyperbolic length of the core curve of any cylinder can be made arbitrarily small using the Teichm\"uller geodesic flow or the Cylinder Deformation Theorem. Conversely, if $\alpha$ is a curve that gets arbitrarily short on $N$, Lemma \ref{L:GetCyls} gives that $\alpha$ is a cylinder curve for $QN$.
\end{proof}

It will be very useful for our purposes to control the ratios of hyperbolic lengths of short curves, as guaranteed by the following lemma.

\begin{lemma}\label{L:ratio}
    Let $N \subseteq \mathcal{T}_{g,n}$ be an algebraic totally geodesic submanifold. Suppose $\{\alpha_1,\dots,\alpha_r\}$ is an equivalence class of $N$-equivalent simple closed curves. Then, there exist constants $M_0 > 0$ and $C > 0$ depending only on $N$ such that for every $i,j \in \{1,\dots,r\}$ and every $X \in N$, if $\min(\ell_{\alpha_i}(X), \ell_{\alpha_j}(X)) \leq M_0$, then
    \[
    1/C \leq \ell_{\alpha_i}(X)/\ell_{\alpha_j}(X) \leq C.
    \]
\end{lemma}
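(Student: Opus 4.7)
The plan is to exhibit, for any given $X\in N$, a single horizontally periodic differential in $QN$ on $X$ whose horizontal cylinders simultaneously realize all of $\alpha_1,\ldots,\alpha_r$ with fixed ratios of moduli, and then convert modulus ratios into hyperbolic length ratios via Corollary~\ref{C:length}. By Corollary~\ref{C:Connected}, the ratios $c_{ij}:=\mathrm{Mod}(F_i)/\mathrm{Mod}(F_j)$ of moduli, and the analogous ratios of circumferences, of cylinders with cores $\alpha_i,\alpha_j$ on any point of $QN$ where both are present are positive constants depending only on $N$ and the class. Applying Corollary~\ref{C:StrebelInQN}, for any $X\in N$ there is a horizontally periodic $q\in QN$ on $X$ whose horizontal cylinders are exactly some $F_1,\ldots,F_r$ with cores $\alpha_1,\ldots,\alpha_r$ and moduli in the prescribed ratios; in particular, the ratio of circumferences of horizontal cylinders of $q$ is bounded above by some constant $M$ depending only on $N$. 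Let $\delta$ be the universal constant of Corollary~\ref{C:length} and let $E$ be the constant it attaches to this $M$.

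Now assume without loss of generality that $\ell_{\alpha_i}(X)\le M_0\le\delta$. Corollary~\ref{C:length} applied to $\alpha_i$ on $(X,q)$ yields $\mathrm{Mod}(F_i)\ge 1/(E\ell_{\alpha_i}(X))$, so $\mathrm{Mod}(F_j)=c_{ji}\mathrm{Mod}(F_i)$ is at least a fixed positive multiple of $1/\ell_{\alpha_i}(X)$. Since $F_j$ is an embedded annulus in $X$ with core $\alpha_j$, the comparison $\mathrm{Ext}_X(\alpha_j)\le 1/\mathrm{Mod}(F_j)$ combined with Maskit's bound of hyperbolic length by extremal length for short curves (the same input already used inside Corollary~\ref{C:length}) gives $\ell_{\alpha_j}(X)\le C'\ell_{\alpha_i}(X)$ for some $C'$ depending only on $N$. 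Choosing $M_0$ small enough that also $C'M_0\le\delta$ ensures $\ell_{\alpha_j}(X)\le\delta$ as well.

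With both $\ell_{\alpha_i}(X)$ and $\ell_{\alpha_j}(X)$ below $\delta$, Corollary~\ref{C:length} is now available for both curves and gives $\ell_{\alpha_k}(X)\cdot\mathrm{Mod}(F_k)\in[1/E,E]$ for $k\in\{i,j\}$. Dividing the two estimates and using $\mathrm{Mod}(F_i)/\mathrm{Mod}(F_j)=c_{ij}$ produces the desired two-sided bound on $\ell_{\alpha_i}(X)/\ell_{\alpha_j}(X)$ with a constant depending only on $N$. The only mild subtlety, and the one place where some care is required, is the bootstrap in the second paragraph: the hypothesis supplies smallness of only one of the two lengths, so one must first propagate smallness from $\alpha_i$ to $\alpha_j$ through the modulus ratio before Corollary~\ref{C:length} can be applied symmetrically.
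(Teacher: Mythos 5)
Your proof is correct and follows essentially the same route as the paper: produce a horizontally periodic differential on $X$ via Corollary~\ref{C:StrebelInQN} with horizontal cylinders exactly the $\alpha_i$ in fixed moduli ratios, then convert moduli to hyperbolic lengths with Corollary~\ref{C:length}. Your explicit bootstrap propagating shortness from $\alpha_i$ to $\alpha_j$ through the modulus ratio is precisely the step the paper compresses into the remark that once one curve is short all moduli are large and hence all the curves are short.
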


\begin{proof}
Corollary \ref{C:StrebelInQN} produces a differential on $X$ in $QN$ with horizontal cylinders corresponding to the $\alpha_i$ and whose ratio of moduli do not depend on $X$.
Corollary \ref{C:length} allows the hyperbolic lengths to be estimated in terms of these moduli, keeping in mind that once one of the $\alpha_i$ gets short the corresponding modulus must be large, hence all the moduli must be large and all the other $\alpha_j$ must be short as well. 
\end{proof}

\subsection{Simultaneous realization.} For later use we highlight the following results. 

\begin{lemma}\label{L:Simultaneous}
Let $N \subseteq \mathcal{T}_{g,n}$ be an algebraic totally geodesic submanifold. Let $\alpha$ and $\beta$ be cylinders at possibly different points of $QN$. Then there exists $q\in QN$ on which both $\alpha$ and $\beta$ are cylinders. 
\end{lemma}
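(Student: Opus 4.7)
Since cylinders on a single quadratic differential have pairwise disjoint core curves, the conclusion implicitly requires that $\alpha$ and $\beta$ be disjoint as simple closed curves; this disjointness should emerge as part of the argument (or be established a priori). My plan is to apply Lemma~\ref{L:RafiCyl} twice along a single Teichm\"uller geodesic segment contained in $N$. Since $N$ is totally geodesic, the generating quadratic differential of any such segment automatically lies in $QN$, and a cylinder on that differential persists along the entire segment under the $SL(2,\mathbb{R})$ action.

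To produce the needed endpoints, I would use Lemma~\ref{L:InvariantUnderTwist} applied to the equivalence classes of $\alpha$ and $\beta$ to obtain multi-twists $\tau_\alpha, \tau_\beta \in \MCG_{g,n}$ that stabilize $N$ and whose supports contain $\alpha$ and $\beta$ respectively. Starting from a sufficiently thick base point $X \in N$, set $Y_n := \tau_\alpha^n \tau_\beta^n X$; then $Y_n \in N$ and the Teichm\"uller geodesic from $X$ to $Y_n$ lies in $N$. The argument then hinges on verifying two properties for large $n$: first, that the hyperbolic lengths $\ell_\alpha$ and $\ell_\beta$ at both endpoints stay uniformly bounded below, and second, that both annular subsurface projection distances $d_\alpha^\MCG(X, Y_n)$ and $d_\beta^\MCG(X, Y_n)$ grow linearly in $n$. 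The length bound uses that each multi-twist fixes the hyperbolic length of curves in its own support, while Lemma~\ref{L:ratio} controls how one multi-twist affects the length of curves stabilized by the other. The projection growth follows from the standard fact that high powers of $\tau_\alpha$ (respectively $\tau_\beta$) produce twisting on the order of $n$ in the annular curve graph of $\alpha$ (respectively $\beta$), combined with a triangle inequality to absorb the contribution of the other multi-twist. Once both conditions are established, Lemma~\ref{L:RafiCyl} applied separately to $\alpha$ and to $\beta$ gives a generating differential $q \in QN$ on which both are cylinder core curves.

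The main obstacle is the cross-projection estimate: if the support of $\tau_\beta$ meets $\alpha$ (or vice versa), then high powers of $\tau_\beta$ contribute linearly in $n$ to the $\alpha$-annular projection, which could cancel the contribution of $\tau_\alpha^n$ in the triangle inequality. Overcoming this requires knowing a priori that the equivalence classes of $\alpha$ and $\beta$ consist of mutually disjoint curves on the marking surface. I expect this disjointness to be the real content of the lemma, and to follow from a compactness argument in the Deligne--Mumford bordification combined with Corollary~\ref{C:Pinchable}: both curves get arbitrarily short on $N$, and one then uses a diagonalization together with the collar-lemma obstruction to simultaneous shortness of intersecting curves to force disjointness, at which point the triangle inequality manipulation goes through cleanly.
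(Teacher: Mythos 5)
There is a genuine gap, and it is right at the foundation of your plan: the lemma does \emph{not} implicitly require $\alpha$ and $\beta$ to be disjoint. Core curves of cylinders on a single quadratic differential are only disjoint when the cylinders are parallel; cylinders in different directions can and do intersect (think of the horizontal and vertical cylinders on a flat torus, or any differential that is Strebel in two directions). Indeed, the lemma is used precisely in situations where the curves intersect: Corollary~\ref{C:Simultaneous} contemplates equivalence classes in which every curve of one class meets a curve of the other, and the remark after Lemma~\ref{L:Simultaneous} feeds such configurations back through Corollary~\ref{C:Connected}. So the ``real content'' you propose to extract --- disjointness of $\alpha$ and $\beta$ via Corollary~\ref{C:Pinchable} and a collar-lemma argument --- is false in general: two intersecting curves cannot be short \emph{simultaneously}, but each can get arbitrarily short at different points of $N$, so both are cylinder curves without being disjoint. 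Since your resolution of the cross-projection problem (the contribution of $\tau_\beta^n$ to $d_\alpha^\MCG$ when $i(\alpha,\beta)\neq 0$) is exactly this disjointness claim, the construction $Y_n=\tau_\alpha^n\tau_\beta^n X$ along a single geodesic cannot be salvaged as written: the worry you yourself raise about cancellation is real and is not removable.

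The correct way around it, and what the paper does, is to not insist on one geodesic determined in advance. Starting from $X$, twist separately to get $Y$ with $\ell_\alpha(Y)=\ell_\alpha(X)$ and $d_\alpha(X,Y)>K$, and $Z$ with $\ell_\beta(Z)=\ell_\beta(X)$ and $d_\beta(X,Z)>K$ (large powers of the multi-twists from Lemma~\ref{L:InvariantUnderTwist}, so $Y,Z\in N$). Then a triangle-inequality case analysis chooses the segment: if $d_\alpha(Y,Z)$ and $d_\beta(Y,Z)$ are both at least $K/100$, apply Lemma~\ref{L:RafiCyl} to the geodesic from $Y$ to $Z$; otherwise, say $d_\alpha(Y,Z)\le K/100$, whence $d_\alpha(X,Z)>K/2$ while $d_\beta(X,Z)>K$, and Lemma~\ref{L:RafiCyl} applies to the geodesic from $X$ to $Z$. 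No disjointness is needed anywhere, and the two cylinders produced on the resulting differential are simply in different directions when $\alpha$ and $\beta$ intersect. Your instinct to combine Lemma~\ref{L:InvariantUnderTwist}, Lemma~\ref{L:RafiCyl}, and control of endpoint lengths is the right toolkit; the missing idea is this flexibility in the choice of endpoints rather than a (false) topological constraint on the curves.
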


\begin{proof}
Fix a point $X\in N$ and find points $Y, Z \in N$ such that
\begin{gather*}
    \ell_\alpha(X) = \ell_\alpha(Y) \quad \text{and} \quad \ell_\beta(X) = \ell_\beta(Z),\\
    d_\alpha(X,Y)>K \quad\text{and} \quad d_\beta(X,Z)>K,
\end{gather*}
where $K > 0$ is a very large constant. This can be done, as in previous proofs, by applying large powers of appropriate multi-twists. 

Now, suppose in first instance that $$d_\alpha(Y,Z) > K/100 \quad\text{and} \quad d_\beta(Y,Z)> K/100.$$
Then, by Lemma \ref{L:RafiCyl}, the quadratic differential generating the Teichm\"uller geodesic segment from $Y$ to $Z$ proves the desired result.

Thus, it suffices by symmetry to consider the case when $d_\alpha(Y,Z) \leq  K/100$. In this case, 
$d_\alpha(X, Z)> K/2$ by the triangle inequality, so, again by Lemma \ref{L:RafiCyl}, the quadratic differential generating the Teichmüller geodesic segment from $X$ to $Z$ proves the desired result. 
\end{proof}

Notice that, in the context of Lemma \ref{L:Simultaneous}, it follows by Corollary \ref{C:Connected} that all simple closed curves $N$-equivalent to $\alpha$ or $\beta$ are core curves of cylinders on $q$. 

\begin{corollary}\label{C:Simultaneous}
Let $N \subseteq \mathcal{T}_{g,n}$ be an algebraic totally geodesic submanifold. Then, any two $N$-equivalence classes of simple closed curves are either pairwise disjoint or every curve of one intersects a curve of the other. 
\end{corollary}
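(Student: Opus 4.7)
The plan is to apply Lemma~\ref{L:Simultaneous} to representatives $\alpha\in[\alpha]$ and $\beta\in[\beta]$, producing a differential $q\in QN$ on which both appear as cylinder core curves. The remark following Lemma~\ref{L:Simultaneous}, deduced from Corollary~\ref{C:Connected}, then upgrades this to the statement that every curve in $[\alpha]\cup[\beta]$ is a cylinder core curve on $q$. Corollary~\ref{C:Connected} further ensures that the curves of $[\alpha]$ form mutually parallel cylinders on $q$ sharing a common direction $\theta_{[\alpha]}$, with the analogous statement for $[\beta]$ and a common direction $\theta_{[\beta]}$.

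The argument then splits according to whether $\theta_{[\alpha]}\equiv\theta_{[\beta]}\pmod\pi$. If the directions agree on $q$, then every cylinder in $[\alpha]\cup[\beta]$ is parallel to every other, and since parallel flat cylinders have mutually disjoint core curves, the two classes are pairwise disjoint and we land in the first case of the dichotomy. Otherwise the two classes have distinct directions on $q$, so no cylinder in $[\alpha]$ is parallel to any cylinder in $[\beta]$, and the goal becomes to show the second case: that every curve of one intersects a curve of the other. For this I plan to argue by contradiction. Suppose some $\alpha_0\in[\alpha]$ were disjoint from every curve in $[\beta]$, while some pair $\alpha^*\in[\alpha]$, $\beta^*\in[\beta]$ satisfies $i(\alpha^*,\beta^*)>0$. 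Then $\{\alpha_0\}\cup[\beta]$ is a disjoint multi-curve of cylinder curves for $QN$. Using Corollary~\ref{C:Pinchable} together with the length-ratio control of Lemma~\ref{L:ratio}, I would argue that $\alpha_0$ can be made simultaneously short with all of $[\beta]$ on a sequence of points in $N$ approaching the Deligne--Mumford boundary, and then invoke Lemma~\ref{L:GetCyls} to produce $q'\in QN$ realizing $\{\alpha_0\}\cup[\beta]$ as parallel horizontal cylinders. Applying Corollary~\ref{C:Connected} to $\alpha_0$ forces all of $[\alpha]$ to be horizontal on $q'$ as well, so that every cylinder in $[\alpha]\cup[\beta]$ is mutually parallel on $q'$; in particular the pair $(\alpha^*,\beta^*)$ would become parallel cylinders with disjoint core curves on $q'$, contradicting the topological invariance of $i(\alpha^*,\beta^*)>0$.

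The main obstacle is justifying the simultaneous pinching of $\{\alpha_0\}\cup[\beta]$ inside $\overline{N}$: Corollary~\ref{C:Pinchable} only supplies separate pinchings of $\alpha_0$ and of $[\beta]$, and combining them requires leveraging their disjointness, the equivalence-class length ratios from Lemma~\ref{L:ratio}, and the algebraic and totally-geodesic structure of $N$ to produce a path in $\overline{N}$ along which $\alpha_0$ degenerates further within the stratum where $[\beta]$ is already pinched.
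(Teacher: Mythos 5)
You have correctly reduced the statement to ruling out the configuration in which some $\alpha_0\in[\alpha]$ is disjoint from every curve of $[\beta]$ while some $\alpha^*\in[\alpha]$ crosses some $\beta^*\in[\beta]$ (the preliminary case split by directions on $q$ plays no role in this), but the step carrying all the weight --- producing a surface in $N$ on which $\alpha_0$ and all of $[\beta]$ are simultaneously very short, i.e.\ a point of $\ol{N}$ where the multicurve $\{\alpha_0\}\cup[\beta]$ is pinched --- is exactly the step you do not prove, and none of the results you cite supplies it. Corollary \ref{C:Pinchable} shortens curves individually (or, via geodesic flow, one parallel class at a time); Lemma \ref{L:GetCyls} runs in the opposite direction (simultaneous shortness $\Rightarrow$ cylinders), and as stated it does not even assert the resulting cylinders are parallel or horizontal; and Lemma \ref{L:ratio} highlights the difficulty rather than resolving it: at any point of $N$ where $\alpha_0$ is short enough, $\alpha^*$ is comparably short, so a point where $\{\alpha_0\}\cup[\beta]$ is pinched would carry the crossing pair $\alpha^*,\beta^*$ simultaneously short --- which is precisely the impossibility you are trying to establish (indeed, once simultaneous shortness is granted, the collar lemma finishes immediately and you never need $q'$, parallelism, or Corollary \ref{C:Connected}). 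Deciding which multicurves can be pinched in $\ol{N}$ is exactly what this corollary and the later boundary theory are needed for (compare part \eqref{SFC:Pinchable} of Theorem \ref{T:SummaryForClassification}, which says pinchable multicurves are unions of disjoint equivalence classes), so your ``main obstacle'' is not a technical detail to be leveraged away: it is essentially the content of the statement, and the proposal as written is circular in spirit.

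For contrast, the paper's proof avoids degeneration entirely and argues with cylinder deformations: realize both classes as cylinders on one differential (Lemma \ref{L:Simultaneous} together with Corollary \ref{C:Connected}) and apply the standard cylinder deformation to the class $[\beta]$; this changes the circumference of a cylinder of $[\alpha]$ meeting the deformed cylinders while leaving unchanged the circumference of one disjoint from them, contradicting the constancy of circumference ratios within $[\alpha]$ from Corollary \ref{C:Connected} (the details, including the passage between intersection of core curves and overlap of cylinders, are in the cited \cite[Proposition 3.2]{H4}). Two further inaccuracies in your write-up, secondary to the main gap: Corollary \ref{C:Connected} asserts $Q_{\max}N$-parallelism of equivalent cylinders only at points of $Q_{\max}N$, whereas Lemma \ref{L:Simultaneous} only provides $q\in QN$, so the ``common direction'' $\theta_{[\alpha]}$ is not justified as stated; and the claimed upgrade of Lemma \ref{L:GetCyls} to ``parallel horizontal cylinders'' would require reworking its proof rather than quoting its statement.
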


\begin{proof}
Pick a curve in each equivalence class. Lemma \ref{L:Simultaneous} provides a quadratic differential $q\in QN$ where both curves are cylinders. Perturbing $q$, without loss of generality we can assume that $q$ is in $Q_{\max}N$. Corollary \ref{C:Connected} gives that all the $N$-equivalent curves are cylinder curves on $q$ as well.  Let $[C]$ and $[D]$ denote the corresponding equivalence classes of cylinders on $q$. 

Using the Cylinder Deformation Theorem, we can deform $[C]$ in such a way that the circumference of every cylinder in $[D]$ that intersects a cylinder in $[C]$ changes. Since ratios of circumference of cylinders in $[D]$ are constant, this gives that if any cylinder of $[D]$ is intersected by a cylinder in $[C]$, then they all are. See \cite[Proposition 3.2]{H4} for details.
\end{proof}

\section{The boundary is totally geodesic}

\subsection{Main definition and theorem.} 
Throughout this section we consider a product of Teichm\"uller spaces of the form
$$\mathcal{T}_{g_1,n_1}\times \cdots \times \mathcal{T}_{g_s,n_s}.$$
We may sometimes use the Teichm\"uller metric on this space, which can be defined in the usual way and is equal to the sup of the Teichm\"uller metrics of the components. We will use $\pi$ to denote the map to the corresponding product of moduli spaces. 

Over these product spaces we consider the corresponding bundles of quadratic differentials holomorphic away from marked points and with at most simple poles at marked points; these differentials have finite area and could be zero on some (or all) components. There is an action of $GL^{+}(2,\mathbb{R})$ on this bundle which acts as usual on components where the differential is non-zero and leaves unchanged components where the differential is zero. We warn the reader that this action is not continuous at points where the differential is zero on at least one component. 

We define a T-disc as the projection to $\mathcal{T}_{g_1,n_1}\times \cdots \times \mathcal{T}_{g_s,n_s}$ of the $GL^{+}(2,\mathbb{R})$ orbit of one of the quadratic differentials introduced above.   This term of course stands for ``Teichm\"uller disc'', but we have chosen to instead write ``T-disc'' to emphasize this is a non-standard definition given the multi-component setting. The T-disc defined using a quadratic differential $q$ will be called the T-disc generated by $q$.

\begin{remark}\label{R:Tdiscs}
We highlight a few important aspects of T-discs:
\begin{enumerate}
\item The T-disc associated to a zero differential is a point. 
\item The projection to each component $\cT_{g_i, n_i}$ of any T-disc is either a point or a Teichm\"uller disc. 
\item\label{R:Tdiscs:equal} If $(X_1, \ldots, X_s)$ and $(Y_1, \ldots, Y_s)$ lie on single T-disc, then, for each pair $i,j$ of coordinates where the disc is not constant, we have 
$$d_{\cT_{g_i, n_i}} (X_i, Y_i) = d_{\cT_{g_j, n_j}} (X_j, Y_j).$$
\item Every T-disc is an isometrically and holomorphically embedded copy of the hyperbolic plane.
\item Not every isometrically and holomorphically embedded copy of the hyperbolic plane is a T-disc. 
\item It is not true that there is a T-disc containing every pair of points in a product of Teichmüller spaces.
\item\label{R:Tdiscs:notUnique} If $q$ and $q'$ are equal up to rescaling on each component by a non-zero factor depending on the component, then the T-discs they define are the same. In particular, when the number of components is at least 2, it is possible to have $\bC q \neq \bC q'$ but for $q$ and $q'$ to generate the same T-disc. 
\end{enumerate}
\end{remark}

Given an irreducible complex analytic subset
$$L \subseteq \mathcal{T}_{g_1,n_1}\times \cdots \times \mathcal{T}_{g_s,n_s},$$
we define $QL$ to be the set of quadratic differentials that generate T-discs completely contained in $L$.

\begin{definition}\label{D:TG}
An irreducible complex analytic subset $L \subseteq \mathcal{T}_{g_1,n_1}\times \cdots \times \mathcal{T}_{g_s,n_s}$ is said to be totally geodesic if there is an irreducible complex analytic subset $Q_{\rm alg}L$ of the bundle of quadratic differentials over $\mathcal{T}_{g_1,n_1}\times \cdots \times \mathcal{T}_{g_s,n_s}$ such that 
\begin{enumerate}
\item $Q_{\rm alg}L \subseteq QL$ and $Q_{\rm alg}L$ is $GL^+(2, \bR)$ invariant, 
\item $\pi(Q_{\rm alg} L)\subseteq Q\mathcal{M}_{g_1,n_1}\times \cdots \times Q\mathcal{M}_{g_s,n_s}$ is a variety, and 
\item $Q_{\rm alg} L$ contains a $\dim L$ dimensional vector subspace above every point of $L$. 
\end{enumerate}
\end{definition}

\begin{remark}
Recall that a complex analytic subset is a subset that is locally defined near each point of the ambient space as the zero set of a finite collection of holomorphic functions.  Such subsets are always closed, and we emphasize that this means $Q_{\rm alg}L$ is required to be closed. 
\end{remark}

\begin{remark}
Later, after we understand their remarkable structure, in Lemma \ref{L:smooth} we will see that totally geodesic analytic subsets are actually complex submanifolds, i.e., have no singularities. So the appearance of complex analytic subsets can be thought of as a technical detail internal to our proofs. The reader wishing to pay attention to this detail may want to keep in mind that there are complex analytic subsets which are topological submanifolds but not complex submanifolds; for example, consider the subset of $\bC^2$ defined by $z_1^2=z_2^3$.
\end{remark}

\begin{remark}
The whole product space $\mathcal{T}_{g_1,n_1}\times \cdots \times \mathcal{T}_{g_s,n_s}$ is totally geodesic, but, as remarked above, it does not contain a T-disc through every pair of points. Thus, our definition does not satisfy all the conditions one might initially hope from a generalized definition of totally geodesic.  
\end{remark} 

\begin{remark}\label{R:2dim}
As is discussed in \cite[Proposition 1.3]{GoujardSurvey} and elsewhere, in the single component case, $L$ is totally geodesic if and only if $\dim QL = 2\dim L$.   Since \cite[Proposition 3.1]{IsomEmb} shows that in this case $QL$ is a vector bundle, it is not hard to see that our definition agrees with the usual definition when $s=1$; see Lemma \ref{L:smooth} for why $L$ is a complex submanifold and not merely a complex analytic subset.
\end{remark}

\begin{remark}
In the single component case one does not always consider zero dimensional totally geodesic submanifolds, i.e., single points, but we note that, for our purposes, a single point will be considered as such.
\end{remark}

\begin{remark}
In the multi-component case, $QL$ is often bigger than $Q_{\rm alg} L$. For example, $QL$ is closed under scaling each component individually by positive real numbers, and need not be closed in the bundle of quadratic differentials. This behavior can already be seen for a diagonal. 
\end{remark}

Definition \ref{D:TG} is easy enough to satisfy that we can verify the following, which is the main result of this section.

\begin{theorem}\label{T:BoundaryTotallyGeodesic}
Let $N\subseteq \cT_{g,n}$ be an algebraic totally geodesic submanifold. Then any irreducible component of the intersection of its closure $\overline{N}$ with a stratum of the Deligne--Mumford bordification is an algebraic totally geodesic subset.
\end{theorem}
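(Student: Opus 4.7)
The plan is to construct the set $Q_{\mathrm{alg}}L$ required by Definition \ref{D:TG} by taking limits of quadratic differentials in $Q_{\max}N$ as one approaches the boundary stratum, and then verify the three defining properties in turn. First I would observe that $L$ is algebraic: since $\pi(N)$ is closed algebraic in $\mathcal{M}_{g,n}$, its closure in $\overline{\mathcal{M}}_{g,n}$ is algebraic, and so its intersection with the algebraic stratum corresponding to $\sigma$ is algebraic, giving that $L$ is a complex analytic subset of the corresponding stratum of augmented Teichm\"uller space.

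For the construction of $Q_{\mathrm{alg}}L$, I would use that by Corollary \ref{C:Pinchable} every curve in the pinched multi-curve $\sigma$ is a cylinder curve for $QN$, and by earlier results $\sigma$ must be a union of $N$-equivalence classes of such curves. Given any $X_\infty \in L$ and any nearby $X \in N$, Corollary \ref{C:StrebelInQN} yields quadratic differentials $q \in QN$ on $X$ whose horizontal cylinders have core curves exactly $\sigma$. As $X \to X_\infty$ and the cylinders along $\sigma$ are pinched, once one normalizes suitably (e.g.\ by fixing unit area on a chosen component) the $q$ converge to a quadratic differential $q_\infty$ on the components of $X_\infty$, with poles at the nodes. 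I would define $Q_{\mathrm{alg}}L$ to be the closure, in the bundle of quadratic differentials over the stratum, of the set of all such $q_\infty$, closed further under the component-wise $GL^+(2,\mathbb{R})$-action and the componentwise rescaling described in Remark \ref{R:Tdiscs}\eqref{R:Tdiscs:notUnique}.

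To verify the three conditions: for $Q_{\mathrm{alg}}L \subseteq QL$, the T-disc generated by $q_\infty$ is a limit of the T-discs in $N$ generated by the approximating $q$'s (these T-discs are defined via $GL^+(2,\mathbb{R})$ and behave continuously under the normalized limit), so the T-disc sits inside $\overline{N}$ and, meeting the stratum, inside $L$. For algebraicity of $\pi(Q_{\mathrm{alg}}L)$, I would invoke Filip's algebraicity theorem together with the known algebraic structure of boundaries of affine invariant subvarieties in the style of the Chen--Wright program. For the dimension condition, I would use that $Q_{\max}N$ is a vector bundle of rank $\dim N$ over $N$ and argue that each $N$-equivalence class of cylinders in $\sigma$ contributes one complex dimension to the drop $\dim N - \dim L$ and two complex dimensions to the corresponding drop from $\dim Q_{\max}N$ to $\dim Q_{\mathrm{alg}}L$, so that the fiber above each point of $L$ contains a complex vector subspace of dimension $\dim L$.

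I expect the main obstacle to be the dimension count just above. The difficulty is that one must track, in plumbing or period coordinates near a boundary point of $\overline{N}$, exactly how the fiber of $Q_{\max}N$ degenerates when the cylinders in $\sigma$ are pinched. This uses in an essential way the structural results pinning down the ratios of moduli within each $N$-equivalence class (Corollary \ref{C:Connected} and Lemma \ref{L:ConstRatio}), since without the rigidity of these ratios one could not identify the lost directions with the pinching parameters of equivalence classes. A related subtlety is to verify that the whole $\dim L$-dimensional vector space (and not just a single line) lives inside $Q_{\mathrm{alg}}L$, which should follow from the componentwise rescaling closure built into the definition together with the fact that complex linear combinations of limits of $q$'s are again limits of elements in the vector bundle $Q_{\max}N$.
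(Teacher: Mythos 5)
Your construction of $Q_{\rm alg}L$ starts from the wrong differentials, and this is a genuine gap rather than a detail. The Strebel differentials of Corollary \ref{C:StrebelInQN} whose horizontal cylinders are exactly the pinched multi-curve $\sigma$ degenerate, as $X\to X_\infty$, to differentials with double poles at the nodes with non-zero 2-residues, i.e.\ pairs of half-infinite cylinders --- this is exactly the behavior exploited in Lemma \ref{L:DilationLemma}. Such limits have infinite area and do not lie in the bundle of quadratic differentials over the boundary stratum that Definition \ref{D:TG} refers to (holomorphic away from marked points, at most simple poles there, finite area), so they cannot serve as $Q_{\rm alg}L$ and do not generate T-discs inside $L$. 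The paper's construction goes in the opposite direction: it takes the full $d$-dimensional limiting fibers of $QN$ inside the quadratic Hodge bundle and cuts out the finite-area part by imposing vanishing of all 2-residues; the point of Lemma \ref{L:ResidueEquation} is that the 2-residues at $N$-equivalent nodes are proportional, so this vanishing costs at most one dimension per pinched equivalence class, leaving a subspace of dimension $d-k$ (Corollary \ref{C:ManyFiniteArea}). This suffices once one knows $\dim L\le d-k$, which is not obtained by your heuristic ``one dimension per class'' count but by the unpinching argument of Lemma \ref{L:UnPinchOneAtATime}, which shows $L$ sits at least $k$ levels deep in the boundary stratification.

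Even granting a corrected construction, two further steps in your outline are asserted rather than proved. First, the claim that the T-discs ``behave continuously under the normalized limit'' is precisely what fails: the $GL^{+}(2,\mathbb{R})$ action is not continuous at differentials vanishing on some components, and showing that the closure of $QN$ in the finite-area part of the quadratic Hodge bundle is still $GL^{+}(2,\mathbb{R})$-invariant is the content of Corollary \ref{C:NotJustContinuity}, whose proof requires the degeneration results of Mirzakhani--Wright and Chen--Wright; your appeal to ``Filip plus the Chen--Wright program'' does not substitute for this step, which is what actually yields $Q_{\rm alg}L\subseteq QL$. Second, Definition \ref{D:TG} requires $Q_{\rm alg}L$ to be an irreducible complex analytic subset, and your ``closure of all such $q_\infty$, closed under rescaling and the group action'' is neither shown to be analytic nor irreducible; similarly, algebraicity of $L$ is not immediate from closing up $\pi(N)$, since $L$ is only one component of the preimage of that variety in the bordification --- the paper handles both points via the Dehn space argument of Lemma \ref{L:CoverVariety} together with the projectivization-and-properness argument extracting an irreducible component of $\ol{QN}$ mapping onto $L$.
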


Note that Definition \ref{D:TG} actually assumes algebraicity, so when in the phrase ``algebraic totally geodesic subset" above the word ``algebraic" is really just for emphasis and for consistency with other situations where algebraicity is not automatic. 

We emphasize that Definition \ref{D:TG} should be thought of as a temporary tool, since ultimately in proving Theorem \ref{T:main1} we will obtain vastly stronger information. 

\subsection{The quadratic Hodge bundle.} Following the notation of, for example, \cite{CMW}, 
we consider the quadratic Hodge bundle over the Deligne--Mumford compactification $\ol{\cM}_{g,n}$; see also \cite[Sections 3.1, 3.2]{StrataOfkDiffs}. This holomorphic vector bundle parameterizes quadratic differentials with at worst simple poles at the marked points and at worst double poles at the nodes, subject to residue matching condition at nodes; these differentials are allowed to be zero on some components. We also consider the pull-back of this bundle to the Deligne--Mumford bordification, leaving it to the reader to infer from context if we are referring to the lifted or the original version. (The pullback is a vector bundle, see for instance \cite[Proposition 1.5]{HatcherVB}, but since the bordification is not even locally compact it is not a holomorphic vector bundle). 

The residue matching condition is equality of the coefficients of $1/z^2$ in the Laurent expansions of the differential on either side of the node; following the work cited above we call this coefficient the 2-residue. When the differential is viewed as a flat surface, having non-zero and matching 2-residues corresponds to having half-infinite cylinders on either side of the node with core curves that are parallel and of the same length. 

Let $N\subseteq \cT_{g,n}$ be an algebraic totally geodesic submanifold. Denote by $\ol{Q N}$ the closure of $QN$ in the quadratic Hodge bundle. Given two $N$-equivalent cylinder curves $\alpha$ and $\beta$, denote by $c(\alpha, \beta)$ the circumference of a cylinder with core curve $\alpha$ divided by the circumference of a cylinder with core curve beta; this quantity is well defined  by Corollary \ref{C:Connected}.

\begin{lemma}\label{L:ResidueEquation}
Let $N\subseteq \cT_{g,n}$ be an algebraic totally geodesic submanifold and $\alpha$, $\beta$ be a pair of $N$-equivalent cylinder curves. Then, for any $q\in \ol{Q N}$ where $\alpha$ and $\beta$ are pinched to nodes $n_\alpha$, $n_\beta$, the 2-residue of $q$ at $n_\alpha$ is equal to $c(\alpha, \beta)$ times the 2-residue of $q$ at $n_\beta$.
\end{lemma}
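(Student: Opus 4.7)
The plan is a density-and-continuity argument that exploits the fact that $\ol{QN}$ is, by definition, the closure of $QN$ in the quadratic Hodge bundle, and that 2-residues are holomorphic functions on that bundle. The proposed identity is a linear equation in holomorphic functions and therefore defines a closed condition, so it suffices to verify it on a dense subset of the relevant boundary stratum, namely on sequences approaching from $QN$ itself.

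Concretely, I would fix $q \in \ol{QN}$ lying on a stratum in which $\alpha$ and $\beta$ are pinched to nodes $n_\alpha$ and $n_\beta$, and choose a sequence $q_k \in QN$ with $q_k \to q$ in the quadratic Hodge bundle. The standard plumbing description of a neighborhood of the boundary stratum shows that the smoothing of the node $n_\alpha$ on the nearby $q_k$ takes the form of a flat cylinder whose core curve is precisely $\alpha$, and similarly for $\beta$ at $n_\beta$. Moreover, the complex holonomy of each such cylinder on $q_k$ converges, via the universal local double-pole model, to the quantity encoded by the 2-residue of $q$ at the corresponding node.

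Now I would apply Corollary \ref{C:Connected}: since $\alpha$ and $\beta$ are $N$-equivalent and both appear as cylinder core curves on every $q_k$, their circumferences satisfy $c_\alpha(q_k) = c(\alpha,\beta)\,c_\beta(q_k)$, and the two cylinders are moreover parallel --- this is immediate at points of $Q_{\max}N$ by the definition of $Q_{\max}N$-parallelism, and extends to all of $QN$ by continuity since $Q_{\max}N$ is open and dense in $QN$. Hence the ratio of the complex holonomies on $q_k$ is a fixed constant, and letting $k \to \infty$ yields the claimed proportionality between the 2-residues of $q$ at $n_\alpha$ and $n_\beta$.

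The chief technical point in this argument, and the step I expect to require the most care, is the identification of $\alpha$ and $\beta$ with the core curves of the opening cylinders on the approximating sequence $q_k$, rather than with some other short curves. This is exactly what the plumbing description of the quadratic Hodge bundle near a boundary stratum provides: each node in the boundary is associated to a definite pinched curve on nearby smooth differentials, so along any sequence $q_k \to q$ the nodes $n_\alpha$ and $n_\beta$ open up into cylinders whose core curves are forced to be $\alpha$ and $\beta$ themselves (compatible with Corollary \ref{C:Pinchable} and Lemma \ref{L:ratio}, which already guarantee that $\alpha$ and $\beta$ are simultaneously short and simultaneously cylinder core curves on $q_k$). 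Once this identification is in place, the rest is a limit computation in the universal double-pole model.
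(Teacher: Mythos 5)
Your argument is correct and is essentially the paper's proof, which is a one-line version of the same idea: by Corollary \ref{C:Connected} the circumference ratio of the $\alpha$- and $\beta$-cylinders is the constant $c(\alpha,\beta)$ along any sequence in $QN$ converging to $q$, and by continuity this ratio passes to the circumferences of the half-infinite cylinders at the nodes, which is exactly the data recorded by the 2-residues. The plumbing details you supply (identifying the opening cylinders' core curves with $\alpha$ and $\beta$) are a reasonable fleshing-out of what the paper leaves implicit, not a different route.
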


\begin{proof}
This follows by continuity of the circumference of a cylinder and the fact that 2-residues give the circumferences of the corresponding infinite cylinders. 
\end{proof}

\begin{corollary}\label{C:ManyFiniteArea}
Let $N\subseteq \cT_{g,n}$ be an algebraic totally geodesic submanifold and suppose $X\in \ol{N}$ is a point where exactly $k$ classes of $N$-equivalent curves have been pinched to nodes. Then, the fiber of $\ol{Q N}$ at $X$ contains a linear subspace of dimension $d-k$ consisting of finite area differentials. Moreover any finite area differential in the fiber is contained in such a subspace. 
\end{corollary}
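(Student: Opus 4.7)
The plan is to first exhibit a $d$-dimensional linear subspace $V$ inside the fiber of $\ol{QN}$ at $X$, and then to see that the finite-area condition cuts $V$ down by at most $k$ dimensions via Lemma \ref{L:ResidueEquation}. Here $d$ denotes the rank of $QN$ as a holomorphic vector bundle over $N$ (equal to $\dim N$ under the totally geodesic assumption). For the first step, since $QN \to N$ is a holomorphic vector bundle by \cite[Proposition 3.1]{IsomEmb}, the fiberwise linear structure defines a holomorphic map from $N$ into the Grassmannian of $d$-planes in fibers of the quadratic Hodge bundle, computed in a local trivialization of the Hodge bundle near $X$. Compactness of the Grassmannian lets me extract a $d$-dimensional accumulation subspace $V$ as one approaches $X$ along a sequence in $N$; since $\ol{QN}$ is closed by definition, the resulting subspace $V$ is contained in the fiber of $\ol{QN}$ over $X$.

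For the second step, a quadratic differential in the quadratic Hodge bundle has finite area precisely when all its 2-residues at the nodes vanish, since double poles are the only source of infinite area for differentials with at worst simple poles at marked points. Let $\alpha_1,\dots,\alpha_k$ be representatives of the $k$ pinched $N$-equivalence classes of curves at $X$. For any other curve $\beta$ pinched at $X$, equivalent to some $\alpha_i$, Lemma \ref{L:ResidueEquation} forces the 2-residue at the node $n_\beta$ of any element of $\ol{QN}|_X$ to equal $c(\beta,\alpha_i)$ times the 2-residue at $n_{\alpha_i}$. Hence requiring all 2-residues on $V$ to vanish amounts to only $k$ linear equations, one for each equivalence class, and the common zero set inside $V$ is therefore a linear subspace of dimension at least $d-k$ consisting entirely of finite-area differentials.

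The only genuine subtlety is the first step: one has to check that the Grassmannian accumulation really is an honest linear subspace of dimension exactly $d$ sitting in $\ol{QN}|_X$, rather than some lower-dimensional or merely analytic degeneration. This is standard given the holomorphic vector bundle structure of $QN$ and the local triviality of the quadratic Hodge bundle over the Deligne--Mumford bordification, but it is the only place where the argument uses more than the formal consequences of Lemma \ref{L:ResidueEquation}.
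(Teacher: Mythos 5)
Your proposal is correct and follows essentially the same route as the paper: use the rank-$d$ vector bundle structure of $QN$ (via \cite[Proposition 3.1]{IsomEmb}) to produce a $d$-dimensional subspace of the fiber of $\ol{QN}$ over $X$, then apply Lemma \ref{L:ResidueEquation} to see that vanishing of all 2-residues imposes only one linear condition per equivalence class, hence codimension at most $k$, with finite area equivalent to vanishing 2-residues. The only difference is that you spell out the Grassmannian-compactness limit argument that the paper leaves implicit, which is a fine way to justify that step.
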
 

\begin{proof}
Recall from \cite[Proposition 3.1]{IsomEmb} that $QN$ is a vector bundle of rank $d=\dim N$. Thus, every fiber of $\ol{Q N}$ is a union of $d$-dimensional vector subspaces $W$  of differentials. Given such a $W$, Lemma \ref{L:ResidueEquation} gives that the subspace $W_0$ of $W$ where all 2-residues vanish has codimension at most $k$. This $W_0$ is exactly the finite area differentials in $W$.
\end{proof}

We deduce the following corollary from \cite{MW17, ChenWright}. 

\begin{corollary}\label{C:NotJustContinuity}
Let $V$ be a $GL^{+}(2,\mathbb{R})$-invariant subvariety of non-zero quadratic differentials over $\cM_{g,n}$. Then the closure of $V$ in the finite area part of the quadratic Hodge bundle is $GL^{+}(2,\mathbb{R})$ invariant. 
\end{corollary}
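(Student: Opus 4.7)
The plan is to deduce this from the structural results of \cite{MW17} and \cite{ChenWright} on the boundaries of $GL^+(2,\bR)$-invariant subvarieties, after passing to the canonical double cover to reduce the quadratic case to the Abelian one. The heart of the matter is that the $GL^+(2,\bR)$-action is discontinuous precisely at multi-component differentials that vanish on some component, so the conclusion does not follow by a naive approximation argument; the cited references are what make the statement non-trivial.

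Let $q$ lie in the closure of $V$ in the finite area part of the quadratic Hodge bundle, fix $g \in GL^+(2,\bR)$, and pick a sequence $q_n \in V$ with $q_n \to q$. If $q$ is non-zero on every irreducible component of the underlying nodal surface, then $g$ acts continuously at $q$, so $g \cdot q_n \to g \cdot q$ and the $GL^+(2,\bR)$-invariance of $V$ gives $g \cdot q \in \overline{V}$ directly. The substantive case is when $q$ vanishes on some irreducible components, since then $g \cdot q_n$ may fail to converge to $g \cdot q$ on those components.

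In this case, I would appeal to the boundary structure theorem of \cite{ChenWright}, which describes the closure of a $GL^+(2,\bR)$-invariant subvariety in the relevant multi-scale compactification as, roughly, a product of pieces coming from invariant subvarieties on each surface component and on each level of the bordification. The finite area assumption is exactly what is needed to remain in the strata where this structure theorem applies: it forces all 2-residues at the nodes to vanish, eliminating half-infinite cylinders and placing $q$ in the type of boundary stratum covered by \cite{MW17, ChenWright}. Each factor in the resulting decomposition is $GL^+(2,\bR)$-invariant in its own right, so $g \cdot q$ also sits in such a factor, hence in $\overline{V}$; equivalently one can produce a new approximating sequence $q_n' \in V$ with $q_n' \to g \cdot q$ by modifying the components of $q_n$ on which $q$ vanishes according to the invariance of the corresponding lower-level factor.

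The main obstacle is bookkeeping: translating between the multi-scale compactification language used in \cite{MW17, ChenWright} and the quadratic Hodge bundle used here, and keeping careful track of the canonical double cover so that $GL^+(2,\bR)$-invariance downstairs is genuinely encoded by the factorwise invariance upstairs. Once this dictionary is set up, the proof is essentially a rewording of the cited results in the present language.
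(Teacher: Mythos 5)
Your proposal correctly isolates the source of the difficulty (discontinuity of the $GL^{+}(2,\mathbb{R})$-action at differentials vanishing on some components) and points to the right references, but the decisive step has a gap. The boundary structure theorem of \cite{ChenWright} (which lives in the ``What You See Is What You Get'' partial compactification, not the multi-scale compactification) only describes the non-degenerating part of a limit: a boundary point there is $(X_\infty', q_\infty)$, the union of the components where $q_\infty$ is nonzero. The closure in the quadratic Hodge bundle remembers strictly more, namely the full stable curve $X_\infty$ including the complex structure of the components on which $q_\infty$ vanishes, and the action of $g$ on such a point is required to leave those components unchanged. Knowing that the WYSIWYG boundary of $V$ is built from $GL^{+}(2,\mathbb{R})$-invariant factors tells you that some sequence in $V$ degenerates to $g\cdot(X_\infty',q_\infty)$ in the WYSIWYG sense; it does not produce a sequence in $V$ converging in the Hodge bundle to the specific point $g\cdot(X_\infty,q_\infty)$, i.e.\ with the degenerating components limiting to the same complex structure as before. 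Your proposed fix --- ``modify the components of $q_n$ on which $q$ vanishes'' --- is exactly the unproved step, and it is stated backwards: what one must do is move the non-vanishing part by $g$ while leaving the degenerating part alone, and there is no a priori reason such a modification stays in $V$.

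The paper closes precisely this gap. After reducing to the unipotent subgroup $u_t$ (complex scalar invariance of the closure is easy, and scalars together with $u_t$ generate $GL^{+}(2,\mathbb{R})$) and passing to the holonomy double cover to work with Abelian differentials, it invokes the main result of \cite{MW17, ChenWright} in its deformation form: any deformation of the WYSIWYG boundary point $(Y_\infty',\omega_\infty)$ can be performed on the nearby surfaces $(Y_k,\omega_k)$ in (the double-cover locus of) $V$ for $k$ large, without changing vanishing cycles, hence without disturbing the part of $Y_k$ on which $\omega_k$ is degenerating. Concretely, the linear path $\omega_\infty + t\,\mathrm{Im}(\omega_\infty)$ in period coordinates is transferred to the path $\omega_k + t\,\mathrm{Im}(\omega_\infty)$, and for fixed small $t$ these converge in the Hodge bundle to $u_t(Y_\infty,\omega_\infty)$ with the zero components untouched; since the closure is closed, small $t$ suffices. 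If you want to repair your write-up, this transfer-of-deformations statement is the ingredient you must cite and apply; factorwise invariance of the boundary alone does not yield the conclusion.
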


Because the $GL^{+}(2,\mathbb{R})$ action on the closure of the quadratic Hodge bundle is not continuous, this is a non-trivial statement. Although restricting to the finite area part is not required, we will only need that case here.

\begin{proof}
For concreteness of notation we will prove invariance under the one parameter subgroup $\{u_t\}_{t \in \mathbb{R}}$ consisting of unipotent upper triangular matrices. This is sufficient because the closure of $V$ in the finite area part of the quadratic Hodge bundle is easily seen to be invariant under multiplication by complex scalars, and complex scalar multiplication together with $\{u_t\}_{t \in \mathbb{R}}$ generate $GL^{+}(2,\mathbb{R})$. 

Consider a sequence $(X_k, q_k) \in V$ that converges to $(X_\infty, q_\infty)$ in the finite area part of the quadratic Hodge bundle. We show $u_t(X_\infty, q_\infty)$ is also in the closure of $V$. Without loss of generality, assume that all $(X_k,\omega_k)$ lie in a single stratum and in a single irreducible component of the intersection of $V$ with that stratum. 

Let $(Y_k, \omega_k)$ be the double cover of $(X_k, q_k)$. After passing to a subsequence, we can assume $(Y_k, \omega_k)$ converges in the Hodge bundle to a finite area limit $(Y_\infty, \omega_\infty)$. Let $Y_\infty'$ (respectively $X_\infty'$) be the union of the components of $Y_\infty$ (respectively $X_\infty$) where $\omega_\infty$ (respectively $q_\infty$) is nonzero. Then, $(Y_\infty', \omega_\infty)$ is a double cover of $(X_\infty', q_\infty)$.

The construction above guarantees $(Y_k, \omega_k)$ converges to $(Y_\infty', \omega_\infty)$ in the ``What You See Is What You Get" partial compactification of the stratum. The main result of \cite{MW17, ChenWright} gives that, for any deformation on the boundary in this partial compactification, a corresponding deformation can be done on $(Y_k, \omega_k)$ for $k$ sufficiently large. The corresponding deformation does not change the vanishing cycles, so, at an intuitive level, one should think that the deformation does not affect the part of $Y_k$ where $\omega_k$ is converging to zero. (Results in \cite[Section 9]{MW17}  relate the topology on the What You See Is What You Get to the topology on the Hodge bundle, and allow vanishing cycles to be related to subsurfaces where $\omega_k$ is converging to zero.)

Applying this to the $u_t$ orbit of $(Y_\infty', \omega_\infty)$, which in period coordinates is given by $\omega_\infty + t \Im(\omega_\infty)$, shows that the path $\omega_k + t \Im(\omega_\infty)$ is contained in the set of double covers of surfaces in $V$, for $k$ sufficiently large and $t$ sufficiently small. For fixed $t$ small, this can be seen to converge to $u_t(Y_\infty, \omega_\infty)$. For more details, compare to \cite[Section 9]{MW17} and \cite[Section 8]{ChenWright}. This shows that for $t$ small $u_t(X_\infty, q_\infty)$ is in the closure of $V$ as desired. Since the closure is closed, proving the result for $t$ sufficiently small depending on the limit point is sufficient. 
\end{proof}

\subsection{Algebraicity} We will  need the following basic algebraicity result. 

\begin{lemma}\label{L:CoverVariety}
Let $V$ be a  subvariety of $\cM_{g,n}$ and $\smash{\widehat{V}}$ be an irreducible component of its preimage in $\cT_{g,n}$. Let $B$ be the closure of $\smash{\widehat{V}}$ in the Deligne--Mumford bordification intersected with a stratum of the bordification. Then, the image of $B$ in the Deligne--Mumford compactification $\ol\cM_{g,n}$ is a variety.
\end{lemma}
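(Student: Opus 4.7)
The plan is to reduce to Chow's theorem (every closed analytic subset of a projective variety is algebraic). Without loss of generality one may assume $V$ is irreducible, so that $\pi(\widehat V)$ equals a single irreducible component of $V$ and we may pass to it. Let $\overline V$ denote the closure of $V$ in the projective Deligne--Mumford compactification $\overline \cM_{g,n}$; it is algebraic. Write $\Sigma$ for the chosen stratum of the bordification and let $\Sigma_\cM := \pi(\Sigma)$ be the corresponding stratum of $\overline \cM_{g,n}$, so $\overline V \cap \Sigma_\cM$ is an algebraic subvariety, and by continuity of $\pi$ one has $\pi(B) \subseteq \overline V \cap \Sigma_\cM$.

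Next I would show $\pi(B)$ is a closed analytic subset of $\Sigma_\cM$. The set $B = \overline{\widehat V} \cap \Sigma$ is a closed analytic subset of $\Sigma$, inheriting local defining equations from those of $\overline{\widehat V}$. The restriction $\pi|_\Sigma \colon \Sigma \to \Sigma_\cM$ is a holomorphic quotient by the properly discontinuous action of the stabilizer of $\Sigma$ in the mapping class group, modulo the Dehn twists around the pinched curves, which act trivially on $\Sigma$. Such quotients send closed analytic subsets to closed analytic subsets, so $\pi(B)$ is closed analytic in $\Sigma_\cM$.

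To upgrade analytic to algebraic, I would consider the closure $\overline{\pi(B)}$ of $\pi(B)$ in $\overline \cM_{g,n}$. The key claim is that this closure is itself an analytic subset of $\overline \cM_{g,n}$. Locally near any point $p$ of $\overline{\pi(B)}$, one works in a local orbifold chart of $\overline \cM_{g,n}$ built from plumbing coordinates together with coordinates on the moduli of the nodal surface, and in a corresponding chart on $\overline \cT_{g,n}$. In such a chart $\overline{\widehat V}$ is locally cut out by finitely many holomorphic equations and $\pi$ is locally a finite quotient, so the image of the analytic set extends analytically across deeper boundary strata. By Chow's theorem $\overline{\pi(B)}$ is then algebraic in $\overline \cM_{g,n}$. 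Finally, since $\pi(B)$ is closed in $\Sigma_\cM$, one has $\pi(B) = \overline{\pi(B)} \cap \Sigma_\cM$, realizing $\pi(B)$ as the intersection of two algebraic subsets of $\overline \cM_{g,n}$, hence algebraic.

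The main obstacle is verifying that $\overline{\pi(B)}$ is analytic along the deeper boundary strata of $\overline \cM_{g,n}$, rather than merely inside $\Sigma_\cM$. This requires careful bookkeeping with the local plumbing description of $\overline \cM_{g,n}$, together with the observation that only finitely many mapping class group translates of $\overline{\widehat V}$ meet any compact region of $\overline \cT_{g,n}$, so that the local analytic branches of $\overline{\widehat V}$ assemble consistently and the image of a single branch admits an analytic closure across deeper strata.
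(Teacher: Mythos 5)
Your outline aims at the right target (transfer algebraicity from the closure of $V$ in $\ol\cM_{g,n}$, using local analyticity plus a Chow-type argument), but it assumes the very point the lemma has to establish. The step ``$B = \overline{\smash{\widehat V}}\cap\Sigma$ is a closed analytic subset of $\Sigma$, inheriting local defining equations from those of $\overline{\smash{\widehat V}}$'' has no content as written: the augmented Teichm\"uller space carries no complex structure near its boundary and is not even locally compact there, so the closure of $\smash{\widehat V}$ in the bordification is not an analytic set in any ambient complex manifold and has no local defining equations to inherit. Indeed, infinitely many $\Tw(S)$-translates of $\smash{\widehat V}$ accumulate on any neighborhood of a boundary point of $\overline{\smash{\widehat V}}$, which also makes your supporting claim that only finitely many mapping class group translates meet a ``compact region'' of $\ol{\cT}_{g,n}$ both unjustified and, in the relevant sense, false (boundary points have no compact neighborhoods). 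This is exactly the difficulty the paper's proof is built to remove: it first quotients by $\Tw(S)$ to pass to Dehn space, which \emph{is} a complex manifold containing the stratum, shows (via the topological fact that any lifts of a convergent sequence converge) that the boundary of $\smash{\widehat V}$ covers the boundary of the image $\smash{\widetilde V}$, and then uses that $\smash{\widetilde V}$ is an irreducible component of the preimage of $V$ together with the fact that Dehn space maps to $\ol\cM_{g,n}$ as a local homeomorphism up to finite group actions. You invoke the twist group only for the map $\Sigma\to\Sigma_\cM$, i.e.\ after the analyticity of $B$ has already been (incorrectly) assumed, rather than where it is needed.

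Two further steps would need repair even granting analyticity of $B$. First, a quotient by a properly discontinuous action does not in general send a closed analytic subset to a closed analytic subset when the subset is not invariant: images can fail to be closed or analytic, and what saves the situation here is precisely the containment of the relevant local pieces in the finitely many local analytic branches of the fixed variety $\ol V$ near each boundary point, which is how the paper's local-homeomorphism-up-to-finite-groups argument concludes. Second, your route through Chow requires knowing that $\overline{\pi(B)}$ is analytic along the deeper strata of $\ol\cM_{g,n}$; you correctly flag this as the main obstacle but do not resolve it, and the paper's argument avoids it entirely by working one stratum at a time inside Dehn space and quoting the algebraicity of the closure of $V$ downstairs.
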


\begin{proof}
We will use the well studied construction of Dehn space; see \cite{BersBull} for details and \cite[Section 5.2]{BainbridgeThesis} for a more recent introduction. We follow the notation of the later reference, so $S$ is a set of curves that are pinched to obtain a stratum of the Deligne--Mumford bordification, $\Tw(S)$ is the group generated by Dehn twists along curves of $S$, and $\Stab(S)$ is the stabilizer of the set $S$ in the whole mapping class group. Note that $\Tw(S)$ is a normal subgroup of $\Stab(S)$.

The Dehn space $\cD_{g,n}(S)$ associated to $S$ can be viewed as arising from a two step construction: first take Teichm\"uller space union  the stratum of the Deligne--Mumford bordification corresponding to $S$ and then quotient by $\Tw(S)$. The key results we need are: the map from $\cD_{g,n}(S)$ to $\ol\cM_{g,n}$ is invariant under the induced action of $\Stab(S)/\Tw(S)$; stabilizers for this action are finite; and up to those finite stabilizers the map is a local homeomorphism. 

We will also make use of the following topological fact:  Given a sequence $Z_n$ in Dehn space converging to a boundary point $Z$, for any choice of preimages $Z_n'$  of the $Z_n$ in Teichm\"uller space, the sequence $Z_n'$ converges to the unique point $Z'$ (in the appropriate stratum of the Deligne--Mumford bordification) that maps to $Z$. (One can keep in mind that the different preimages of $Z_n$ all differ by $\Tw(S)$, and applying elements of $\Tw(S)$ does not effect convergence to points where the curves in $S$ are pinched.) 

Let $\smash{\widetilde{V}}$ be the image of $\smash{\widehat{V}}$ in Dehn space, so we have 
$$\smash{\widehat{V}} \to \smash{\widetilde{V}} \to V.$$
We first note that the topological fact above implies that the boundary of $\smash{\widehat{V}}$ in the stratum corresponding to $S$ covers the boundary of $\smash{\widetilde{V}}$. Thus it suffices to show that the image of the boundary of $\smash{\widetilde{V}}$  is a variety in $\ol\cM_{g,n}$. 

Consider $\cM_{g,n}$ union the boundary stratum corresponding to $S$, and let $\cU$ be a neighborhood of the boundary. Up to passing to a finite cover, this $\cU$ can be chosen so that it can be parametrized by a product of moduli spaces (one for each component after the curves in $S$ are pinched) and punctured discs (one for each curve in $S$). Viewed as a complex analytic space, we can arrange for the intersection $\cU\cap V$ to have finitely many irreducible components. 
The boundary of each of these components is a variety. 

Let $\smash{\widetilde{\cU}}$ be the component of the preimage of $\cU$ in Dehn space whose closure intersects the boundary. This can be taken to be parametrized by a product of Teichm\"uller spaces (one for each component after the curves in $S$ are pinched) and punctured discs (one for each curve in $S$).

Because $\smash{\widetilde{V}}$ is an irreducible component of the preimage of $V$ in Dehn space and because the map from Dehn space to $\ol\cM_{g,n}$ is locally a homeomorphism up to the action of finite groups, the image of $\widetilde{V}\cap \smash{\widetilde{\cU}}$ in $\cU$ is equal to a finite union of components of  $V \cap \mathcal{U}$, and the boundary of $\smash{\widetilde{V}}$ maps onto the union of the boundaries of these components. 
\end{proof}

We also note two corollaries  of the analysis above, in whose proofs we continue to use the notation above. 

\begin{corollary}\label{C:algebraic1}
Let $V$ be a  subvariety of $\cM_{g,n}$ and $\smash{\widehat{V}}$ be an irreducible component of its preimage in $\cT_{g,n}$. Then, up to the action of the stabilizer of $\smash{\widehat{V}}$ in the mapping class group, there are only finitely many orbits of strata in the Deligne--Mumford bordification that intersect the closure of $\smash{\widehat{V}}$. 
\end{corollary}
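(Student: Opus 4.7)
The plan is to split the finiteness into two successive reductions. Since $\overline{\cM}_{g,n}$ has only finitely many strata and $\overline{V}$ is a closed subvariety, $\overline{V}$ meets only finitely many strata $\mathcal{S}^{\cM}_1,\ldots,\mathcal{S}^{\cM}_r$ of $\overline{\cM}_{g,n}$. It therefore suffices to fix one such downstairs stratum $\mathcal{S}^{\cM}$ and a chosen lift $\mathcal{S}\subseteq \overline{\cT}_{g,n}$ above it, and to show that those strata in the $\MCG$-orbit of $\mathcal{S}$ which meet $\overline{\smash{\widehat{V}}}$ form only finitely many $\Stab(\smash{\widehat{V}})$-orbits.

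The next step is a group-theoretic reformulation. Strata above $\mathcal{S}^{\cM}$ are parametrized by $\MCG/\Stab(\mathcal{S})$, and $\gamma\mathcal{S}$ meets $\overline{\smash{\widehat{V}}}$ if and only if $\mathcal{S}$ meets $\overline{\gamma^{-1}\smash{\widehat{V}}}$, i.e.\ the closure of some irreducible component of $\pi^{-1}(V)$. Identifying components $\gamma^{-1}\smash{\widehat{V}}$ with left cosets $\Stab(\smash{\widehat{V}})\gamma$, the double quotient $\Stab(\smash{\widehat{V}})\backslash\MCG/\Stab(\mathcal{S})$ restricted to the relevant $\gamma$ ends up in bijection with $\Stab(\mathcal{S})$-orbits of irreducible components of $\pi^{-1}(V)$ whose closure meets $\mathcal{S}$. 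The remaining task is thus to show this latter set is finite.

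For this I would use local finiteness of analytic decompositions, together with Lemma \ref{L:CoverVariety}. The variety $\overline{V}\cap \mathcal{S}^{\cM}$ has only finitely many irreducible components $W_1,\ldots,W_m$. For each $W_j$, the preimage of $W_j$ in $\mathcal{S}$ is a union of finitely many irreducible analytic subvarieties, on which $\Stab(\mathcal{S})$ acts transitively: here I am using that $\mathcal{S}\to\mathcal{S}^{\cM}$ is the quotient by $\Stab(\mathcal{S})/\Tw(\mathcal{S})$ and that $\Tw(\mathcal{S})$ acts trivially on $\mathcal{S}$. Fix one such component $C_0$. Any component $\smash{\widehat{V}}'\subseteq \pi^{-1}(V)$ whose closure meets $W_j$ contains in its closure some irreducible component of the preimage of $W_j$, so by transitivity, translating by an appropriate element of $\Stab(\mathcal{S})$ replaces $\smash{\widehat{V}}'$ by a component whose closure contains $C_0$. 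Local finiteness of analytic subsets at any point of $C_0$ then implies that only finitely many global irreducible components of $\pi^{-1}(V)$ contain that point in their closure, so only finitely many $\Stab(\mathcal{S})$-orbits meet $W_j$. Summing over $j$ and over the finitely many downstairs strata gives the result.

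The main obstacle is the bookkeeping in the middle step: one must carefully identify the left $\Stab(\smash{\widehat{V}})$-action on strata above $\mathcal{S}^{\cM}$ with the $\Stab(\mathcal{S})$-action on components of $\pi^{-1}(V)$ via the inversion $\gamma\leftrightarrow\gamma^{-1}$, being careful about which coset spaces appear on which side. Once this identification is made, both the finiteness of the $W_j$ (algebraic geometry) and the local finiteness of analytic irreducible components fall into place, and the proof concludes smoothly.
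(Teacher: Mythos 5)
Your reduction to finitely many strata of $\ol{\cM}_{g,n}$ and the double-coset reformulation (finiteness of $\Stab(\mathcal{S})$-orbits of irreducible components of $\pi^{-1}(V)$ whose closure meets a fixed stratum $\mathcal{S}$) are fine, and in spirit close to what the paper intends. But the final finiteness step has a genuine gap: you invoke ``local finiteness of analytic subsets at any point of $C_0$'' to conclude that only finitely many irreducible components of $\pi^{-1}(V)$ contain a given boundary point in their closure. That point lies in a stratum of the augmented Teichm\"uller space, not in $\cT_{g,n}$, where $\pi^{-1}(V)$ is analytic; at such ideal points no local finiteness is available, and in fact the claim is false in general. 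If $x\in\ol{\widehat{V}}$ lies in the stratum where the multicurve $S$ is pinched, then every translate $T\widehat{V}$ with $T\in\Tw(S)$ also satisfies $x\in\ol{T\widehat{V}}$, because elements of $\Tw(S)$ fix the stratum pointwise and do not affect convergence to it. When $\Stab(\widehat{V})\cap\Tw(S)$ has infinite index in $\Tw(S)$ these translates are infinitely many pairwise distinct components of $\pi^{-1}(V)$ through the same boundary point; this already happens for a curve $V$ whose closure passes through a codimension-two boundary stratum with local monodromy generated by the product twist $T_{\alpha_1}T_{\alpha_2}$ inside $\Tw(S)\cong\bZ^2$. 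This is exactly the non-local-compactness of the bordification that the paper flags, and it is why the paper routes the whole argument through Dehn space: one first quotients by $\Tw(S)$, obtaining a space in which the boundary stratum is adjoined in a locally compact, locally analytic way and which maps to $\ol{\cM}_{g,n}$ as a local homeomorphism up to finite groups; there the image $\widetilde{V}$ of $\widehat{V}$ is an irreducible component of an analytic set, local finiteness at boundary points is legitimate, and the residual ambiguity is by $\Stab(S)/\Tw(S)$, which is absorbed into the orbit count. Your argument can very likely be repaired by inserting this step, but as written the crucial claim fails.

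Two smaller points. First, the preimage of $W_j$ in $\mathcal{S}$ is typically a union of \emph{infinitely} many irreducible components (the deck group $\Stab(S)/\Tw(S)$ is infinite); only the transitivity of the action matters, and that itself needs the covering-space/orbifold argument over the regular locus rather than being immediate. Second, the assertion that a component $\widehat{V}'$ whose closure meets the preimage of $W_j$ must \emph{contain} an entire irreducible component of that preimage in its closure is unjustified, and it is doing real work in your argument, since it is what lets you pass to a single point of $C_0$; without it, different components could meet $C_0$ at different points and pointwise counting would not suffice. Both of these are also most cleanly handled in the Dehn space picture, as in the paper's proof of Lemma \ref{L:CoverVariety}.
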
  

\begin{proof}
There are only finitely many mapping class group orbits of strata of the Deligne--Mumford bordification, so it suffices to prove finiteness in each such orbit. 

Consider a stratum that intersects the closure of $\smash{\widehat{V}}$. Near that stratum, there must be a subset of $\smash{\widehat{V}}$ that maps onto one of the finitely many components of $\cU\cap V$. Given a second stratum in the same mapping class group orbit where one similarly gets a subset mapping onto the same component, there must be a mapping class $g$ taking a germ of one subset to a germ of the other. That mapping class must stabilize $\smash{\widehat{V}}$ because $\smash{\widehat{V}}$ and $g(\smash{\widehat{V}})$ are both irreducible varieties and share a germ, so $\smash{\widehat{V}}=g(\smash{\widehat{V}})$. 
\end{proof}

\begin{corollary}\label{C:algebraic2}
Let $V$ be a  subvariety of $\cM_{g,n}$ and $\smash{\widehat{V}}$ be an irreducible component of its preimage in $\cT_{g,n}$. Let $S$ be a disjoint collection of curves. Let $\Stab(\smash{\widehat{V}})$ denote the stabilizer of $\smash{\widehat{V}}$ in the mapping class group, and $$\Tw_S(\smash{\widehat{V}})= \Stab(\smash{\widehat{V}})\cap \Tw(S).$$
Then the map from $\smash{\widehat{V}}/ \Tw_S(\smash{\widehat{V}})$ to $\cT_{g,n}/\Tw(S)$ is proper.
\end{corollary}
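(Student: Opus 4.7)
The plan is to establish properness by showing that preimages of compact sets are sequentially compact. Let $f$ denote the map in question, let $K \subseteq \cT_{g,n}/\Tw(S)$ be compact, and let $[Z_n] \in f^{-1}(K)$ be a sequence. Lift to $Z_n \in \smash{\widehat{V}}$. Since the images $[Z_n]$ lie in $K$, by the definition of the quotient topology we may, after passing to a subsequence, choose $\phi_n \in \Tw(S)$ such that $\phi_n(Z_n) \to W_\infty$ in $\cT_{g,n}$ for some $W_\infty$.

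The key step is a local finiteness property: in a sufficiently small neighborhood $U$ of $W_\infty$, only finitely many $\MCG$-translates of $\smash{\widehat{V}}$ intersect $U$. This follows from the fact that $\MCG$ acts on $\cT_{g,n}$ properly discontinuously with finite stabilizers, so that $\cT_{g,n} \to \cM_{g,n}$ is locally a finite quotient map, combined with the fact that $V$ is a variety in $\cM_{g,n}$ and hence has only finitely many irreducible branches near any point. Since each $\phi_n(\smash{\widehat{V}})$ is an irreducible component of $\pi^{-1}(V)$ intersecting any fixed neighborhood of $W_\infty$ for $n$ large, pigeonhole and a further subsequence allow us to assume $\phi_n(\smash{\widehat{V}}) = \phi_0(\smash{\widehat{V}})$ for all $n$ and some fixed $\phi_0 \in \Tw(S)$.

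Setting $\alpha_n := \phi_0^{-1}\phi_n \in \Tw(S)$, we have $\alpha_n(\smash{\widehat{V}}) = \smash{\widehat{V}}$, so $\alpha_n \in \Stab(\smash{\widehat{V}}) \cap \Tw(S) = \Tw_S(\smash{\widehat{V}})$. Moreover, $\alpha_n(Z_n) = \phi_0^{-1}(\phi_n(Z_n)) \to \phi_0^{-1}(W_\infty)$, and since $\smash{\widehat{V}}$ is closed and contains each $\alpha_n(Z_n)$, the limit also lies in $\smash{\widehat{V}}$. Therefore $[Z_n] = [\alpha_n(Z_n)] \to [\phi_0^{-1}(W_\infty)]$ in $\smash{\widehat{V}}/\Tw_S(\smash{\widehat{V}})$, and this limit lies in $f^{-1}(K)$ because its image in $\cT_{g,n}/\Tw(S)$ is $[W_\infty] \in K$. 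I expect the local finiteness step to be the main obstacle, though it ultimately reduces to well-known structural features of the mapping class group action on $\cT_{g,n}$ together with the algebraicity of $V$; everything else is a straightforward unwinding of the quotient topologies and of the definition of $\Tw_S(\smash{\widehat{V}})$.
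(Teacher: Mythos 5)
Your proof is correct and follows essentially the same route as the paper: both arguments come down to the local finiteness of the analytic branches of the preimage of $V$ (the paper phrases this as the germ of the image $\smash{\widetilde{V}}$ in Dehn space being a finite union of irreducible germs, you phrase it as only finitely many translates of $\smash{\widehat{V}}$ meeting a small neighborhood upstairs), together with the observation that any element of $\Tw(S)$ matching up $\smash{\widehat{V}}$ with itself lies in $\Tw_S(\smash{\widehat{V}})$ by irreducibility. Your sequential-compactness packaging and the paper's compact-neighborhood packaging are interchangeable here, since the relevant quotients are metrizable.
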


\begin{proof}
The image of this map is $\smash{\widetilde{V}}$. At each point the germ of $\smash{\widetilde{V}}$ is a finite union of irreducible germs. As in the proof of the previous corollary, if two germs of $\smash{\widehat{V}}$ at different points are related by a mapping class, that mapping class must stabilize $\smash{\widehat{V}}$. It follows that any point of $\smash{\widetilde{V}}$ has a compact neighborhood whose preimage in  $\smash{\widehat{V}}/ \Tw_S(\smash{\widehat{V}})$ is compact. 
\end{proof}

\subsection{Unpinching} To complete the proof of Theorem \ref{T:BoundaryTotallyGeodesic} we also need a result that allows us to unpinch nodes in a given equivalence class. We will deduce this as a corollary of the following result, which is stronger than needed at the moment but will also be useful later. 

\begin{lemma}\label{L:DilationLemma}
Let $N\subseteq \cT_{g,n}$ be an algebraic totally geodesic submanifold of Teichmüller space. For any point $X$ in the boundary of $\ol{N}$ it is possible to find a quadratic differential $q\in QN$ with a collection of disjoint equivalence classes of cylinders such that replacing each  cylinder in this collection with a pair of half-infinite cylinders gives a differential on $X$. 
\end{lemma}

\begin{proof}
Consider a sequence $X_k$ in $N$ converging to $X$. For each equivalence class of nodes of $X$ we get an associated multi-twist and from Lemma \ref{L:StrebelGeneratingHorocycle} we also get horizontally periodic differentials $\eta_k$ on $X_k$ where the horizontal cylinders correspond to the given equivalence class. By Corollary \ref{C:length}, up to rescaling and possibly passing to a subsequence, these converge in the quadratic Hodge bundle to a differential on $X$ with a pair of half-infinite cylinders at each node in the equivalence class. 

Taking the sum of one such differential for each equivalence class of nodes we get a differential $q_\infty$ on $X$ with a pair of half-infinite cylinders at each node.
(The differentials for each equivalence class should be produced using the same sequence $X_k$, so they will lie in the same linear space obtained as a limit of fibers of $QN$ over a subsequence of the $X_k$). With $q_\infty$ thus constructed in the boundary of $QN$, we can find a sequence $q_k\in QN$ converging to $q_\infty$.

Apply the Cylinder Deformation Theorem to shrink cylinders on $q_k$ without twisting them to obtain differentials $q_k' \in QN$ where all the cylinders under consideration have modulus bounded above and below. The Deligne-Mumford bordification is not locally compact, and we cannot yet assume that the $q_k'$ converge after passing to a subsequence. 

Let $S$ be the set of curves that are pinched on $X$. The Dehn space of $S$ is locally compact, so we can pass to a subsequence to assume that the images of the $q_k'$ in the Dehn space of $S$ converge; this is possible because all the $q_k'$ can be chosen to live on surfaces in a compact neighborhood around the image of $X$.

Using Corollary \ref{C:algebraic2}, we can find $g_k \in \Stab(\smash{\widehat{V}})\cap \Tw(S)$ so that $g_k q_k'$ converge to a differential $q$.  This $q$ has the desired properties. 
\end{proof}

\begin{corollary}\label{C:UnPinchOneAtATime}
Let $N\subseteq \cT_{g,n}$ be an algebraic totally geodesic submanifold and $X\in \ol{N}$. Then, for any N-equivalence class of nodes on $X$, there is a nearby point in $\ol{N}$ where these nodes are not pinched but all other nodes remain pinched.
\end{corollary}

\begin{proof}
This follows by applying cylinder deformations to the differential $q$ produced in the lemma. We can stretch all but one equivalence class out to become pairs of half-infinite cylinders, and stretch the remaining equivalence class to have large but finite modulus.  
\end{proof}

\subsection{Conclusion of the proof.} We can now complete the goal of this section. 

\begin{proof}[Proof of Theorem \ref{T:BoundaryTotallyGeodesic}]
Consider the closure of $N$ in the Deligne--Mumford bordification intersected with one of its strata where exactly $k$ classes of $N$-equivalent curves have been pinched to nodes. By Lemma \ref{L:CoverVariety}, this intersection covers a variety and is, in particular, a complex analytic subset. Let $L$ be an irreducible component of this set. 

Corollary \ref{C:ManyFiniteArea} gives that over every point of $L$ there is a linear subspace of the fiber of $\ol{QN}$ of dimension $d-k$ consisting of finite area differentials. Corollary \ref{C:NotJustContinuity} gives that this subspace is contained in $QL$.

We claim that $L$ has dimension at most $d-k$. This is true because Corollary \ref{C:UnPinchOneAtATime} guarantees it is contained at least $k$ levels deep in a stratification of the boundary. (We of course expect that $\dim L = d-k$ but the verification of this fact will be postponed to the next section).

Let $Q_{\rm alg}' L$ be the intersection of $\ol{QN}$ with the relevant part of the boundary (or rather, the union of the irreducible components of this intersection which lie over $L$). This satisfies all the properties required of $Q_{\rm alg} L$ except that we have not proven it is irreducible. By Corollary \ref{C:ManyFiniteArea}, each fiber of $Q_{\rm alg}' L$ is a union of linear subspaces, so in particular it is closed under the $\bC$ action which rescales quadratic differentials (equally on all components). We  obtain an associated complex analytic set $\smash{\bP Q_{\rm alg}' L}$  by projectivising fibers, and this still covers a variety. The map $\smash{\bP Q_{\rm alg}' L} \to L$ is proper, so the image of any irreducible component is an analytic subset \cite[page 290]{IntroCxGeom}. Hence irreducibility of $L$ implies that one of the irreducible components of $Q_{\rm alg}' L$ maps surjectively onto $L$, and we can define $Q_{\rm alg}L$ to be that component. 
\end{proof}

\section{The boundary is semisimple}

\subsection{Main definitions and statement.} The following are the two main definitions of this section.

\begin{definition}
Suppose $1\leq \ell < s$ and
$$L_1 \subseteq \mathcal{T}_{g_1,n_1}\times \cdots \times \mathcal{T}_{g_\ell,n_\ell}, \quad L_2 \subseteq \mathcal{T}_{g_{\ell+1},n_{\ell+1}}\times \cdots \times \mathcal{T}_{g_s,n_s}$$
are both subsets of products of Teichmüller spaces. Then we say that the set
 $$L := L_1 \times L_2 \subseteq \mathcal{T}_{g_1,n_1}\times \cdots \times \mathcal{T}_{g_s,n_s}$$
 is the product of $L_1$ and $L_2$. We also say $L$ is a product if this is true after permuting factors, and we use the natural extension of this definition allowing for more than two factors. 
\end{definition}

\begin{definition}
Following Definition \ref{D:TG}, suppose 
 $$L\subseteq \mathcal{T}_{g_1,n_1}\times \cdots \times \mathcal{T}_{g_s,n_s}$$
 is a totally geodesic complex analytic set. We say $L$ is simple if the projection to any factor is an isometric embedding. We say $L$ is semisimple if it is a product of simple totally geodesic complex analytic sets. 
\end{definition}

The main result of this section is the following.

\begin{theorem}\label{T:ss} 
Every algebraic totally geodesic complex analytic subset 
$$L\subseteq \mathcal{T}_{g_1,n_1}\times \cdots \times \mathcal{T}_{g_s,n_s}$$ 
is semisimple.
\end{theorem}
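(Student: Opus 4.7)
The plan is to transfer the cylinder machinery of Section~2 to the multi-component setting and use it to identify a natural product decomposition of $L$ into simple pieces. First, I would establish analogues of Theorem~\ref{T:Connected} and Corollary~\ref{C:Connected} for $Q_{\rm alg}L$: for a simple closed curve $\alpha$ on some factor $\cT_{g_i,n_i}$, the locus in $Q_{\rm alg}L$ where $\alpha$ is a cylinder core on the $i$-th component should be path connected, and $Q_{\rm alg}L$-parallel cylinders---which may live on different factors---should have constant ratios of moduli, circumferences, and heights. The arguments of Section~2 should adapt with only cosmetic changes, using a multi-component version of the Strebel construction from Corollary~\ref{C:StrebelInQN}, Lemma~\ref{L:RafiCyl} applied factor by factor, and the non-divergence of the horocycle flow on each factor.

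Next, I would define an equivalence relation on the index set $\{1,\dots,s\}$ by declaring $i\sim j$ when some $q\in Q_{\rm alg}L$ carries $Q_{\rm alg}L$-parallel cylinders on both components, and taking the transitive closure. Let $G_1,\dots,G_k$ be the equivalence classes and set $L_j := \pi_{G_j}(L)$. The claim to verify is that $L = L_1\times\cdots\times L_k$ and that each $L_j$ is simple. The product decomposition should follow because the multi-twists $D_\alpha$ built from cylinder equivalence classes supported on $G_a$ stabilize $L$ (as in Lemma~\ref{L:InvariantUnderTwist}) while acting trivially on $G_b$-components for $b\neq a$, giving mapping-class-group actions on $L$ that are independent across blocks; combined with the algebraicity coming from Definition~\ref{D:TG}, this should force a Cartesian product structure. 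Simplicity of each $L_j$ then follows because the constant ratios of circumferences of $Q_{\rm alg}L$-parallel cylinders translate into isometric identifications between factor projections of T-discs, which one extends to all of $L_j$ by continuity and by the density of T-discs through each point.

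Factors on which $L$ is constant contribute trivial single-point simple pieces; these can be detected by analyzing the generic support of $Q_{\rm alg}L$, since if the $i$-th component of every $q\in Q_{\rm alg}L$ vanishes, then the $\dim L$-dimensional fibers of $Q_{\rm alg}L$ over $L$ force $L$ to be constant in the $i$-th coordinate. The main obstacle I expect is rigorously extracting the product structure from the independence of cylinder deformations across equivalence classes: while intuitively natural, turning independent deformations into an actual Cartesian product requires combining the cylinder flexibility with the algebraic rigidity of $L$ and controlling the possibility that differentials without cylinders might link different groups $G_a,G_b$ in ways invisible to the cylinder equivalence relation.
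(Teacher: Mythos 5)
There is a genuine gap, and it sits exactly where you flag your ``main obstacle.'' First, the plan to port the Section~2 machinery ``with only cosmetic changes'' does not work: Theorem~\ref{T:Connected}, Corollary~\ref{C:Connected} and their supporting lemmas (e.g.\ Lemma~\ref{L:GenerateHorocycle}, Lemma~\ref{L:RafiCyl}) repeatedly use that any two points of $N$ are joined by a Teichm\"uller geodesic or horocycle arc \emph{inside} $N$. In the multi-component setting this is precisely what is unavailable: Definition~\ref{D:TG} is deliberately weak, and $L$ need not contain a T-disc through a given pair of its points (indeed even the full product does not, and a geodesic for the sup metric on the product is not generated by a single differential in $Q_{\rm alg}L$). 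So a cylinder-based equivalence relation on the factors cannot be propagated between points of $L$ the way it is in Section~2, and the multi-twist argument only shows that certain mapping classes stabilize $L$; ``independent group actions on different blocks plus algebraicity'' does not by itself force $L$ to be a Cartesian product, nor does it show the projection of each block to a single Teichm\"uller factor is an isometric embedding. Your closing worry---that differentials without cylinders could link two blocks $G_a,G_b$ invisibly---is exactly the point that needs a theorem, not a remark.

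The paper resolves both issues by different means. The block decomposition is not defined via cylinders but via the prime decomposition of the invariant subvariety $\overline{\cF(\pi(Q_{\max}L))}$, and the Chen--Wright structure theorem (absolute periods in one component of a prime determine those in the others, together with constancy of area ratios) is what rules out the ``invisible linking'' and yields the fiberwise statements of Proposition~\ref{P:ssKey1}: $\oplus_i V_i\subset Q_{\rm alg}L$, each $V_i$ is all-or-nothing on its components, and the scaling rigidity. The second, equally essential ingredient is the exponential map $\exp_X:\oplus_{i}V_i\to L$ built from geodesic flow blockwise: its continuity, injectivity (Teichm\"uller uniqueness plus the scaling rigidity), and surjectivity (properness, invariance of domain, and irreducibility of $L$ via its smooth stratification, Lemma~\ref{L:surjective}) are what convert the linear-algebraic decomposition of the fibers into an actual product decomposition $L=L_1\times\cdots\times L_k$, show each factor is simple (any two points of $L_i$ lie on a geodesic-flow orbit, whence the isometric projections, cf.\ Remark~\ref{R:Tdiscs}), and also justify your claim about factors where all differentials vanish---that $L$ is a point there is a consequence of surjectivity of $\exp_X$, not of a dimension count alone. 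Without substitutes for these two steps your outline does not close.
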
 

Recall that, according to Definition \ref{D:TG}, the $L$ here is assumed to be irreducible. 

\subsection{Primality and the proof of Theorem \ref{T:ss}.}\label{SS:Primality}

Let $$L\subseteq \mathcal{T}_{g_1,n_1}\times \cdots \times \mathcal{T}_{g_s,n_s}$$
be a totally geodesic complex analytic set. Fix a $Q_{\rm alg}L \subseteq QL$ as given by Definition \ref{D:TG}.  We now set up notation to deal with an important technical detail.

Suppose $X\in L$, $V\subseteq QD(X)$ has dimension at least $\dim L$, and $V \subseteq Q_{\rm alg} L$. We assume $V$ is not contained in a larger subspace of the fiber of $Q_{\rm alg} L$ above $X$.

The components of $X$ are indexed by the numbers $\{1, \ldots, s\}$. Let $I_Z \subseteq \{1, \ldots, s\}$ correspond to the components of $X$ on which all differentials of $V$ are zero and $I_R$ correspond to the remaining components. Here ``$Z$'' stands for ``zero'' and ``$R$'' for ``rest''. Note that $\{1, \ldots, s\}$ is the disjoint union of $I_Z$ and $I_R$. Let 
$$Z : \prod_{i \in \{1, \ldots, s\}} \mathcal{T}_{g_i, n_i} \to \prod_{i \in I_Z} \mathcal{T}_{g_i, n_i}, \quad\quad R: \prod_{i \in \{1, \ldots, s\}} \mathcal{T}_{g_i, n_i} \to \prod_{i \in I_R} \mathcal{T}_{g_i, n_i}$$
be the associated maps that forget factors. 

Consider the slice of $L$ defined by 
$$ S = \{X'\in L : Z(X') = Z(X) \}.  $$
This is the subset of $L$ of all surfaces which are equal to $X$ in the coordinates where all differentials in $V$ are zero. We also consider the image $R(S)$, which we call the reduced slice, and note that $R : S \to R(S)$ is a bijection. 

Later we will see that all of the objects just introduced do not depend on the choice of $X$ and $V$, but until then we keep in mind the possibility that these objects, including $S$, may depend on these choices. 

Define $QS$ to be the set of pairs $(X', q') \in Q_{\rm alg} L$ with $X'\in S$ and $q'$ zero on all components indexed by $I_Z$. Let $R(QS)$ be the corresponding collection of quadratic differentials on elements of $R(S)$. Again, note that the corresponding forgetful map, which by abuse of notation we denote by $R : QS \to R(QS)$, is also a bijection.

We emphasize that no information is lost by $R$ in our context because we are restricting to points of $L$ where the components forgotten by $R$ are constant. Furthermore, we are restricting to differentials that are zero on the components that are forgotten. In our context, $R$ has an inverse which simply ``adds back" the $Z(X)$ that was forgotten. Our use of slices and the map $R$ is a technical detail required to connect to previous work considering products of strata of differentials, where by definition the differentials are non-zero on all components. 

Consider the typical stratum of a differential in $V$ and let $V^{(0)}\subseteq V$ denote the elements of $V$ in this stratum. Note that $V^{(0)}$ is open, connected, and dense in $V$. Consider the intersection of $R(QS)$ with  the associated stratum after applying $R$. Let $\cM$ be an irreducible component of this intersection that contains $R(V^{(0)})$. 

All the constructions so far are fully compatible with the maps
$\pi$ from products of Teichm\"uller spaces to the associated products of moduli spaces (of Riemann surfaces or quadratic differentials), so we get that $\cM$ covers a variety $\pi(\cM)$. By definition (and this is the point of our use of slices and forgetful maps) this $\pi(\cM)$ lies in a product of strata of \emph{non-zero} quadratic differentials. 

\begin{lemma}\label{L:MisGL2inv}
$\cM$ and $\pi(\cM)$ are $GL^+(2, \bR)$-invariant.
\end{lemma}

\begin{proof}
Recall from Definition \ref{D:TG} that $Q_{\rm alg} L$ is $GL^+(2, \bR)$-invariant. By definition of the action, it follows that $QS$ is invariant, and hence that $R(QS)$ is invariant as well. The action preserves strata, so it follows that the relevant intersection of $R(QS)$ with a stratum is invariant as well. The action on a stratum is real analytic, so it preserves irreducible components and we conclude that $\cM$ and $\pi(\cM)$ are $GL^+(2, \bR)$-invariant.  
\end{proof}

Recall from \cite{ChenWright} that an invariant subvariety of a product of strata of quadratic differentials is called prime if it cannot be written as a product. Furthermore, every invariant subvariety of a product of strata of quadratic differentials can be written uniquely as a product of primes. We will make use of the following  structure theorem; see \cite[Theorem 1.3]{ChenWright}. 

\begin{theorem}[Chen--Wright]
In any prime invariant subvariety of a product of strata, the absolute periods in one component locally determine the absolute periods in any other component. 
\end{theorem}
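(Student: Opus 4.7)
The plan is to establish the contrapositive: if absolute periods in some component $i$ fail to locally determine absolute periods in some other component $j$, then the invariant subvariety $\cM$ admits a nontrivial product decomposition and hence is not prime.

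The main tool is the linear structure of invariant subvarieties. Passing if necessary to orienting double covers so as to work with Abelian differentials, the tangent space $T_p\cM$ at any point $p$ is cut out by $\bR$-linear equations in period coordinates, by the foundational results of Eskin--Mirzakhani--Mohammadi and Filip. I would use this to define a coupling relation on the set of factors: say components $i$ and $j$ are coupled if there is a linear equation defining $T_p\cM$ whose absolute-period part is nontrivial on both $i$ and $j$, and take the transitive closure to obtain an equivalence relation on $\{1,\dots,s\}$. By construction, two components lie in distinct equivalence classes precisely when the projection of $T_p\cM$ to absolute periods respects the corresponding direct sum splitting.

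The central task is then to show that $\cM$ decomposes as a product indexed by this equivalence relation. The absolute-period parts of tangent spaces decompose by construction; since relative periods are internal to a single component, they cannot introduce couplings across components, so the full $T_p\cM$ decomposes as a direct sum across equivalence classes. To upgrade from tangent spaces to a genuine product of subvarieties, one integrates using $GL^+(2,\bR)$-invariance and the global linearity of $\cM$: the projection of $\cM$ onto the factors in a single equivalence class is itself a $GL^+(2,\bR)$-invariant affine submanifold, and the product of these projections locally coincides with $\cM$ in period coordinates.

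Granted this decomposition, the theorem follows: primality of $\cM$ forces the equivalence relation to have a single class containing every component, which is exactly the statement that absolute periods of any one component locally determine those of any other. The hard step is precisely the upgrade from a direct-sum decomposition of tangent spaces to an actual product decomposition of $\cM$ as an affine manifold; this requires leveraging that the defining linear equations are global (not merely pointwise), that the candidate product structure is preserved by independent $GL^+(2,\bR)$ actions on each group of coupled factors, and that there are no higher-order obstructions connecting components that are linearly uncoupled at the infinitesimal level.
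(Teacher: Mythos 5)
First, note that the paper does not prove this statement at all: it is quoted verbatim from Chen--Wright \cite[Theorem 1.3]{ChenWright} and used as a black box, so your proposal has to be judged against that external proof, which relies on the geometric and dynamical structure theory of invariant subvarieties rather than on formal linear algebra in period coordinates.

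The decisive gap is your final step, which is a non sequitur. ``Absolute periods in component $i$ locally determine absolute periods in component $j$'' means, infinitesimally, that every tangent vector $v \in T_p\cM$ with vanishing absolute periods in component $i$ also has vanishing absolute periods in component $j$; i.e.\ $\ker(p_i|_{T_p\cM}) \subseteq \ker(p_j|_{T_p\cM})$ for all pairs $i,j$. What your coupling relation detects is only whether $T_p\cM$ (or its absolute-period part) splits as a direct sum over a partition of the components. These are very different conditions: a subspace of $H^1(X_1;\bR)\oplus H^1(X_2;\bR)$ that is the graph of a nonzero linear map $A$ with nontrivial kernel is indecomposable (a single coupling class), yet the absolute periods of component $2$ do not determine those of component $1$. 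Nothing in your argument excludes $T_p\cM$ having this shape, and excluding it is precisely the content of the theorem; one can check that such a graph can even be symplectic, so no soft constraint rules it out. The exclusion genuinely requires the $GL^+(2,\bR)$-dynamics and structure theory of invariant subvarieties (cylinder deformations, rel, semisimplicity-type inputs), which is why Chen--Wright's proof is not a formal argument about defining equations.

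A second, independent problem is the claim that ``relative periods are internal to a single component, so they cannot introduce couplings across components.'' A linear equation relating a relative period of component $i$ to a relative period of component $j$ couples the components while being invisible to your absolute-period coupling relation, so the asserted direct-sum decomposition of the full tangent space across your equivalence classes does not follow; showing that such cross-component rel relations behave as you want is again part of the real work. (By contrast, the local-to-global upgrade from a direct-sum decomposition of tangent spaces to a local product structure is comparatively harmless for linear manifolds, though promoting it to a global product of invariant subvarieties also needs an argument.) As it stands, the proposal reduces the theorem to a strictly weaker indecomposability statement and does not engage the dynamical input that the actual proof requires.
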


This is stated in \cite{ChenWright} in the case of Abelian differentials but it follows immediately that it also holds for quadratic differentials with the standard convention that period coordinates, and hence the notion of ``absolute periods", come from considering double covers where the quadratic differential pulls back to the square of an Abelian differential. 

It follows that we get a decomposition of $\pi(\cM)$ as a product of primes and hence a corresponding decomposition of $\cM$, which we denote 
$$\cM = \cM_1 \times \cdots \times \cM_k.$$

Recall that $\cM$ contains $R(V^{(0)})$, which is an open dense subset of $R(V)$. Since $V$ is not contained in a larger subspace, it follows that we get an associated decomposition of $V$, 
$$V= V_1 \oplus \cdots \oplus V_k, $$
where the $V_i$ can be defined as the differentials of $V$ supported on the same components as $\cM_i$. We now make the following key observation:

\begin{lemma}\label{L:ViOneZeroAllZero}
Suppose $q'\in V_i$ is zero on one of the components of $X$ corresponding to the factor $\cM_i$. Then $q'=0$. 
\end{lemma}
\begin{proof}
Suppose otherwise. Pick some arbitrary $q\in V^{(0)}$. Adding small multiples of $q'$ to $q$ gives rise to a path in $\cM_i$ were multiples of a fixed non-zero differential are added in some components and some other components remained unchanged. This causes area to change in some components but not others. (For example  one can consider large multiples instead of small to let a differential go to infinity and see that the area cannot be constant.) Since area can be computed from absolute periods we see that absolute periods change in some components but not all, contradicting the fact that $\cM_i$ is prime. 
\end{proof}

We now consider a non-standard exponential map 
$$\exp : V_1 \oplus \cdots \oplus V_k \to \mathcal{T}_{g_1,n_1}\times \cdots \times \mathcal{T}_{g_s,n_s}$$
defined as follows. Given $$v=(q_1, \ldots, q_k)\in V_1 \oplus \cdots \oplus V_k$$ let $\exp(v)$ be the result of applying geodesic flow for time $|q_i|$ using the differential $q_i$ for $i=1,\ldots, k$. Geodesic flows in different components commute, so it does not matter what order one applies these flows in or if one applies them simultaneously.

\begin{lemma}\label{L:expContinuous}
$\exp$  is continuous.
\end{lemma}

\begin{proof}
Consider a sequence $$v^{(j)}=(q_1^{(j)}, \ldots, q_k^{(j)}) \in V$$ converging to $v=(q_1, \ldots, q_k) \in V$. To show convergence of $\exp(v^{(j)})$ to $\exp(v)$, it suffices to prove convergence in each of the $k$ factors. Restricting to the factor $i$, we proceed in two cases. 

In the first case, suppose that $q_i\neq 0$. Then, Lemma \ref{L:ViOneZeroAllZero} gives that $q_i$ is non-zero in all components associated to this factor, and we get that eventually all the components of $\smash{q_i^{(j)}}$ are non-zero.  The result of flowing for time $\smash{|q_i^{(j)}|}$ along $\smash{q_i^{(j)}}$ converges to the result of flowing for time $|q_i|$ along $q_i$ because $\smash{|q_i^{(j)}|}\to |q_i|$ and the action is continuous when the set of components where the differential is non-zero is constant.

In the second case, suppose $q_i= 0$. Then $\smash{|q_i^{(j)}|}\to 0$ and continuity follows since flowing for amounts of time converging to zero results in no change in the limit. 
\end{proof}

\begin{lemma}\label{L:expInjective}
$\exp$  is injective.
\end{lemma}

\begin{proof}
Suppose $\exp(q_1, \ldots, q_k) = \exp(q_1', \ldots, q_k')$. The Teichm\"uller Uniqueness Theorem gives that $q_i$ and $q_i'$ are equal up to scale in each component. It follows  that $q_i=c_iq_i'$ for some $c_i>0$, since otherwise a linear combination of $q_i$ and $q_i'$ would contradict Lemma \ref{L:ViOneZeroAllZero}. The distance moved in each component of the $i$-th factor is by definition $|q_i| = |q_i'|$, so we get that $c_i=1$. 
\end{proof}

\begin{lemma}\label{L:expImageInS}
The image of $\exp$ is contained in the slice $S$.
\end{lemma}

\begin{proof}
Because $\cM$ is a product and is invariant under geodesic flow, it follows that $\cM$ is invariant under doing different amounts of geodesic flow in each factor $\cM_i$. By continuity and density, we get that same statement for the closure of $\cM$ in $R(QS)$, and hence also for the associated locus in $QS$. 
\end{proof}

\begin{lemma}
$\exp$ is a homeomorphism onto $S$ and $S=L$. Furthermore, $\dim V = \dim L$.
\end{lemma}

\begin{proof}
We know that $\exp$ is continuous and injective. Since it is also proper, the image of $\exp$ is a topological manifold of real dimension $2 \dim V$, where, as always, $\dim$ denotes complex dimension. Invariance of Domain and our assumption that $\dim V \geq \dim L$ gives $\dim V = \dim L$.
We will show that it follows from the irreducibility of $L$ that $L$ is equal to the image of this map. Since the image is contained in $S$ and $S\subseteq L$, this will also prove $S=L$. 

Indeed, $L$ can be stratified into real submanifolds (actually even complex submanifolds), and irreducibility of $L$ implies that there is only one largest dimensional stratum $L^{\mathrm{top}}$ in this stratification, which is furthermore open and dense in $L$ \cite[Section 18]{Whitney}. Let $V^{\mathrm{top}} \subseteq \oplus_{i=1}^k V_i$ 
be the preimage of $L^{\mathrm{top}}$ under $\exp$. Using Invariance of Domain as well as properness and injectivity of $\exp$, we see that the image of  $\exp|_{V^{\mathrm{top}}}$ is both open and closed in $L^{\mathrm{top}}$ and hence this image is equal to $L^{\mathrm{top}}$. Since the image of $\exp$ is closed and $L^{\mathrm{top}}$ is dense in $L$, we get that the image of $\exp$ is equal to $L$. 
\end{proof}

We get the following as as immediate corollary.

\begin{corollary}\label{C:DefineLi}
$L$ can be written as a product 
$$L=L_0 \times L_1 \times \cdots \times L_k$$
such that 
\begin{enumerate}
\item  $L_0$ is a subset of the product of Teichm\"uller spaces indexed by $I_Z$,
\item $L_0$ consists of a single point, and 
\item each $L_i, \ i>0$ is  a subset of the product of the factors corresponding to $\mathcal{M}_i$. 
\end{enumerate}
Furthermore, if $X=(X_0, X_1, \ldots, X_k)$ is the point chosen at the beginning of this subsection and $$Y=(Y_0, Y_1, \ldots, Y_k)\in L$$ is arbitrary, then $L_i$ contains a T-disc through $X_i$ and $Y_i$ for each $i$.
\end{corollary}

We can now conclude as follows:

\begin{lemma}
$I_Z, I_R$, the partition of $I_R$ into $k$ subsets associated to the product decomposition $\cM = \cM_1\times \cdots \cM_k$, and the $L_i$ do not depend on the choice of $X$ and $V$. 
\end{lemma}

\begin{proof}
We see from the above that $I_Z$ indexes the components where $L$ is constant, so $I_Z$ and its complement $I_R$ do not depend on our choices. We now consider the partition of $I_R$ into $k$ subsets. By Remark \ref{R:Tdiscs} part \eqref{R:Tdiscs:equal}, $L_i$ cannot be a product in a non-trivial way, and it follows that the decomposition of $I_R$ and the $L_i$ cannot depend on our choices either. 
\end{proof} 

\begin{proof}[Proof of Theorem \ref{T:ss}]
Since $X$ was arbitrary and the $L_i$ do not depend on choices, we get that the $L_i$ contain a T-disc through every pair of distinct points. Thus Remark \ref{R:Tdiscs} part \eqref{R:Tdiscs:equal} gives the result. A technical point is that if a complex analytic set is a product of sets, then the factors are automatically complex analytic. (For example, if $A'\times B'$ is complex analytic in a complex manifold $A\times B$, note that $A'$ can be expressed in the form $(A'\times B') \cap (A \times \{b_0\})$ for some $b_0 
\in B'$, and use that the intersection of analytic sets is analytic).
\end{proof}

\section{The boundary is smooth and irreducible}

\subsection{Smoothness and the proof of Theorem \ref{T:main1}.} The following ties up a loose end in previous discussions.

\begin{lemma}\label{L:smooth}
Every totally geodesic complex analytic set $L$ of a product of Teichmüller spaces is a complex submanifold. 
\end{lemma}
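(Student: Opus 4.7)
The plan is to reduce to the case where $L$ lies in a single Teichm\"uller space using the product decomposition from Theorem \ref{T:ss}, and then in that single-factor case to upgrade the already-established topological manifold structure to a complex submanifold structure via a Zariski tangent space computation. Since products of complex submanifolds are complex submanifolds, Theorem \ref{T:ss} reduces the problem to showing that each simple factor is a complex submanifold; and by the definition of simple, each factor projects biholomorphically onto its image in a single Teichm\"uller space, so we reduce to the case of a totally geodesic complex analytic subset $L \subseteq \cT_{g,n}$ of complex dimension $d$.

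In this single-factor case, by Lemma \ref{L:surjective} and the proof of Theorem \ref{T:ss}, the exponential map $\exp_X \colon V_X \to L$ is a homeomorphism for every $X \in L$, where $V_X$ is the $d$-dimensional complex vector subspace of the fiber of $Q_{\rm alg}L$ at $X$ from Definition \ref{D:TG}. This already implies that $L$ is a topological manifold of real dimension $2d$. To promote this to smoothness as a complex analytic set, I would compute the Zariski tangent space at each $p \in L$: the lower bound $\dim_\bC T_p L \geq d$ would follow from the existence of a $d$-dimensional family of smooth Teichm\"uller discs through $p$, each contributing a complex tangent line to $T_p L$; the upper bound $\dim_\bC T_p L \leq d$ would follow from exhibiting a local holomorphic parameterization of $L$ near $p$ by integrating the vector bundle $Q_{\rm alg} L$, which is a holomorphic rank-$d$ vector bundle on the dense smooth locus $L^{\mathrm{reg}}$ by \cite[Proposition 3.1]{IsomEmb}, and extending across the singular locus via the complex analyticity of $Q_{\rm alg}L$. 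Equality of these bounds at $d = \dim L$ is equivalent to $L$ being smooth at $p$.

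The hard part will be executing the upper bound on $\dim_\bC T_p L$, since the exponential map is only real-analytic and does not itself furnish a holomorphic chart. To overcome this one should use the holomorphic dependence of Teichm\"uller discs on their generating quadratic differentials via Bers-type coordinates, together with the complex analyticity of $Q_{\rm alg} L$ as a subset of the quadratic differential bundle; these together should rule out cuspidal pathologies such as $z_1^2 = z_2^3$, where a topological manifold structure coexists with a complex-analytic singularity, and should produce the desired holomorphic chart at any potentially singular point of $L$.
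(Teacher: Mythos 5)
Your first step (reduce to simple factors via Theorem \ref{T:ss}) matches the paper, but after that the argument has a genuine gap, and also a smaller unjustified reduction. The smaller issue first: ``projects biholomorphically onto its image in a single Teichm\"uller space'' does not follow from the definition of simple, which only gives a holomorphic injective isometric embedding. A bijective holomorphic map of complex analytic sets need not be biholomorphic (the normalization $t\mapsto (t^3,t^2)$ of the cusp is a holomorphic homeomorphism), and the smoothness of the image in a single Teichm\"uller space is not previously known either --- the single-factor case is part of what this lemma proves --- so this reduction quietly assumes the regularity you are trying to establish.

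The central gap is the upper bound $\dim_\bC T_pL\leq d$. First, even if you could produce a local holomorphic parameterization of $L$ by a $d$-ball, that would not bound the Zariski tangent space: the cusp $z_1^2=z_2^3$ is exactly such an image of a bijective holomorphic map from $\bC$, yet has a two-dimensional Zariski tangent space at the origin. You would need the parameterization to be a holomorphic immersion, which is the whole problem. Second, and more fundamentally, there is no way to ``integrate the vector bundle $Q_{\rm alg}L$'' holomorphically: Teichm\"uller geodesics and discs do not depend holomorphically on the generating quadratic differential, and the Teichm\"uller exponential map lacks even the smoothness to be directly useful --- the paper flags exactly this obstruction, citing \cite[Theorem 1]{Rees}, immediately before the proof. (Also, \cite[Proposition 3.1]{IsomEmb} concerns $QN$ over an already-smooth totally geodesic submanifold of a single Teichm\"uller space; using it on $L^{\mathrm{reg}}$ is fine but does not cross the potential singular locus.) The paper's actual route is different in kind: it uses the exponential map only to make a metric statement --- every point of $L$ has a $\dim L$-dimensional linear subspace of the ambient tangent space whose exponential image equals $L$, so $L$ is locally well approximated by a linear space in a Lipschitz sense --- and then invokes the Lipschitz regularity theorem for complex analytic sets, \cite[Proposition 3.4]{LipReg}, whose proof runs a degree argument on the projection to the approximating linear space (degree one gives a holomorphic graph via the Removable Singularity Theorem; degree greater than one contradicts the metric approximation). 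Some external input of this strength is what rules out cusp-type singularities, and your proposal is missing it.
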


Note that the arguments above show that $L$ is a topological manifold, but these arguments do not show it is a complex submanifold, and the exponential maps lack the smoothness to be immediately useful for this problem \cite[Theorem 1]{Rees}.

\begin{proof}
By Theorem \ref{T:ss} it suffices to prove this when $L$ is simple. For this it is helpful to use the version of the exponential map defined on the tangent space, so $\exp_X(v)$ is the constant speed (but typically not unit speed) geodesic through $X$ with derivative $v$. By first considering the smooth points of $L$, as above we see that most and hence all points $X$ of $L$ have a $\dim L$-dimensional subspace of the ambient tangent space whose image under $\exp_X$ is equal to $L$. This shows that $L$ is locally well approximated by a linear space at every point of $L$, as is the case for a submanifold. The result now follows from \cite[Proposition 3.4]{LipReg}, keeping in mind that all Finsler metrics are equivalent on compact sets. 

(The idea of the argument is to consider (locally) the projection to the approximating linear space. If this projection has degree 1, one can find a holomorphic inverse off of the branching locus, and then extend it to the branching locus using the Removable Singularity Theorem. This would show that $L$ is locally the graph of a holomorphic map, giving the result. On the other hand, if the degree is greater than 1, then as points in the same fiber come together one find pairs of points such that the most efficient way to join them is to leave $L$). 
\end{proof}

\subsection{Irreducibility.} The final loose end is the following, which will be important later. 

\begin{proposition}\label{P:irreducible}
Suppose $N\subseteq \mathcal{T}_{g,n}$ is algebraic and totally geodesic and let $L$ denote the intersection of $\overline{N}$ with a boundary stratum. Then $L$ is irreducible. 
\end{proposition}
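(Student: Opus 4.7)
The plan is to construct a continuous pinching retraction $\Phi:\overline{N}\to L$ which shows $L$ is path-connected, and then to upgrade path-connectedness to irreducibility using the exponential map characterization from Lemma \ref{L:surjective}.

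Let $S$ be the multi-curve defining the stratum, which by Theorem \ref{T:SummaryForClassification}\eqref{SFC:Pinchable} is a disjoint union of $k$ $N$-equivalence classes. For each $X\in N$, Corollary \ref{C:StrebelInQN} provides a continuously varying horizontally periodic quadratic differential $q_X\in QN$ whose horizontal cylinders have core curves exactly $S$. Define $\Phi:N\to\Sigma_S$ by $\Phi(X)=\lim_{t\to\infty}g_t\cdot(X,q_X)$, the nodal surface obtained by pinching all horizontal cylinders of $q_X$; continuity follows from continuity of $q_X$ and standard properties of horizontally periodic differentials, and since the Teichm\"uller geodesic preserves $QN$, we get $\Phi(X)\in \overline{N}\cap \Sigma_S = L$. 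Extend $\Phi$ to $\overline{\Phi}:N\cup L\to L$ by setting $\overline{\Phi}|_L = \mathrm{id}_L$. Continuity at $X_\infty\in L$ for $X_n\in N$ with $X_n\to X_\infty$ follows from the augmented Teichm\"uller topology: as the $S$-curves become arbitrarily short on $X_n$, the pinching limit $\Phi(X_n)$, which agrees with $X_n$ outside $S$-collar neighborhoods, converges to $X_\infty$. Thus $L=\overline{\Phi}(N\cup L)$ is the image of the path-connected set $N\cup L$ under the continuous map $\overline{\Phi}$, so $L$ is path-connected.

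To upgrade path-connectedness to irreducibility, suppose for contradiction that $L$ has two distinct irreducible components; by path-connectedness we may assume $L^{(1)}$ and $L^{(2)}$ satisfy $L^{(1)}\cap L^{(2)}\neq \emptyset$. By Theorem \ref{T:main1} combined with Lemma \ref{L:smooth}, each is a smooth semisimple totally geodesic submanifold of dimension $d-k$ where $d=\dim N$. Pick $X\in L^{(1)}\cap L^{(2)}$. By Lemma \ref{L:surjective}, each $L^{(i)}$ is the $\exp_X$-image of a $(d-k)$-dimensional linear subspace $V^{(i)}\subseteq Q_{\rm alg}L^{(i)}$ at $X$, sitting inside the finite-area part of the fiber of $\overline{QN}$ at $X$. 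Combining Corollary \ref{C:ManyFiniteArea} (lower bound $d-k$) with Lemma \ref{L:ResidueEquation} (giving exactly $k$ independent residue conditions from the $k$ equivalence classes cut out on the rank-$d$ fiber of $\overline{QN}$), the finite-area subspace of $\overline{QN}$ at $X$ has dimension exactly $d-k$, so $V^{(1)}=V^{(2)}$ and hence $L^{(1)}=L^{(2)}$.

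The main obstacle will be to nail down the precise dimension of the finite-area subspace of the fiber of $\overline{QN}$ at a boundary point $X$, since this is what forces the exponential data $V^{(i)}$ for different components to coincide; the lower bound is already in hand, but the matching upper bound requires verifying that the $k$ residue conditions cut out by the equivalence classes are genuinely independent on $\overline{QN}|_X$. A secondary obstacle will be to make the continuity of the pinching retraction at $L$ fully rigorous, particularly comparing the hyperbolic structure of $X_n$ to the flat cylinder structure of $q_{X_n}$ near short $S$-curves.
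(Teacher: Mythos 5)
Your reduction of irreducibility to ``path-connectedness plus a dimension count at an intersection point'' is where the argument genuinely breaks. Path-connectedness of an analytic set says nothing about irreducibility (two irreducible components meeting at a point are already path-connected), so everything rests on your claim that at $X\in L^{(1)}\cap L^{(2)}$ the finite-area part of the fiber of $\overline{QN}$ is a linear space of dimension exactly $d-k$, forcing $V^{(1)}=V^{(2)}$. This is unjustified on two counts. First, as you yourself flag, the independence of the $k$ residue conditions is never established: Lemma \ref{L:ResidueEquation} and Corollary \ref{C:ManyFiniteArea} only give the lower bound $d-k$. Second, and more seriously, the fiber of the closure $\overline{QN}$ over a boundary point is not known to be a single $d$-dimensional vector space at all; it is a union of limits of the fibers $QN|_{X_m}$ along different sequences $X_m\to X$ (the closure of a vector subbundle can have boundary fibers that are unions of many subspaces, as for a line subbundle whose direction oscillates as one approaches the boundary). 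So even with independent residue conditions, $V^{(1)}$ and $V^{(2)}$ could be distinct $(d-k)$-dimensional subspaces sitting inside different limiting copies of the fiber, and the contradiction via Lemma \ref{L:surjective} never materializes. Proving the needed rigidity of $\overline{QN}$ over the boundary looks at least as hard as the proposition itself.

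The retraction in the first half also needs repair, though that part is more fixable: with $g_t=\mathrm{diag}(e^t,e^{-t})$ the horizontal cylinders of $q_X$ have moduli tending to $0$ as $t\to+\infty$, so you must flow in the opposite direction; and continuity of $\overline{\Phi}$ at points of $L$ is not because ``$\Phi(X_n)$ agrees with $X_n$ outside $S$-collars''---the Strebel-ray limit has complementary pieces whose conformal structure is given by half-infinite cylinders glued along the critical graph of $q_{X_n}$, not by the structure of $X_n$ off the collars, so a genuine comparison (e.g.\ a quasiconformal map with dilatation tending to $1$ as the moduli blow up) is required. But even a fully rigorous retraction only yields connectedness. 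The paper's proof sidesteps all of this by producing, for any $X,Y\in L$, sequences $X_k\to X$, $Y_k\to Y$ in $N$ at bounded Teichm\"uller distance (Lemma \ref{L:CarefulOpeningUp}) and showing the Teichm\"uller discs through $X_k,Y_k$ converge to a holomorphic disc in $L$ containing $X$ and $Y$ (Lemma \ref{L:LimitOfDiscs}); a holomorphic disc is irreducible, hence lies in a single irreducible component, which gives irreducibility directly. To salvage your approach you would need to connect points of $L$ by complex-analytic, not merely continuous, paths---which is essentially the paper's disc-limit argument.
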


We will prove some lemmas before we prove this proposition. 

\begin{lemma}\label{L:LimitOfDiscs}
Suppose $X_k, Y_k \in \cT_{g,n}$, with $X_k\to X$ and $Y_k\to X$ in the Deligne-Mumford bordification. Suppose that $X,Y$ are in the same stratum, that $X_k\neq Y_k$ for all $k \in \mathbb{N}$, and that $\sup_k d(X_k, Y_k)<\infty$. Let $\Delta_k$ be the Teichm\"uller disc through $X_k$ and $Y_k$. Then, possibly after passing to a subsequence, the discs $\Delta_k$ converge to a holomorphic disc in the stratum of $X$ and $Y$ containing $X$ and $Y$. 
\end{lemma}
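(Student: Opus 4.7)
The plan is to parametrize each disc $\Delta_k$ by a holomorphic isometric embedding of the upper half plane and extract a holomorphic limit via a normal families argument, then verify the limit lies in the correct stratum using the classical relation between Teichm\"uller distance and hyperbolic-length distortion. For each $k$, choose a holomorphic isometric embedding $\phi_k \colon \bH \to \cT_{g,n}$ with image $\Delta_k$, $\phi_k(i) = X_k$, and $\phi_k(z_k) = Y_k$ for some $z_k \in \bH$ with $d_\bH(i, z_k) = d(X_k, Y_k)$; this fixes the remaining rotational ambiguity. Since $d(X_k, Y_k)$ is uniformly bounded, after passing to a subsequence $z_k \to z_\infty \in \bH$.

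Let $S$ be the pinching locus of the stratum containing $X$ and $Y$. For any compact $K \subset \bH$, the isometric property of $\phi_k$ places $\phi_k(K)$ at uniformly bounded Teichm\"uller distance $D_K$ from $X_k$. The classical Wolpert bound
\[
e^{-2D_K}\, \ell_\alpha(X_k) \le \ell_\alpha(\phi_k(z)) \le e^{2D_K}\, \ell_\alpha(X_k)
\]
then gives $\ell_\alpha(\phi_k(z)) \to 0$ uniformly on $K$ for $\alpha \in S$, and $\ell_\beta(\phi_k(z))$ uniformly bounded above and below for $\beta \notin S$. Consequently $\phi_k(K)$ stays in a fixed compact subset of the Deligne--Mumford bordification near the stratum of $X$. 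Working in Dehn-space (plumbing) coordinates near $X$, in which the bordification is locally a complex manifold, Arzel\`a--Ascoli (using 1-Lipschitzness of the $\phi_k$) combined with the standard fact that uniform limits of holomorphic maps are holomorphic yields, after passing to a subsequence, a holomorphic limit $\phi \colon \bH \to \ol{\cT}_{g,n}$ with $\phi_k \to \phi$ uniformly on compact subsets of $\bH$.

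Passing to the limit in the length bound shows $\ell_\alpha(\phi(z)) = 0$ for every $\alpha \in S$ and $\ell_\beta(\phi(z)) > 0$ for every $\beta \notin S$, at every $z \in \bH$. Hence $\phi(\bH)$ lies entirely in the stratum containing $X$ and $Y$. Pointwise convergence gives $\phi(i) = \lim \phi_k(i) = X$ and $\phi(z_\infty) = \lim \phi_k(z_k) = Y$, so $\phi(\bH)$ is the desired holomorphic disc.

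The main obstacle is preventing the limit disc from drifting into a deeper boundary stratum, and this is precisely what the bounded-distance hypothesis controls via Wolpert's estimate: bounded Teichm\"uller distance forces bounded multiplicative distortion of every hyperbolic length, so no curve outside $S$ can pinch in the limit. Once this length control is established, the remainder of the argument is a routine Montel/Arzel\`a--Ascoli construction in local complex-manifold coordinates on the bordification.
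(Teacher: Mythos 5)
Your argument is correct in outline, but it is not the paper's proof: the paper works with the generating quadratic differentials $q_k$ of the segments from $X_k$ to $Y_k$, passes to a subsequence so that the flow times $t_k$ converge and the $q_k$ converge (after componentwise rescaling) to a limit differential $q$ on $X$ that is non-zero on every component, and takes the limit disc to be the $GL(2,\bR)$-orbit of $q$, with holomorphicity coming from holomorphic dependence of parameters in the measurable Riemann mapping theorem. Your route --- parametrize each $\Delta_k$ by a holomorphic isometric embedding of $\bH$, control lengths via Wolpert's inequality, and extract a locally uniform holomorphic limit by a normal-families argument --- is essentially the alternative the paper itself mentions in the closing parenthetical of its proof (``use normal families and the fact that Dehn space is a bounded domain''). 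The paper's flat-geometric argument has the advantage of producing the limit disc explicitly as a $GL(2,\bR)$-orbit that is non-constant in every component; your argument is softer and avoids flat geometry, at the cost of having to control where the limit lands.

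Two steps need repair, though both are fixable. First, the assertion that $\phi_k(K)$ lies in a fixed compact subset of the bordification does not follow immediately from the length bounds: the Deligne--Mumford bordification is not locally compact at boundary points (this is exactly why the paper routes related arguments through Dehn space), and your claim that $\ell_\beta(\phi_k(z))$ is bounded above for all $\beta\notin S$ is false for curves crossing $S$, whose lengths blow up. The correct statement is boundedness above and below for curves disjoint from $S$; from this one can either bound the twisting about complementary curves (so that Fenchel--Nielsen coordinates subconverge), or, more cleanly, compose the $\phi_k$ with the quotient to the Dehn space $\cD_{g,n}(S)$, which is a bounded domain, and apply Montel directly --- which also repairs the second issue, namely that Teichm\"uller-metric $1$-Lipschitzness by itself does not give equicontinuity into the bordification, and the bordification is not ``locally a complex manifold'' at boundary points (Dehn space is). Finally, having obtained a holomorphic limit disc in the boundary of Dehn space, one should note that $\Tw(S)$ acts trivially on the stratum, so convergence in Dehn space to stratum points lifts to convergence in the bordification (this is the topological fact recorded in the proof of Lemma \ref{L:CoverVariety}); your length argument then correctly places the limit in the open stratum of $X$ and $Y$ and identifies $\phi(i)=X$, $\phi(z_\infty)=Y$.
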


\begin{proof}
For the rest of the proof let $\rho$ denote the map that forgets a differential on a surface, so $\rho(X,q)=X$. 

Let $q_k$ be a quadratic differential on $X_k$ and $t_k>0$ be a real number such that doing geodesic flow along $q_k$ for time $t_k$ starting at $X_k$ gives $Y_k$. Pass to a subsequence so $t_k\to t$. Also pass to a subsequence to assume there is a meromorphic differential $q$ on $X$ that is non-zero on every component  and such that, with appropriate rescaling depending on the component, $q_k$ converges to $q$ (away from nodes).  Compare for example to \cite[Theorem 10]{SumOfExponents}  and the references therein.

For any matrix $g\in GL(2, \bR)$ one can check directly using the quasi-conformal topology that  $\rho(g q_k)$ converges to $\rho(gq)$, and, in particular, that $\rho(g_tq) =Y$. Let $\Delta$ be the image under $\rho$ of the $GL^+(2,\bR)$ orbit of $q$. It follows that the $\Delta_k$ converge to $\Delta$. One can check that $\Delta$ is holomorphic using holomorphic dependence of parameters in the measurable Riemann mapping theorem. 

(An alternative approach to this lemma would be to use normal families and the fact that Dehn space is a bounded domain \cite[Section 16]{BersDehnSpaceBounded}). 
\end{proof}

\begin{remark}
One can compare the previous proof to \cite[Theorem 1.4]{MultiScale}, keeping in mind that Lemma \ref{L:LimitOfDiscs} is much simpler because it does not require any global structural results for a compactification. 
\end{remark}

\begin{lemma}\label{L:CarefulOpeningUp}
In the setting of Proposition \ref{P:irreducible}, given two points $X, Y\in L$, it is possible to find $X_k, Y_k \in N$ such that $X_k\to X$, $Y_k\to Y$, and the distance between $X_k$ and $Y_k$ is bounded. 
\end{lemma}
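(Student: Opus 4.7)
The plan is to apply the dilation construction from Lemma \ref{L:DilationLemma} simultaneously at $X$ and at $Y$, and then to coordinate the two resulting approximating families so that their Teichm\"uller distance stays bounded. The central point is that since $X$ and $Y$ lie in the same stratum of the Deligne--Mumford bordification, the underlying multi-curves being pinched agree topologically, and by Corollary \ref{C:Pinchable} these multi-curves are precisely unions of $N$-equivalence classes of cylinder curves.

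First, I would apply Lemma \ref{L:DilationLemma} at both $X$ and $Y$ to produce differentials $q^X, q^Y \in QN$ on nearby surfaces in $N$, whose cylinders, when replaced by pairs of half-infinite cylinders, yield differentials on $X$ and on $Y$ respectively, and such that the cylinders on $q^X$ and on $q^Y$ correspond to the same collection of $N$-equivalence classes. Second, I would use the Cylinder Deformation Theorem, which preserves $QN$, to rescale the cylinders of $q^X$ and $q^Y$ to a common large modulus $M$, yielding one-parameter families $X_M, Y_M \in N$. By Corollary \ref{C:length} the core curves become arbitrarily short as $M \to \infty$, so $X_M \to X$ and $Y_M \to Y$ in the bordification, and by Lemma \ref{L:ratio} the lengths of different curves in each equivalence class stay in uniformly bounded ratio.

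Third, I would bound $d(X_M, Y_M)$ via Minsky's Product Region Theorem (or equivalently Rafi's combinatorial distance formula). The thick-part contribution is controlled because $X_M$ and $Y_M$ converge to $X$ and $Y$ in the same stratum, where these two nodal surfaces lie at finite Teichm\"uller distance. Each thin-part contribution coming from a pinched curve is dominated by the discrepancy in its length and twist coordinates: the length coordinates agree up to bounded ratios by construction, and the twist coordinates can be brought within bounded distance of each other by composing with an appropriate multi-twist that stabilizes $N$, as supplied by Lemma \ref{L:InvariantUnderTwist}.

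The main obstacle is the final step: matching the twist parameters along every short curve simultaneously while staying inside $N$. The only multi-twists preserving $N$ come in rigid combinations across each $N$-equivalence class, with rational ratios of exponents prescribed by Corollary \ref{C:Connected}. The reason this is nevertheless enough is that the \emph{same} equivalence classes appear on both sides, with the same constant ratios of moduli, so a twist correction applied to align one curve of a class automatically aligns all curves in that class up to bounded error; after doing this for each class, the distance formula yields the required uniform bound on $d(X_M, Y_M)$.
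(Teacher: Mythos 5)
Your construction of the approximating points is the same as the paper's: apply Lemma \ref{L:DilationLemma} at $X$ and at $Y$, then use untwisted cylinder stretches to bring the relevant cylinders to matching large moduli. (One small imprecision: you can only normalize one cylinder per equivalence class to modulus $M$; the others are then forced by the fixed rational ratios of Corollary \ref{C:Connected} --- but since those ratios are the same on both sides, the moduli at $X_k$ and $Y_k$ still agree, which is the point.) Where you diverge is the final step: the paper concludes by directly building quasiconformal maps $X_k \to Y_k$ of bounded dilatation that are the identity on large parts of the cylinders, whereas you invoke Minsky's product region theorem \cite{MinskyProduct}. That route is viable --- the authors explicitly mention it as an alternative in the remark following the lemma --- but it obliges you to do twist bookkeeping that the explicit-map argument sidesteps, and this is where your write-up has a real problem.

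Your resolution of the ``main obstacle'' does not work as stated. You claim that because the moduli ratios within an equivalence class are the same on both sides, a multi-twist correction aligning the twist along one curve of a class ``automatically aligns all curves in that class up to bounded error.'' There is no justification for this: the relative twists of $X_M$ and $Y_M$ along the different curves $\alpha_1,\dots,\alpha_r$ of a class are whatever the two unrelated boundary points $X$, $Y$ and the two applications of Lemma \ref{L:DilationLemma} happen to produce, and nothing forces them to be proportional to the exponents $k_1:\cdots:k_r$ of the multi-twist from Lemma \ref{L:InvariantUnderTwist}; a power of that multi-twist changes the twists only in that rigid ratio, so simultaneous alignment is impossible in general. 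The constant ratio of moduli governs how much twisting the multi-twist (or horocycle flow) performs in each curve, not the pre-existing twist discrepancies between two boundary points. Fortunately, the correction is also unnecessary, so your argument can be repaired by deleting it: the stretches are performed without twisting, so the flat twisting data along each $\alpha_i$ is unchanged as $M$ grows, and the discrepancy between flat and hyperbolic twisting is $O(1/\ell_{\alpha_i})$ by \cite[Theorem 4.3]{RafiComb}; hence in the horoball factor for $\alpha_i$ the two points differ by a twist offset of size (constant depending on $X,Y$) plus $O(1/\ell_{\alpha_i})$ at heights comparable to $1/\ell_{\alpha_i}$ (comparable by Corollary \ref{C:length} and Lemma \ref{L:ratio}), which contributes $O(1)$ of hyperbolic distance. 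Together with the fact that the thick factors of $X_M$ and $Y_M$ converge to the corresponding pieces of $X$ and $Y$, which lie at finite distance --- and this same observation is what you should cite to conclude $X_M \to X$ rather than merely convergence into the stratum, since shortness of the core curves alone does not pin down the limit --- the product region estimate then gives the required bound on $d(X_M, Y_M)$.
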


\begin{proof}
Lemma \ref{L:DilationLemma} gives  differentials in $Q N$ such that replacing some disjoint equivalence classes of cylinders with pairs of half-infinite cylinders gives differentials on $X$ and $Y$. We can define $X_k$ and $Y_k$ to be the result of using the Cylinder Deformation Theorem to stretch cylinders (without twisting them) so that a chosen cylinder in each horizontal equivalence class has modulus $k$. Then all the moduli of the relevant cylinders are the same at $X_k$ and $Y_k$. 

One can then directly build quasi-conformal maps from $X_k$ to $Y_k$ of bounded dilatation which are the identity on a large part of the horizontal cylinders.
\end{proof}

\begin{remark}
For convenience we gave a proof of Lemma \ref{L:CarefulOpeningUp} that is very specific to our situation. 
There are other approaches that hinge on finding holomorphic disks  through the given points in the variety and using plumbing coordinates. One such approach might try to bound distances using the Minsky product region estimates, leaning on asymptotic results on the hyperbolic metric to control lengths and twists for the short curves: see for example \cite[page 71]{Wol10} and \cite{WolpertUniversalCurve}. Another approach would be to estimate the Teichm\"uller norm of an infinitesimal change of the plumbing parameter and (roughly speaking) integrate that to get bounded distance.  Another approach would be to do something  similar to what we did above but without the use of flat geometry. We thank Scott Wolpert for helpful discussions about several of these approaches. 
\end{remark}

\begin{proof}[Proof of Proposition \ref{P:irreducible}]
Lemma \ref{L:LimitOfDiscs} applied to the points produced by Lemma \ref{L:CarefulOpeningUp} gives that every pair of points in $L$ is contained in a holomorphic disc in $L$, which shows that $L$ is irreducible. 
\end{proof}

\subsection{Proof of Theorem \ref{T:main1}} This concludes the proof of Theorem \ref{T:main1} as follows:

\begin{proof}[Proof of Theorem \ref{T:main1}]
Irreducibility is given by Proposition \ref{P:irreducible}. Algebraicity follows from Lemma \ref{L:CoverVariety}, and semisimplicity from Theorem \ref{T:ss}, with the additional input of Lemma \ref{L:smooth} guaranteeing that the complex analytic sets that appear are in fact complex submanifolds. 
\end{proof}

\section{First consequences of semisimplicity}

In this section we begin the work of clarifying how some of the different structures previously introduced relate to each other.

\subsection{The boundary of a subsurface.} If $U$ is a subsurface, we let $\cT(U)$ denote the Teichm\"uller space of the finite type surface obtained by contracting each boundary component of $U$ to a marked point or puncture.

\begin{lemma}\label{L:BoundaryCurvesSimpleFactor}
Let $N \subseteq \mathcal{T}_{g,n}$ be an algebraic totally geodesic submanifold of Teichm\"uller space. Suppose  $F \subseteq \cT(U_1)\times \cdots \times \cT(U_k)$
is a simple factor  of the intersection of $\ol{N}$ with a stratum of the Deligne-Mumford bordification. Furthermore, assume $F$ is not a point. Then:
\begin{enumerate} 
\item\label{L:BoundaryCurvesSimpleFactor:1}  For all $i\neq j$, every boundary curve of $U_i$ is equivalent to a boundary curve of $U_j$.
\item\label{L:BoundaryCurvesSimpleFactor:2} For all cylinder curves in $\cC(U_i)$, every equivalent curve lies in one of the $\cC(U_j)$, and there is at least one equivalent curve in each $\cC(U_j)$.
\item\label{L:BoundaryCurvesSimpleFactor:3} The boundary of the union of the $U_i$ is a union of equivalence classes. 
\end{enumerate} 
\end{lemma}

Here, saying a curve is in $\cC(U_i)$ is just a convenient way of saying it is contained in $U_i$ non-peripherally. A simple factor is by definition one of the $L_i, \ i\geq0$ in Corollary \ref{C:DefineLi}; the requirement that $F$ not be a point exactly excludes $L_0$.

Note that assertion \eqref{L:BoundaryCurvesSimpleFactor:2} is trivially true when $F$ is a point, but that assertion \eqref{L:BoundaryCurvesSimpleFactor:1} fails, for example, when $N=\cT_{g,n}$ and a convenient multi-curve is pinched. In this case, when $F$ is a point, the $U_i$ are the pants components of the complement of the pinched multi-curve. See Figure \ref{F:U1U2}.  

\begin{figure}[h!]
\includegraphics[width=0.4\linewidth]{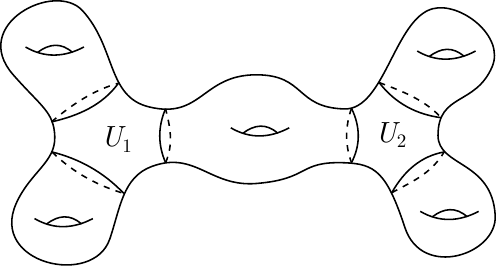}
\caption{If $N$ is the whole Teichm\"uller space, pinching the curves drawn gives a simple factor which is a point in $\cT(U_1)\times \cT(U_2)$ for the trivial reason that $\cT(U_1)$ and $\cT(U_2)$ are points themselves. In this example each curve is only equivalent to itself.}
\label{F:U1U2}
\end{figure}  

\begin{proof}
Suppose to the contrary that there is a boundary curve $\alpha$ of $U_1$ that is not equivalent to any boundary curve of $U_2$. By Corollary \ref{C:UnPinchOneAtATime}, we can unpinch $\alpha$ and its equivalence class, without unpinching any other curves. We get a new boundary stratum where $U_2$ is still a component and where there is a component $U_1'$ containing $U_1$. Note that $U_1'$ and $U_2$ must be in the same simple factor, since otherwise we can take limits and contradict the fact that $F$ is a simple factor. However, a Dehn multi-twist along the  equivalence class of $\alpha$ (produced by Lemma \ref{L:InvariantUnderTwist}) changes $U_1'$ without changing $U_2$, giving a contradiction. A similar argument proves the second claim after pinching the boundary curves of the $U_i$. 

It remains only to address the third claim, where we will require some ideas from papers on invariant subvarieties. These ideas are isolated in the second of the following two sublemmas, so they can be treated as a black box if desired. 

\begin{sublemma}\label{SL1} 
Fix an equivalence class $B$ of a boundary curve of one of the $U_i$. There is a finite area quadratic differential $q$ in the boundary of $Q_{\max} N$ which is horizontally periodic and such that the following hold: 
\begin{enumerate}
\item the curves in $B$ are horizontal cylinders on $q$,
\item there is precisely one other equivalence class $E$ of horizontal cylinders on $q$,  
\item the pinched curves at $q$ are exactly the other equivalence classes of boundary curves, and
\item all core curves of cylinders in $E$ are contained non-peripherally in one of the $U_i$. 
\end{enumerate}
\end{sublemma} 

\begin{proof}[Proof of Sublemma \ref{SL1}]
Because $F$ is not a point, it contains non constant T-discs. The quadratic differentials generating these T-discs have cylinders, and so we see that the $U_i$ contain cylinder curves non-peripherally. Consider such a cylinder curve and note that the second claim of Lemma \ref{L:BoundaryCurvesSimpleFactor} gives that its equivalence class $E$ consists entirely of curves that that are contained non-peripherally in one of the $U_i$. 

Pinch all the equivalence classes of boundary curves of the $U_i$ except $B$. Note that the curves of $B$ and $E$ must all lie in the same simple factor, again by the second claim of Lemma \ref{L:BoundaryCurvesSimpleFactor}. 

Lemma \ref{L:InvariantUnderTwist} gives a positive Dehn multi-twist in all the  curves of $E \cup F$ that stabilizes the given simple factor. Consider the T-disc in that simple factor that contains a point and its image under this multi-twist, and consider the quadratic differential $q$ generating this T-disc which also generates the horocycle between these two points in this disc. By applying Lemma \ref{L:StrebelGeneratingHorocycle} in each component, we see that it is horizontally periodic with horizontal cylinders corresponding to $B\cup E$. 
\end{proof}

\begin{sublemma}\label{SL2}
Let  $\mathbf{C}_1$ and $\mathbf{C}_2$ be parallel equivalence classes of cylinders on a differential $q$ in the boundary of $Q_{\max} N$. If a cylinder of  $\mathbf{C}_1$ is adjacent to a cylinder of $\mathbf{C}_2$, then every cylinder of $\mathbf{C}_2$ is adjacent to a cylinder of $\mathbf{C}_1$. 
\end{sublemma}

Here we say two cylinders are adjacent exactly when they have a common saddle connection on their boundary. 

\begin{proof}[Proof of Sublemma \ref{SL2}]
This follows from an ``over-collapsing" technique developed for the classification of invariant subvarieties of differentials. See \cite[Lemma 4.4]{ApisaMHD} for a precise result sufficient for our needs and see the paragraph after \cite[Lemma 4.8]{AWgeneralizations} for a list of similar arguments which have appeared in the literature, many of which include illustrations. The rough summary of the argument is that one considers an appropriate linear path of cylinder deformations of $\mathbf{C}_1$  which keeps the circumferences the same but decreases the areas of these cylinders to 0. Continuing this path in the stratum, one finds differentials where the moduli of some of the cylinders adjacent to $\mathbf{C}_1$ start to decrease. Since this ratio of moduli of cylinders in $\mathbf{C}_2$ is constant by Lemma \ref{L:ConstRatio}, this gives the result. 
\end{proof} 

To conclude the proof of the third claim, consider the differential $q$ given by Sublemma \ref{SL1}. Let $\mathbf{C}_1=E$, and let $\mathbf{C}_2=B$ denote the equivalence class of one of the boundary curves of one of the $U_i$. Then Sublemma \ref{SL2} gives the result because the curves of $E$ are contained in the $U_i$. 
\end{proof}

\subsection{Projection to a subsurface.}  We also note the following result, which will clarify much of our later discussion. 

\begin{lemma}\label{L:SimpleFactorsWellDefined}
Suppose  $F \subseteq \cT(U_1)\times \cdots \times \cT(U_k)$
is a simple factor  of the intersection of $\ol{N}$ with a stratum of the Deligne-Mumford bordification.  Fix $1\leq i_0\leq k$. Consider any non-empty intersection of $\ol{N}$ with a stratum of the Deligne-Mumford bordification which has $\mathcal{T}(U_{i_0})$ as a factor. 
\begin{enumerate}
\item Suppose $F$ is not a point. Then this intersection has $F$ as a factor. 
\item Suppose $F$ is a point. Then the projection of this intersection to $\mathcal{T}(U_{i_0})$ is the same as the projection of $F$ to $\mathcal{T}(U_{i_0})$, and so, in particular, a point. 
\end{enumerate}
\end{lemma}

\begin{proof}
In either case, we can consider the component of the boundary of $\ol{N}$ resulting from pinching the equivalence classes of the boundary curves of $U_{i_0}$. Let $F'$ be the simple factor involving the $\mathcal{T}(U_{i_0})$ coordinate.  

Any boundary component of $\ol{N}$ where  $U_{i_0}$ appears results from pinching additional equivalence classes, with the restriction that the equivalence classes must contain disjoint curves and cannot contain curves in $U_{i_0}$. 

If $F'$ is a point, then any additional degeneration will not affect the projection to $\mathcal{T}(U_{i_0})$. If $F'$ is not a point, then the product structure of the boundary gives that any additional degenerations which leaves $U_{i_0}$ intact cannot affect $F'$. 
\end{proof}

\begin{remark}
In Figure \ref{F:U1U2} we can first pinch the curves bounding $U_1$ and then pinch the curves bounding $U_2$ to see that a simple factor which is a point can in a certain sense ``grow'' from involving only $U_1$ to involving  both $U_1$ and $U_2$. 
\end{remark}

Lemma \ref{L:SimpleFactorsWellDefined} says that, with a minor caveat in the case of simple factors that are points, the factor $F$ is well defined given any of the $U_i$.

\section{From Teichm\"uller spaces to non-annular curve graphs}

Theorem \ref{T:main1} provides a decomposition of the boundary of any algebraic totally geodesic submanifold into simple factors. The projection of any such factor into any of its Teichmüller coordinate domains is an isometric embedding whose image is an algebraic totally geodesic submanifold. The main results of this section is the following theorem which provides control over the coarse geometry of these projections and which is particularly useful for the applications we discuss later. Recall that if $U$ is a surface then $\mathcal{T}(U)$ and $\mathcal{C}(U)$ denote its Teichmüller space and curve complex, respectively.

\begin{theorem}\label{T:CurveSS}
Let $N \subseteq \mathcal{T}_{g,n}$ be an algebraic totally geodesic submanifold. Then, there exists $K > 0$ such that if
$$F \subseteq \cT(U_1)\times \cdots \times \cT(U_k)$$
is a simple factor of the intersection of $\ol{N}$ with a stratum of the Deligne--Mumford bordification, then the projection maps from the image of $F$ in 
$$\cC(U_1)\times \cdots \times \cC(U_k)$$
to each factor are $K$-quasi-isometric embeddings. 
\end{theorem}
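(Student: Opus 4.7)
The plan is to reduce the theorem, via simplicity of $F$, to a statement about a single Teichm\"uller space. By the definition of simple, the projection from $F$ to each $\cT(U_i)$ is an isometric embedding onto an algebraic totally geodesic submanifold $N_i \subseteq \cT(U_i)$ (Theorem \ref{T:main1}). Hence the Teichm\"uller metric on $F$ coincides with the pullback of the Teichm\"uller metric on any one of the $N_i$, and the theorem reduces to showing that for each $i$ the systole map $N_i \hookrightarrow \cT(U_i) \to \cC(U_i)$ is a $K$-quasi-isometric embedding, with $K$ depending only on $g$ and $n$.

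The coarse-Lipschitz direction $d_{\cC(U_i)} \leq C\, d_{\cT(U_i)} + D$ is the standard fact that the systole map is coarsely Lipschitz. For the reverse direction I would invoke the electrification theorem proved in Appendix \ref{A:Rafi}: for any Teichm\"uller geodesic $\gamma$ in $\cT(U_i)$, contracting to points the maximal intervals on which some non-peripheral curve is short yields a line that quasi-isometrically embeds into $\cC(U_i)$ via the systole map. Applying this to the Teichm\"uller geodesic in $N_i$ from $X$ to $Y$ (which lies in $N_i$ since $N_i$ is totally geodesic), writing $L$ for its length and $L_e$ for its electrified length, one gets $L_e \leq C_1\, d_{\cC(U_i)}(X,Y) + C_2$. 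It then suffices to bound the total thin-region time $L - L_e$ by $O(L_e) + O(1)$.

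To control $L - L_e$ I would argue by induction on the dimension of $N$, using Theorem \ref{T:main1} for the inductive structure. Each thin episode along $\gamma$ is associated to an $N$-equivalence class of short curves $[\mu]$ (Corollary \ref{C:Pinchable}), whose members remain simultaneously short with uniformly bounded ratios of hyperbolic lengths (Corollary \ref{C:Connected}, Lemma \ref{L:ratio}). Deep within the episode, $\gamma$ is close to a point of $\ol{N_i}$ lying in a boundary stratum where $[\mu]$ is pinched; by Theorem \ref{T:main1} that stratum decomposes as a product of simple factors of strictly smaller complexity, for each of which the inductive hypothesis supplies a QI embedding into the relevant curve graph. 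Combining these embeddings with the rigidity coming from Corollary \ref{C:Connected} (twists along equivalent cylinders are coordinated, so the ``free'' twist directions available in $N_i$ across the pinched multicurve are effectively one-dimensional per equivalence class) and with Rafi's formulas relating flat-cylinder modulus to annular twisting (used in the proof of Lemma \ref{L:RafiCyl}), one bounds the length of each thin episode by a constant times the change in $d_{\cC(U_i)}$ across it. Since by Corollary \ref{C:algebraic1} only finitely many combinatorial types of thin episodes occur, summing these bounds gives $L - L_e \leq C_3\, d_{\cC(U_i)}(X,Y) + C_4$.

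The main obstacle is the inductive step: quantitatively converting the QI embedding information for lower-complexity boundary simple factors into a bound on thin-region time in the ambient $\cT(U_i)$. One must simultaneously track the Teichm\"uller distance contributed by the complementary subsurfaces (controlled by the inductive hypothesis) and the annular twisting along the pinched multicurve (controlled by the coordination from Corollary \ref{C:Connected}), and translate both into control on $d_{\cC(U_i)}$. The base case, when $N$ is a Teichm\"uller disc so that every boundary simple factor is a point, reduces to the known fact that algebraic Teichm\"uller discs quasi-isometrically embed into the curve graph, consistent with \cite{AffineUndistorted}.
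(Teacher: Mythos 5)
There is a genuine gap, and it occurs at your very first step: the statement you reduce to is strictly stronger than the theorem and is false, so the rest of the argument cannot be repaired. The theorem is about the image of $F$ inside $\cC(U_1)\times\cdots\times\cC(U_k)$ with the metric induced from that product: it asserts that the coordinate projections of this image to the individual $\cC(U_i)$ are quasi-isometric embeddings, i.e.\ that $d_{\cC(U_i)}(X_i,Y_i)$ and $d_{\cC(U_j)}(X_j,Y_j)$ are uniformly comparable for $X,Y\in F$. It does \emph{not} assert that $N_i$ with the Teichm\"uller metric quasi-isometrically embeds in $\cC(U_i)$ via the systole map, which is what you set out to prove. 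That stronger claim fails whenever some curve gets arbitrarily short on $N_i$, which by Corollary \ref{C:Pinchable} (together with the existence of cylinders in any closed invariant subvariety) happens for every positive-dimensional algebraic totally geodesic $N_i$: two points of $N_i$ deep in the thin part of the same curve $\alpha$, with $\ell_\alpha$ equal to $\epsilon_1$ and $\epsilon_2$, have Teichm\"uller distance at least roughly $\tfrac12|\log(\epsilon_1/\epsilon_2)|$, while their curve-graph projections are both coarsely $\alpha$ and the electrified length between them is bounded. So the inequality $L-L_e\leq C_3\,d_{\cC(U_i)}(X,Y)+C_4$ that your induction targets is already false for the diagonal factor $L_1\subseteq\cT_{1,1}\times\cT_{1,1}$ of Example \ref{E:FirstExample}, and no inductive scheme over boundary strata can establish it. Your base case is also not a known fact: an algebraic Teichm\"uller disc does not quasi-isometrically embed into the curve graph, since the cusps of its (never cocompact) Veech group give regions of unbounded hyperbolic diameter with bounded curve-graph image; \cite{AffineUndistorted} proves undistortion of the affine group in the mapping class group, which is a different statement.

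The electrification theorem is nonetheless the right tool, but it should be used to compare the factors with one another rather than to compare $d_{\cT}$ with $d_{\cC}$. Given $X,Y\in F$, simplicity (Theorem \ref{T:main1}) gives that the Teichm\"uller geodesic from $X_i$ to $Y_i$ lies in $N_i$ and corresponds, under the isometric identifications of the factor images, to the Teichm\"uller geodesic from $X_j$ to $Y_j$ with the same parameterization. Lemma \ref{L:BoundaryCurvesSimpleFactor}, Corollary \ref{C:Pinchable} and Lemma \ref{L:ratio} then show that whenever a curve is short in one factor at some time, an equivalent curve is comparably short in every other factor at that time, so the electrified intervals agree after adjusting the shortness thresholds by the constant of Lemma \ref{L:ratio}. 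Applying Theorem \ref{T:electrification} in each factor identifies $d_{\cC(U_i)}(X_i,Y_i)$ with the common electrified length up to uniform constants, yielding $d_{\cC(U_i)}(X_i,Y_i)\asymp d_{\cC(U_j)}(X_j,Y_j)$, which is exactly the assertion of the theorem; no induction on complexity is needed.
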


As a corollary, it is possible to deduce the same statement without the need to pass to the boundary. 

\begin{corollary}\label{C:CurveSS}
Let $N \subseteq \mathcal{T}_{g,n}$ be an algebraic totally geodesic submanifold. Then, there exists $K > 0$ such that if the subsurfaces $U_1, \ldots, U_k$ arise as in Theorem \ref{T:CurveSS} from a simple factor of the boundary, then the projection maps from the image of $N$ in 
$$\cC(U_1)\times \cdots \times \cC(U_k)$$
to each factor are $K$-quasi-isometric embeddings. 
\end{corollary}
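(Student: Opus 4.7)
The plan is to reduce Corollary \ref{C:CurveSS} to Theorem \ref{T:CurveSS} by deforming interior points of $N$ to nearby points of the simple boundary factor $F$, where ``nearby'' is measured in each $\cC(U_i)$.

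Fix $X, Y \in N$ and a simple factor $F \subseteq \cT(U_1) \times \cdots \times \cT(U_k)$ arising from a stratum of the Deligne--Mumford bordification. By Lemma \ref{L:BoundaryCurvesSimpleFactor}, the collection of boundary curves of the $U_i$ forms a union of $N$-equivalence classes. By Corollary \ref{C:StrebelInQN}, there is a horizontally periodic quadratic differential in $QN$ on $X$ whose horizontal cylinder core curves are exactly these boundary curves. Applying the Cylinder Deformation Theorem to stretch these cylinders (without any twisting) gives a continuous path $X_t \in N$, and by Corollary \ref{C:length} the hyperbolic lengths of the cylinder core curves tend to zero as $t \to \infty$. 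After passing to a subsequence and applying stabilizing multi-twists to ensure convergence in the Deligne--Mumford bordification (as in the proof of Lemma \ref{L:UnPinchOneAtATime}, using Corollary \ref{C:algebraic2}), the path $X_t$ converges to a point $X' \in \ol{N}$ in the stratum where precisely the boundary curves of the $U_i$'s are pinched; by construction the $(U_1,\ldots,U_k)$-coordinate $(X_1', \ldots, X_k')$ of $X'$ lies in $F$. I construct $Y'$ analogously starting from $Y$.

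The key claim is that the subsurface projections $\pi_{U_i}(X_t)$ stay within uniformly bounded distance of $\pi_{U_i}(X)$ in $\cC(U_i)$ and converge to $\pi_{U_i}(X_i')$. This is because the cylinder deformation is supported in annular neighborhoods of the pinched curves, disjoint from the interior of each $U_i$, and there is no twisting along these annuli; thus the conformal structure on the complement of the annuli is essentially unchanged along the path and short markings on each $U_i$ are preserved up to bounded modification. Algebraicity of $N$ together with Corollary \ref{C:algebraic1} ensures that the constants involved depend only on $N$. Consequently there is a constant $D = D(N)$ with $d_{\cC(U_i)}(\pi_{U_i}(X), \pi_{U_i}(X_i')) \leq D$ for every $i$, and the analogous bound holds for $Y$ and $Y'$.

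Finally, Theorem \ref{T:CurveSS} applied to $X', Y' \in F$ yields a constant $K = K(N)$ such that for all $i, j$,
\[
d_{\cC(U_i)}(X_i', Y_i') \quad \text{and} \quad d_{\cC(U_j)}(X_j', Y_j')
\]
are comparable up to multiplicative and additive constants depending only on $K$. Combining this with the bounded-change estimate from the previous paragraph via the triangle inequality produces the same comparison for the projections of $X$ and $Y$ themselves, giving a $K' = K'(N)$ quasi-isometric embedding statement for the image of $N$ in the product of curve graphs. The main obstacle I expect is the careful verification of the bounded-change claim for subsurface projections under cylinder-stretching deformations: although this is morally standard from the theory of subsurface projections, its precise formulation requires localizing the deformation near the boundary annuli and ensuring the bounds are uniform across the (finitely many, by Corollary \ref{C:algebraic1}) possible configurations of simple factors.
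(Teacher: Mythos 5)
Your overall reduction (push interior points of $N$ to boundary points with coarsely the same curve–graph image, then quote Theorem \ref{T:CurveSS}) is the same as the paper's, but the step that carries all the weight — that $\pi_{U_i}(X_t)$ stays uniformly close to $\pi_{U_i}(X)$ along the cylinder-stretch path and converges to the projection of the pinched limit — is asserted rather than proved, and the justification offered does not close it. The stretch path is a cylinder deformation, not a Teichm\"uller geodesic, so Rafi's active-interval theorem (Theorem \ref{T:Active}) cannot be invoked for it; the comparison maps $X\to X_t$ are conformal off the annuli but have dilatation tending to infinity, so coarse Lipschitzness of $\pi_{U_i}$ gives nothing; and the time before the core curves become short is unbounded over $X\in N$ (the initial moduli can be arbitrarily small), so even a bound "per unit of stretching" would not yield a uniform constant. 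Corollary \ref{C:algebraic1} only controls the finitely many orbits of tuples $(U_1,\dots,U_k)$; it says nothing about uniformity over the basepoint $X$, which is exactly what is needed. The "conformal outside disjoint annuli with no twisting implies bounded change of $\pi_{U_i}$" principle is plausible but is not a standard quotable fact and would itself require an argument of Rafi type. The paper sidesteps this entirely: it moves from $X$ along an honest Teichm\"uller geodesic and stops at the \emph{first} time some $U_i$ has all boundary curves short, so Theorem \ref{T:Active} together with Lemma \ref{L:ratio} bounds the change of every $\pi_{U_i}$ with Rafi's uniform constant, and the remaining jump to the pinched boundary point is handled by the compactness statement Lemma \ref{L:OpenUpNoProjChange}. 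If you want to keep your deformation route, you must supply a proof of the bounded-change claim with constants independent of $X$; otherwise you should switch to the geodesic-plus-compactness argument.

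There is a second, smaller gap in "by construction the $(U_1,\dots,U_k)$-coordinate of $X'$ lies in $F$." Your limit lies in the stratum where only (the equivalence classes of) the boundary curves of the $U_i$ are pinched, whereas $F$ may have been extracted from a deeper stratum with more curves pinched; moreover you need the limits coming from $X$ and from $Y$ to land in a single simple factor before Theorem \ref{T:CurveSS} can be applied. This requires Proposition \ref{P:irreducible} (irreducibility of the intersection of $\ol{N}$ with the relevant stratum, so its semisimple decomposition is unambiguous) together with the unpinching argument of Lemma \ref{L:UnPinchOneAtATime} showing $F$ is also a factor there — none of which is automatic "by construction." Finally, note that Corollary \ref{C:StrebelInQN} as stated produces a Strebel differential for a \emph{single} equivalence class; for the union of the classes bounding the $U_i$ you should combine the multi-twists of Lemma \ref{L:InvariantUnderTwist} and reapply Lemma \ref{L:StrebelGeneratingHorocycle}, a harmless but necessary adjustment.
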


We isolate some of the work of proving the corollary in the following lemma.

\begin{lemma}\label{L:OpenUpNoProjChange}
Let $N \subseteq \mathcal{T}_{g,n}$ be an algebraic totally geodesic submanifold.
There exists a (small) $\epsilon>0$ and a (large) $K>0$ such that the following holds for any $Y\in N$. 

For any collection of equivalence classes of curves, all of hyperbolic length at most $\epsilon$ at $Y$,  there is a point $Z$ of the boundary of $N$ where exactly these curves are pinched, and for every subsurface $U$ that is a component of the complement of the union of the curves, the projections of $Y$ and $Z$ to $\cC(U)$ are within $K$ of each other. 
\end{lemma}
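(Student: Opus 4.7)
The plan is to use the Cylinder Deformation Theorem to stretch the given short curves in $Y$ until they are pinched, producing a path in $N$ that limits to the desired $Z$, and then to verify that subsurface projections move by only a bounded amount along this path.

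First I would observe that the given equivalence classes of short curves are pairwise disjoint: each consists of short (hence pairwise disjoint) curves, and Corollary \ref{C:Simultaneous} then forces distinct equivalence classes to be disjoint as well. Choosing $\epsilon$ smaller than the threshold in Lemma \ref{L:GetCyls}, I would apply that lemma to produce a quadratic differential $q \in QN$ on $Y$ realizing each of the given curves as a cylinder core curve, and rotate so these cylinders are horizontal. I would then apply the Cylinder Deformation Theorem without twisting, stretching all the given equivalence classes simultaneously by a factor $s \geq 1$, to obtain a continuous family $q_s \in QN$ with underlying Riemann surfaces $Y_s \in N$, whose cylinder core curves shrink to zero by Corollary \ref{C:length}. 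Following the strategy of Lemma \ref{L:UnPinchOneAtATime} and using the properness provided by Corollary \ref{C:algebraic2}, I would pass to a subsequence so that $Y_{s_k}$ converges in the Deligne--Mumford bordification to some $Z \in \overline{N}$. Because the flat geometry away from the stretched cylinders is unaffected by the deformation, no curve outside the given equivalence classes can become arbitrarily short (provided $\epsilon$ is small enough to produce a definite length gap for non-equivalent curves at $Y$), so exactly the intended curves will be pinched at $Z$.

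The substantive step is to bound $d_{\cC(U)}(\pi_U(Y), \pi_U(Z))$ by a uniform constant $K$ for each complementary subsurface $U$. Since $Y_{s_k} \to Z$, it suffices to bound $d_{\cC(U)}(\pi_U(Y), \pi_U(Y_s))$ uniformly in $s$. Throughout the deformation the multicurve $\partial U$ remains short, so $Y_s$ stays inside a single thin part region of $\cT_{g,n}$. By Minsky's product region theorem \cite{MinskyProduct}, this region is quasi-isometric to a product of the Teichm\"uller spaces $\cT(U_i)$ of the complementary subsurfaces and of horoballs $\bH_\alpha$ associated to the short curves $\alpha$; the CDT stretch without twisting corresponds, in these coordinates, to motion purely in the horoball factors, leaving the $\cT(U_i)$ coordinates coarsely fixed. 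Since $\pi_U$ factors coarsely through the $\cT(U)$ coordinate, this would yield the desired uniform bound.

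The main obstacle will be the last paragraph: extracting a genuinely uniform quantitative bound from the product region theorem, which on its face only asserts a quasi-isometric identification. If the product region theorem alone turns out to be insufficient, I would instead compare $\pi_U(Y)$ and $\pi_U(Y_s)$ using Rafi's combinatorial description of subsurface projections \cite{RafiComb}, exploiting that the CDT stretch introduces no twisting about $\partial U$ and does not alter the flat geometry on $U$.
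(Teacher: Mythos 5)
Your overall route is genuinely different from the paper's: you construct an explicit path in $N$ by stretching cylinders via the Cylinder Deformation Theorem until the prescribed curves pinch, whereas the paper's proof is a compactness argument (compactness of the boundary of $\pi(N)$ in $\ol{\cM}_{g,n}$, a finite subcover by neighborhoods with nice preimages in the bordification, and lifting), which produces the uniform constants $\epsilon$ and $K$ essentially for free from the finiteness of the cover. Your construction of the limit point $Z$ is in itself plausible and closely parallels arguments the paper does make elsewhere (Lemmas \ref{L:UnPinchOneAtATime}, \ref{L:DilationLemma}, \ref{L:CarefulOpeningUp}), though note a misquotation at the start: Lemma \ref{L:GetCyls} does not give a differential over the given point $Y$; to get one over $Y$ you should use the multi-twists of Lemma \ref{L:InvariantUnderTwist} together with Lemma \ref{L:StrebelGeneratingHorocycle} (or Corollary \ref{C:StrebelInQN} class by class), and the differential so obtained is horizontally periodic, so the cylinders fill the surface and there is essentially no ``flat geometry away from the stretched cylinders''; the correct justification that exactly the prescribed curves are pinched is convergence of the flat structures to the half-infinite-cylinder limit, not invariance of a complementary flat piece.

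The genuine gap is the key quantitative step, which you yourself flag: the claim that the CDT stretch ``corresponds to motion purely in the horoball factors, leaving the $\cT(U_i)$ coordinates coarsely fixed'' is not justified and is not what Minsky's product region theorem provides. As the cylinders are stretched, the conformal structure of each complementary piece genuinely changes (each piece contains halves of the stretched cylinders), so its $\cT(U)$-coordinate moves; what one actually needs is that the $\cC(U)$-shadow of this moving coordinate has drift bounded by a constant $K$ depending only on $N$, uniformly over $Y$ and over the choice of equivalence classes. Nothing in your argument supplies that uniformity: for each fixed $Y$ the path converges, hence has finite drift, but the bound could a priori depend on $Y$. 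Rafi's active-interval theory does not help here either, since it only constrains projections while $\partial U$ is \emph{not} short, and your fallback via \cite{RafiComb} controls twisting about $\partial U$ (annular projections) rather than $\pi_U$ itself. To close this you would need either a genuine flat-geometry estimate (e.g.\ exhibiting, uniformly, a curve or marking in $U$ whose extremal length stays bounded along the whole stretch) or a compactness argument over the boundary of $\pi(N)$ --- and the latter is exactly the paper's proof, at which point the path construction becomes unnecessary.
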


\begin{proof}[Proof sketch]
We will use compactness of the boundary of $\pi(N)$ in $\ol{\cM}_{g,n}$. Each point $Z$ in the boundary has a small neighborhood consisting of points $Y$ which are appropriately close to $Z$. Taking a finite subcover and lifting gives the result. (One should use neighborhoods that have nice preimages in the bordification.) 
\end{proof}

\begin{proof}[Proof of Corollary \ref{C:CurveSS} assuming Theorem \ref{T:CurveSS}]
By Corollary \ref{C:algebraic1}, there are only finitely many orbits of relevant tuples $U_1, \ldots, U_k$, so we do not have to worry about the dependence of $K$ on this tuple. 

It suffices to show that for every point $X$ in $N$ there is a corresponding point of $F$ with coarsely the same image in the given product of curve graphs. We first do this when $F$ is not a point. 

Let $C$ be the constant from Lemma \ref{L:ratio} controlling ratios of hyperbolic lengths, and let $\epsilon$ be as in Lemma \ref{L:OpenUpNoProjChange}. 
We will say that a curve is short if its hyperbolic length is at most $\epsilon$ and very short if its hyperbolic length is at most $\epsilon/C$. Thus Lemma \ref{L:ratio}  gives that if a curve is very short, all equivalent curves are short. 

Let $X$ be a point of $N$. Consider a Teichm\"uller geodesic segment from $X$ to a point $Z$ of $N$ where all boundary curves of $U_1,\dots,U_k$ are very short. Let $Y$ be the first point on this segment such that there exists $i$ such that all boundary curves of $U_i$ are very short at $Y$. By definition, at every point on the geodesic before $Y$, each $U_i$ has a boundary curve with length greater than $\epsilon/C$. 

By Rafi's theory of active intervals \cite[Section 3]{RafiComb}, which we recall in Theorem \ref{T:Active}, the projection to the curve complexes $\mathcal{C}(U_i)$ coarsely does not change along the Teichmüller geodesic $[X, Y]$. 

By Lemma \ref{L:BoundaryCurvesSimpleFactor} part \eqref{L:BoundaryCurvesSimpleFactor:1}, all the subsurfaces $U_i$ are bounded by curves in the same finite collection of equivalence classes. As discussed above, it follows that all boundary curves of all $U_i$ are short at $Y$.  We now let $Z$ be the point produced from $Y$ by Lemma \ref{L:OpenUpNoProjChange} with the collection of curves being the boundary of the $U_i$. So the curves pinched at $Z$ are exactly the boundary curves of the $U_i$. Lemma \ref{L:OpenUpNoProjChange} gives that $Y$ and $Z$ have coarsely the same projection to each $\cC(U_i)$. 

This shows that the image of $X$ in $\cC(U_1)\times \cdots \times \cC(U_k)$ is coarsely equal to the image of a point $Z$ in the stratum of the Deligne--Mumford bordification where the boundary components of all the $U_i$ have been pinched. This part of the boundary is irreducible by Proposition \ref{P:irreducible}. The given $F$ may have originated from an even smaller boundary stratum, i.e., with more pinched curves, but this is not a concern thanks to Lemma \ref{L:SimpleFactorsWellDefined}.

If $F$ is a point, we cannot use Lemma \ref{L:BoundaryCurvesSimpleFactor}, but the proof can be easily modified. Say $U_{i_0}$ has that all boundary curves are very short at $Y$. They key observation is that subsurface projection to $U_{i_0}$ cannot change along $[Y,Z]$, because by Lemmas \ref{L:OpenUpNoProjChange} and  \ref{L:SimpleFactorsWellDefined} it is controlled by  a single point of $\cT(U_{i_0})$. 
\end{proof}

\begin{proof}[Proof of Theorem \ref{T:CurveSS}]
Notice that Theorem \ref{T:main1} gives the corresponding result at the level of Teichm\"uller spaces. Lemma \ref{L:BoundaryCurvesSimpleFactor} part \eqref{L:BoundaryCurvesSimpleFactor:2} gives that for any cylinder curve in $\cC(U_i)$ there must be at least one equivalent cylinder curve in  every other $\cC(U_j)$.  

Hence Corollary \ref{C:Pinchable} and Lemma \ref{L:ratio} give that when a curve is short on one $U_i$ then there is a comparably short curve in the same equivalence class on every other $U_j$. Hence the result follows from Theorem \ref{T:electrification} and Corollary \ref{C:electrification}, because we can use Theorem  \ref{T:electrification} to coarsely compute distance in the curve graph of one component using distance in an electrification, and Corollary \ref{C:electrification} gives that the exact same electrification coarsely computes distance in the curve graphs of the other components. 
\end{proof}

\section{From Teichm\"uller spaces to annular curve graphs}

The goal of this section is to show that the analogue of Corollary \ref{C:CurveSS} for annular curve graphs also holds. 
See Section \ref{SS:AnnularSetup} for background. If $\alpha$ is a simple closed curve we let $\cC(\alpha)$ denote the Teichm\"uller space version of the annular curve graph of $\alpha$, i.e., $\cC(\alpha)$ can be viewed as the horoball in the upper half plane $\bH$ with imaginary part at least 1. There is a projection map $\pi_\alpha$ from Teichm\"uller space to $\cC(\alpha)$, where the imaginary part is the max of 1 and 1 over the hyperbolic length of $\alpha$, and the real part encodes twisting, as measured by taking an appropriate marking and projecting it to the mapping class group version of the annular curve graph. 

\begin{theorem}\label{T:AnnularCurveSS}
Let $N \subseteq \mathcal{T}_{g,n}$ be an algebraic totally geodesic submanifold. Then, there exists $K > 0$ such that if $\{\alpha_1, \ldots, \alpha_k\}$ is an $N$-equivalence class of simple closed curves, then the projection maps from the image of $N$ in 
$$\cC(\alpha_1)\times \cdots \times \cC(\alpha_k)$$
to each factor are $K$-quasi-isometric embeddings. 
\end{theorem}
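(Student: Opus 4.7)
The plan is to reduce the comparison to the two ``coordinates'' of the Teichm\"uller-space annular curve graph: the imaginary coordinate $\max(1, 1/\ell_{\alpha_i})$ encoding hyperbolic length, and the real coordinate encoding twisting. For points $X \in N$ the imaginary coordinates at $\alpha_i$ and $\alpha_j$ are bi-Lipschitz comparable: Lemma \ref{L:ratio} says that once any $\alpha_i$ in the equivalence class is short the others are short with comparable lengths, while above the threshold all imaginary coordinates sit in a uniformly bounded range. So the projections to $\cC(\alpha_i)$ and $\cC(\alpha_j)$ agree in their horoball height up to bounded additive error depending only on the constants of Lemma \ref{L:ratio}.

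The real (twisting) coordinate requires more work. Given $X, Y \in N$, the Teichm\"uller geodesic segment from $X$ to $Y$ lies in $N$ since $N$ is totally geodesic. By Rafi's active interval theory (Theorem \ref{T:Active}, and the formulas of \cite[Section 4]{RafiComb} already used in the proof of Lemma \ref{L:RafiCyl}), the twisting change $\Re(\pi_{\alpha_i}(X)) - \Re(\pi_{\alpha_i}(Y))$ equals, up to an additive error controlled by $\ell_{\alpha_i}(X)^{-1} + \ell_{\alpha_i}(Y)^{-1}$, the flat twist accumulated by the generating quadratic differential along the active interval $I_i$ on which $\alpha_i$ is short. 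Lemma \ref{L:ratio} now ensures that the active intervals $I_i$ for the different $\alpha_i$ in the equivalence class coincide up to bounded overlap at the endpoints, so the comparison reduces to comparing the rates of flat twist accumulation on this common active interval.

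To control those rates, I would use the structural results of Section 2. On the common active interval all $\alpha_l$ are short, hence by Corollary \ref{C:Pinchable} and Corollary \ref{C:StrebelInQN} at each such time there is a horizontally periodic differential in $QN$ on the underlying Riemann surface whose horizontal cylinders have core curves exactly $\{\alpha_1, \ldots, \alpha_k\}$, with ratios of moduli and circumferences constant across the equivalence class by Corollary \ref{C:Connected}. The rate of flat twist accumulated by a horizontal cylinder under the Teichm\"uller geodesic flow is proportional to $1/\mathrm{Mod}(\alpha_l)$ times the horizontal stretching, so constancy of moduli ratios immediately makes the rates for different $\alpha_l$ proportional with a constant depending only on $N$.

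The main obstacle is that the generator of the actual geodesic from $X$ to $Y$ need not be this Strebel model; the active interval theory only tells us the two differentials are close in a coarse sense. To close the gap I would combine Corollary \ref{C:length} (which bounds $\ell_{\alpha_l}\cdot\mathrm{Mod}$ from above and below) with Lemma \ref{L:expanding} to show that on the active interval the true generator is dominated by its cylinders at the $\alpha_l$ in a controlled way, and to conclude that the flat twist rates of the true generator are comparable, up to uniform multiplicative constants, to those of the Strebel model supplied by Corollary \ref{C:StrebelInQN}. Summing over the (finitely many, since there are only finitely many equivalence classes by Corollary \ref{C:algebraic1}) active intervals and combining with the height comparison then yields a two-sided bi-Lipschitz bound between $d_{\cC(\alpha_i)}(X,Y)$ and $d_{\cC(\alpha_j)}(X,Y)$, proving that each coordinate projection is a quasi-isometric embedding.
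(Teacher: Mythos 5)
Your reduction to the two horoball coordinates, with the height comparison coming from Lemma \ref{L:ratio}, matches the paper's strategy (the paper packages this reduction as Lemma \ref{L:ForAnnularCurveSS}). The genuine gap is in the twist comparison, precisely at the step you yourself flag as ``the main obstacle''. The twisting that accumulates at $\alpha_l$ along the Teichm\"uller geodesic from $X$ to $Y$ is governed by the cylinders of the generating differential $q_t$ of \emph{that} geodesic, not by the auxiliary Strebel differentials supplied by Corollary \ref{C:StrebelInQN} on the same Riemann surfaces, and Lemma \ref{L:expanding} together with Corollary \ref{C:length} cannot bridge the two: both require the bounded circumference-ratio hypothesis, which holds for the Strebel model but need not hold for $q_t$, and they give two-sided modulus bounds only for the Strebel model. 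For the true generator, hyperbolic shortness of $\alpha_l$ yields at best an \emph{upper} bound on the modulus of a $q_t$-cylinder at $\alpha_l$; the curve may be short entirely because of expanding annuli of $q_t$, in which case $q_t$ has no cylinder at $\alpha_l$ and essentially no twisting accumulates there, even though your Strebel model has huge cylinders. So the asserted uniform multiplicative comparability of ``flat twist rates'' between $q_t$ and the Strebel model is unjustified (and false in general), and the comparison of the real parts of $\pi_{\alpha_i}$ and $\pi_{\alpha_j}$ does not follow. Two smaller issues: the rate statement ``proportional to $1/\mathrm{Mod}(\alpha_l)$ times the horizontal stretching'' is garbled, since a horizontal cylinder accumulates no twisting under its own geodesic flow and the relevant formulas of \cite[Section 4]{RafiComb} concern cylinders that are neither horizontal nor vertical; and Lemma \ref{L:ratio} only gives a nested-threshold containment of the intervals where the $\alpha_l$ are short, not coincidence up to bounded overlap.

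The missing input is that the generator of a geodesic between two points of $N$ lies in $QN$, because $N$ is totally geodesic, so Corollary \ref{C:Connected} applies to $q_t$ \emph{itself}: if any $\alpha_l$ is a cylinder on $q_t$, then all equivalent curves are, with moduli in a fixed ratio. The paper exploits this and avoids rate estimates altogether: using the multi-twist $D_\alpha \in \Stab(N)$ from Lemma \ref{L:InvariantUnderTwist}, it replaces $X'$ by $D_\alpha^p X'$ so that the twisting at $\alpha_1$ between the endpoints is uniformly bounded (this normalization also produces the proportionality constants $c_{j,m}$), and then argues by contradiction that the twisting at any other $\alpha_j$ is $O(\ell_{\alpha_j}(X)^{-1}+\ell_{\alpha_j}(X')^{-1})$: large twisting at $\alpha_2$ forces, by the Lemma \ref{L:RafiCyl}-type analysis and the appendix formulas, a large-modulus $\alpha_2$-cylinder somewhere on the segment, hence by Corollary \ref{C:Connected} a comparably large $\alpha_1$-cylinder, which is incompatible with the bounded twisting at $\alpha_1$. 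Your proposal neither normalizes by this multi-twist nor applies Corollary \ref{C:Connected} to the actual generator, and without one of these inputs the argument does not close.
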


We verify this using the following. 

\begin{lemma}\label{L:ForAnnularCurveSS}
Let $\cX$ be a subset of $\{z: \Im(z)>1\}^k \subset \bH^k$. Assume there are constants $C>0$ and $c_{j,m}> 0$ such that 
if $$(x_j+i y_j)_{j=1}^k, (x_j'+i y_j')_{j=1}^k\in \cX$$ then for all $j,m$ we have
$y_j/y_m < C$ and $$\left| |x_j-x_j'|-c_{j,m}|x_m-x_m'|\right| \leq  C(|y_1|+|y_1'|).$$
Then there is a $K$ depending only on $C, c_{j,m}$ and $k$ such that each of the $k$-coordinate projections $\cX\to \bH$ are $K$-quasi-isometric embeddings. 
\end{lemma}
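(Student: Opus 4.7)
The plan is to invoke the standard coarse description of the hyperbolic distance on the horoball $\{z : \Im z > 1\}\subset \bH$, namely
\[
d_\bH\big((x,y),(x',y')\big) \approxadd \big|\log(y/y')\big| + \log^+\!\left(\frac{|x-x'|^2}{yy'}\right),
\]
where $\log^+ := \max(\log, 0)$ and the additive error is universal; this drops out of the exact formula $\cosh d_\bH - 1 = ((x-x')^2+(y-y')^2)/(2yy')$ by a small case analysis. Write $p_j := x_j + i y_j$ and $p_j' := x_j' + i y_j'$. Since projections to a factor of the sup-metric product $\bH^k$ are 1-Lipschitz, and the sup distance is $\max_m d_\bH(p_m, p_m')$, it suffices to show that for any two indices $j, m$,
\[
d_\bH(p_m, p_m') \leq K\, d_\bH(p_j, p_j') + K,
\]
for a constant $K$ depending only on $C$, the $c_{j,m}$, and $k$.

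The vertical terms are comparable across coordinates: writing $y_j/y_j' = (y_j/y_m)(y_m/y_m')(y_m'/y_j')$ and applying the first hypothesis within each of the two points, the first and third factors lie in $(1/C, C)$, so $\big|\log(y_j/y_j') - \log(y_m/y_m')\big| \leq 2\log C$. For the horizontal terms, the first hypothesis also gives that $y_j y_j'$ and $y_m y_m'$ are $C^2$-comparable, so one only has to compare the numerators $|x_j-x_j'|^2$ and $|x_m-x_m'|^2$.

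I split into two cases according to the size of $|x_m - x_m'|$. If $|x_m - x_m'|$ exceeds a sufficiently large multiple of $y_1 + y_1'$ (large enough to absorb the additive error in the second hypothesis), then that hypothesis forces $|x_j - x_j'|$ to be within a bounded multiplicative factor of $c_{j,m}|x_m - x_m'|$, so the two $\log^+$ terms agree up to an additive constant depending on $c_{j,m}$. If instead $|x_m - x_m'|$ is bounded by a constant multiple of $y_1 + y_1'$, the same hypothesis bounds $|x_j - x_j'|$ by such a multiple as well; using the first hypothesis to write $y_1 + y_1' = O(\max(y_*, y_*'))$ for $* \in \{j, m\}$, each $\log^+$ term is then bounded by $|\log(y_*/y_*')| + O(1)$, and the vertical comparison already established shows the two total distances are each $O(1) + O(|\log(y_1/y_1')|)$, hence comparable up to a multiplicative factor of $2$ and an additive constant.

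I do not expect a real obstacle: the only outside input is the coarse distance formula on the horoball, which is classical, and everything else is direct bookkeeping of hypotheses (i) and (ii) against that formula. The mild subtlety is simply that the $y$-comparability in (i) is stated within each individual point, so one has to chain comparisons through an intermediate index when relating quantities between the two points, but this only costs a factor of $C^2$ rather than $C$.
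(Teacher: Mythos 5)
Your proposal is correct and follows essentially the same route as the paper, whose proof simply cites the coarse formula for the horoball distance in $\bH$ and says the result follows; you have just filled in the bookkeeping (term-by-term comparison of vertical and horizontal contributions, with the two cases according to whether $|x_m-x_m'|$ dominates $y_1+y_1'$). One small imprecision: the formula $|\log(y/y')|+\log^+\bigl(|x-x'|^2/(yy')\bigr)$ is not correct up to a universal \emph{additive} error (e.g.\ $y=1$, $y'=|x-x'|=R$ gives true distance $\log R+O(1)$ but the formula gives $2\log R$); it agrees with the true distance only up to a multiplicative factor of $2$ plus an additive constant, which is still perfectly sufficient for the quasi-isometric embedding conclusion and only worsens the constant $K$.
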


The metric on $\bH^k$ here is the sup of the usual hyperbolic metrics on the factors. (Since everything here is coarse it wouldn't matter if we instead used the product Riemannian metric.) 

\begin{proof}
The distance between two points $x+iy, x'+iy'\in \bH$  is coarsely
$$\log(\max(y, y')/\min(y,y')) + \max(0,\log(|x-x'|/\max(y, y'))),$$
and the result follows from this. 
%
%
%
\end{proof}

\begin{proof}[Proof of Theorem \ref{T:AnnularCurveSS}]
We verify the assumptions of Lemma \ref{L:ForAnnularCurveSS}. 
Lemma \ref{L:ratio} immediately gives the assumption on ratios of $y$ coordinates. 

Consider two points $X, X'\in N$. Let $D_\alpha$ denote a multi-twist supported on the set of $\alpha_j$, as produced by Lemma \ref{L:InvariantUnderTwist}. We can pick a power $p$ such that the projections $\pi_{\alpha_1}(D_\alpha^pX')$ and $\pi_{\alpha_1}(X)$ have that their real parts, i.e., their twists, are uniformly close depending only on $N$. (The uniformity here follows from Corollary \ref{C:algebraic1}, which gives that there are only finitely many equivalence classes up to the action of the fundamental group of $N$. See also \cite[Theorem 5.1]{MW17} and \cite[Theorem 5.3]{ChenWright} for previous justifications of this.) 

It now suffices to prove that for every other $j$ the real parts of $\pi_{\alpha_j}(D_\alpha^pX')$ and $\pi_{\alpha_j}(X)$ must be equal up to an error of size $O(\ell_{\alpha_j}(X)^{-1}+\ell_{\alpha_j}(X')^{-1})$. (The constants $c_{j,m}$ arise from the ratio of twisting done by $D_\alpha$ in different curves.)  

Suppose otherwise in order to find a contradiction. We consider the geodesic segment from $X$ to $D_\alpha^pX'$ and perform a more in depth version of the analysis of Lemma \ref{L:RafiCyl}. Although we do not use the main results in Appendix \ref{A:Rafi}, we use results of Rafi recalled there, so we recommend first reading the beginning of that appendix. One should also recall that \cite[Theorem 4.3]{RafiComb} shows that
different notions of twisting at $\alpha$ (namely the $x$ coordinate of $\pi_\alpha$ and the $\twist_q$ of the appendix) are equivalent up to an error of size at most $O(\ell_{\alpha}(X)^{-1})$.

Let $q$ be the quadratic differential on $X$ generating the geodesic segment from $X$ to $D_\alpha^pX'$. Without loss of generality suppose in order to find a contradiction that the real parts of $\pi_{\alpha_2}(D_\alpha^pX')$ and $\pi_{\alpha_2}(X)$ differ by an extremely large multiple of $\ell_{\alpha_j}(X)^{-1}+\ell_{\alpha_j}(X')^{-1}$. As in the proof of Lemma \ref{L:RafiCyl}, we see that $\alpha_2$ is a cylinder on $q$. Moreover, the formulas recalled in the appendix give that, as always up to the error of one over hyperbolic length, the amount of twisting accomplished depends on the modulus. Thus we see that, at some point on the geodesic segment, the $\alpha_2$ cylinder must have large modulus, but that the $\alpha_1$ cylinder cannot have large modulus because almost no twisting is accomplished in $\alpha_1$; here we are using that the balanced times, in Rafi's terminology, for $\alpha_1$ and $\alpha_2$ are the same because the corresponding cylinders are parallel. This contradicts Corollary \ref{C:Connected} which gives the moduli of the $\alpha_1$ and $\alpha_2$ cylinders must have fixed ratio. 
\end{proof}

\section{Hierarchical hyperbolicity}

\subsection{The axioms.} We note in advance that to improve readability for non-experts  we will verify a slight modification of the axioms for an HHS. Indeed, we will verify the axioms for an almost HHS. These axioms are identical to those of an HHS, except for a technical change to the orthogonality axiom. 
It is known that every almost HHS is an HHS, see \cite[Appendix]{AlmostHHS}, so the very slightly modified axioms are equivalent to the usual ones. Additionally and for the sake of completeness, we sketch a proof of the usual orthogonality axiom at the end of this section.

We we will paraphrase the axioms for an almost HHS imprecisely, omitting a number of details; the reader should consult other sources, such as \cite{AlmostHHS}, for the precise definitions of these axioms. Our goal here is to briefly introduce the axioms for readers new to hierarchical hyperbolicity and to refresh the memory of those that are already familiar with this concept.

Roughly speaking, a quasi-geodesic metric space $X$, endowed with a good deal of extra structure specified below, is said to be hierarchically hyperbolic if the following axioms hold with uniform constants:

\begin{enumerate}
\setcounter{enumi}{-1}
\setlength{\itemsep}{4pt}
\item\label{O:MetricSpaces} \textbf{Basic set up.} There is an index set $\mfS$, called the set of domains, and for each $W\in \mfS$ there is a Gromov hyperbolic metric space $\cC W$. 
\item\label{O:Projections} \textbf{Projections.} There are  coarsely Lipschitz projections $\pi_W: X \to 2^{\cC W}.$ The image $\pi_W(x)$ of each point must have uniformly bounded diameter, so each $\pi_W(x)$ can be considered as a coarse point and there is no harm in thinking of $\pi_W$ as a map to $\cC W$ instead of to subsets of $\cC W$. The image of each $\pi_W$ is required to be quasi-convex (which is automatic when it is coarsely surjective). 
\item\label{O:Nesting} \textbf{Nesting.} The set of domains $\mfS$ has a reflexive partial order $\sqsubseteq$ called nesting with a unique maximal element. If $V \sqsubsetneq W$, then there is a specified point $\rho^V_W \in \cC W$ and a map $\rho^W_V: \cC W \to 2^{\cC V}.$
\item\label{O:Orthogonality} \textbf{Orthogonality.} The set of domains $\mfS$ has a symmetric, anti-reflexive relation $\perp$ called orthogonality, such that the following hold: 
\begin{enumerate}
\item\label{O:NestedInOrthogonal} If $V \sqsubseteq W$ and $W\perp U$ then $V \perp U$. 
\item\label{O:NotNestedAndOrthogonal} A nested pair is never orthogonal. 
\item\label{O:BoundedOrth} The cardinality of any set of pairwise orthogonal domains is uniformly bounded above.  
\end{enumerate}
\item\label{O:TC} \textbf{Transversality and consistency.} We say two domains $V$ and $W$ are transverse and write $V\pitchfork W$ if they are not orthogonal and neither is nested in the other. Furthermore, for a uniform constant $\kappa_0 > 0$:
\begin{enumerate}
\item\label{O:TC:Behrstock}  If $V\pitchfork W$, then there are points $\rho^V_W \in \cC W$ and $\rho^W_V \in \cC V$ such that the following Behrstock inequality holds:
$$\min\{d_W(\pi_W(x), \rho^V_W), d_V(\pi_V(x), \rho^W_V)\} \leq \kappa_0.$$
\item\label{O:TC:Functoriality} If $V\sqsubseteq W$, either $\pi_V(x)$ is approximately $\rho^W_V(\pi_W(x))$ or 
$$d_W(\pi_W(x), \rho^V_W)  \leq \kappa_0.$$ 
\item\label{O:TC:Rhos} If $U \sqsubseteq V$, then we have 
$$d_W(\rho^U_W, \rho^V_W) \leq \kappa_0$$ 
for all $W$ satisfying one of the following: $V\sqsubseteq W$; or $W$ transverse to $V$ and not orthogonal to $U$.
\end{enumerate}
\item\label{O:Finite} \textbf{Finite complexity.} There is a uniform upper bound on the length of properly nested chains. 
\item\label{L:LargeLinks} \textbf{Large links.} There are uniform constants $\lambda > 0$ and $E > 0$ such that for every $x, x'\in X$ there exists a set $\{T_i\}$ of at most $$\lambda d_W(\pi_W(x), \pi_W(x')) + \lambda$$ domains properly nested in $W$ such that if $T$ is properly nested in $W$ and 
$$d_T(\pi_T(x), \pi_T(x'))>E$$
then $T$ is nested in one of the $T_i$. 
\item\label{O:BGI} \textbf{Bounded geodesic image.} If $\gamma$ is a geodesic in $\cC W$ and $V \sqsubseteq W$, then either $\rho^W_V(\gamma)$ has uniformly bounded diameter, or $\gamma$ comes close to $\rho^V_W$. 
\item\label{O:PR} \textbf{Partial realization.} For a uniform $\alpha > 0$ and any pairwise orthogonal family $\{V_j\}$ and points $p_j \in \pi_{V_j}(X)$, there exists a point $x\in X$ such that
$$d_{V_j}(\pi_{V_j}(x), p_j)\leq \alpha$$
for all $j$, and such that for each $j$ and each $V$ with  $V_j \sqsubsetneq V$ or $V_j\pitchfork V$, 
$$d_V(\pi_V(x), \rho^{V_j}_V)\leq \alpha.$$ 
\item\label{O:Uniqueness} \textbf{Uniqueness.} For all $\kappa > 0$ there exists $\theta_u > 0$ such that if $d(x,y)\geq \theta_u$ then there exists $V$ with $d_V(\pi_V(x), \pi_V(y))\geq \kappa$.
\end{enumerate}

\subsection{Totally geodesic submanifolds.} We now verify the axioms for an algebraic totally geodesic submanifold $N$, following the order introduced above. Note that we write $d_U(x,y)$ instead of $d_U(\pi_U(x), \pi_U(y))$, leaving implicit that to compute such distance we must first project to $\cC U$. 

\bold{Basic set up.} We start by determining what our set of domains will be. 

\begin{lemma}\label{L:dichotomy}
For any algebraic totally geodesic submanifold $N$ there exists a constant $D > 0$ such that for any subsurface $U$ the projection of $N$ to $\cC(U)$ is either unbounded or has diameter at most $D$. The projection is unbounded if and only if $U$ appears as a component of an unbounded simple factor. 
\end{lemma}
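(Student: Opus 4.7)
The plan is to use Rafi's active interval theory in combination with the semisimplicity result (Theorem \ref{T:main1}). The key idea is that a large projection to $\cC(U)$ forces $\partial U$ to be short somewhere on $N$, which in turn exhibits $U$ (or a refinement of it) as a factor in a semisimple boundary stratum; if the corresponding factor is positive-dimensional, the projection is automatically unbounded.

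First I would take the contrapositive: suppose $\pi_U(N)$ contains two points $\pi_U(X), \pi_U(Y)$ at distance greater than a large constant $D_0$, to be chosen depending only on $g$, $n$, and the constants of $N$. Since $N$ is totally geodesic, the Teichm\"uller geodesic segment from $X$ to $Y$ lies in $N$. Using Rafi's active interval theorem (Theorem \ref{T:Active} in Appendix \ref{A:Rafi}) combined with the formulas in \cite[Section 4]{RafiComb} that relate $d_U$ to twisting rates governed by $\ell_{\partial U}^{-1}$, I would argue that $d_U(X,Y)$ being very large forces every component of $\partial U$ to have hyperbolic length less than some small $\epsilon$ at some point $W$ of the segment. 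By choosing $D_0$ large enough, we can arrange $\epsilon$ to be smaller than the threshold appearing in Lemma \ref{L:GetCyls}.

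Next, I invoke the semisimplicity of the boundary. By Lemma \ref{L:GetCyls}, the existence of $W$ shows that $\overline{N}$ intersects a boundary stratum where the curves of $\partial U$ (together with their full $N$-equivalence classes) are pinched. Passing if necessary to a smaller subsurface $U' \subseteq U$ obtained by further pinching inside $U$, I may assume that the boundary of $U'$ is a union of complete $N$-equivalence classes. By Theorem \ref{T:main1}, the intersection of $\overline{N}$ with this stratum is semisimple, and $\cT(U')$ is one of its Teichm\"uller factors. Lemma \ref{L:BoundaryCurvesSimpleFactor} groups $U'$ with other subsurfaces into a single simple factor $F$ whose projection to $\cT(U')$ is an isometric embedding of a totally geodesic submanifold. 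Since $d_U(X,Y) > D_0$, this projection is forced to be positive-dimensional: otherwise the composite projection of $N$ (which factors through the nearby boundary) to $\cC(U)$ would be bounded in terms of $D_0$, contradicting the assumption.

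Finally, to deduce unboundedness of $\pi_U(N)$, I would use that a positive-dimensional algebraic totally geodesic submanifold of $\cT(U')$ has unbounded image in $\cC(U')$; concretely, Lemma \ref{L:InvariantUnderTwist} supplies a multi-twist supported on curves inside $U'$ (hence acting non-trivially on $\cC(U'))$ that stabilizes this submanifold, whose iterates produce unbounded orbits. Pulling this back, $\pi_U(N)$ is unbounded in $\cC(U)$. The uniform bound $D$ arises because all constants used (from Rafi's theorems, Lemma \ref{L:GetCyls}, and Corollary \ref{C:length}) depend only on $g$, $n$, and $N$, and because Corollary \ref{C:algebraic1} ensures only finitely many $\Gamma_N$-orbits of boundary strata need to be considered. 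The main obstacle I expect is the quantitative passage from ``large $d_U$'' to ``arbitrarily short $\partial U$''; this requires some care with Rafi's estimates, since the bare active interval theorem only guarantees $\ell_{\partial U}$ below a fixed threshold, not arbitrarily small length, so one must exploit the explicit dependence of the twist rate on $\ell_{\partial U}^{-1}$.
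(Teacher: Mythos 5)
Your overall skeleton (large $d_U$ forces $\partial U$ short via active intervals, then pass to the boundary and use semisimplicity) is in the spirit of the paper's proof, and the first step is fine: taking Rafi's threshold equal to the $\epsilon$ of Lemma \ref{L:GetCyls} and $D_0 > M_{\rm active}(\epsilon)$ already gives a point of $N$ where all of $\partial U$ is shorter than $\epsilon$, so the "main obstacle" you flag is not really an obstacle. But the second half of your argument has two genuine errors. First, your mechanism for unboundedness is wrong: iterates of a multi-twist supported on curves \emph{inside} $U'$ have \emph{bounded} orbits in the non-annular curve graph $\cC(U')$, since the twist fixes its supporting curves and therefore moves any vertex a distance at most twice its distance to those curves. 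Multi-twists give unbounded orbits only in the \emph{annular} graphs of the twisting curves. The paper's proof instead uses that a positive-dimensional simple factor contains a Teichm\"uller disc, and that Teichm\"uller discs have infinite-diameter shadows in the curve graph (\cite[Lemma 4.4]{RScovers} together with \cite{KMS}); some such input (a disc with uniquely ergodic directions, or a pseudo-Anosov supported on $U'$) is needed and cannot be replaced by a multi-twist.

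Second, your reduction "pass to $U'\subseteq U$ whose boundary is a union of complete $N$-equivalence classes" breaks the link with $\cC(U)$. If a boundary curve of $U$ is $N$-equivalent to a curve $\alpha$ lying in the interior of $U$ (the paper's case 2), then whenever $\partial U$ is short Lemma \ref{L:ratio} forces $\alpha$ to be short as well, so along any geodesic in $N$ the projection to $\cC(U)$ is coarsely constant near $\alpha$: the correct conclusion in that configuration is that $\pi_U(N)$ is \emph{bounded}. In your argument this configuration is exactly the one that forces $U'\subsetneq U$, and there neither of your transfer steps is valid: boundedness of the projection to $\cC(U')$ when the factor is a point does not bound the projection to $\cC(U)$ (so your "otherwise bounded, contradiction" inference fails), and unboundedness in $\cC(U')$ does not "pull back" to unboundedness in $\cC(U)$ (indeed it cannot, since the latter is false here). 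You need a separate case, as in the paper, for boundary curves equivalent to interior curves, and also a case where some boundary curve of $U$ is not a cylinder curve at all (then it never gets short, by Corollary \ref{C:Pinchable}, and active intervals give the bound directly). Finally, your argument never treats annular $U$, where the pinching/simple-factor framework does not apply and where the multi-twist argument is the correct one — but for the annular graph of the core curve, not for a non-annular $\cC(U')$.
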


The requirement that the simple factor be unbounded is equivalent to the requirement that it not be a point. We say $U$ is a component of the simple factor if $\cT(U)$ is a factor of the product of Teichm\"uller spaces in which the simple factor lies. 

\begin{proof}
Suppose first that $U$ is annular. If the core curve does not get short, then the diameter bound follows from Rafi's theory of active intervals (recalled in Theorem \ref{T:Active}). Otherwise, the core curve gets short and, keeping in mind Lemma \ref{L:InvariantUnderTwist} and Corollary \ref{C:Pinchable}, we can pinch it and twist it to show unbounded image.

Now take $U$ to be non-annular. We proceed in cases, sometimes making use of Theorem \ref{T:main1}:

\begin{enumerate}
    \item \textit{Some boundary curve of $U$ is not a cylinder curve.} By Corollary \ref{C:Pinchable}, this curve cannot get short, so the theory of active intervals gives the diameter bound.
    \item \textit{All boundary curves of $U$ are cylinder curves, but one of these curves is $N$-equivalent to a curve $\alpha$ in $\cC U$.} The theory of active intervals gives that the projection to $\cC U $ can only change when the boundary of $U$ is short, but in that case, by Lemma \ref{L:ratio}, $\alpha$ is short, so the projection is coarsely $\alpha$, providing a uniform bound on the diameter. 
    \item \textit{It is possible to pinch the boundary of $U$ without pinching a curve in the interior of $U$.} In this case $U$ appears in some simple factor $F$. If this factor is a point, then the proof of Corollary \ref{C:CurveSS} shows that the projection to $\cC U$ has bounded diameter. If not, it contains a Teichm\"uller disc. Since, by \cite[Lemma 4.4]{RScovers} and \cite{KMS}, all Teichm\"uller discs have infinite diameter in the curve graph of the surface, this gives unbounded image. \qedhere 
\end{enumerate}
\end{proof}

\begin{definition}
Given an algebraic totally geodesic submanifold $N \subseteq \mathcal{T}_{g,n}$: 
\begin{enumerate}
\item We call the subsurfaces $U$ for which $\pi_U(N)$ is unbounded active.  
\item We call  two non-annular active subsurfaces equivalent if they occur in the same simple factor of the intersection of $\ol{N}$ with a stratum of the Deligne--Mumford bordification. 
\item We call two annular active subsurfaces equivalent if their core curves are equivalent. 
\end{enumerate}
\end{definition}

Note that Lemma \ref{L:SimpleFactorsWellDefined}  gives that two equivalent active non-annular subsurfaces  will be in the same simple factor of every intersection of $\ol{N}$ with a stratum of the Deligne--Mumford bordification where the two subsurfaces appear. Note also that, by definition, a non-annular subsurface cannot be equivalent to an annular subsurface.  

\begin{example}
As remarked in the introduction, the basic example of totally geodesic submanifolds are covering constructions in the sense of \cite[Section 6]{MMW}, resulting in a copy of a smaller Teichm\"uller space sitting inside of a larger Teichm\"uller space. In this case the active subsurfaces are  connected components of lifts of non-pants  
connected subsurfaces, and the different connected components of such a lift are equivalent. 
\end{example}

Let $\mfS_N$ be the set of equivalence classes of active subsurfaces of $N$. For each equivalence class $E=\{U_1, \ldots, U_k\}$ in $\mfS_N$ we let $\cC(E)$ be the image of $N$ in 
$$\cC(U_1) \times \cdots \times \cC(U_k).$$
Note that Lemma \ref{L:dichotomy} gives that $\cC(E)$ is unbounded.

\begin{lemma}
For any $E\in \mfS_N$ the space $\cC(E)$ is Gromov hyperbolic.
\end{lemma}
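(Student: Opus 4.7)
The plan is to deduce both assertions directly from the quasi-isometric embedding results established in the previous two sections. First, note that by the very definition of the equivalence relation on active subsurfaces, $E$ never mixes annular with non-annular subsurfaces, so we can treat these two cases in parallel.

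For unboundedness, each $U_i \in E$ is by definition active, which by Lemma \ref{L:dichotomy} means $\pi_{U_i}(N)$ is unbounded in $\cC(U_i)$. The ambient product $\cC(U_1)\times \cdots \times \cC(U_k)$ carries the sup metric, so the coordinate projection $\cC(E) \to \cC(U_i)$ is $1$-Lipschitz and surjects onto $\pi_{U_i}(N)$; a $1$-Lipschitz map onto an unbounded space has unbounded domain, so $\cC(E)$ is unbounded.

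For Gromov hyperbolicity, I would invoke Corollary \ref{C:CurveSS} in the non-annular case and Theorem \ref{T:AnnularCurveSS} in the annular case. Either gives that the coordinate projection $\cC(E) \to \cC(U_1)$ is a $K$-quasi-isometric embedding for a uniform $K>0$. Now each curve graph $\cC(U_1)$ is Gromov hyperbolic: for non-annular $U_1$ this is Masur--Minsky, while for annular $U_1$ the Teichm\"uller-space version of the annular curve graph is quasi-isometric to a horoball in $\bH$ as recalled in \S\ref{SS:AnnularSetup}, hence hyperbolic. Since Gromov hyperbolicity in the four-point formulation is automatically inherited by arbitrary subsets and is preserved under quasi-isometric embeddings, $\cC(E)$ inherits hyperbolicity from its image in $\cC(U_1)$.

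I do not anticipate any real obstacle here; the only point requiring care is to confirm that the metric one puts on $\cC(E)$ is the restriction of the sup metric on the ambient product of curve graphs, which is consistent with the conventions already in force throughout the paper.
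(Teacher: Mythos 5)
Your unboundedness argument is fine, but the hyperbolicity step has a genuine gap. The principle you invoke --- that four-point Gromov hyperbolicity ``is preserved under quasi-isometric embeddings'' --- is false for general metric spaces: quasi-isometry invariance of hyperbolicity is a theorem about geodesic (or length, or uniformly quasi-geodesic) spaces, proved via the Morse lemma, while Gromov products involve a difference of distances and are not coarsely preserved by maps with multiplicative distortion. (Only rough isometries, with multiplicative constant $1$, preserve the four-point condition for arbitrary spaces.) So you cannot pass from the image $\pi_{U_1}(N)\subset \cC(U_1)$, which is a priori just an arbitrary subset satisfying the four-point inequality in the restricted metric, back to $\cC(E)$ along the $K$-quasi-isometric embedding of Corollary \ref{C:CurveSS} or Theorem \ref{T:AnnularCurveSS}. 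Moreover, even if you could, ``four-point hyperbolic subset'' is too weak for the role $\cC(E)$ plays later: the HHS axioms (bounded geodesic image, realization, the distance formula machinery) need each $\cC(E)$ to be a genuinely hyperbolic, uniformly quasi-geodesic space, not merely a subset of one.

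What is missing is precisely an argument that the image in a single factor is a quasi-geodesic hyperbolic space, and this is where the paper's proof does real work. For non-annular $U$, one uses that $N$ is totally geodesic together with Rafi's theorem that Teichm\"uller geodesics project to unparametrized quasi-geodesics in $\cC(U)$ \cite[Theorem B]{RafiHyp}: any two points of $\pi_U(N)$ are joined by the projection of a Teichm\"uller geodesic lying in $N$, so $\pi_U(N)$ is quasi-convex in $\cC(U)$, hence itself hyperbolic as a quasi-geodesic space; the quasi-isometric embedding then legitimately transports hyperbolicity to $\cC(E)$. For annular $U$ (whose core curve is a cylinder curve, by activeness and Lemma \ref{L:dichotomy}), one instead shows $\pi_U|_N$ is coarsely surjective onto the horoball (one can twist arbitrarily and make the core curve arbitrarily short within $N$), so the image is coarsely the whole horoball and hence hyperbolic. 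Your proposal never establishes quasi-convexity or coarse surjectivity, and without one of these the deduction does not go through.
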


\begin{proof}
By Corollary \ref{C:CurveSS} and Theorem \ref{T:AnnularCurveSS}, the projections to each factor are quasi-isometric embeddings, so it suffices to show that $\pi_{U_1}(N)$ is Gromov hyperbolic.

If ${U_1}$ is non-annular this follows from the fact that Teichm\"uller geodesics map to unparametrized quasi-geodesics \cite[Theorem B]{RafiHyp}, which gives quasi-convexity and hence hyperbolicity of the image. If ${U_1}$ is annular one can explicitly show that $\pi_U$ is coarsely surjective, which directly gives hyperbolicity of the image.
\end{proof}

For more on the image of Teichm\"uller discs in the curve graph see \cite{Shadows}. 

\bold{Projections.} By definition we have natural maps $\pi_E : N \to \cC(E)$ defined as the restriction of the product map $(\pi_{U_1}, \ldots, \pi_{U_k})$ to $N$. The map $\pi_E$ is coarsely Lipschitz since the maps $\pi_{U_i}$ are coarsely Lipschitz.

\bold{Nesting.}
We say $E=\{U_1, \ldots, U_k\}$ is nested in  $F=\{V_1, \ldots, V_\ell\}$ if every $U_i$ is nested in a $V_j$ and every $V_j$ has a $U_i$ nested in it.
In this case we define $\rho^F_{E}(\alpha)$ to be the union of the $\pi_{E}(X)$ for $X\in N$ with $\pi_F(X)$ close to $\alpha$. 

Notice that $\pi_E$ is surjective by definition. We define $\rho^E_F$ to be the boundary components of the $U_i$ in the corresponding $V_j$. Since the boundary components of the $U_i$ can be made simultaneously short, this gives a well defined point in $\cC(F)$. 

We should compare our definition of $\rho^F_{E}(\alpha)$ to the usual definition used in the HHS structure of Teichm\"uller space. Let $U$ and $V$ be subsurfaces with $U$ nested in $V$. To start, we define $\rho^U_V$ to be the boundary of $U$, which is a coarse point in $\cC(V)$. The map $\rho^V_U : \cC V \to \cC^U$ is only really required to be meaningful for points a definite distance from $\rho^U_V$, but in this context there is a meaningful definition for all curves $\alpha$ that cut $U$, and one defines $\rho^V_U(\alpha)$ to be the usual subsurface projection of $\alpha$ to $U$. Our definition of $\rho^F_{E}(\alpha)$  is compatible with the usual definitions in the following sense. 

\begin{lemma}\label{L:rhoIsAsUsual}
Suppose $U$ is nested in $V$. Consider $\alpha\in \cC(V)$ far from $\rho^U_V$. Then the usual subsurface projection $\rho^V_U(\alpha)$ of $\alpha$ to $U$ lies within bounded distance of set of $\pi_U(X)$ where $X$ ranges over the set of $X\in \cT_{g,n}$ where $\pi_V(X)$ is close to $\alpha$. 
\end{lemma}

\begin{proof}
This follows from axiom  \eqref{O:TC:Functoriality} for the usual HHS structure on $\cT_{g,n}$. 
\end{proof} 

Our choice of $\rho^F_{E}(\alpha)$ was made so that it is immediate from the definition that it lies in $\cC(E)$, but Lemma \ref{L:rhoIsAsUsual} shows that we could have defined it in other ways. (Moreover \cite[Proposition 1.11]{HHS2}  shows that we could have even avoided defining $\rho^V_U$ in this case had we wished to.)

\bold{Orthogonality.}  We say $E=\{U_1, \ldots, U_k\}$ is orthogonal to  $F=\{V_1, \ldots, V_\ell\}$ if every $U_i$ is orthogonal to every $V_j$.

\bold{Transversality and consistency.} By definition, $E=\{U_1, \ldots, U_k\}$ and  $F=\{V_1, \ldots, V_\ell\}$ are transverse if they are not orthogonal and neither is nested in the other. To proceed, we need to better understand transverse pairs. 

\begin{lemma}
\label{L:transverse}
Let $\{U_1, \ldots, U_k\}$ and $\{V_1, \ldots, V_\ell\}$ be distinct equivalence classes of active subsurfaces of $N$. Then, one of the following holds: 
\begin{enumerate}
\item Every $U_i$ is nested in some $V_j$ and every $V_j$ has some $U_i$ nested in it; or the same holds with the roles of the $U_i$'s and $V_j$'s interchanged. 
\item Every $U_i$ is orthogonal to every $V_j$. 
\item Every $U_i$ is transverse to some $V_j$ and every $V_j$ is transverse to some $U_i$. 
\end{enumerate}
\end{lemma}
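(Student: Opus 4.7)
The plan is to exploit the rigidity of the equivalence relation, both for subsurfaces and for their boundary curves, together with Theorem \ref{T:main1}, to propagate any single pairwise relationship to all pairs. At a high level, the relations between $E$ and $F$ are controlled by (i) the relations between their associated curve-equivalence classes via Corollary \ref{C:Simultaneous}, and (ii) the semisimple structure of a common boundary stratum via Theorem \ref{T:main1}.

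First I would analyze boundary curves. By Lemma \ref{L:BoundaryCurvesSimpleFactor}, each boundary curve of a subsurface in $E$ is equivalent (as a curve) to boundary curves of the other $U_i$'s, and likewise for $F$. Applying Corollary \ref{C:Simultaneous} to the curve-equivalence classes associated to $\partial E$ and $\partial F$, either these classes are pairwise disjoint, or every curve of one crosses every curve of the other. In the second scenario, no $U_i$ can be nested in, or disjoint from, any $V_j$ — both relationships force disjoint boundary curves — so every pair $(U_i, V_j)$ is transverse and case (3) of the lemma holds.

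In the first scenario I would pinch all boundary curves of $E$ and $F$ simultaneously, obtaining a boundary stratum $B$ of $\ol{N}$. By Theorem \ref{T:main1}, $B$ decomposes semisimply, and by definition the classes $E$ and $F$ populate two simple factors $F_E$ and $F_F$. I now propagate the chosen topological relationship between $U_1$ and $V_1$. In the nested case $U_1 \subseteq V_1$: since $F_F$ projects isometrically into each $\cT(V_j)$ and such isometric embeddings arise from covering constructions by \cite{IsomEmb}, the subsurface $U_1 \subset V_1$ has a canonical image in each $V_j$; these images are themselves $N$-equivalent to $U_1$ (they come from the same simple factor $F_E$), hence must be among the $U_i$'s, showing that each $V_j$ contains some $U_i$, and a symmetric argument yields the other half of (1). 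The disjoint case is analogous: if $U_1$ lies entirely off all $V_j$'s, the simple factor structure forces the same for every $U_i$, giving (2). If $U_1$ is transverse to some $V_j$ in $B$, then propagation together with the constraints coming from $F_E$ and $F_F$ places us in (3).

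The main obstacle is the propagation step — arguing that equivalent subsurfaces correspond to each other uniformly under the simple factor structure, i.e.\ that the topological nesting/disjointness/transversality data on $B$ is determined by the analytic data of $F_E$ and $F_F$. I expect this to require a careful appeal to the covering-construction interpretation of isometric embeddings of Teichm\"uller spaces from \cite{IsomEmb}. As a backup approach, I would argue directly with multi-twists supported on $\partial V_1$ and equivalent curves, provided by Lemma \ref{L:InvariantUnderTwist}: these twists stabilize $N$, preserve the inclusion $U_1 \subset V_1$, and by the rigidity of cylinder ratios (Corollary \ref{C:Connected}) act on each $V_j$ consistently, which should force each $V_j$ to contain a corresponding $U_i$ in $E$.
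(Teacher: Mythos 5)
Your first scenario is essentially the paper's first case, but note that Corollary \ref{C:Simultaneous} only gives that every curve of one class intersects \emph{a} curve of the other class, not every curve of the other; so your claim that ``every pair $(U_i,V_j)$ is transverse'' is not justified (fortunately conclusion (3) only requires each $U_i$ to be transverse to \emph{some} $V_j$, which is all the paper extracts from Lemma \ref{L:BoundaryCurvesSimpleFactor} together with Corollary \ref{C:Simultaneous}). The genuine gap is in your second scenario, in the propagation step. The result of \cite{IsomEmb} concerns isometric embeddings of an entire Teichm\"uller space into another Teichm\"uller space; a simple factor $F_F$ only projects to each $\cT(V_j)$ as an isometric embedding of the subset $F_F$, so \cite{IsomEmb} does not apply, there is no covering map $V_1\to V_j$, and hence no ``canonical image'' of $U_1$ inside $V_j$. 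Moreover, the assertion that these putative images are $N$-equivalent to $U_1$ ``because they come from the same simple factor $F_E$'' is circular: equivalence of subsurfaces is defined by occurring in a common simple factor, which is exactly what would need to be proved. The backup argument via multi-twists from Lemma \ref{L:InvariantUnderTwist} is too vague to close this: stabilizing $N$ and preserving cylinder ratios (Corollary \ref{C:Connected}) does not by itself force each $V_j$ to contain one of the $U_i$'s. There is also a setup issue: pinching $\partial E$ and $\partial F$ simultaneously requires the union of the relevant curve-equivalence classes to be a pinchable multicurve (Theorem \ref{T:SummaryForClassification}\eqref{SFC:Pinchable}), i.e.\ the classes must be disjoint as classes; disjointness of the actual boundary curves does not immediately give this, since a class can contain curves that are not boundary curves of any $U_i$ or $V_j$.

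The paper's proof avoids all of this and is purely combinatorial: assuming all boundaries are disjoint, it uses the second statement of Lemma \ref{L:BoundaryCurvesSimpleFactor} to show that if some $U_i$ contains a boundary curve of a $V_j$ non-peripherally then all $U_i$ do (and symmetrically), and then splits into the case where this happens in both directions (giving (3), by the definition of transversality for subsurfaces with disjoint boundaries) and the case where it happens in at most one direction, where the elementary topological fact --- if $U,V$ are connected subsurfaces with disjoint boundaries, $V$ contains a boundary component of $U$ non-peripherally, and $U$ contains no boundary component of $V$ non-peripherally, then $V\supseteq U$ --- yields conclusion (1), with (2) as the remaining possibility. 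This topological observation is the step your proposal is missing: without it you never convert the boundary-curve data into nesting or orthogonality of the subsurfaces themselves, and the analytic machinery (new boundary strata, semisimplicity, covering constructions) you invoke to substitute for it does not deliver the needed correspondence between the $U_i$'s and the $V_j$'s.
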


Here we use the usual terminology for the HHS structure on Teichm\"uller space, so $U_i$ is nested in $V_j$ means that either $U_i=V_j$, or $U_i$ is non-annular and is contained up to isotopy in $V_j$, or $U_i$ is annular with core curve a non-peripheral curve of $V_j$. Also, $U_i$ is orthogonal to $V_j$ if they are not equal and can be isotoped to be disjoint. As usual, $U_i$ is transverse to $V_j$ if they are neither nested nor orthogonal. More concretely, $U_i$ is transverse to $V_j$ if either their boundaries intersect or each contains a boundary curve of the other non-peripherally. 

\begin{proof}
We proceed in cases. 

\bold{Case 1: Both equivalence classes are non-annular.}
Suppose that the boundary of some $U_i$ intersects the boundary of some $V_j$. Then, Lemma \ref{L:BoundaryCurvesSimpleFactor} part \eqref{L:BoundaryCurvesSimpleFactor:3} 
and Corollary \ref{C:Simultaneous} give that the boundary of every $U_i$ must intersect the boundary of some $V_j$ and vice versa, so the third conclusion holds. 

We can thus assume that all the boundaries do not intersect. Lemma \ref{L:BoundaryCurvesSimpleFactor} part \eqref{L:BoundaryCurvesSimpleFactor:2} gives that if some $U_i$ contains a boundary curve $\gamma$ of a $V_j$ non-peripherally, then all $U_i$ contain a boundary curve of a $V_j$ non-peripherally.
By symmetry, we also get the claim with the role of the $U_i$'s and $V_j$'s swapped. 

Now suppose that some $U_i$ contains non-peripherally a boundary component of one of the $V_j$ and that some $V_j$ contains non-peripherally a boundary component of the $U_i$. In this case the previous paragraph gives that the third conclusion holds. 

It thus suffices to consider the case when some $U_i$ contains non-peripherally a boundary component of a $V_j$, but no $V_j$ contains non-peripherally a boundary component of a $V_i$. In this case the first conclusion holds, since if $U$ and $V$ are connected subsurfaces with disjoint boundary and $V$ contains  a boundary component of $U$ but $U$ does not contain a boundary component of $V$, then $V$ contains $U$. 

\bold{Case 2: $\{U_1, \ldots, U_k\}$ is annular and $\{V_1, \ldots, V_\ell\}$ is non-annular.} In this case each $U_i$ can be thought of as a curve as well as an annular domain. If one of the $U_i$ is contained non-peripherally in one of the $V_j$, then Lemma \ref{L:BoundaryCurvesSimpleFactor} part \eqref{L:BoundaryCurvesSimpleFactor:2} gives that that the first conclusion holds. 
If one of the $U_i$ intersects the boundary of one of the $V_j$, then Corollary \ref{C:Simultaneous} and Lemma \ref{L:BoundaryCurvesSimpleFactor} part \eqref{L:BoundaryCurvesSimpleFactor:3} give that the third conclusion holds. 

\bold{Case 3: Both equivalence classes are annular.} In this case Corollary \ref{C:Simultaneous} gives that either the second or third conclusion hold.
\end{proof}

It follows from Lemma \ref{L:transverse} that if $E=\{U_1, \ldots, U_k\}$ and  $F=\{V_1, \ldots, V_\ell\}$ in $\mfS_N$ are transverse, then each $U_i$ is intersected by the boundary of some $V_j$ and vice versa. In particular, we can define $\rho^E_F$ using these intersections as in the usual HHS structure on Teichm\"uller space. Since the boundary components of the $U_i$ can be made simultaneously short, this gives a point of $\cC(F)$. 

With all relevant definitions complete, we observe that axiom \eqref{O:TC:Behrstock} follows immediately from the corresponding axiom for Teichm\"uller space. Here one must keep in mind the diagonal like behavior of $\cC(E)$ inside of $\prod \cC(U_i)$, which gives that two points in $\cC(E)$ are close if and only if they are close in any given factor $\cC(U_i)$. So in particular, if $d_{U_i}(x, \rho^{V_j}_{U_i})$ is small for a single $i,j$, then $d_E(x, \rho^F_E)$ is small.  

A similar shows \eqref{O:TC:Functoriality}, given Lemma \ref{L:rhoIsAsUsual}. Axiom \eqref{O:TC:Rhos} also follows from the corresponding axiom for Teichm\"uller space because we have defined the relevant $\rho$'s for $N$ using the corresponding $\rho$'s for Teichm\"uller space.

\bold{Finite complexity.} This follows immediately from the previous definitions.

\bold{Large links.} 
%
%
%
We assume $E$ is larger than the constant in Lemma \ref{L:dichotomy}.
Fix an equivalence class $\{W_1, \ldots, W_r\} \in \mfS_N$ and $x, x'\in \cT_{g,n}$. The large links axiom for Teichmuller space gives that we can find a set of at most $\lambda d_{W_1}(x, x') + \lambda$ domains properly nested in $W_1$ that contain all $T$ properly nested in $W_1$ with $d_T(x, x')>E$.  If these domains are active, then considering their equivalence classes proves large links (in this instance) for $N$. Otherwise, if one such domain $U$ is not active, then Lemma \ref{L:dichotomy} gives that $d_U(x,x')$ is uniformly small, so applying the large links axiom again for Teichmuller space gives a uniformly finite collection of domains properly nested in $U$. Iteratively applying large links in this way, we eventually obtain a (possibly empty) uniformly finite set of domains properly nested in $U$ that are active and which serve to prove the large links axiom for $N$.  Compare to the Passing Up Lemma \cite[Lemma 2.5]{HHS2}.  

\bold{Bounded geodesic image.} This follows directly from the corresponding axiom for Teichm\"uller space, keeping Lemma \ref{L:rhoIsAsUsual} in mind. 

\bold{Partial realization.} Fix a pairwise orthogonal family of domains in $\mfS_N$ and a point in each of the corresponding products of curve graphs. To start, assume none of the domains are annular. By the proof of Corollary \ref{C:CurveSS}, each point can be realized as a point in the corresponding Teichmüller space after pinching all the boundary curves of every surface in every domain of the family. Using semisimplicity, it follows that the tuple of points can be realized simultaneously on the corresponding pinched surface. Unpinching the nodes we realize the tuple by a point of $N$ where all the corresponding boundary curves are short; see Corollary \ref{C:UnPinchOneAtATime}. This point proves the partial realization axiom when no annular domains are considered. 

The proof when some domains are annular is similar and we only provide a sketch. First pinch and arrange for all non-annular domains to realize the appropriate projections. Then, keeping in mind Lemma \ref{L:DilationLemma} as well as the proof of Lemma \ref{L:OpenUpNoProjChange}, one can show that it is possible to unpinch so that each equivalence class of previously pinched curves is approximately any desired small length, and without changing the subsurface projections to the relevant non-annular domains. Then one can apply Dehn multi-twists to adjust the $x$-coordinates of the image in the curve complexes of the annular domains as required. 

\bold{Uniqueness.} This follows immediately from the uniqueness axiom for Teichm\"uller space and the uniform bound provided by Lemma \ref{L:dichotomy}.

\bold{Conclusion.} This concludes the proof that algebraic totally geodesic submanifolds of Teichmüller space are hierarchically hyperbolic. 

\subsection{Fundamental groups.} 
Let $\Gamma_N$ denote the stabilizer of $N$ in the mapping class group. Our goal is now to show that $\Gamma_N$ is a hierarchically hyperbolic group. We will only sketch this, leaving some details to the reader. Recall that a group is a hierarchically hyperbolic group if it has a suitably nice action on a hierarchically hyperbolic space. This action in particular must be metrically proper and cobounded, and induce an action on the set of domains that is cofinite. See \cite[Definition 1.21]{HHS2} for the full definition.

Note that $\Gamma_N$ is finitely  presented because it is the fundamental group of an algebraic variety. To establish that $\Gamma_N$ is a hierarchically hyperbolic group, we will fix a finite generating set and consider the action of $\Gamma_N$ on its Cayley graph, which we denote by $\Cay(\Gamma_N)$. 

The action of $\Gamma_N$ on $\Cay(\Gamma_N)$ is certainly metrically proper and cobounded. We will describe a hierarchically hyperbolic structure on $\Cay(\Gamma_N)$ and then complete the proof that $\Gamma_N$ is a hierarchically hyperbolic group.

\bold{Basic set up.} The set of domains will be the same as for $N$, which is to say it will still be $\mfS_N$. The set of associated hyperbolic spaces will be almost the same: For $E$ non-annular, we use the same hyperbolic space $\cC(E)$. When $E$ is annular, the hyperbolic space we used above is quasi-isometric to a horoball, and we will now replace this horoball with its boundary together with the path metric on its boundary. That is to say, when $E$ is annular,  the new hyperbolic space can be taken to be the real line. We will abuse notation and continue to call this $\cC(E)$.

\bold{Projections.} The projection maps are obtained as follows. Fix $X_0\in N$. It may be helpful to think of $X_0$ as being in the thick part of $N$, but of course every point of $N$ is in the $\e$-thick part for some $\e > 0$. 

We then define $\pi_E(g)$ to be $\pi_E(gX_0)$. Since $gX_0$ is uniformly thick, if $E$ is annular, this projection will lie uniformly close to the boundary of the horoball, so this gives a coarsely well defined point even with our new version of annular curve graphs. It will be implicit in our discussion of partial realization that these projections are coarsely surjective. 
%
%

\bold{Nesting, orthogonality, and finite complexity.} We are using the same set of domains as for $N$ and we leave the definitions of nesting and orthogonality unchanged. 

\bold{Transversality and consistency, large links, and bounded geodesic image.} This follows immediately from the corresponding statements for $N$. 

\bold{Partial realization.} Given a collection of orthogonal domains and a point in each of their associated hyperbolic spaces, we use partial realization for $N$ to get an associated point $X\in N$. We then modify this point, as in the proof of Lemma \ref{L:OpenUpNoProjChange}, to be uniformly thick. We continue to denote the modification by $X$. 
Since the $\Gamma_N$ action on the thick part of $N$ is co-compact, there exists $g\in \Gamma$ such that $g X_0$ is uniformly close to $X$, and this $g$ shows that the partial realization holds for the tuple under consideration. 

%
%

\bold{Uniqueness.} Consider $g, h \in \Gamma_N$ and suppose $g$ and $h$ are far apart in $\Gamma_N$. Because the action of the mapping class group on Teichm\"uller space is metrically proper, the action of $\Gamma_N$ on $N$ is metrically proper, so it follows that $gX_0$ and $hX_0$ are far apart. We then get uniqueness for $\Gamma_N$ from uniqueness for $N$, keeping in mind that the inclusion of the boundary of a horoball into the horoball is metrically proper. 

\bold{Conclusion.} This conclude our sketch that $\Cay(\Gamma_N)$ is hierarchically hyperbolic. The fact that $\Gamma_N$ acts on $N$ easily implies it acts on $\mfS_N$. Since each $\Gamma_N$ orbit for this action gives a component of the boundary of $\ol{\pi(N)}$ in $\ol{\cM}_{g,n}$, the fact that $\pi(N)$ is a variety gives that there are only finitely many orbits by Corollary \ref{C:algebraic1}.

\subsection{The container axiom.} We conclude by commenting a little bit more on the difference between an HHS and an almost HHS. The definition of an almost HHS includes the following as axiom \eqref{O:BoundedOrth}.

\bold{The Bounded Orthogonality Axiom:} The cardinality of a set of pairwise orthogonal domains is uniformly bounded. \medskip  

\noindent In contrast, the orthogonality axiom of an HHS includes the following.  

\bold{The Container Axiom:} For each domain $T$, and each domain $U$ nested in $T$ for which there exists a domain $V$ nested in $T$ and orthogonal to $U$, there exists a domain $W$ properly nested in $T$ which contains all such $V$. 

\medskip

If the so called container domain $W$ can be additionally chosen to be orthogonal to $U$ then the HHS is said to have clean containers \cite[Definition 7.1]{AlmostHHS} and the $W$ can be thought of as the perp of $U$ inside $T$. 

If one uses a slightly larger set of domains than we use above, it is easy to directly verify that the resulting HHS or HHG has clean containers. To do this one only has to make new domains for every orthogonal tuple of old domains; this  roughly allows disconnected subsurfaces as well as connected ones. The hyperbolic spaces of the new domains should be viewed as (coarse) points. Given a new domain, which is an orthogonal tuple of the old domains, it has a clean container that can be obtained as follows. Consider the complement of all the boundaries of all the subsurfaces involved. First take the union of the active domains in the complement that do not contain any of the subsurfaces involved. Then add on the annular boundary domains that were not part of the orthogonal tuple. 

This has some relevance that we have suppressed above, since \cite[Remark A.7]{AlmostHHS} has an omission and in fact there is a mild complication in the equivalence of hierarchically hyperbolicity and almost hierarchically hyperbolicity for HHGs \cite[Remark 3.4]{ForRemark}. 

\section{The proof of Theorem \ref{T:SummaryForClassification} and open questions}\label{S:concluding}

\subsection{About the proof of Theorem \ref{T:SummaryForClassification}} We can now point out where the various parts of this theorem were proved in the paper.

\begin{proof}[Proof of Theorem \ref{T:SummaryForClassification}]
Part \eqref{SFC:CylinderEverywhere} follows from Corollary \ref{C:StrebelInQN}; part \eqref{SFC:EquivAlsoCylinder} from Corollary \ref{C:Connected}; part \eqref{SFC:SimultaneousRealization} from Lemma \ref{L:Simultaneous}.

We check  part \eqref{SFC:Pinchable} as follows. Given a pinchable multicurve,  Corollary \ref{C:Pinchable} and Lemma \ref{L:ratio} show that it is a union of equivalence classes of curves in $\cS$. Conversely, given a union of disjoint equivalent classes of curves in $\cS$, we can argue as in Corollary \ref{C:StrebelInQN} (using a product of twists given by Lemma \ref{L:InvariantUnderTwist}) to find a differential where all the curves are simultaneously cylinders and then pinch these curves as in Corollary \ref{C:Pinchable}. 

The final claim, part \eqref{SFC:BoundaryOfFilled}, is proven as follows. There is nothing to prove if the two equivalences classes of curves are disjoint, so assume they are not disjoint. Let $U_1, \ldots, U_k$ be the connected components of the subsurface filled by the two equivalence classes. Corollary \ref{C:Simultaneous} gives that none of the $U_i$ are annular. 

Lemma \ref{L:InvariantUnderTwist} gives multi-twists associated to each of the two equivalence classes. A variant of the Thurston--Veech construction gives a mapping class $g$ which is supported on the union of the $U_i$'s, which stabilizes $N$, and which is a pseudo--Anosov on each of the $U_i$'s. 

Fix $X\in N$ and let $Y=g^p X$ for $p$ very large. In particular, $d_V(X, Y)$ is large if $V$ is one of the $U_i$'s and otherwise it is bounded. It follows from the definition that $\{U_i\}$ is a union of equivalence classes of active subsurfaces. Lemma \ref{L:dichotomy} then gives that the boundary of the $U_i$'s can be pinched.  The fact that the boundary of the $U_i$ is a union of equivalence classes of curves is Lemma \ref{L:BoundaryCurvesSimpleFactor} part \eqref{L:BoundaryCurvesSimpleFactor:3}. 
\end{proof}

\subsection{Open questions} Of the many open questions related to this paper, we highlight first the following: 

\begin{question}
Is it true that every totally geodesic submanifold of dimension at least 3 is a covering construction in the sense of \cite[Section 6]{MMW}, based off an entire Teichm\"uller space? (Compare to \cite[Question 1.3]{HighRank}).
\end{question}

We also highlight just a small number of instances of Metaconjecture \ref{M} to emphasize the variety of perspectives that might be interesting here. 

\begin{question}
Is there a version of the curve complex for higher dimensional totally geodesic submanifolds that is homotopy equivalent to a wedge of spheres? Are the fundamental groups of totally geodesic subvarieties virtual duality groups? (Compare to \cite{Harer}).
\end{question}

\begin{question}
Are all algebraic totally geodesic submanifolds biholomorphic to bounded domains?
\end{question}

\begin{question}
Can one give nice, explicit presentations for the fundamental groups of the examples in \cite{MMW, EMMW}?
\end{question}

We also mention some open questions that are motivated at least in part by comparison to Teichm\"uller curves. 

\begin{question}
Are the surface bundles and surface group extensions associated to totally geodesic subvarieties hierarchically hyperbolic? (Compare to \cite{ExtensionsII}).
\end{question}

\begin{question}
Are algebraic totally geodesic subvarieties rigid? (Compare \cite{McMullenRigid} and note the potential relevance of results along the lines of those in \cite{InfiniteEnergy}). 
\end{question}

\appendix

\section{Annular projections using cylinders (after Rafi)}\label{A:Rafi}

In this appendix we continue to fix a Teichm\"uller space $\cT_{g,n}$, and all the constants we produce continue to implicitly depend on $g$ and $n$. See Section \ref{SS:AnnularSetup} for background on annular curve graphs. 

As we recall in the next remark, the usual subsurface projection to annular subsurfaces does not behave as nicely as one would like along Teichm\"uller geodesics. The purpose of this appendix is to communicate a beautiful idea of Rafi which for some purposes sidesteps this difficulty. We thank Rafi for sharing this with us. The results of this section are only used in Appendix \ref{S:electrification}. 

\begin{remark} Recall from Section \ref{SS:AnnularSetup} that there are two versions of annular curve graphs. If one uses the projections to the annular curve graphs that are quasi-isometric to $\bZ$, Rafi conjectures there is no backtracking, but this conjecture remains open as of the writing of this paper. If, as we do, one uses the version of annular curve graphs that are horoballs, it is well known that there can be backtracking.\footnote{More specifically, the image of a Teichm\"uller geodesic can make an excursion deep into the horoball $\{z: \Im(z)\geq 1\}$  starting at a point on the boundary $\{z: \Im(z)= 1\}$ and eventually returning to roughly the same point on the boundary. To see this, take for example a flat torus that generates a cobounded geodesic in $\cT_{1}$. Take two copies of this torus, and glue them along an arbitrarily tiny slit of slope 1. The slit curve is hyperbolically short, but if one flows in either direction for a sufficiently long time it is not hyperbolically short. One can use \cite[Theorem 4.3]{RafiComb} to verify that all points where the slit curve is not hyperbolically short have roughly the same twisting.} 

Since there is potential for confusion, we pause briefly to comment on Rafi's results related to this. Rafi has proven proven a no backtracking result,  which, as stated in \cite[Theorem B]{RafiHyp}, does not explicitly rule out annular subsurfaces. And indeed Rafi does  prove a no backtracking result for some definition of twisting. However the definition of twisting Rafi uses is not what is used in defining maps from Teichm\"uller space to annular curve graphs, and instead is part of what we use in this appendix. See \cite[Theorem 4.3]{RafiComb} for a comparison and \cite[Section 2.7]{StatHyp} for some related discussion.
\end{remark}

The idea to sidestep this difficulty is to define a variant of subsurface projection for annular subsurfaces. The advantage of this variant is that one simultaneously maintains a version of the uniqueness axiom (axiom \eqref{O:Uniqueness}) and gets  non-backtracking.  The disadvantage of this variant is that it does not give a well defined function on all of Teichm\"uller space. Instead, for each Teichm\"uller geodesic, one gets a projection defined on this Teichm\"uller geodesic, which makes use in an essential way of the quadratic differentials generating Teichm\"uller geodesic. Equivalently, one gets a projection defined on the bundle of non-zero quadratic differentials.  

\begin{definition}
For any quadratic differential $q$ and any simple closed curve $\alpha$, define $\pi_\alpha(q)$ to be the point in the horoball $\Im(z)\geq 1$ in the upper half plane $\bH$ as follows.  If $q$ does not have a cylinder with core curve $\alpha$, set $\pi_\alpha(q) = 0+i$. If $q$ does have a cylinder with core curve $\alpha$, 
set 
$$\pi_\alpha(q) = \twist_q(\alpha) + \max(1,\modulus_q(\alpha)) i ,$$ 
where typically $\twist_q(\alpha)$ is the twisting at $\alpha$ between the vertical foliation of $q$ and a flat perpindicular to the core curve of $\alpha$, as defined in \cite[Section 4]{RafiComb}, and $\modulus_q(\alpha)$ is the modulus of the cylinder with core curve $\alpha$. In the exceptional situation when the cylinder is vertical, we define $\twist_q(\alpha)=0$.  
\end{definition}

We warn that special care should be taken with horizontal and vertical cylinders. It is safest to use this definition along one Teichm\"uller geodesic at a time, in which case it is easy to understand.

If $q_t$ is a Teichm\"uller geodesic and $\alpha$ is not vertical or horizontal, then  \cite[Equation (16)]{RafiComb} and  \cite[Equation(4)]{RafiHyp} give that there are constants $T_\alpha$ (the total twisting along the geodesic) and $t_\alpha$ (the balanced time) such that 
$$\twist_q(\alpha) \approxadd \frac{T_\alpha e^{2(t-t_\alpha)}}{4\cosh^2(t-t_\alpha)-2}$$ and $$\quad 
\modulus_q(\alpha) \approxmult \frac{T_\alpha}{\cosh^2(t-t_\alpha)} \approxmult \frac{T_\alpha}{4\cosh^2(t-t_\alpha)-2}.$$
Here, as in \cite{RafiHyp}, $A\approxadd B$ means $|A-B|$ is bounded above by a constant only depending on $g$ and $n$, and  $\smash{A\approxmult B}$ means $A/B$ is bounded above and below by constants only depending on $g$ and $n$.

\begin{lemma}\label{L:quasigeodesic}
The path 
$$\frac{e^{2t}+i}{e^{2t}+e^{-2t}}$$
is geodesic of speed 2 in the upper half plane $\bH$. 
\end{lemma}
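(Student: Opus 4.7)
The plan is a direct calculation in two steps: first verify that the image of the path lies on a Euclidean semicircle with center on the real axis (hence a geodesic), and then compute the hyperbolic speed. The whole argument is elementary, so there is no real obstacle.

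First I would simplify. Since $e^{2t}+e^{-2t} = 2\cosh(2t)$ is real and positive, the path $z(t) = \frac{e^{2t}+i}{e^{2t}+e^{-2t}}$ has real and imaginary parts
\[
x(t) = \frac{e^{2t}}{2\cosh(2t)}, \qquad y(t) = \frac{1}{2\cosh(2t)}.
\]
To see that the image is a geodesic, observe that $x(t)-1 = \frac{-e^{-2t}}{2\cosh(2t)}$, so
\[
x(t)\bigl(x(t)-1\bigr) + y(t)^2 = \frac{-1}{4\cosh^2(2t)} + \frac{1}{4\cosh^2(2t)} = 0,
\]
which rearranges to $(x(t)-1/2)^2 + y(t)^2 = 1/4$. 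Thus the image lies on the upper semicircle centered at $1/2$ of radius $1/2$, which is a geodesic in $\mathbb{H}^2$.

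Next I would compute the speed. Differentiating using the quotient rule gives
\[
z'(t) = \frac{2e^{2t}\cdot 2\cosh(2t) - (e^{2t}+i)\cdot 4\sinh(2t)}{4\cosh^2(2t)} = \frac{e^{2t}\bigl(\cosh(2t)-\sinh(2t)\bigr) - i\sinh(2t)}{\cosh^2(2t)},
\]
and since $\cosh(2t)-\sinh(2t) = e^{-2t}$, this simplifies to
\[
z'(t) = \frac{1 - i\sinh(2t)}{\cosh^2(2t)}.
\]
Therefore
\[
|z'(t)|^2 = \frac{1+\sinh^2(2t)}{\cosh^4(2t)} = \frac{1}{\cosh^2(2t)},
\]
so that the hyperbolic speed is
\[
\frac{|z'(t)|}{y(t)} = \frac{1/\cosh(2t)}{1/(2\cosh(2t))} = 2,
\]
as claimed.
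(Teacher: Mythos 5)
Your proposal is correct, but it takes a genuinely different route from the paper. The paper's proof is a one-line isometry argument: the path is the image of the vertical geodesic $t\mapsto ie^{2t}$, which visibly has speed $2$, under the M\"obius transformation $z\mapsto z/(z+1)$, an isometry of $\bH^2$; this also explains where the formula comes from. You instead verify the statement by direct computation: the image lies on the semicircle $(x-\tfrac12)^2+y^2=\tfrac14$, hence on a geodesic, and the hyperbolic speed $|z'(t)|/y(t)$ equals $2$. Both of your calculations check out (the simplification $x(x-1)+y^2=0$ and the derivative $z'(t)=(1-i\sinh 2t)/\cosh^2 2t$ are right). One small point worth a sentence: knowing the image lies in a geodesic and the speed is constantly $2$ gives a speed-$2$ geodesic only once you rule out backtracking; here that is immediate, e.g.\ because $x(t)=1/(1+e^{-4t})$ is strictly increasing (or because the speed never vanishes, so the arclength parameter along the circle is monotone). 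The trade-off between the two approaches: yours is self-contained calculus requiring no idea beyond the quotient rule, while the paper's is shorter and conceptually identifies the path as an isometric image of a standard vertical geodesic.
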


This path is illustrated in Figure \ref{F:RafiFormulas}.

\begin{figure}[h!]
\includegraphics[width=0.45\linewidth]{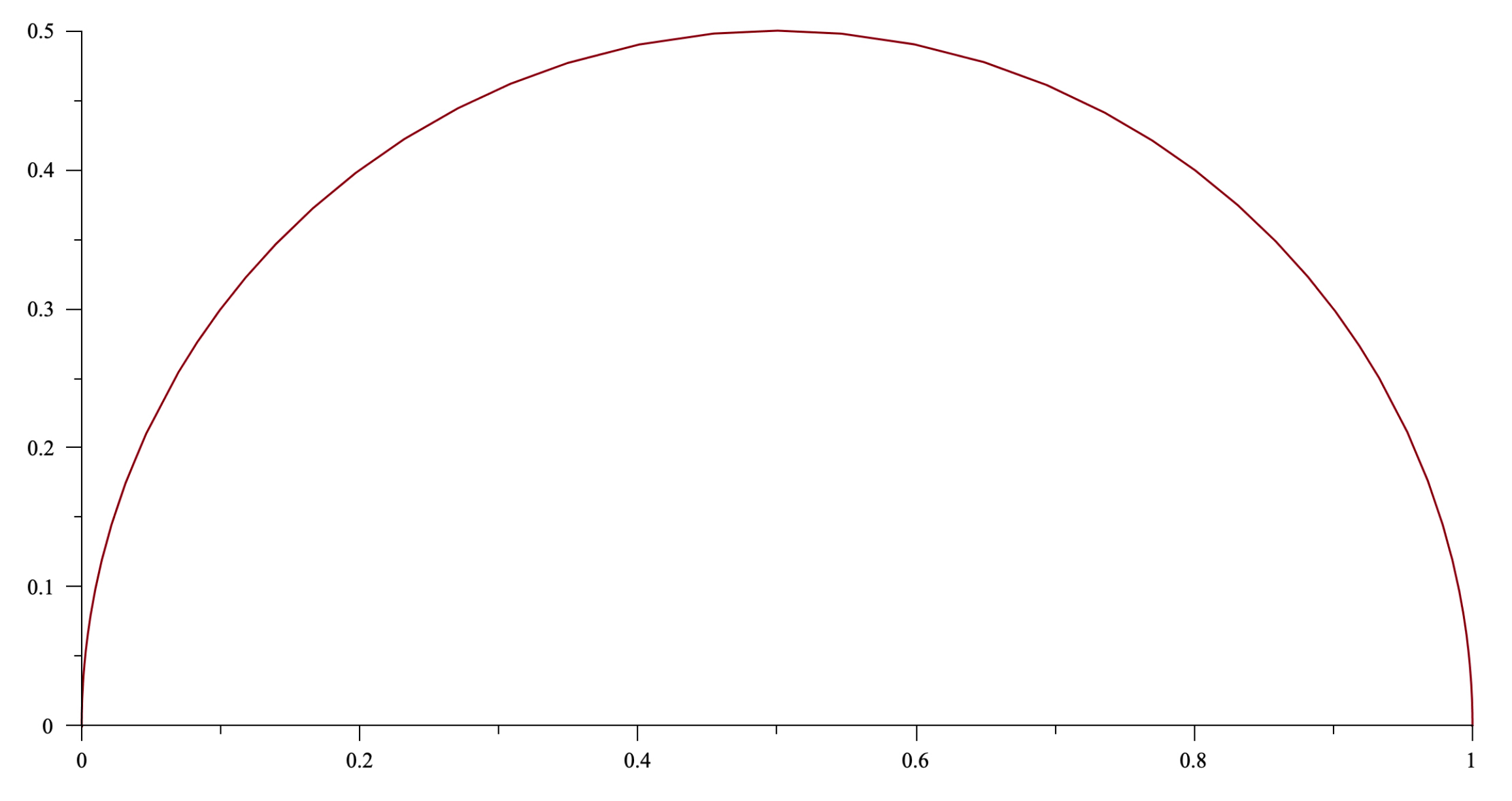}
\caption{The path of Lemma \ref{L:quasigeodesic}.}
\label{F:RafiFormulas}
\end{figure}  

\begin{proof}
Consider the vertical geodesic $t \mapsto i e^{2t}$ and apply the M\"obius transformation $z \mapsto z / (z + 1).$ (We thank Saul Schleimer for pointing out this proof to us.)   
%
%
\end{proof}

\begin{corollary}\label{C:NewNoBacktracking}
If $q_t$ is a Teichm\"uller geodesic, then $\pi_\alpha(q_t)$ is an unparametrized quasi-geodesic. 
\end{corollary}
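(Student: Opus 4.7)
The plan is to use the formulas of Rafi recalled just above, together with Lemma \ref{L:quasigeodesic}, to show that $\pi_\alpha(q_t)$ is, up to uniformly bounded hyperbolic error, the nearest-point projection to the horoball of an honest $\bH^2$-geodesic. First dispense with the trivial cases. Being a cylinder core curve is preserved by the Teichm\"uller geodesic flow, so either $\alpha$ fails to be a cylinder for every $q_t$ (in which case $\pi_\alpha(q_t) = 0 + i$ is constant) or $\alpha$ is a cylinder for every $q_t$. If the cylinder is always horizontal or always vertical, then the twist is identically zero by convention and the projection traces a vertical ray in $\bH^2$, which is a geodesic; so we may assume $T_\alpha > 0$ and that the formulas apply.

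Using the identity $4\cosh^2(s) - 2 = e^{2s} + e^{-2s}$, the recalled formulas give that $\pi_\alpha(q_t)$ lies within uniformly bounded hyperbolic distance of
$$
\psi(t - t_\alpha) := \frac{T_\alpha\, e^{2(t-t_\alpha)}}{e^{2(t-t_\alpha)} + e^{-2(t-t_\alpha)}} + \max\!\left(1,\, \frac{T_\alpha}{e^{2(t-t_\alpha)} + e^{-2(t-t_\alpha)}}\right) i.
$$
The additive twist error translates to a horocycle-distance error that is bounded in $\bH^2$ because the imaginary part is at least $1$, while the multiplicative modulus error produces at most a bounded vertical error when the modulus exceeds $1$ and has no effect when it does not. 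Ignoring the $\max(\cdot)$ clamping, $\psi$ is exactly $T_\alpha$ times the path of Lemma \ref{L:quasigeodesic}; since $z \mapsto T_\alpha z$ is a hyperbolic isometry of $\bH^2$, this unclamped path is an honest hyperbolic geodesic of speed $2$.

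To reintroduce the clamping, observe that $(x, y) \mapsto (x, \max(1, y))$ is precisely the hyperbolic nearest-point projection from $\bH^2$ onto the horoball, since the geodesics perpendicular to the horocycle $y = 1$ are the vertical lines. The horoball is geodesically convex in $\bH^2$, so this projection is $1$-Lipschitz; a direct check then shows that the projection of a $\bH^2$-geodesic to the horoball is an unparametrized quasi-geodesic, obtained by concatenating the central arc inside the horoball (which is unchanged) with two horocyclic arcs on $y = 1$ corresponding to the portions outside. Since $t \mapsto \pi_\alpha(q_t)$ parametrizes this image monotonically along the underlying $\bH^2$-geodesic, the conclusion follows. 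The main obstacle is this last step: verifying that the resulting three-piece path is an unparametrized quasi-geodesic with constants uniform in $T_\alpha$, since the geometry changes qualitatively between small $T_\alpha$, where the path hugs the horocycle $y = 1$, and large $T_\alpha$, where the central arc rises substantially above it. This requires a short explicit computation in the half-plane model, balancing the length of the horocyclic pieces against the hyperbolic length of the central arc.
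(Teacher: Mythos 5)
Your argument is correct and follows essentially the same route as the paper: dispose of the vertical/horizontal case, rewrite $4\cosh^2 s-2=e^{2s}+e^{-2s}$ so that the unclamped path is $T_\alpha$ times the path of Lemma \ref{L:quasigeodesic} (hence a genuine geodesic via $z\mapsto T_\alpha z$), and then check that clamping the imaginary part at $1$ is harmless. The ``main obstacle'' you flag is exactly the paper's technical point and resolves easily with no case split on $T_\alpha$: the horocyclic pieces always have uniformly bounded length (for a semicircular geodesic with feet distance $2r$ apart, each clamped arc has length $r-\sqrt{r^2-1}\leq 1$ when it enters the horoball, and otherwise the whole image has diameter at most $2r\leq 2$), so the three-piece path is a uniform unparametrized quasi-geodesic; the only slight imprecision is that for a horizontal cylinder the convention does not set the twist to zero, but it is constant along the flow, which is all you need.
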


\begin{proof}
In the special case when $\alpha$ is vertical or horizontal, this follows easily because the twist is constant and an even simpler formula governs the modulus. The typical case follows from Lemma \ref{L:quasigeodesic} by using the hyperbolic isometry $z\mapsto T_\alpha z$. 

A technical point is that the max in the formula for $\pi_\alpha$ does not cause problems; a priori one might worry that significant change to the real part might take place during an interval when the imaginary part is constantly equal to 1. For this one should note that, with the formula in Lemma \ref{L:quasigeodesic}, after $t$ gets negative enough to have that the imaginary part $1/(e^{2t}+e^{-2t})$ has size at most $\delta=1/T_\alpha$, the $x$ coordinate is within $e^{2t} \delta < \delta$ 
of its limiting value. A similar observation holds when $t$ is large and positive.
\end{proof}

In the next lemma we give a uniqueness constant $\theta_u$ following the usual notation in the uniqueness axiom; the subscript is just a reminder that this relates to uniqueness. 

\begin{proposition}\label{P:KasraUnique}
Fix a Teichm\"uller space $\cT_{g,n}$. 
 For all $\kappa$ there exists $\theta_u$ such that if $X, Y\in \cT_{g,n}$ have  $d(X,Y)\geq \theta_u$ then either 
 \begin{enumerate}
 \item there exists a non-annular $V$ with $d_V(X,Y)\geq \kappa$, or 
 \item if $q_X$ and $q_Y$ are the differentials such that $g_T(X, q_X)=(Y, q_Y)$ for some $T\in \bR$, then there is a curve $\alpha$ with 
 $$d_{\bH}(\pi_\alpha(q_X), \pi_\alpha(q_Y))\geq \kappa.$$
 \end{enumerate}
\end{proposition}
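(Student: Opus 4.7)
The plan is to reduce Proposition \ref{P:KasraUnique} to the standard uniqueness axiom for the HHS structure on $\cT_{g,n}$, using the usual annular projections to horoballs, and then transfer the conclusion from the standard annular projection to Rafi's projection $\pi_\alpha$ in the remaining (annular) case. Given $\kappa$, I would first fix a small constant $\delta > 0$ depending on $\kappa$, then choose a large $\kappa' \gg \kappa$ depending on $\kappa$ and $\delta$, and let $\theta_u$ be the uniqueness constant provided by the standard HHS uniqueness axiom applied with the constant $\kappa'$. For $X, Y$ with $d(X,Y)\geq \theta_u$, the standard uniqueness axiom yields a subsurface $V$ with $d_V(X,Y)\geq \kappa'$. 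If $V$ is non-annular, we are in conclusion (1). Hence one may assume that $V$ is the annulus with core curve $\alpha$, and the task becomes converting the bound $d_\alpha(X,Y)\geq \kappa'$ on the standard horoball projection into the bound $d_{\bH^2}(\pi_\alpha(q_X), \pi_\alpha(q_Y))\geq \kappa$.

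I would then split the annular case into two subcases according to whether $\alpha$ is hyperbolically very short at $X$ or $Y$. In the short subcase, where $\min(\ell_\alpha(X),\ell_\alpha(Y))\leq \delta$, Corollary \ref{C:length} forces $\alpha$ to be the core of a cylinder on the corresponding differential ($q_X$ or $q_Y$) with modulus comparable to $1/\ell_\alpha$. Thus at least one of $\pi_\alpha(q_X), \pi_\alpha(q_Y)$ has imaginary part at least $1/(C\delta)$, while the other lies in the horoball $\{\Im z \geq 1\}$, so their hyperbolic distance in $\bH^2$ exceeds $\kappa$ provided $\delta$ was chosen small enough.

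In the non-short subcase $\min(\ell_\alpha(X),\ell_\alpha(Y))\geq \delta$, the imaginary parts of the standard horoball projections $\pi_\alpha(X), \pi_\alpha(Y)$ are bounded in terms of $\delta$, so the hypothesis $d_\alpha(X,Y)\geq \kappa'$ forces the Masur--Minsky annular twist $d_\alpha^{\MCG}(X,Y)$ to grow at least exponentially in $\kappa'$. Applying \cite[Theorem 4.3]{RafiComb}, whose error term is of size $O(\ell_\alpha(X)^{-1}+\ell_\alpha(Y)^{-1}) = O(1/\delta)$, Rafi's twist difference $|\twist_{q_Y}(\alpha)-\twist_{q_X}(\alpha)|$ is also very large; and Lemma \ref{L:RafiCyl} guarantees that $\alpha$ is in fact a cylinder on $q_X$ (hence on $q_Y$, since $q_Y=g_T q_X$). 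Therefore $\pi_\alpha(q_X)$ and $\pi_\alpha(q_Y)$ are both computed by the non-trivial branch of the definition, their real parts differ by a very large amount, and their imaginary parts are at least $1$, so the hyperbolic distance in $\bH^2$ between them is at least $2\log(\text{twist difference}) - O(1)$, which exceeds $\kappa$ once $\kappa'$ is large enough.

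The main obstacle is the bookkeeping required to choose constants in the correct order: $\delta$ must be fixed from $\kappa$ first in order to deal with the short-curve subcase, and then $\kappa'$ must be chosen in terms of both $\kappa$ and $\delta$ so that in the non-short subcase the Rafi-twist term dominates both the $O(1/\delta)$ error from \cite[Theorem 4.3]{RafiComb} and the additive constants coming from the hyperbolic distance formula in $\bH^2$. Once this order of quantifiers is set up correctly the two subcases combine to give the proposition.
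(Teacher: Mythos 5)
Your non-short subcase is essentially fine (it is the same mechanism the paper uses elsewhere, e.g.\ in Lemma \ref{L:RafiCyl} and Theorem \ref{T:AnnularCurveSS}: bounded lengths plus large horoball distance force large Masur--Minsky twisting, \cite[Theorem 4.3]{RafiComb} transfers this to Rafi's flat twisting with an $O(1/\delta)$ error, and a flat cylinder with core $\alpha$ then exists and carries the twisting). The genuine gap is in the short subcase. You claim that $\min(\ell_\alpha(X),\ell_\alpha(Y))\leq\delta$ together with Corollary \ref{C:length} forces $\alpha$ to be the core of a cylinder on $q_X$ (or $q_Y$) of modulus comparable to $1/\ell_\alpha$. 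Corollary \ref{C:length} only applies to horizontally periodic differentials whose horizontal cylinders have bounded circumference ratios; it says nothing about an arbitrary unit-area differential $q_X$ on a surface where $\alpha$ happens to be hyperbolically short. In general a hyperbolically short curve need not be the core of \emph{any} cylinder on $q_X$: in the thick-thin decomposition its small extremal length can be accounted for entirely by expanding annuli. In that situation $\pi_\alpha(q_X)=\pi_\alpha(q_Y)=i$ by definition, so the modified annular distance is zero even though the standard horoball distance $d_\alpha(X,Y)$ is huge (because of the imaginary part at $X$). The footnote in Appendix \ref{A:Rafi} is exactly such an example: two square tori glued along a tiny slit, where the slit curve is hyperbolically short at the start of the geodesic but the modified projection records essentially no change. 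So if the standard HHS uniqueness axiom hands you that annular domain, your argument produces no witness for conclusion (2), and conclusion (2) for that particular $\alpha$ is simply false.

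The missing idea is what to do in this case: the proposition is still true, but the witness is a \emph{different} domain (nested in the subsurface cut off by the expanding annuli), and locating it is the real content. This is why the paper's proof does not go through the standard uniqueness axiom at all; it argues by induction on $\dim\cT_{g,n}$ along the flow line $g_t(X,q_X)$: either some cylinder acquires large modulus (and Corollary \ref{C:NewNoBacktracking} gives (2)), or the path stays thick and makes progress in $\cC(\Sigma)$ (giving (1)), or a curve gets short with all cylinder moduli bounded, in which case Lemma \ref{L:ThickThin} and Rafi's subsurface-projection estimates show the geodesic shadows a geodesic in a lower-dimensional Teichm\"uller space, where the induction hypothesis (combined with no-backtracking, \cite[Theorem B]{RafiHyp}) supplies the witness domain. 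Your proposal would need some replacement for this inductive step to handle the expanding-annulus case; without it the argument does not close.
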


Before giving a proof we recall a standard lemma, which can be derived from from Rafi's thick-thin decomposition (with specific ingredients including \cite[Theorem 4]{ThickThin} and \cite[Theorem 3.1]{RafiHyp})
%
%
or from more recent technology such as \cite{Compactification1} (using that $q$ is close to a limit differential with no horizontal nodes and hence more than one level).

\begin{lemma}\label{L:ThickThin}
For any $D$ and any $M$, there exists an $\e$ such that if $q \in Q\cT_{g,n}$ has no cylinder of modulus greater than $D$ and has a curve of hyperbolic length less than $\e$, then there is a subsurface $U$ of $q$ bounded by expanding annuli of modulus at least $M$. 
\end{lemma}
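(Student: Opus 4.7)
The plan is to combine the extremal-length formula for quadratic differentials with Maskit's comparison between hyperbolic and extremal length for short curves, and then to invoke the standard thick--thin organization of flat surfaces to assemble the subsurface.

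First, I would use the formula (see \cite[Theorem 3.1]{RafiHyp} or \cite[Theorem 4.6]{MinskyHarmonic}) that for any simple closed curve $\alpha$ on $(X,q)$,
\[
\frac{1}{\mathrm{Ext}_X(\alpha)} \approxmult \mathrm{Mod}(F_\alpha) + \mathrm{Mod}(E_\alpha^+) + \mathrm{Mod}(E_\alpha^-),
\]
where $F_\alpha$ is the maximal flat cylinder with core curve $\alpha$ (with modulus zero if no such cylinder exists) and $E_\alpha^\pm$ are the maximal expanding annuli on either side. By \cite{maskit}, hyperbolic and extremal length are comparable for short curves, so if $\ell_\alpha(X) < \e$ then the right-hand side above is at least $c/\e$ for a universal constant $c$. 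The hypothesis $\mathrm{Mod}(F_\alpha) \leq D$ then forces at least one of $E_\alpha^\pm$ to have modulus at least $M$, provided $\e$ is chosen small enough in terms of $D$ and $M$.

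Next, to produce $U$, I would apply the above to every simple closed curve of hyperbolic length below the chosen threshold; by the collar lemma this is a finite, pairwise disjoint collection, each member of which has a large expanding annulus on at least one side. Making a choice of side for each such curve, I would let $U$ be a connected component of the complement in $q$ of the union of the bounded-modulus flat cylinders together with the chosen large expanding annuli. By construction, each boundary component of $U$ is the core of an expanding annulus of modulus at least $M$.

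The main obstacle is not the estimate above but the combinatorial task of showing that the sides can be chosen compatibly so that a well-defined subsurface $U$ with the required boundary structure actually exists --- in particular, that the outer boundaries of the relevant expanding annuli do not interfere with one another and genuinely cut out a component of the complement with the expected boundary. This is the content of the thick--thin decomposition carried out in \cite{ThickThin} and refined in more recent work such as \cite{Compactification1}, and I would appeal to those references rather than reproduce the organization from scratch.
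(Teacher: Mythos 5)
Your proposal is correct and takes essentially the same route as the paper, which offers no argument of its own beyond citing \cite{ThickThin} and \cite{Compactification1}: your extremal-length/Maskit estimate showing each hyperbolically short curve with bounded cylinder modulus has a large expanding annulus on some side is exactly the comparison the paper itself uses in Lemma~\ref{L:expanding} and Corollary~\ref{C:length}. One caution: with the paper's convention that the expanding annuli are \emph{contained} in $U$, the subsurface should be taken to be the appropriate piece of Rafi's thick--thin decomposition (e.g.\ the piece of largest size, so that each boundary curve has its large annulus on the $U$-side), rather than an arbitrary component of the complement of the removed annuli, whose boundary need not abut a large annulus --- but that is precisely the organizational step you flag and defer to the same references the paper cites.
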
 

Here the expanding annuli are taken to be contained in $U$. This $U$ is a small subsurface, and the expanding annuli mediate between the small scale of $U$ and larger scale adjacent to $U$.

\begin{proof}[Proof of Proposition \ref{P:KasraUnique}]
The cases of $\cT_{1,1}$ and $\cT_{0,4}$ are straightforward to analyze, either directly or using the ideas below, so we can induct on $\dim \cT_{g,n}$ and assume that the complex dimension of $\cT_{g,n}$ is at least two and the result is known for smaller dimensional Teichm\"uller spaces. 

Consider the path $g_t(X, q_X), t\in [0,T]$. If at some point this has a cylinder of large modulus with core curve $\alpha$, then we can directly check that (2) holds using Corollary \ref{C:NewNoBacktracking} and the related formulas, which together give that the modified annular projection must change when the modulus is large, and that this change cannot be undone. So suppose there is an upper bound on the modulus of cylinders along this path. 

If the path stays in the thick part, it makes uniform progress in the main curve graph \cite[Lemma 4.4]{RScovers}. So it suffices to consider the case when a curve gets very short. In this case Lemma \ref{L:ThickThin} and the results of \cite[Section 4]{RafiHyp} on projection of a quadratic differential to subsurfaces give that there is a long subinterval where the path follows a geodesic in a smaller dimensional Teichm\"uller space. The result then follows from the induction hypothesis, keeping in mind Rafi's no backtracking result \cite[Theorem B]{RafiHyp}.
\end{proof}

\section{Electrified Teichm\"uller geodesics}\label{S:electrification}

In this appendix we continue to fix a Teichm\"uller space $\cT_{g,n}$, and all the constants we produce continue to implicitly depend on $g$ and $n$. Given a Teichm\"uller geodesic $$X_t, t\in \bR$$ and a constant $D>0$, for each (essential, non-peripheral) simple closed curve $\alpha$ we define $I_\alpha \subset \bR$ to be the convex hull of the $t\in \bR$ such that $\alpha$ has length at most $D$ at $X_t$. We define $\cE$ to be the electrification of $\bR$ along these intervals. The purpose of this section is to prove the following. Since this result does not seem to have been previously recorded in  the literature we provide a proof here, making use in particular of Rafi's ideas from the previous appendix. 

\begin{theorem}\label{T:electrification}
For any $D>0$ there exists $K$ such that for any Teichm\"uller geodesic $X_t, t\in \bR$ the natural map 
$$\pi : \cE \to \cC(\Sigma)$$
is a $K$-quasi-isometric embedding. 
\end{theorem}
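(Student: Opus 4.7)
The plan is to establish the two Lipschitz inequalities of the quasi-isometric embedding, noting that $d_\mathcal{E}(s,t)$ is coarsely equivalent to the Lebesgue measure of $[s,t]\setminus\bigcup_\alpha I_\alpha$ plus the number of electrified intervals that $[s,t]$ meets. For the upper bound $d_{\mathcal{C}(\Sigma)}(\pi(s),\pi(t))\leq K d_\mathcal{E}(s,t)+K$, two observations suffice. First, the Bers-type projection $\pi_\Sigma:\mathcal{T}_{g,n}\to\mathcal{C}(\Sigma)$ is coarsely Lipschitz with respect to the Teichm\"uller metric. Second, on each $I_\alpha$ the image has uniformly bounded diameter: both endpoints have $\ell_\alpha\leq D$ and hence map near $\alpha$, and Rafi's theorem that $\pi_\Sigma\circ X$ is an unparametrized quasi-geodesic \cite[Theorem B]{RafiHyp} forces the full image to remain near $\alpha$.

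The heart of the lower bound is a positive-speed lemma on thick intervals: after possibly enlarging $D$ (the case of small $D$ follows by comparison, using that along any Teichm\"uller geodesic the length of time spent in $\{\ell_\alpha\in[D,D']\}$ is bounded in terms of $D$ and $D'$), there exist uniform constants $\theta_u,\kappa>0$ such that whenever $[s',t']\subseteq\mathbb{R}\setminus\bigcup_\alpha I_\alpha$ and $|s'-t'|\geq\theta_u$, one has $d_{\mathcal{C}(\Sigma)}(\pi_\Sigma(X_{s'}),\pi_\Sigma(X_{t'}))\geq\kappa$. I would prove this by applying Proposition \ref{P:KasraUnique}: since $|s'-t'|=d_\mathcal{T}(X_{s'},X_{t'})\geq\theta_u$, either (i) some non-annular $V$ has $d_V(X_{s'},X_{t'})\geq\kappa$, or (ii) some curve $\alpha$ has large modified annular projection change. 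Alternative (ii) requires a large cylinder modulus at some intermediate time, which by Corollary \ref{C:length} forces $\alpha$ to be short there, contradicting $[s',t']\subseteq\mathbb{R}\setminus\bigcup_\alpha I_\alpha$. In alternative (i), if $V$ were proper, Rafi's theory of active intervals (Theorem \ref{T:Active}) would force a component of $\partial V$ to be short in $[s',t']$, contradicting the same hypothesis. So $V=\Sigma$ and the claim holds.

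To conclude the lower bound, decompose $[s,t]$ as the alternating union of the maximal electrified intervals $I_{\alpha_1},\ldots,I_{\alpha_k}$ it contains and the complementary thick intervals $J_0,\ldots,J_k$, and apply the unparametrized quasi-geodesic property to the subdivision by the endpoints of these pieces. The positive-speed lemma yields $\mathcal{C}(\Sigma)$-progress of order $|J_i|/\theta_u - O(1)$ across each $J_i$, and since the short curves $\alpha_i\neq\alpha_{i+1}$ associated to consecutive electrified intervals are distinct, each transition also contributes at least a uniform unit of $\mathcal{C}(\Sigma)$-progress at its boundary endpoints. The unparametrized quasi-geodesic property bounds the total sum of these consecutive $\mathcal{C}(\Sigma)$-displacements by $K d_{\mathcal{C}(\Sigma)}(\pi(s),\pi(t)) + O(k)$; the transition contributions then force $k\lesssim d_{\mathcal{C}(\Sigma)}(\pi(s),\pi(t))$, and absorbing this into the thick-interval estimate gives $\sum|J_i|\lesssim d_{\mathcal{C}(\Sigma)}(\pi(s),\pi(t))$, so $d_\mathcal{E}(s,t)\lesssim d_{\mathcal{C}(\Sigma)}(\pi(s),\pi(t))$, as required.

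The main obstacle is the positive-speed lemma, where one must carefully coordinate Proposition \ref{P:KasraUnique} with Rafi's theory of active intervals to simultaneously rule out both the annular alternative (by the absence of large-modulus cylinders) and the proper non-annular alternative (by the absence of short boundary curves) inside a fully thick subinterval; once this is in place the rest of the argument is a bookkeeping application of the no-backtracking result for $\pi_\Sigma\circ X$.
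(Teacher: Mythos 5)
Your upper bound is fine, and your positive-speed lemma on fully thick subintervals is essentially correct: on an interval disjoint from every $I_\alpha$, choosing the active-interval threshold $\epsilon\le\min(D,\epsilon_0/2)$ in Theorem \ref{T:Active} kills the proper non-annular alternative of Proposition \ref{P:KasraUnique}, and bounded cylinder moduli (large modulus forces small extremal, hence small hyperbolic, length --- this is the standard Maskit comparison, not Corollary \ref{C:length}, which concerns horizontally periodic differentials with bounded circumference ratios) together with the formulas behind Corollary \ref{C:NewNoBacktracking} kill the annular alternative, provided $\kappa$ is chosen large in terms of $D$ and $M_{\rm active}$. Note also that your ``enlarge $D$'' reduction is both unnecessary (the argument just sketched works for every $D>0$) and unsupported: a fixed curve can keep hyperbolic length in a band $[D,D']$ for an arbitrarily long time along a Teichm\"uller geodesic, for instance a curve inside a thick piece whose rescaled flat structure is nearly constant while the boundary of the piece stays very short, so the claimed bound on the time spent in $\{\ell_\alpha\in[D,D']\}$ is false as stated.

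The genuine gap is in the lower bound, at the point where you bound the number $k$ of electrified intervals by $d_{\cC(\Sigma)}(\pi(s),\pi(t))$. (A secondary issue: $d_\cE(s,t)$ is not coarsely ``thick measure plus number of electrified intervals met,'' since overlapping intervals can be skipped; but one could work with that expression as an upper bound for $d_\cE$.) The fatal step is the claim that each transition between consecutive maximal intervals with distinct curves $\alpha_i\neq\alpha_{i+1}$ contributes ``a uniform unit of $\cC(\Sigma)$-progress'': all it contributes is at least $d_{\cC(\Sigma)}(\alpha_i,\alpha_{i+1})-O(1)\ge 1-O(1)$, which is swallowed by the coarseness constants. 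Your own bookkeeping exposes this: summing consecutive displacements along the unparametrized quasi-geodesic yields only an estimate of the form $k\le K\,d_{\cC(\Sigma)}(\pi(s),\pi(t))+O(k)$, which is vacuous, and the estimate for $\sum|J_i|$ also carries an additive $O(k)$, so nothing closes. No-backtracking does not rule out the dangerous configuration: a long chain of distinct curves, pairwise close in $\cC(\Sigma)$, becoming short one after another with no single curve short throughout and with small total curve-graph displacement --- an unparametrized quasi-geodesic may pass through arbitrarily many such curves within its additive constant. Excluding exactly this configuration is the hard content of the theorem; it is the metric properness statement, Proposition \ref{P:key}, which the paper proves via the uniformly finite family of subsurfaces of Lemma \ref{L:DM} (iterated large links), Rafi's active intervals, the modified annular projections of Appendix \ref{A:Rafi}, and length quasi-convexity. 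Your proposal contains no substitute for this step, so the lower Lipschitz bound is not established.
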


Here $\Sigma$ is the surface of genus $g$ with $n$ punctures. Note that the fact that $\pi$ is coarsely Lipschitz follows from the standard fact that the subset of $\cT_{g,n}$ where a curve $\alpha$ has length at most $D$ maps to a subset of $\cC(\Sigma)$ that is contained in a  ball centered at $\alpha$ of radius a function of $D$. That standard fact is implicit in the result of Masur and Minsky which proves that the curve graph is equal to a similar electrification of Teichm\"uller space \cite[Lemma 7.1]{MMI}.

When we apply Theorem \ref{T:electrification} it is helpful to also have the following.  

\begin{corollary}\label{C:electrification}
For any $D_2\geq D_1>0$ there exists $K$ such that the following holds. Consider a Teichm\"uller geodesic $X_t, t\in \bR$. For each $\alpha$, let $I_{\alpha} \subset \bR$ be an interval such that 
$$t\in I_\alpha \implies \ell_\alpha(X_t)\leq D_2, \quad\quad\text{and}\quad\quad t\notin I_\alpha \implies \ell_\alpha(X_t)\geq D_1.$$
Let $\cE$ be the electrification of $\bR$ along these intervals. Then the natural map 
$$\pi : \cE \to \cC(\Sigma)$$
is a $K$-quasi-isometric embedding. 
\end{corollary}

\begin{proof}[Proof of Corollary \ref{C:electrification} assuming Theorem \ref{T:electrification}]
Let $\cE_1$ be the electrification of $\bR$ used in Theorem \ref{T:electrification} with $D=D_1$, and let $\pi_1: \cE_1 \to \cC(\Sigma) $ be the associated map to the curve graph. Theorem  \ref{T:electrification} and the discussion above gives that there is a constant $K$ depending only on $D_1$ and $D_2$ such that $\pi_1$ is a $K$-quasi-isometric embedding and $\pi$ is $K$-Lipschitz. 

\begin{figure}[h!]
\begin{tikzpicture}

  \node (e1) at (0, 0) {$\mathcal{E}_{1}$};
  \node (e) at (2, 0) {$\mathcal{E}$};
  \node (xi2) at (2, -2) {$\cC(\Sigma)$};

  \draw[->, -Latex] (e1) -- node[above] {$\iota_{1}$} (e);
  \draw[->, -Latex] (e1) -- node[midway,below left] {$\pi_{1}$} (xi2);
  \draw[->, -Latex] (e) -- node[midway, right] {$\pi$} (xi2);

\end{tikzpicture}
\caption{The proof of Corollary \ref{C:electrification}.}
\label{F:factor}
\end{figure}  

We also have a $1$-Lipschitz inclusion $\iota_1 : \cE_1 \to \cE$, and $\pi_1 =   \pi \circ \iota_1$. The situation is summarized in Figure \ref{F:factor}. The result follows since $\iota_1$ is coarsely surjective. 
\end{proof}

\bold{Previous work.} 
Theorem \ref{T:electrification} is  related to work of Rafi and Schleimer \cite{RScovers}. It seems natural to expect that one could approach Theorem \ref{T:electrification} using the analysis in  \cite{RScovers}, but we do not follow that  approach here. 

We note that Theorem \ref{T:electrification} can be used to reprove the main result of \cite{RScovers}. Another proof of the main result of \cite{RScovers} was given in \cite{Tang}. See also \cite{Tang2} for a related result,  \cite{FuterSchleimer} for a version for arc graphs instead of curve graphs, and \cite{AougabPatelTaylor} for effective results. Some of these results use an ``electric distance" on hyperbolic three manifolds and hence have a similarity in flavor to Theorem \ref{T:electrification}; see \cite[Theorem 2.1.4]{BowditchELT} and  the discussion after the statement of \cite[Theorem 4.1]{AougabPatelTaylor} for more context on this.

See also \cite[Theorem 1.4]{AffineUndistorted} for a result of a similar flavor to Theorem \ref{T:electrification} which gives information on Teichm\"uller discs with sufficiently non-trivial Veech groups. 

\bold{Active intervals.} We start by recalling some of Rafi's theory of active intervals, established in \cite[Section 3]{RafiComb}. See also  \cite[Theorem 3.22]{DowdallMasur} and \cite[Section 2.5]{StatHyp}, for example, for summaries of this theory. It gives in particular the following. 

\begin{theorem}[Rafi]\label{T:Active}
There exists $\epsilon_0$ such that for  $0<\epsilon<\epsilon_0$  there exists $\epsilon'\in (0, \epsilon)$ and $M_{\rm{active}} >0$ such that for any proper subsurface $U$ and any Teichm\"uller geodesic $X_t$ there is an interval $I_U$ of times such that 
\begin{enumerate}
\item all components of $\partial U$ have length at most $\epsilon$ at every $X_t$ with  $t\in I_U$, 
\item at each $X_t, t\notin I_U$ at least one component of $\partial U$ has length at least $\epsilon'$, and 
\item for any $s,t$ in the same component of $\bR-I_U$, we have $d_U(X_s, X_t)\leq M_{\rm{active}} $. 
\end{enumerate}
\end{theorem}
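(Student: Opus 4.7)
The plan is to deduce Theorem~\ref{T:Active} from three classical ingredients: (a) Kerckhoff's convexity of extremal length along Teichm\"uller geodesics, (b) Maskit's comparison between hyperbolic length and extremal length in the thin part, and (c) Masur--Minsky's bounded geodesic image theorem together with the fact that shadows of Teichm\"uller geodesics are unparametrized quasi-geodesics in $\cC(\Sigma)$.

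I would begin by replacing the hyperbolic-length condition with an extremal-length condition, provisionally defining $I_U$ to be the set of $t \in \bR$ at which every component of $\partial U$ has extremal length at most a threshold $\delta$. By Kerckhoff, each such extremal length is a log-convex function along the Teichm\"uller geodesic; as an intersection of sublevel sets of convex functions, $I_U$ is automatically a closed interval (possibly empty). Choosing $\delta = \delta(\epsilon)$ sufficiently small and using Maskit's two-sided comparison between hyperbolic and extremal length once either is small, we obtain condition (1): all $\partial U$-components have hyperbolic length at most $\epsilon$ throughout $I_U$. For condition (2), outside $I_U$ some component of $\partial U$ has extremal length at least $\delta$, and one chooses $\epsilon' = \epsilon'(\delta)$ so that this forces hyperbolic length at least $\epsilon'$, where the choice of $\epsilon_0$ ensures we stay in the range where the Maskit comparison is bidirectional.

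The substantive step is condition~(3). Suppose $s < t$ lie in the same component of $\bR - I_U$; then between $s$ and $t$ at least one boundary component of $U$ has extremal length bounded below by $\delta$. Let $M_{\rm active}$ be the target bound and suppose for contradiction that $d_U(X_s, X_t) > M_{\rm active}$. Build a Masur--Minsky hierarchy joining the shortest markings on $X_s$ and $X_t$; by the hierarchy machinery, the total $\cC(U)$-progress is comparable to $d_U(X_s, X_t)$. Applying the bounded geodesic image theorem to the main geodesic of this hierarchy, and using that this main geodesic fellow-travels the shadow of the Teichm\"uller geodesic (quasi-geodesicity of shadows), we obtain an intermediate time $r \in (s, t)$ at which the shortest curve on $X_r$ is disjoint from $\partial U$ and projects non-trivially to $\cC(U)$. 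Minsky's product region theorem then forces the conclusion: a curve essential in $U$ can only become uniformly short on $X_r$ when $\partial U$ is already uniformly short, placing $r \in I_U$ and contradicting the assumption that $s, t$ lie in the same component of $\bR - I_U$.

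The main obstacle is the final quantitative link in step~(3): the bounded geodesic image theorem hands us a short curve essential in $U$, but converting this into uniform shortness of $\partial U$ itself requires a careful threshold-chase through the product region theorem and Rafi's thick-thin description of the Teichm\"uller metric. Uniformity of $M_{\rm active}$ across all proper subsurfaces $U$ then follows from the finiteness of topological types of subsurfaces of $\Sigma$ and mapping-class-group equivariance, while uniformity across geodesics is automatic once the constants depend only on $g, n, \epsilon, \delta$.
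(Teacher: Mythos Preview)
The paper does not give its own proof of this statement: Theorem~\ref{T:Active} is stated as a result of Rafi and attributed to \cite[Section 3]{RafiComb}, so there is no in-paper argument to compare against. That said, your proposal has two genuine problems worth flagging.

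First, the claimed ingredient (a) is not correct as stated. Kerckhoff's convexity results concern hyperbolic length along earthquake paths (and relatedly Weil--Petersson geodesics), not extremal length along Teichm\"uller geodesics. Extremal length is not known to be log-convex along Teichm\"uller geodesics; what is available is a quasi-convexity statement for hyperbolic length due to Lenzhen--Rafi (the paper's reference \cite{LengthQuasiConvex}), which only gives that length at an interior time is bounded by a fixed multiple of the maximum at the endpoints. This is enough to make $I_U$ coarsely an interval, but your sentence ``as an intersection of sublevel sets of convex functions, $I_U$ is automatically a closed interval'' does not go through as written.

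Second, and more seriously, the final implication in your argument for (3) is false. You assert that ``a curve essential in $U$ can only become uniformly short on $X_r$ when $\partial U$ is already uniformly short.'' There is no such principle: take $U$ to be a one-holed torus inside a genus two surface, and pinch a non-separating curve inside $U$ while keeping $\partial U$ of moderate length. The product region theorem does not help here, since it describes the geometry near the thin part determined by whatever curves happen to be short, and places no constraint on which curves those are. Rafi's actual mechanism is different: he uses the flat geometry of the generating quadratic differential (expanding annuli, size of subsurfaces) to show directly that when some component of $\partial U$ has extremal length bounded below, the induced geodesic in $\cT(U)$ makes only bounded progress, and hence so does the projection to $\cC(U)$. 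Your hierarchy/BGI route produces a short curve somewhere, but you then need a separate reason---not supplied---to conclude it is a component of $\partial U$ rather than an interior curve of $U$.
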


This theorem allows the $U$ to be annular, and in this case we use the version of $\cC(U)$ that is a horoball in the hyperbolic plane. Note that $I_U$ can be empty or (in unusual cases) half-infinite. 

Rafi \cite[Theorem B]{RafiHyp} also proved  that the subsurface projection of a Teichm\"uller geodesic to a non-annular curve graph gives an unparametrized quasi-geodesic (a quasi-geodesic composed with a monotone time reparametrization).

\bold{Metrically proper maps.} A map $p:S \to T$ is called metrically proper if for any $d_0$ there exists a $c_0$ such that 
$$d(p(s_1), p(s_2))< d_0  \implies  d(s_1, s_2)< c_0.$$
Here it suffices to consider $d_0$ larger than any fixed constant. 

Dowdall and Taylor observed that a metrically proper alignment preserving map must be a quasi-isometric embedding \cite[Lemma 3.4]{DT}.  The definition of alignment preserving is such that a coarsely Lipschitz map to a Gromov hyperbolic space  that sends geodesics to unparametrized quasi-geodesics is alignment preserving. 

\bold{Reduction to a key proposition.} The following will be the key to the proof. 

\begin{proposition}\label{P:key}
For any $L$ and any  $D$  there exists $m$ and $B$ such that if $X,Y \in \cT_{g,n}$ have 
$$d_\Sigma(X,Y)<L$$
then there are $\alpha_1, \ldots, \alpha_m\in \cC(\Sigma)$ and subintervals $I_1, \ldots, I_m$ of the Teichm\"uller geodesic $[X,Y]$ from $X$ to $Y$ such that at each point in $I_j$ we have the length of $\alpha_j$ is at most $D$, and such that each subinterval of the complement of $\cup I_j$ has length at most $B$. 
\end{proposition}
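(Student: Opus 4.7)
My plan is to establish Proposition \ref{P:key} by separately bounding the two parameters $B$ and $m$, each linearly in $L$. For the bound $B$ on complementary components: any time $t$ in the complement of $\cup_\alpha I_\alpha$ satisfies $\ell_\alpha(X_t) > D$ for every simple closed curve $\alpha$, so $X_t$ lies in the $D$-thick part of $\cT_{g,n}$. On the $D$-thick part, Teichm\"uller geodesics make uniform linear progress in $\cC(\Sigma)$: there exist constants $c>0$ and $K_0>0$ depending only on $D$ and the topology such that $d_\Sigma(X_s, X_t) \geq c(t-s) - K_0$ whenever $[s,t]$ is contained in the $D$-thick part; see for instance \cite[Lemma 4.4]{RScovers}. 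Since the image of the geodesic $t \mapsto X_t$ in $\cC(\Sigma)$ is an unparametrized quasi-geodesic by \cite[Theorem B]{RafiHyp}, this image has diameter at most $KL+K$, so every connected component of the complement has length at most $B := (KL+K+K_0)/c$.

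For the bound on $m$: I would proceed by a greedy sweep from $t=0$ to $t=T$. At each step, if the current time $t$ lies in some $I_\alpha$, select an $I_\alpha$ whose right endpoint is maximal, add this pair to the collection, and advance $t$ to that right endpoint; if instead $t$ lies in a complementary component, skip it (of length at most $B$) and continue. Two consecutive greedy choices $\alpha_j, \alpha_{j+1}$ are both of length at most $D$ at the common transition time, so they are distinct disjoint simple closed curves; hence their midpoint projections in $\cC(\Sigma)$ lie at positive $\cC(\Sigma)$-distance while appearing in time order along the unparametrized quasi-geodesic $t \mapsto \pi_\Sigma(X_t)$. The quasi-geodesic inequality then bounds $m$ linearly in the $\cC(\Sigma)$-diameter of this image, yielding $m = O(L)$.

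The main obstacle is making the final counting step fully rigorous: one must extract from the greedy choices an actual positive lower bound on their spacing along the quasi-geodesic parameter, rather than merely positive $\cC(\Sigma)$-distance. This requires balancing the coarse Lipschitz constant of the projection to $\cC(\Sigma)$, the quasi-geodesic constants from \cite[Theorem B]{RafiHyp}, and the unit spacing between distinct disjoint simple closed curves. Careful use of Rafi's theory of active intervals (Theorem \ref{T:Active}) to arrange that each $I_\alpha$ is a single interval for $D$ chosen sufficiently small, together with the observation that consecutive greedy curves must be simultaneously short and hence disjoint, will make this bookkeeping tractable.
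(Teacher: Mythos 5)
Your first step, the bound $B$ on complementary components, is fine: every time in the complement of all the shortness intervals is $D$-thick, thick segments of Teichm\"uller geodesics make uniform progress in $\cC(\Sigma)$ (the paper invokes the same fact, \cite[Lemma 4.4]{RScovers}), and the $\cC(\Sigma)$-image of $[X,Y]$ has diameter $O(L)$ since it is an unparametrized quasi-geodesic with endpoints at distance less than $L$ (for the main curve graph this is due to Masur--Minsky \cite{MMI}; \cite[Theorem B]{RafiHyp} is about subsurface projections). This is a legitimately more direct route to the $B$ bound than the paper's, which instead passes through Proposition \ref{P:KasraUnique}. The quasi-convexity issue you flag (replacing the threshold $D$ by $D/K$ via \cite{LengthQuasiConvex} so that each shortness locus can be treated as an interval on which the length is at most $D$) is also handled correctly in spirit.

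The genuine gap is the count of greedy steps. Your certificate is that consecutive greedy curves are disjoint, hence at $\cC(\Sigma)$-distance $1$, and ``appear in time order along the unparametrized quasi-geodesic,'' from which you want a lower bound on spacing and hence $m=O(L)$. No such lower bound exists: an unparametrized quasi-geodesic of diameter $O(L)$ can pass through arbitrarily many distinct vertices, and the projection of a Teichm\"uller geodesic to $\cC(\Sigma)$ genuinely stalls for arbitrarily long stretches whenever a proper subsurface $Q$ carries large projection distance (take $X,Y$ differing by a huge power of a pseudo-Anosov supported on $Q$: then $d_\Sigma(X,Y)$ is uniformly bounded, the geodesic is very long, and consecutive greedy transitions -- a curve short near $X$, then a component of $\partial Q$ covering the active interval, then a curve short near $Y$ -- are separated by essentially zero $\cC(\Sigma)$-progress). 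So ``positive $\cC(\Sigma)$-distance in time order'' gives no bound, and this is not a matter of balancing constants: the reason the greedy count is finite in such examples is the covering phenomenon (the boundary of the subsurface responsible for the stall is itself short throughout the stall), and bounding how many curves fail to be covered requires controlling subsurface projections at all levels of the hierarchy, not just in $\cC(\Sigma)$. This is exactly what the paper's Lemma \ref{L:DM} does, by iterating the large links axiom over initial and terminal windows (of size comparable to $L$) in each subsurface, combined with the active-interval theory (Theorem \ref{T:Active}) and the modified annular projections of Proposition \ref{P:KasraUnique}; note also that the resulting bound on $m$ is polynomial in $L$ of degree roughly the topological complexity, so even the linear bound you anticipate is likely too optimistic. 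Without an argument of this hierarchical type, the bound on $m$ -- which is the heart of the proposition -- is missing.
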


Before we prove the proposition, we explain why it is sufficient. 

\begin{proof}[Proof of  Theorem \ref{T:electrification} assuming Proposition \ref{P:key}]
The map $\cE(\cI) \to \cC(\Sigma)$ is alignment preserving, so it suffices to prove it is metrically proper, and that follows immediately from the key proposition. 
\end{proof}

\bold{A uniformly finite collection of subsurfaces.}  We now give a construction  reminiscent of \cite{DowdallMasur} that captures some of the behavior of initial and terminal parts of a Teichm\"uller geodesic segment.

Since the next lemma is technical, we motivate it by briefly hinting at how it will be applied.  Our goal is now to prove Proposition \ref{P:key}.  The theory of active intervals gives that, given a subsurface $U$, during the interval of times in which a Teichm\"uller geodesic segment is far from its initial and terminal values in $\cC(U)$ the boundary of $\cC(U)$ will be short. During the time in which this boundary is short, we do not require any further information. 

\begin{lemma}\label{L:DM}
There exists a $C>0$ such that 
for any $M$ there exists an $N$ such that for any $X,Y\in \cT_{g,n}$ there exists a set $\cS=\cS(X,Y)$ of subsurfaces with the following properties. 
\begin{enumerate}
\item $\Sigma \in \cS$. 
\item $\cS$ has cardinality at most $N$.
\item Suppose $U\in \cS$ and $V$ is properly nested in $U$. Suppose there is large progress in $V$, in the sense that 
$$d_V(X,Y)>C.$$
Suppose also that this progress happens in the initial or terminal part of the Teichm\"uller geodesic as measured in $\cC(U)$, in the sense that
$$\min( d_U(\rho^V_U, X), d_U(\rho^V_U, Y) )  <M.$$
Then there exists $V'\in \cS$ properly nested in $U$ with $V$ nested in  $V'$. 
\end{enumerate}
\end{lemma}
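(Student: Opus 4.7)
The plan is to construct $\cS$ inductively, starting from $\{\Sigma\}$ and, at each stage, adding for each $U$ already present a uniformly bounded number of maximal subsurfaces of $U$ that witness the required large projections at either the initial or the terminal end of the geodesic in $\cC(U)$. First I would fix $C$ larger than the bounded geodesic image constant $\kappa$ for $\cT_{g,n}$, so that any $V \sqsubsetneq U$ with $d_V(X,Y) > C$ forces the $\cC(U)$-geodesic from $\pi_U(X)$ to $\pi_U(Y)$ to pass within $\kappa$ of $\rho^V_U$. Given $U \in \cS$, call $V \sqsubsetneq U$ \emph{initially qualifying} if $d_V(X,Y) > C$ and $d_U(\rho^V_U, \pi_U(X)) < M$, with \emph{terminally qualifying} defined symmetrically. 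I would add to the next stage of $\cS$ all nesting-maximal initially and terminally qualifying subsurfaces of $U$.

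For the cardinality bound, choose a point $X'$ on the Teichm\"uller segment $[X,Y]$ such that $\pi_U(X')$ sits at distance approximately $M + 2\kappa$ from $\pi_U(X)$ along the $\cC(U)$-geodesic toward $\pi_U(Y)$; take $X' = Y$ if this distance exceeds $d_U(X,Y)$. For any initially qualifying $V$, the point $\rho^V_U$ lies in the initial $(M+\kappa)$-neighborhood of $\pi_U(X)$ on this geodesic, so the sub-geodesic from $\pi_U(X')$ to $\pi_U(Y)$ stays at distance greater than $\kappa$ from $\rho^V_U$; bounded geodesic image then gives $d_V(X',Y) = O(1)$, and the triangle inequality yields $d_V(X,X') \geq C - O(1)$. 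The large links axiom for $\cT_{g,n}$ applied to the pair $(X,X')$ in $U$ now produces at most $\lambda\, d_U(X,X') + \lambda = O(M)$ maximal proper subsurfaces of $U$ that dominate every initially qualifying $V$ under nesting; the terminal case is symmetric, using a point $Y'$ near $Y$.

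To conclude, iterate the construction. Each newly added subsurface is properly nested in an earlier member of $\cS$, and properly nested chains of subsurfaces of $\Sigma$ have length at most $3g-3+n$, so the recursion halts in at most that many stages. Combined with the $O(M)$ additions per stage per surface, this yields the uniform cardinality bound $|\cS| \leq N(M, g, n)$. Property (1) is built into the seed, and property (3) is built into the construction, since for any qualifying $V$ inside $U \in \cS$ the maximal cover added immediately after $U$ contains some $V' \sqsupseteq V$ properly nested in $U$. The most delicate step is the coupling of bounded geodesic image with the hyperbolicity of $\cC(U)$ needed to convert the initial-end hypothesis $d_U(\rho^V_U, \pi_U(X)) < M$ into the bounded-$d_U$ hypothesis on the pair $(X, X')$ to which large links can be applied; once $C$ and $X'$ are calibrated so that BGI is usable in both the forward direction (constraining $\rho^V_U$) and the reverse direction (bounding $d_V(X',Y)$), the rest is bookkeeping.
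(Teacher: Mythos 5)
Your proposal is correct and follows essentially the same route as the paper's proof: an iterative construction seeded by $\{\Sigma\}$, a truncation point $X'$ chosen so that $\pi_U(X')$ sits just beyond the $M$-neighborhood of $\pi_U(X)$ in $\cC(U)$ (the paper's $X_U$, taken at threshold $M+H+10$), bounded geodesic image plus consistency to get $d_V(X',Y)=O(1)$ and hence $d_V(X,X')>E$ for every qualifying $V$, the large links axiom applied to the pair $(X,X')$ in $U$ to produce the $O(M)$ new domains, and termination via the bound on lengths of properly nested chains. Two small calibration points, both handled the same way in the paper: what you actually add to $\cS$ must be the large-links domains dominating the qualifying $V$ (your opening phrase about adding all nesting-maximal qualifying subsurfaces would not have a uniformly bounded count, but the rest of your argument does the right thing), and $C$ must be taken larger than the large links threshold $E$ plus the bounded geodesic image constant (the paper takes $C=E+c_{BGI}+200$), not merely larger than the BGI constant.
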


Here $\rho^V_U$ continues to denote the boundary of $V$ viewed as a coarse point of $\cC(U)$. A consequence of the final condition is that for any surface $V$ not in $\cS$ where large progress occurs, there is a bigger subsurface $U$ in $\cS$ which proves that this progress does not happen in the initial or terminal part of the Teichm\"uller geodesic, as can be seen by looking at a minimal element in $\cS$ that contains $V$.

The proof will use the large links axiom, which gives constants $E, \lambda$ such that if $X$ and $Y$ are points of Teichm\"uller space and $U$ is a subsurface then there is a collection $Q=Q_U(X,Y)$ of at most $\lambda d_U(X,Y)+\lambda$ subsurfaces properly nested in $U$ such that if $V$ is properly nested in $U$  and $d_V(X,Y)>E$  then $V$ is nested in one of the surfaces in $Q$.  

\begin{proof}
The proof is via an iterative construction. The number of stages in the construction will be bounded by the maximum length of a properly nested chain of subsurfaces, so to prove uniform finiteness it suffices to prove that the number of subsurfaces added to $\cS$ in each stage is uniformly bounded. At the first stage we set $\cS=\{\Sigma\}$.

At each subsequent stage, we consider all the subsurfaces added to $\cS$ in the previous step that are not annular. Let $U$ be such a subsurface, and note that since $U$ is non-annular the map to $\cC(U)$ gives an un-parametrized quasi-geodesic. Let $H$ be such that for any Teichm\"uller geodesic and any non-annular curve graph $U$ there is a monotone reparametrization of the geodesic that stays within distance $H$ of a geodesic in $\cC(U)$. 

If it exists, let $X_U$ be the first point on the Teichm\"uller geodesic from $X$ to $Y$ with 
$$d_U(X, X_U)>M+H+10.$$
Similarly let $Y_U$ be the last point with 
$$d_U(Y_U, Y)>M+H+10.$$
If these points do not exist, we will work with the whole geodesic from $X$ to $Y$. If they do exist, we'll separately work with the segment from $X$ to $X_U$ and also the segment from $Y$ to $Y_U$. 

We now apply the large links axiom to these segments (of which there are one or two per $U$ added in the last stage) with subsurface $U$, and add all the resulting subsurfaces to $\cS$ and proceed to the next stage. 

The result is a uniformly finite collection $\cS$ of subsurfaces. Now, consider $U\in \cS$, and  $V$  properly nested in $U$ and assume
$$ d_V(X,Y)>C \quad\quad\text{and}\quad\quad d_U(\rho^V_U, X)<M.$$
We will specify $C$ soon. Note that the case $ d_U(\rho^V_U, Y)<M$ follows by symmetry from this case.

If $X_U$ is not defined above set $X_U=Y$. 
It suffices to prove that when $C$ large enough $$d_V(X, X_U)>E,$$ since this will give that in our construction a subsurface properly nested in $U$ and in which $V$ is nested is added to $\cS$ at the stage when the large links axiom is applied to the segment $[X, X_U]$ with subsurface $U$. 

To this end note that the geodesic from $\pi_U(X_U)$ to $\pi_U(Y)$ consists entirely of curves that cut $V$. Thus 
$$\rho^U_V(\pi_U(X_U), \pi_U(Y))\leq c_{BGI},$$
where $c_{BGI}$ is the constant in the Bounded Geodesic Image Theorem. Hence, keeping in mind that the constants required for the consistency conditions in this context are actually quite small, we get that 
$$d_V(X_U, Y) \leq c_{BGI}+100.$$
So, picking 
 $$C = E+c_{BGI}+200,$$
 ensures that 
 $d_V(X, X_V) >E$
as desired.
\end{proof}

\bold{The proof.} We can now prove the key proposition. 

\begin{proof}[Proof of Proposition \ref{P:key}.]
Let $K$ be a constant given by \cite[Theorem A]{LengthQuasiConvex}, so that the hyperbolic length of a curve at any point on a Teichm\"uller geodesic segment is at most $K$ times the maximum of its lengths at the endpoints of the segment. 

By Theorem \ref{T:Active} there is a constant $M_{\rm{active}}$ such that projection to a subsurface can change by at most  $M_{\rm{active}}$ during intervals where at least one boundary component has length at least $D/K$.  We can easily arrange for this also to apply to the modified annular projections of Appendix \ref{A:Rafi}, since the explicit estimates for the modified projections show in particular that progress only happens with the modulus is large, and large modulus implies the core curve is hyperbolically short. 
 
Let $\cS$ be as given in Lemma \ref{L:DM}, with any $M>\max(L, M_{active}).$ We can assume that the $C$ produced by that lemma is bigger than $L$, since the result with a bigger $C$ is weaker than the result with a smaller $C$. 

In general Lemma \ref{L:DM} gives for any surface $U$ not in $\cS$ where large progress occurs, there is a bigger subsurface in $\cS$ which proves that this progress does not happen in the initial or terminal part of the Teichm\"uller geodesic. By taking $M>L$, we ensure that that larger subsurface is never $\Sigma$ itself.  

Let $\alpha_1, \ldots, \alpha_m$ be the boundary curves of the subsurfaces in $\cS$ together with the set of all curves that are short at $X$ or $Y$. The number $m$ of such curves depends only on $M$ (and, as always, $g$ and $n$). 

Let $I_i$ be the interval from the first time $\alpha_i$ has length less than $D/K$ to the last time it has length $D/K$. The main result of \cite{LengthQuasiConvex} gives that $\alpha_i$ has length at most $D$ at every point of $I_i$.

Let $X', Y'$ be in the same component of $[X,Y]$ minus the segments corresponding to the $I_i$. By Proposition \ref{P:KasraUnique}, to conclude this proof it suffices to show that $X'$ and $Y'$ are bounded distance in all non-annular curve graphs and in the modified annular projections of all curves. We will do this by contradiction. 

Suppose that $U$ is a non-annular subsurface with $$d_U(X', Y')>C+M_{\rm{active}}+100.$$ We immediately get that $U$ cannot be in $\cS$. It follows that $U$ is properly nested in a proper subsurface $V$ in $\cS$, and that the progress in $U$ is not initial or terminal as measured in $\cC(V)$. It follows that the active interval for $U$ is contained in the active interval for $V$, giving a contradiction to the fact that no $\alpha_i$ is short between $X'$ and $Y'$. 

Now suppose that $X'$ and $Y'$ are far apart as measured using the modified annular projection of Appendix \ref{A:Rafi} for some curve $\alpha$. It follows from Corollary \ref{C:NewNoBacktracking} that $X$ and $Y$ are far apart using the same modified annular projection. Rafi has proven that when $\alpha$ is not short, the regular and modified annular projections agree \cite[Theorem 4.3]{RafiComb}, so we either get that $X$ and $Y$ are far apart using the usual annular projections or we get that $\alpha$ is short at $X$ or $Y$. 

\bold{Case 1:}  $\alpha$ is short at $X$ or $Y$.  Then $\alpha=\alpha_i$ for some $i$, and we get that $\alpha$ is not short along $[X', Y']$. This contradicts the fact that the geodesic makes progress in the modified annular projection during this time.

\bold{Case 2:} $X$ and $Y$ are far apart using the usual annular projections and $\alpha$ is not short at $X$ or $Y$. 
Then we know from Lemma \ref{L:DM} that $\alpha$ must be nested in a proper subsurface $V$ which is contained in $\cS$, and that in $\cC(V)$ we have that $\alpha$ lies far from $X$ and far from $Y$. The progress of the modified projection thus happens far from $X$ or $Y$ in $\cC(V)$, showing that the interval when the modified projection changes is contained in the active interval for $V$, again giving a contradiction. 
\end{proof}

\bibliographystyle{amsalpha}

\bibliography{bibliography}

\end{document}